\let\@fnsymbol\@arabic
\numberwithin{equation}{section}
\definecolor{dred}{rgb}{.8,0,0}
\definecolor{ddmagenta}{rgb}{0.7,0,0.9}
\definecolor{ddcyan}{rgb}{0,0.2,1.0}
\definecolor{Green}{rgb}{0.,0.5,0.}
\newcommand{\DDDS}{\color{black} }
\newcommand{\DDDE}{\color{black}}
\newcommand{\DelS}{\color{yellow} }
\newcommand{\DelE}{\color{black}}
\newcommand{\eps}{\varepsilon}
\newcommand{\umin}{u_{\mathrm{min}}}
\newcommand{\foraa}{\text{for a.a.}}
\newcommand{\weakto}{\rightharpoonup}
\def\weaksto{\stackrel{*}{\rightharpoonup}}
\newcommand{\piecewiseConstant}[2]{\overline{#1}_{\kern-1pt#2}}
\newcommand{\underpiecewiseConstant}[2]{\underline{#1}_{\kern-1pt#2}}
\newcommand{\pairing}[3]{\sideset{_{}}{_{ #1}}  {\mathop{\langle #2 , #3  \rangle}}}
\newcommand{\inner}[3]{\sideset{_{}}{_{ #1}}  {\mathop{( #2 , #3  )}}}
\newcommand{\BV}{\mathrm{BV}}
\newcommand{\mmu}{{\mbox{\boldmath$\mu$}}}
\newtheorem{notation}[theorem]{Notation}
\newcommand{\piecewiseLinear}[2]{#1_{\kern-1pt#2}}
\newcommand{\dd}{\,\mathrm{d}}
\newcommand{\mixed}[1]{\mathrm{M}_{#1}}
\newcommand{\DDD}[3]{\begin{array}[t]{c}#1\vspace*{-1em}\\_{#2}\vspace*{-.5em}\\_{#3}\end{array}}
\newcommand{\ddd}[3]{\DDD{\begin{array}[t]{c}\underbrace{#1}\vspace*{.6em}\end{array}}{\text{\footnotesize #2}}{\text{\footnotesize #3}}}
\newcommand{\epsi}{\epsilon}
\newcommand{\twodis}{\mathrm{d}_2}
\def\trait #1 #2 #3 {\vrule width #1pt height #2pt depth #3pt}
\def\fin{
    \trait .3 5 0
    \trait 5 .3 0
    \kern-5pt
    \trait 5 5 -4.7
    \trait 0.3 5 0
\medskip}
\newcommand{\il}{q}
\newenvironment{rcomm}{\color{black} }{\color{black}}
\newcommand{\beric}{\begin{rcomm}}
\newcommand{\eric}{\end{rcomm}}
\newenvironment{nrcomm}{\color{black} }{\color{black}}
\newcommand{\berin}{\begin{nrcomm}}
\newcommand{\erin}{\end{nrcomm}}
\newenvironment{newnewchanges}{\color{black}
}{\color{black}}
\newcommand{\bnnc}{\begin{newnewchanges}}
\newcommand{\ennc}{\end{newnewchanges}}
\begin{document}
\title{A quasilinear differential inclusion for  viscous and rate-independent damage systems in non-smooth domains}
\author{%
Dorothee Knees\thanks{
Weierstrass Institute for
Applied Analysis and Stochastics,  Mohrenstr.~39,
10117 Berlin, Germany,
 E-Mail: dorothee.knees@wias-berlin.de
}
\and
Riccarda Rossi\thanks{DICATAM -- Sezione di  Matematica,
University of Brescia,
Via Valotti 9,
25133 Brescia, Italy, E-Mail: riccarda.rossi@ing.unibs.it}
\and
Chiara Zanini\thanks{Department of Mathematical Sciences, Politecnico di
  Torino, Corso Duca degli Abruzzi 24, 10129 Torino, Italy, E-Mail:
  chiara.zanini@polito.it}
}

\date{January 26, 2014 }

\maketitle
%% article and other classes: abstract after maketitle

\begin{abstract}
%
%In the following I am always referring to the PREPRINT version of
%\cite{krz}

 This paper focuses on
rate-independent damage in elastic bodies. Since the driving energy is nonconvex,
solutions may have jumps as a function of time, and in this situation
it is known that the classical concept
of energetic solutions for rate-independent systems may fail to accurately describe the
behavior of the system at jumps.

 Therefore we resort to the (by now well-established) \emph{vanishing viscosity}
approach to rate-\-in\-dependent modeling, and  approximate the model by its viscous regularization.
In fact, the analysis of the latter PDE system presents remarkable difficulties, due to its highly nonlinear character.
We tackle it by
combining a \emph{variational} approach to a class of abstract doubly nonlinear evolution equations, with
 careful regularity estimates tailored to this specific system,  relying on a $q$-Laplacian type gradient regularization of the damage variable.
 Hence for the viscous problem we conclude the existence of weak
 solutions, satisfying a 
 suitable energy-dissipation inequality that is the starting point for the vanishing viscosity analysis.
 The latter leads to the notion of \emph{(weak) parameterized
   solution} to our rate-independent system, 
 which encompasses the influence of viscosity in the description of the jump regime.
\end{abstract}

\noindent {\bf Keywords:} Rate-independent damage evolution;
vanishing viscosity method; arclenght reparameterization; time discretization; regularity estimates

\noindent {\bf AMS Subject Classification:}
74R05, %Brittle damage
74C05, %Small-strain, rate-independent theories (including
      %rigid-plastic and elastoplastic
      %materials)
35D40, %Viscosity solutions
35K86, % Nonlinear parabolic unilateral problems and nonlinear
      % parabolic variational inequalities [See also 35R35, 49J40]
49J40 % Variational methods including variational inequalities

%
%----------------------------------------------------------------------

%\tableofcontents

%--------------------------------------------------------------------------

%
%------------------------------------------
%\input{introduction_06June2011}
\section{Introduction}
\label{s:introduction}
We analyze the following
PDE system  for damage evolution
\begin{subequations}
\label{pde-syst-intro}
\begin{align}
& \label{pde-syst-intro1} - \mathrm{div} (g(z)\mathbb{C} \eps (u + u_D )) = \ell \quad \text{in }\Omega \times (0,T),
\\
& \label{pde-syst-intro2}
 \begin{aligned}
 \partial \mathrm{R}_1 (z_t)  -\mathrm{div} \left( (1+\abs{  \nabla z}^2)^\frac{q-2}{2} \nabla  z \right) + f'(z)   + \frac12 g'(z)
   \bbC \varepsilon(u + u_D ) : \varepsilon(u+ u_D ) \ni 0 \quad \text{in }\Omega \times (0,T).
     \end{aligned}
\end{align}
\end{subequations}
Here, $\Omega \subset \R^d$, with $d\geq 3$, is a bounded Lipschitz domain,
  occupied by a body subject to damage,
  $u: \Omega \times [0,T] \to \R^d$ the displacement vector,
  $\eps( u)$ denoting the symmetrized strain tensor,
  and  $z  :\Omega \times [0,T] \to [0,1] $ the damage parameter. Within the approach of Generalized Standard Materials (see also \cite{FN96} and \cite{DeSimone1} in the case of stress softening), we  model the degradation of the elastic behavior of the body through the internal variable $z$, which assesses the soundness of the material. Thus, for $z(x,t)=1$ ($z(x,t)=0$, respectively)  the material is in the unbroken state (in the fully damaged state), ``locally" around $x \in\Omega$ and at the process time $t \in [0,T]$; the intermediate case  $0<z(x,t)<1$
describes partial damage.
We consider a gradient regularization for $z$, which leads to the $q$-Laplacian  operator in
\eqref{pde-syst-intro}, with $q \geq 2$.  Rate-independence and unidirectionality of damage evolution stem
 from the $1$-positively homogeneous dissipation potential
%\begin{equation}
%\label{dissip-concrete-intro}
\[
\mathrm{R}_1 : \R \to [0,\infty ], \qquad \mathrm{R}_1(\eta) = \begin{cases}
\kappa |\eta| & \text{if } \eta \leq 0,
\\
\infty& \text{otherwise},
\end{cases}
\]
with $\kappa>0$   a given fracture toughness. $\mathrm{R}_1$
 \bnnc enforces \ennc the constraint that $z_t (x,t) \leq 0$
on $\Omega \times (0,T)$; the operator $\partial\mathrm{R}_1: \R
\rightrightarrows \R  $ is its subdifferential in the sense of convex
analysis. Furthermore,
 $f: \R \to \R$ and
 $g : \R \to (0,\infty)$ are given constitutive functions,
$\bbC= \bbC(x)$ is the
 (positive definite, symmetric) $x$-dependent elasticity tensor, $u_D $ a Dirichlet datum (from now on, within this section we will take $u_D=0$ for simplicity), and $\ell$ is the external loading.
 System \eqref{pde-syst-intro} is supplemented
 with zero Neumann conditions for $z$ on the whole of $\partial\Omega$
 and with mixed boundary conditions
 for $u$ on $\partial\Omega=\Gamma_D\cup\Gamma_N$,
 where $\Gamma_D $  is  a closed subset of  $\partial
 \Omega$ on which Dirichlet boundary conditions are prescribed for $u$. 
 For shortness,
 in this introduction we assume homogeneous Dirichlet
 conditions for $u$. 

 Hereafter, we shall suppose that $g(z) \geq c>0$ for all $z \in \R$: joint with the positive-definiteness
 of the tensor $\bbC$, this excludes elliptic degeneracy of equation \eqref{pde-syst-intro1} even in the case
  of maximal damage, i.e.\ for
  $z(x,t)=0$. Namely, here we rule out \emph{complete damage}.

 %\begin{equation}
 %\label{timde-depe-Dir}
 %\bar u(x,t) = u_D (x,t),  \qquad (x,t) \in \Gamma_D \times (0,T),
 %%\end{equation}
%are prescribed, where $u_D(\cdot,t)$ is the trace on $\Gamma_D$ of a given function   (denoted
%with the same symbol)  $u_D(\cdot,t): \Omega \to \R^d$
% In fact, from now on we shall switch from the unknown   $\bar u$ to $u$, where
%\[
%u(x,t) = \bar u (x,t) - u_D(x,t), \qquad (x,t) \in \Omega \times (0,T),
%\]
% and  prescribe homogeneous Dirichlet boundary conditions for $u $ on $\Gamma_D$.

 Observe that \eqref{pde-syst-intro1}
 is the Euler-Lagrange equation for the minimization,
 with respect to the variable $u$,
 of the stored energy functional $ \calE:
[0,T]\times \calU \times \calZ \to \R$
\begin{equation}
\label{ef-intro}
  \calE(t,u,z): = \frac1q \int_\Omega (1+|\nabla z|^2)^{\frac q2} \dx + \int_{\Omega} f(z) \dx +
  \frac12\int_{\Omega} g(z) \bbC \varepsilon(u) : \varepsilon(u)\dx
  - \pairing{\calU}{\ell(t)}{u},
\end{equation}
 with the state spaces $\calU
= \Set{v\in W^{1,2}(\Omega,\R^d)}{v|_{\Gamma_D}=0}$
 for $u$, and
$\calZ= W^{1,q}(\Omega)$ for $z$.
In fact, in what follows we are going to treat \eqref{pde-syst-intro} as an abstract evolution equation set in the
dual space $\calZ^*$, viz.\
\begin{equation}
\label{dndia}
\begin{aligned}
    & \partial \calR_1(z'(t)) + A_\il z(t) + f'(z(t)) + \frac12 g'(z(t))
   \bbC \varepsilon(u(t)) : \varepsilon(u(t)) \ni 0 \quad    \text{in } \calZ^* \quad \foraa\, t \in (0,T),
   \\
   &
   u(t) \in \Argmin\Set{ \calE(t,v,z(t))}{v \in \calU}\quad \foraa\, t \in (0,T),
\end{aligned}
\end{equation}
with
$
\calR_1 :  L^1(\Omega) \to [0,\infty]$   defined by
\begin{equation}
\label{def_R1}
\calR_1(\eta)=
\int_\Omega \mathrm{R}_1 (\eta(x)) \,\dd x =
\begin{cases}
\displaystyle\int_\Omega \kappa \abs{\eta(x)}\dx &\text{if }\eta\leq 0 \text{ a.e.\ in $\Omega$,}\\
\infty &\text{else,}
\end{cases}
\end{equation}
$\partial\calR_1 : \calZ \rightrightarrows \calZ^*$ its (convex analysis) subdifferential,
and
$A_\il$ denoting the $\il$-Laplacian  operator
$
A_\il z= -
\mathrm{div} (1+\abs{  \nabla z}^2)^\frac{q-2}{2} \nabla z
$
with zero Neumann boundary conditions.
Introducing the \emph{reduced} energy
\begin{equation}
\label{e:intro-reduced}
\calI: [0,T]\times \calZ \to \R,
 \quad
\calI(t,z) = \inf_{v \in \calU} \calE (t,v,z),
\end{equation}
we can further reformulate \eqref{dndia} as
\begin{equation}
\label{rate-indep-dne-intro}
 \partial \calR_{1}(z'(t)) +
\rmD_z\calI(t,z(t))\ni 0 \quad \text{in $\calZ^*$ } \ \foraa\, t \in
(0,T),
\end{equation}
where $\rmD_z \calI$ is the  G\^ateaux derivative of $\calI$ w.r.t.\ $z$.

Since $\calR_1$ has only linear growth and the reduced energy $\calI(t,\cdot)$ has no uniform convexity properties,
 solutions to \eqref{rate-indep-dne-intro} are, in general, only $\mathrm{BV}$-functions of time.
 This calls for weak, derivative-free solvability concepts for \eqref{rate-indep-dne-intro}: first and foremost, the notion of \emph{energetic solution} by \textsc{Mielke \& Theil} \cite{MieThe99MMRI, MieThe04RIHM, Mielke05}.
  For \emph{incomplete} damage, the existence of energetic solutions
  to a version of
 \eqref{dndia} was established for $q>d$  in \cite{MR06}, and extended
 to  $q>1$
   in \cite{TM10}; in \cite{Tho12} the case of a
   $\mathrm{BV}$-regularization (i.e.\ $q=1$) was analyzed.

Over the last years,
it has been realized that
 the description of rate-independent evolution resulting from the
 global stability condition of the energetic solution concept does not seem to be mechanically feasible in the case of a nonconvex driving energy.
 Indeed, in order to satisfy the global stability,
 energetic solutions may change instantaneously in a very
drastic way, jumping into very far-apart energetic configurations
(see, for instance, \cite[Ex.\,6.1]{Miel03EFME},
\cite[Ex.\,6.3]{KnMiZa07?ILMC},
\cite[Ex.\,1]{mrs2009dcds},
as well as the characterization of energetic solutions to one-dimensional rate-independent systems provided in \cite{RossiSavare12}).
This observation has motivated the introduction of alternative weak solution notions.

 A well-established approach for deriving a  concept which accurately describes the behavior of the solution
 at jumps is taking the  \emph{vanishing viscosity} limit in the \emph{viscous} approximation of
 a given rate-independent system. Starting from the seminal paper \cite{efendiev-mielke}, this technique
 has by now been thoroughly developed both for abstract rate-independent systems \cite{mrs2009dcds, MRS10a, MZ10},
 and in the applications to fracture \cite{ToaZan06?AVAQ,Cagnetti,KnMiZa07?ILMC,KnMiZa08?CPPM,LazzaroniToader}, and to
 plasticity \cite{DDMM07?VVAQ,BabFraMor12,DalDesSol11,DalDesSol12,FrSt2013}.

 Following on the analysis initiated in  \cite{krz}, in this paper we  develop this approach for  the damage
 system \eqref{dndia}, and accordingly consider its viscous regularization
\begin{equation}
\label{dndia-eps}
    \partial \calR_1(z'(t)) +\epsi z'(t) + A_\il z(t) + f'(z(t)) + \frac12 g'(z(t))
   \bbC \varepsilon(u(t)) : \varepsilon(u(t)) \ni 0  \text{ in } \calZ^* \ \foraa\, t \in (0,T),
   \end{equation}
   with
   $u(t) \in \Argmin\Set{ \calE(t,v,z(t))}{v \in \calU}$ for almost all $t \in (0,T)$.
   Observe that
   \eqref{dndia-eps} rewrites as
   \begin{equation}
\label{viscous-dne-intro}
 \partial \calR_{\epsi}(z'(t)) +
\rmD_z\calI(t,z(t))\ni 0 \quad \text{in $\calZ^*$ } \ \foraa\, t \in
(0,T),
\end{equation}
with $\calR_\epsi (\eta):= \calR_1 (\eta) + \frac\epsi 2 \| \eta\|_{L^2(\Omega)}^2$.
     %\quad \foraa\, t \in (0,T),
     % \ \foraa\, t \in
%(0,T)\,.
%\end{equation}
In fact, the analysis of \eqref{dndia-eps} is itself fraught with analytical difficulties.
In what follows, we briefly hint at them, and then illustrate our approach and our %main
 existence result, Theorem \ref{thm:ex-viscous}, for
the Cauchy problem associated with \eqref{dndia-eps}.
We then describe the vanishing viscosity analysis of
\eqref{dndia-eps}. % as $\epsi \to 0$,
%%%%%%
%%%%%
%%%%%
\paragraph{The viscous problem: mathematical difficulties and existing  results.}
The most evident difficulty attached to the analysis of \eqref{dndia-eps} is
the presence of
the  quadratic term $ g'(z(t))
   \bbC \varepsilon(u(t)) : \varepsilon(u(t))$ on its right-hand side.
   The basic energy estimate for \eqref{dndia-eps}  provides a (uniform w.r.t.\ time) $W^{1,2} (\Omega;\R^d)$-bound for $u$ which,
even assuming  $|g'(z)|\leq C$, only gives an $L^1(\Omega)$-estimate for   $ g'(z)\bbC
    \varepsilon(u) : \varepsilon(u)$. Therefore,
    it is necessary to enhance the spatial regularity of $u$, which
    requires performing enhanced regularity estimates on \eqref{dndia-eps}.

    The latter issue poses further difficulties due to the \emph{doubly nonlinear} 
    character of \eqref{dndia-eps}, because of the simultaneous presence of
    the nonlinear $\il$-Laplacian operator $A_\il$, and of the (maximal monotone) 
    multivalued operator
    $\partial \calR_1 : \calZ \rightrightarrows \calZ^*$.

    Last but not least, since the domain of  $\calR_1$  is not the whole space $\calZ$,
    $\partial \calR_1$ is indeed an \emph{unbounded} operator. This
    rules out the possibility of deriving bounds for $\rmD_z \calI$ by comparison arguments in \eqref{viscous-dne-intro}. Since the term $f'(z)$ contributing to $\rmD_z \calI$ may be considered of lower
    order under suitable assumptions on $f$, the problem boils down to deriving further estimates for $A_\il z$ and, again, for the quadratic term $ g'(z)\bbC 
    \varepsilon(u) : \varepsilon(u)$.

    All of these difficulties are reflected in the results
    available in the literature on damage problems, starting from the first, pioneering
    paper on the viscous system \eqref{dndia-eps}, viz.\
    \cite{bonetti-schimperna}. Therein, the Laplace operator
    (i.e., $q=2$) was considered and the choice $f' = \partial I_{[0,1]} $, with
    $I_{[0,1]}$ the indicator function of $[0,1]$, was allowed for.
    However, the gradient  regularizing term  $A_2 z_t$  was
    also  added, enabling the authors to handle the doubly nonlinear
    character of \eqref{dndia-eps} and to derive enhanced estimates on $z$
    by resorting to elliptic regularity results. Observe that such
    results rely on suitable smoothness
    assumptions on the domain $\Omega$. The latter  are  also at the
    core of the analysis developed in the subsequent paper
    \cite{bonetti-schimperna-segatti},
    where the (doubly nonlinear) evolution equation for the damage
    parameter $z$ (with $q=2$) is coupled with a parabolic equation for $u$, 
    in the context of linear viscoelasticity.
    Therein, the usage of the regularizing term
     $A_2 z_t$  is avoided. In fact, the authors exploit  the available estimates 
     on the viscous term
    $\varepsilon (u_t)$, and elliptic regularity arguments on $u$,  in
    order to test \eqref{dndia-eps} by $\partial_t ( A_2 z + f'(z))$. 
    This allows them to estimate the term $A_2 z$ and
to gain enhanced spatial regularity for $z$, again by elliptic regularity.    
Refined estimates combined with regularity assumptions on the domain $\Omega$ are crucial also in \cite{bonetti-bonfanti},
    extending the analysis to a temperature-dependent model.

    In the recent \cite{HK10}, 
    different techniques have been adopted to analyze models coupling damage with phase separation processes in elastic
    bodies  (see also \cite{HK11}). Also in \cite{HK10}, a $q$-Laplacian regularization with $q>d$ ($d$ being the space dimension) is
    used in order to ensure $\rmC^0 (\overline\Omega)$-regularity for $z$.
 Because of the complexity of the overall system for damage and  phase separation, and because of the triply nonlinear character of the equation for the damage parameter (featuring the $q$-Laplacian  and the multivalued operators $\partial \calR_1$ and $\partial I_{[0,1]}$), the authors are able to prove existence  only
    for a weak solution notion.

%%%%%%
%%%%%
%%%%%
\paragraph{The viscous problem: our results.}
Our aim  in this paper is to analyze \eqref{dndia} and its viscous approximation \eqref{dndia-eps}
 under \emph{minimal regularity} assumptions on $\Omega$. This is
 particularly meaningful in view of the applications to 
 engineering problems, where the spatial domain occupied by the
 elastic body is usually far from being of class
 $\rmC^2$. 
 Therefore, we have to apply refined elliptic regularity results to
 enhance the spatial regularity of $u$.

 Let us motivate the choice of $q>d$ for the $q$-Laplacian
operator $A_q$. Since the damage variable $z$ enters into the
coefficients of the  operator of linear elasticity
 $-\Div (g(z)\bbC
\varepsilon(u)$), there is an intimate relation between the regularity
of $z$ and the regularity of the displacements $u$.
In our analysis we rely on the fact that
 $u\in W^{1,p}(\Omega)$ with $p>d$. Such a
regularity property can be achieved for the solutions of linear elliptic
systems
on nonsmooth domains with mixed boundary conditions (under certain
geometric conditions),
 assuming that the coefficients are at least uniformly continuous on
$\overline\Omega$. This is in particular guaranteed, if $z\in
W^{1,q}(\Omega)$ with $q>d$, see Section \ref{sec:red_energy} for details.
However, if $q=2$, i.e.\ $A_q$ coincides with the standard Laplacian,
then the coefficient $g(z)\bbC$ belongs to $L^\infty(\Omega)\cap
H^1(\Omega)$. In contrast to the case of scalar elliptic equations,
for linear elliptic systems this regularity of the coefficients in
general  does not imply that solutions are continuous. This situation
is highlighted in the three-dimensional example due to Ne\v{c}as and
\v{S}tipl, \cite{NeSti76},  with coefficients from  $L^\infty(\Omega)\cap
H^1(\Omega)$ leading to weak solutions $u$ that do not belong to
$\rmC^0(\overline\Omega)$ and hence also not to $W^{1,p}(\Omega)$ with
$p>d$. For this reason in the present paper we focus on the assumption
that $q>d$.  \DDDE

   In the same spirit, in \cite{krz} we  chose the   fractional
    $s$-Laplacian
   operator $A_s$, on the Sobolev-Slobodeckij space $W^{s,2}(\Omega)$,
   with $s \geq \frac d2$, in place  of the
   $q$-Laplacian. 
    Note that for the case $d=2$ the analysis performed in \cite{krz}
    deals with the Laplacian for $z$, so the choice of a ``pure''
    $s$-Laplacian operator was made for space dimension $d\geq 3$.
    The $q$-Laplacian is, 
     however, a more
   physically justifiable regularization than the nonlocal operator $A_s$,
   which fact has motivated the present study.

  Relying on  the spatial continuity of $z$, we  obtain the regularity result which lies at
  the core of our analysis, viz.\ Lemma \ref{lem_reg_babadjan} asserting that, under suitable conditions on the data $u_D$ and $\ell$,
  \begin{equation}
  \label{regularity-for-u-intro}
  \exists\, p_*> d \text{ such that } \| u \|_{W^{1,p_*} (\Omega;\R^d)} \leq C.
  \end{equation}
  Its proof is based  on regularity results for
  elliptic systems with constant (or smooth) coefficients,
   combined with an iteration argument drawn from
   \cite{BabMil12}. Let us stress that the regularity results
   which we invoke allow
  for elliptic operators with
  changing boundary conditions and, more importantly, for  nonconvex, \emph{nonsmooth} polyhedral domains, see Example \ref{example_nonsmooth}
  later on. 

 Estimate \eqref{regularity-for-u-intro}  enables
  us to improve the regularity of $z$ which  results from the sole basic energy estimate. In particular,
  (formally) differentiating \eqref{viscous-dne-intro} and testing it
  by $z'$, we enhance the spatial regularity of $z'$ by  deducing the \emph{mixed estimate}
  \begin{equation}
\label{mixed-estimate-intro}
\int_0^T \int_\Omega (1+|\nabla z(t)|^2)^{\frac{\il-2}{2}} |\nabla z' (t)|^2 \,\dd x \,\dd r <\infty.
\end{equation}

All these calculations are made rigorous on the time-discretization scheme with which we approximate \eqref{viscous-dne-intro}:
discrete solutions $(z_{k}^\tau)_{k=0}^N$, with $\tau>0$ a constant time-step, are
constructed via the time-incremental minimization scheme
\begin{equation}
\label{time-discrete-min-intro}
  z_{k+1}^\tau \in
\Argmin\Set{\calI(t_{k+1}^\tau,z) +
\tau\calR_\epsilon\left(\frac{z
    -z_k^\tau}{\tau}\right)}{z\in\calZ}.
    \end{equation}
    The related piecewise constant and piecewise linear interpolants $\overline{z}_\tau$ and $\hat{z}_\tau$ accordingly fulfill the Euler-Lagrange equation
    \begin{equation}
\label{time-el-intro}
       \partial\calR_1\left(\hat{z}_\tau'(t)\right)+  \epsilon
  \hat{z}_\tau'(t)
  + A_\il \overline{z}_\tau(t) +
  f'(\overline{z}_\tau(t) ) + \frac12 g'(\overline{z}_\tau(t) )
   \bbC \varepsilon( \overline{u}_\tau(t)) : \varepsilon(\overline{u}_\tau(t)) \ni
  0 \quad \foraa\, t \in (0,T).
  \end{equation}
  With a maximum principle argument, we prove (cf.\ Prop.\ \ref{rem:nice} later on) that,
   if the initial datum $z_0 $
   fulfills $z_0 \in [0,1] $ a.e.\ in $\Omega$, then
   $0 \leq \overline{z}_\tau(t) \leq 1$ for all $t \in [0,T]$.

  A crucial ingredient for passing to the limit as $\tau \to 0$ in \eqref{time-el-intro} is
  to obtain suitable estimates  for the term  $A_\il \overline{z}_\tau$. Indeed, its  weak convergence
  cannot be solely deduced from estimates for $(\overline{z}_\tau)_\tau$ in the space $\calZ=W^{1,q}(\Omega)$,
  due to the nonlinear character of $A_\il$. This is \bnnc in its own right \ennc a challenging feature of the  problem investigated here: the linear operator
$A_s$  considerably simplified the existence proof for \eqref{visc-eps-dne}, in \cite{krz}.
In fact, after Lemma \ref{lem_reg_babadjan}, the second milestone of our analysis is  Theorem \ref{app_reg_thm_z}:
based on a careful difference quotient argument,
 for the discrete solutions to \eqref{time-discrete-min-intro} it ensures
\begin{equation}
\label{enh-reg-z-intro}
\exists\, C>0 \ \forall\, \tau>0 \ \forall\, t \in (0,T] \, : \qquad
\| \overline{z}_\tau (t) \|_{W^{1+\beta,q}(\Omega)}\leq C \qquad
 \text{for all } 0<\beta<  \frac{1}{q} \left(1-\frac dq\right).
\end{equation}

Estimate \eqref{enh-reg-z-intro} yields $W^{1,q}(\Omega)$-compactness  for $( \overline{z}_\tau)_\tau$ and thus allows us to take the limit of the term $A_\il \overline{z}_\tau$. Indeed, for the limit passage as $\tau \to 0$
we adopt a \emph{variational} approach: instead of passing to the limit directly in \eqref{time-el-intro}, we
 take the limit of the associated \emph{discrete energy inequality},
 cf.\ \eqref{discr-enid} ahead. 
With suitable compactness and lower semicontinuity arguments, we
deduce that there exists a limit curve $z \in 
L^{2q}(0,T; W^{1+\beta,q}(\Omega)) \cap W^{1,2}(0,T; W^{1,2}(\Omega) ) $,
 with $ z \in [0,1]$ a.e.\ in $\Omega \times (0,T)$,
 for which the mixed estimate \eqref{mixed-estimate-intro} holds and
fulfilling the energy inequality associated with \eqref{viscous-dne-intro}, viz.\
\begin{equation}
\label{energy-inequality-intro}
\int_s^t \calR_\epsi (z'(r)) \,\dd r + \int_s^t \calR_\epsi^* (-\rmD_z \calI (r,z(r))) \,\dd r + \calI (t,z(t)) \leq  \calI (s,z(s))+ \int_s^t \partial_t \calI (r,z(r)) \,\dd r.
\end{equation}
for all $0\leq s \leq t \leq T$, with $\calR_\epsi^*$ the Fenchel-Moreau conjugate of $\calR_\epsi$.
We also prove in Theorem \ref{thm:chain-rule} that, along the limit curve $z$ a chain rule formula is valid, viz.\ for almost all $t \in (0,T)$
\begin{equation}
\label{ch-rule-intro}
\begin{aligned}
\frac{\dd }{\dd t} \calI (t,z(t)) - \partial_t \calI (t,z(t))  & =  \int_\Omega
(1+ |\nabla z(t)|^2)^{\frac{q-2}{2}} \nabla z(t) \cdot \nabla z'(t) \,\dd x
\\
& \quad
 +\int_\Omega (f'(z(t)) + \frac12 g'(z(t)) \bbC \eps(u(t)){\colon}  \eps(u(t)))  z'(t) \,\dd x\,.
 \end{aligned}
\end{equation}
A key ingredient for \eqref{ch-rule-intro} is indeed estimate \eqref{mixed-estimate-intro},
 which guarantees that the first integral on the right-hand side of \eqref{ch-rule-intro} is well defined. %\footnote{\berin OLD COMMENT: maybe we should highlight the role of the mixed estimate.
  %It turned out to be THE quantity which is important. Possibly I (Doro)
  %also find some useful connections (references) to quasilinear parabolic
  %equations involving the q-Laplacian, where the mixed
  %estimate might play a role. \erin}
Relying on \eqref{ch-rule-intro}, in Proposition \ref{prop:equivalence} we show that
    the energy inequality \eqref{ch-rule-intro} is in fact equivalent to
    \begin{equation}
\label{weak-def-sol-intro}
\begin{aligned}
\calR_\epsi(w) - \calR_\epsi (z'(t)) \geq  &  \pairing{\calZ}{-A_{\il} z(t) }{w}  +  \int_\Omega (1+ |\nabla z(t)|^2)^{\frac{q-2}{2}} \nabla z(t) \cdot \nabla z'(t) \,\dd x
\\ & \quad
- \int_\Omega (f'(z(t)) + \frac12 g'(z(t)) \bbC \eps(u(t)){\colon}  \eps(u(t)))(w-z'(t))
 \quad \text{for all } w \in \calZ
\end{aligned}
\end{equation}
for almost all $t \in (0,T)$.
This variational inequality  defines our  notion of weak solution for the viscous doubly nonlinear equation \eqref{dndia-eps}, cf.\ Definition \ref{def:wsol}: our  existence result for weak solutions,  Theorem \ref{thm:ex-viscous}, follows from the aforementioned arguments. 

%\DelS
Observe that, %if we were in the position of 
as soon as we can interpret
%interpreting
 the terms on
the r.h.s.\ of \eqref{weak-def-sol-intro} as
the duality product
$\pairing{\calZ}{-A_{\il} z - f'(z) - \frac12 g'(z) \mathbb{C}
  \eps(u){\colon} \eps(u)}{w-z'} $, then
\eqref{weak-def-sol-intro} % would in fact be 
is in fact equivalent to the
subdifferential inclusion \eqref{dndia-eps}. 
%This would be possible
%only upon having that $z'(t) \in \calZ$ for almost all $t \in (0,T)$,
%which remains an open question.
In Sec.\ \ref{ss:3.1}
  the relation of our weak solution concept for \eqref{dndia-eps}
 to the usual subdifferential formulation \eqref{viscous-dne-intro}
  is discussed at length, 
   also in connection with
   the chain rule \eqref{ch-rule-intro}, and with  the failure of
   the energy inequality \eqref{energy-inequality-intro} to hold as an
   equality.
%\DelE
%\footnote{D: keep this paragraph as it is? C: to me we can even remove it}

%%%%%%
%%%%%
%%%%%
\paragraph{The vanishing viscosity analysis.} As in \cite{krz}, for passing to the limit in \eqref{viscous-dne-intro}
as $\epsi \to 0$
 we adopt the \emph{reparameterization technique} from  \cite{efendiev-mielke}, which leads to a notion of
 solution for the rate-independent system \eqref{rate-indep-dne-intro}, encompassing a finer description of the energetic behavior of the system jumps.
  The underlying philosophy is that,  at jumps  the  vanishing viscosity solutions to \eqref{rate-indep-dne-intro}
    follow a path which
is reminiscent  of the viscous approximation.
To reveal this, one has to go over to an
extended state space and study  the limiting behavior of the sequence $(
\tilde{t}_\epsilon, \tilde{z}_\epsilon)_\epsilon $ as $\epsilon \downarrow 0$,
for a  suitable reparameterization
 $\tilde{z}_\epsilon= z_\epsilon \circ \tilde{t}_\epsilon$ of  a family  $(z_\epsilon)_\epsi$  of \emph{weak}
solutions (in the sense of \eqref{weak-def-sol-intro})  to \eqref{viscous-dne-intro}.
More precisely,
 we  consider the
$L^2(\Omega)$-arclength parameterization
$
   s_\epsilon(t)=t+\int_0^t \norm{z'_\epsilon(r)}_{L^2(\Omega)}\dr
$
of the graph of $z_\epsi$. The key $\mathrm{BV}$-estimate
 \begin{equation}
 \label{bv-est-intro}
\sup_{\epsilon>0}\int_0^T\norm{z_\epsilon'(t)}_{L^2(\Omega)}\dt \leq C
\end{equation}
for viscous solutions to \eqref{viscous-dne-intro} guarantees that, up to a subsequence,
$S_\epsilon:= s_\epsilon(T)$
converges as $\epsi \to 0$ to some $S\geq T$.
We then
 define
$\tilde{t}_\epsilon:[0,S_\epsilon]\to [0,T]$
via $ \tilde{t}_\epsilon(s):= s_\epsilon^{-1}(s)$, and accordingly set
$
   \tilde{z}_\epsilon(s):=z_\epsilon(\tilde{t}_\epsilon(s)). $

In Theorem \ref{main-thm-vanvisc} we  prove that, up to a subsequence, the curves $(
\tilde{t}_\epsilon, \tilde{z}_\epsilon) $ converge to a pair $(\tilde t, \tilde z) :  [0,S] \to [0,T] \times \calZ
$ which fulfills the parameterized energy inequality
\begin{equation}
\label{parametrized-energy-ineq-intro}
\begin{aligned}
 \int_{s_1}^{s_2} \widetilde{\mathcal{M}}_0 (\tilde{t}'(r),\tilde{z}'(r),  -\rmD_z \calI
(\tilde{t}(r),\tilde{z}(r)))
   \,\mathrm{d}r
  + \calI(\tilde{t}(s_2),\tilde{z}(s_2))  \leq
   \calI(\tilde{t}(s_1),\tilde{z}(s_1))
   +\int_{s_1}^{s_2} \partial_t
   \calI(\tilde{t}(r),\tilde{z}(r))\tilde{t}'(r)
   \mathrm{d}r
   \end{aligned}
\end{equation}
for all $0 \leq s_1 \leq s_2 \leq S$.  In \eqref{parametrized-energy-ineq-intro},  the term
\[
\widetilde{\mathcal{M}}_0 (\tilde{t}',\tilde{z}',  -\rmD_z \calI
(\tilde{t},\tilde{z})) =\begin{cases}
\calR_1 (\tilde{z}') + I_{\partial \calR_1 (0)} (-\rmD_z \calI
(\tilde{t},\tilde{z}))  &\text{if $\tilde{t}'>0$,}
\\
\calR_1 (\tilde{z}') +  \norm{\tilde{z}'}_{L^2(\Omega)} \twodis(-\rmD_z \calI
(\tilde{t},\tilde{z}), \partial\calR_1(0))  &\text{if $\tilde{t}'=0$,}
\end{cases}
\]
($ \twodis(-\rmD_z \calI
(\tilde{t},\tilde{z}), \partial\calR_1(0))$ denoting the $L^2(\Omega)$-distance of $-\rmD_z \calI
(\tilde{t},\tilde{z})$ from $\partial\calR_1(0)$)
 \bnnc enforces  the \emph{local stability} condition $-\rmD_z \calI
(\tilde{t},\tilde{z}) \in \partial \calR_1 (0)$ in the case of
  purely  rate-independent evolution, i.e.\
 when $\tilde{t}'>0$. \ennc When the (slow) external time, encoded by the function $\tilde t$, is frozen, the system jumps.
 Then, the system may switch to a \emph{viscous} regime.  
 We refer to Sec.\ \ref{ss:7.2} for further details on this.

\paragraph{Plan of the paper.}
 In Section \ref{section2} we specify all our assumptions, prove the crucial regularity Lemma~\ref{lem_reg_babadjan},
 and collect all
 properties of the reduced energy $\calI$ which shall be used in the
 subsequent analysis. Then, in Section \ref{s:3}  we  introduce and
 motivate our notion of \emph{weak solution} for the
Cauchy problem associated with the viscous equation \eqref{dndia-eps}, state
 Theorem \ref{thm:ex-viscous} (=existence of weak solutions and a priori estimates uniform w.r.\ to
 the viscosity parameter $\epsi$), and prove Theorem \ref{thm:chain-rule}, providing the
  chain rule \eqref{ch-rule-intro}. In Section~\ref{s:4}  we set up the time-discretization scheme for \eqref{viscous-dne-intro} and prove the higher differentiability result
  yielding \eqref{enh-reg-z-intro}. Section \ref{s:5} is devoted to
  the proof of a series of a priori estimates on the discrete solutions, most of which uniform both w.r.t.\ $\tau$
  \emph{and} $\epsi$. In particular, the discrete version of the $\mathrm{BV}$-estimate \eqref{bv-est-intro}
  is derived.
 We prove Theorem\ \ref{thm:ex-viscous} by passing to the limit as $\tau \to 0$
   in the time discrete scheme, also
   exploiting Young measure techniques which are recapped in Appendix \ref{s:a-1}.
 Finally, in Section \ref{s:7} we develop the vanishing viscosity
 analysis of \eqref{viscous-dne-intro}.
%%%%%
%%%%%%
%%%%%%%%
%%%%%%%%%%%%%%%%%%%%%%%%%%%%%%%%%%%%%%%%%%%%%%%%%%%%%%%%%%%%%%%%%%%%%%%
%
%%%%%%%%%%%%%%%%%%%%%%%%%% SECTION 2 %%%%%%%%%%%%%%%%%%%%%%%%%%%%%%%%%%
%
%%%%%%%%%%%%%%%%%%%%%%%%%%%%%%%%%%%%%%%%%%%%%%%%%%%%%%%%%%%%%%%%%%%%%%%

\section{Preliminaries}
\label{section2}
\subsection{Set-up}
\label{ss:2.1}

\paragraph{Notation.}
For a given Banach space $X$, we shall denote by
$\pairing{X}{\cdot}{\cdot}$ the duality pairing between $X^*$ and
$X$, and, if $X$ is a Hilbert space, we shall use the symbol
 $\inner{X}{\cdot}{\cdot}$
for its scalar product.
 For matrices $A,B \in \R^{m\times d}$ the inner product is
defined by $A:B= \tr(B^\top A)= \sum_{i=1}^m\sum_{j=1}^d a_{ij}
b_{ij}$.

Let $d\geq 3$ and let $\Omega\subset\R^d$ be a bounded domain with a closed Dirichlet boundary
$\Gamma_D\subset\partial\Omega$ and Neumann boundary
$\Gamma_N=\partial\Omega\backslash \Gamma_D$.
 Further  assumptions on the regularity of
$\Omega$ and on the Dirichlet boundary $\Gamma_D$ 
will be specified in Sections \ref{sec:red_energy} and \ref{ss:3.1} (cf.\
(A$_\Omega 1$) and (A$_\Omega 2$)).
The letter $Q$  shall stand for the space-time cylinder $\Omega \times
(0,T)$. The following function spaces and notation shall be used for
$\sigma\geq 0$, $p\in  [1,\infty]$:
\begin{itemize}
\item $W^{\sigma,p}(\Omega)$ Sobolev-Slobodeckij spaces,
  \item
  $W^{1,p}_{\Gamma_D}(\Omega):= \Set{u\in
    W^{1,p}(\Omega)}{u\big|_{\Gamma_D}=0}$ and
  $W_{\Gamma_D}^{-1,p}(\Omega):=\big(W_{\Gamma_D}^{1,p'}(\Omega)\big)^*$
  the dual space, $\frac1p+\frac1{p'}=1$.
\end{itemize}

We shall denote by
 $u : \Omega \to \R^d$ the displacement, and by $z:
\Omega \to \R$ the (scalar) damage variable. The corresponding state
spaces are
%\footnote{  We can never allow $\il =d$, right? We need
 % $\il >d$ in the proof of Dorothee's regularity estimate,
 % correct?  \DDDS yes \DDDE}
 \begin{align}
 \label{state-u}
 \calU&:=\Set{v\in W^{1,2}(\Omega,\R^d)}{v\big|_{\Gamma_D}=0}=
 W_{\Gamma_D}^{1,2}(\Omega,\R^d),
\\
 \label{state-z}
 \calZ&:= W^{1,\il}(\Omega),
\end{align}
with $\il>d$. 
On the space $\calZ$  the $q$-Laplacian operator is defined as follows
\begin{align*}
 A_q:\calZ\rightarrow\calZ^*, \quad \langle A_q(z),v\rangle_\calZ:=\int_\Omega
(1 + \abs{\nabla z}^2)^{\frac{q-2}{2}}\nabla z\cdot\nabla v\dx \quad\text{for }z,v\in
\calZ.
\end{align*}
 \paragraph{\bf Useful inequalities.} We collect here some inequalities which
shall
 be extensively used in the following.
First of all,
\begin{equation}
\label{lions-magenes}
\begin{aligned}
\text{let } p_* >d\, : \ \text{ then } \
  \forall\, \rho >0 \ \exists\, \, C_\rho>0  \
 & \forall\, z \in W^{1,2}(\Omega)\, :
 \\ &
 \qquad \|z\|_{L^{2p_*/(p_*{-}2)}
(\Omega)} \leq \rho \|z\|_{W^{1,2}(\Omega)} + C_\rho
\|z\|_{L^2(\Omega)}.
\end{aligned}
 \end{equation}
 This follows from the
the  compact and continuous   embeddings $W^{1,2}(\Omega) \Subset  L^{2p_*/(p_*{-}2)}
(\Omega)  \subset L^2(\Omega)$ (due to $p_*>d$), on account of
\cite[Lemma 8]{simon87}.

Secondly, let us recall that with $G_q(A)=\frac1q (1 + \abs{A}^2)^\frac{q}{2}$
for $A\in \R^d$, as a consequence of \cite[Lemma~8.3]{ana:Giu94}  the
following monotonicity and convexity estimates are valid
% that from \cite[A.4]{kne:Kne05}
%\footnote{\DDDS different citation? I did not invent this\DDDE }
 for $q\geq 2$ and $A,\, B\in \R^d$,
 \begin{align}
 \label{e:from-D-thesis}
% \langle A_q v - A_q w, v-w\rangle_{\calZ} &
%\geq c(1 + |v|^2 +|w|^2)^{\frac{q-2}{2}}|v-w|^2,
 & (\rmD G_q(A) - \rmD G_q(B))\cdot(A-B)\geq c_q(1+\abs{A}^2
+\abs{B}^2)^\frac{q-2}{2}\abs{A-B}^2
\\
 &
 \begin{aligned}
 G_q(B) - G_q(A) -(1 +
\abs{A}^2)^\frac{q-2}{2}A \cdot(B-A)
 & \geq c_q(1+\abs{A}^2 +\abs{B}^2)^\frac{q-2}{2}\abs{A-B}^2
\\
&
\geq
 \wt c_q(\abs{A-B}^2+\abs{A-B}^q). \label{q-convexity}
 \end{aligned}
%&
%\leq
%c_q(1+\abs{a}^2 +\abs{b}^2)^\frac{q-2}{2}\abs{a-b}^2 \leq -(1 +
%\abs{a}^2)^\frac{q-2}{2}a \cdot(b-a) + G_q(b) - G_q(a)
 \end{align}

 Observe that \eqref{e:from-D-thesis} implies for all $z_1,z_2\in \calZ$:
\begin{align}\label{e2.22}
 \langle A_q z_1 - A_q z_2, z_1-z_2\rangle_{\calZ}
\geq c_q\int_\Omega(1+\abs{\nabla z_1}^2 + \abs{\nabla
z_2}^2)^\frac{q-2}{2}\abs{\nabla (z_1-z_2)}^2\dx.
\end{align}
  Moreover, for all $z_1,z_2$ and $w\in \calZ$
 \begin{equation}\label{A7-D}
\abs{ \pairing{\calZ}{A_q z_1 - A_q z_2}{w}} \leq
c\int_\Omega(1+\abs{\nabla z_1}^2 + \abs{\nabla
z_2}^2)^\frac{q-2}{2} \abs{\nabla (z_1-z_2)}\abs{\nabla w}\,\dd x.
 \end{equation}
% for any $v,w\in\calZ$.
% \CCCE
% For $q>2$ we also have the enhanced estimate: for $a,b\in\R^d$
% \begin{align}
%\label{q-convexity} \wt c_q(\abs{a-b}^2+\abs{a-b}^q) \leq
%c_q(1+\abs{a}^2 +\abs{b}^2)^\frac{q-2}{2}\abs{a-b}^2 \leq -(1 +
%\abs{a}^2)^\frac{q-2}{2}a \cdot(b-a) + G_q(b) - G_q(a),
%\end{align}
%with $G_q(a)=\frac1q (1 + \abs{a}^2)^\frac{q}{2}$.

\paragraph{The energy functional.}
 The energy $\calE = \calE(t,u,z)$ consists of two contributions. The first one,
$\calI_1= \calI_1 (z)$, only depends on the damage variable. The second one,
 $\calE_2 = \calE_2(t,u,z)$,
is given by the sum of an elastic energy of the type
$\int_\Omega g(z)W(\varepsilon(x,u + u_D(t)))\dx
$, with $g$ and $W$ suitable constitutive functions
(cf.\ Assumption \ref{assumption:energy} below)
and $u_D  \in \rmC^0([0,T];W^{1,2}(\Omega,\R^d))$ a Dirichlet datum (see
\eqref{a:data}), and of the external loading term.
%More precisely, we require
\begin{assumption}
\label{assumption:energy}
We consider
\begin{equation}\label{e:Iq}
\calI_1: \calZ \to \R \ \text{ defined by } \
\calI_1(z):=\calI_\il (z)+ \int_\Omega f(z)\dx \  \text{ with }
\calI_\il (z):= \frac 1{\il} \int_\Omega
(1+\abs{\nabla z}^2)^{\frac{\il}{2}}\,\dd x, \  q>d,
\end{equation}
and $f$ fulfilling
%\footnote{  I have weakened the coercivity condition on
%$f$   }
\begin{equation}
\label{ass-eff} f\in \rmC^2(\R),  \quad  \text{such that} \quad  \exists\,
K_1,\, K_2 >0 \quad  \forall\, x \in \R : \quad %\abs{ f{''}(x)}\leq K_1\ \
\ f(x)\geq K_1\abs{x} - K_2.
\end{equation}
 A typical choice for $f$  is  $f(z)=z^2$, see
\cite{cra:Gia05}.

 %\footnote{\DDDS If we want to be able to treat the model from
 %  \cite{cra:Gia05}, we also should assume that $\calR_1=\calR_1(z,\dot
 %  z)$. \DDDE   Let's keep this generalization in mind, and discuss this
%in Berlin..., but I don't
%   understand: is the dissipation potential in Giacomini's paper also depending
%on $z$?
%   }

As for $\calE_2$,
linearly elastic materials are considered with an elastic energy
density
\[
W(x,\eta)= \frac{1}{2} \bbC(x) \eta : \eta, \quad \text{for $\eta \in
\R^{d\times d}_\text{sym}$ and almost every $x\in \Omega$.}
\]
Hereafter, we shall suppose for the elasticity tensor
that
\begin{subequations}
\begin{align}
\label{elast-1} &
 \bbC \in
%  L^\infty
 \rmC_\mathrm{lip}^0
(\Omega,\Lin(\R^{d\times d}_\text{sym},\R^{d\times
    d}_\text{sym}))
\DDDS \text{ with } \bbC(x)\xi_1:\xi_2= \bbC(x)\xi_2:\xi_1 \text{ for
  all }x\in \Omega,\xi_i\in \R^{d\times
    d}_\text{sym}, \DDDE
\\ &
\label{elast-2} \exists\, \gamma_0>0 \quad  \text{for all }\xi\in
\R^{d\times d}_\text{sym} \text{ and almost all }x\in \Omega:  \ \
\bbC(x)\xi \colon \xi\geq \gamma_0\abs{\xi}^2.
\end{align}
\end{subequations}
 Let $g:\R\rightarrow \R$ be a further constitutive function
 such that
 \begin{equation}
 \label{new-g}
 g\in \mathrm{C}^{2}(\R),  \text{  with } g' %,\, g{''}
 \in L^\infty(\R), \ \text{ and } \exists\, \gamma_1, \, \gamma_2>0 \, : \
\forall\, z \in \R\, : \
\gamma_1 \leq g(z) \leq \gamma_2.
 \end{equation}
 Then, we
 take the
elastic energy
\begin{equation}
\label{elastic-energy}
\begin{aligned}
\calE_2: [0,T]\times \calU \times \calZ \to \R  \ \text{ defined by
} \ \calE_2(t,u,z):=  \int_\Omega g(z)W(x,\varepsilon(u + u_D(t)))\dx
-\pairing{\calU}{\ell(t)}{u}
\end{aligned}
\end{equation}
 where
$\varepsilon(u)=\frac{1}{2}(\nabla u + \nabla u^T)$ is the
symmetrized strain tensor and $\ell\in \rmC^0([0,T],\calU^*)$ an external loading
(cf.\ \eqref{a:data} later on for further requirements).
\end{assumption}
\noindent
For $u\in \calU$ and $z\in \calZ$ the
stored energy is then defined as
\begin{align}
\calE(t,u,z)= \calI_1(z) +\calE_2(t,u,z).
\end{align}
%%%%%%
%%%%%%
%%%%%
\paragraph{\bf Reduced energy.}
Minimizing the stored energy with respect to the displacements we
obtain the reduced energy
\begin{equation}
\begin{aligned}
\label{reduced-energy}  \calI:[0,T]\times\calZ\rightarrow \R \
\text{ given by } \ \calI(t,z)= \calI_1(z) + \calI_2(t,z)
\text{ with }  \calI_2(t,z)=\inf \Set{\calE_2(t,v,z)}{v\in
\calU}.
\end{aligned}
\end{equation}

 Let us now shortly comment on the conditions  in Assumption
 \ref{assumption:energy}.
\begin{remark}
\label{justification of our assumptions}
\upshape
As already mentioned in the introduction,
our model does not allow for \emph{complete} damage: this is reflected in the coercivity \eqref{elast-2},
and in the strict positivity \eqref{new-g} of  the constitutive function $g$.

 The Lipschitz continuity \eqref{elast-1} of the coefficient matrix
$\mathbb{C} $, as well as the smoothness of $g$, shall be exploited in the forthcoming Lemma
\ref{lem_reg_babadjan}, providing higher integrability for $\varepsilon (u)$.
For proving this result, which  will be at the core
of all the subsequent analysis, we have to stay with a \emph{quadratic} elastic energy.

Relying on these regularity properties for $\mathbb{C}$ and $g$, we are also going to prove
 higher differentiability for $z$,
 cf.\ the crucial Theorem \ref{app_reg_thm_z} later on.

 Nonetheless, let us stress that significant  damage models fall within the scope of the above conditions: for example,
 the Ambrosio-Tortorelli model, whose quasistatic evolution was discussed in \cite{cra:Gia05}, as an approximation of the
 Francfort-Marigo model  \cite{Fra-Mar} for crack propagation.
Observe that,
in  the energy functional considered for the rate-independent model of  \cite{cra:Gia05}, the index in \eqref{e:Iq} is $q=2$.
  Instead, in the more recent
 \cite{BabMil12}, which deals with the (metric) \emph{gradient flow} of the Ambrosio-Tortorelli functional, it is assumed that $q>d$ like in the present setting.
 \end{remark}

 %%%%%%%%%%%%%%%%%%%%%%
%%%%%
 \subsection{
 Geometric assumptions and regularity of the displacement field }
\label{sec:red_energy} %%%%
%%%%%%%

%\paragraph{Displacement-regularity based on Babadjan et al iteration.}

For the analysis of the time-dependent damage model higher integrability
properties of the gradients of the displacement  field   are needed, and hence
the domain $\Omega$ and the data should be more regular than stated above.

With $\bbC$ as in \eqref{elast-1}--\eqref{elast-2}, $g$ from \eqref{new-g} and
$z\in W^{1,q}(\Omega)$
let $L, L_{g(z)}:W^{1,2}_{\Gamma_D}(\Omega;\R^d)\rightarrow
W^{-1,2}_{\Gamma_D}(\Omega;\R^d)$ be
the operators associated with the bilinear forms describing linear
elasticity, i.e.
\begin{align}
\forall u,v\in W^{1,2}_{\Gamma_D}(\Omega;\R^d):\qquad
\langle  L u ,v\rangle:= \int_\Omega \bbC \varepsilon(u):
\varepsilon(v)\dx,\quad
\langle  L_{g(z)} u ,v\rangle:= \int_\Omega g(z)\bbC \varepsilon(u):
\varepsilon(v)\dx.
\end{align}

 A good compromise between the smoothness needed for our analysis and
nevertheless allowing for polyhedral domains and changing boundary conditions
is formulated in the following

\paragraph{\bf Assumption on the domain.}
\begin{itemize}
 \item[(A$_\Omega 1$)] $\Omega\subset\R^d$ is a bounded domain,  and $\Omega$
and $\Gamma_D\subset\partial\Omega$ ($\Gamma_D$ is  closed and with positive
measure)
are chosen in such a way that the following two conditions are satisfied:
\begin{itemize}
 \item[(i)] The spaces $W^{1,p}_{\Gamma_D}(\Omega;\R^d)$, $p\in (1,\infty)$, form an
interpolation scale.
\item[(ii)]
There exists $p_*>d$ such that for all $p\in [2,p_*]$ the
operator $L: W^{1,p}_{\Gamma_D}(\Omega;\R^d)\rightarrow
W^{-1,p}_{\Gamma_D}(\Omega; \R^d)$ is an isomorphism.
% and such that the operator norms
%of $L$ and $L^{-1}$ are uniformly bounded with respect to the spaces
%$W^{1,p}_{\Gamma_D}(\Omega)$ and $W^{-1,p}_{\Gamma_D}(\Omega)$, $p\in [2,p_*]$.
\end{itemize}
\end{itemize}
For an abstract   definition of interpolation scales we refer to \cite{Tri78},
while
in Example \ref{example_nonsmooth} here below we present
 nonsmooth, nonconvex  domains  with mixed boundary
conditions satisfying  (A$_\Omega 1$).

Observe that the isomorphism property stated in (A$_\Omega 1$)  is
also
valid for all $p\in [p_*',2]$, %argue with adjoint operators
 and that the operator norms are  uniformly bounded, i.e. with
 \begin{equation}
 \label{new-notation}
 \calX_p:=
W^{1,p}_{\Gamma_D}(\Omega;\R^d) \text{ and  } \calY_p:=W^{-1,p}_{\Gamma_D}(\Omega;\R^d)
\end{equation}
it holds
\begin{align}
\label{est_unif_op}
 \sup_{p\in [p_*',p_*]}
\norm{L}_{\calX_p\rightarrow \calY_p}
+\norm{L^{-1}}_{\calY_p\rightarrow \calX_p }=:c_{
p_* } <\infty.
\end{align}
%follows from norm-identity of adjoint operators. The fact that the
%norms of $L^{-1}$ are uniformly bounded in the scale $p\in [2,p_*]$
% follows if the domain is good enough such that these spaces are an
% interpolation scale
%
Here, $\norm{L}_{\calX\rightarrow\calY}$ denotes the operator norm of
$L:\calX\rightarrow\calY$.

Lemma \ref{lem_reg_babadjan} below, which plays a key role in  the subsequent
analysis, states the following important fact: The isomorphism property
\eqref{est_unif_op}
is invariant with respect to a perturbation of the coefficient matrix $\bbC$ by
multiplying it with  (H\"older-)continuous functions $g(z)$, with $g$ as in
\eqref{new-g} and $z\in W^{1,q}(\Omega)$ for $q>d$. Furthermore,   uniform
bounds can
  be derived that are independent of $p\in [p_*',p_*]$ and explicit in $z$,
  relying on an  iteration  argument presented in \cite{BabMil12}.
%\footnote{  already here, cite  \cite{BabMil12}?  }
  %This
%is formulated
%in the
%next lemma.

\begin{lemma}
\label{lem_reg_babadjan}
 Let (A$_\Omega$1) be satisfied, $g$ as in \eqref{new-g} and $q>d$. Let
furthermore $p_*>d$ be chosen according to (A$_\Omega$1) and let $k_*\in \N$
be the smallest number with $k_*>\frac{dq}{2(q-d)}$.

Then for all $p\in [p_*',p_*]$ and all $z\in W^{1,q}(\Omega)$ the operator
\begin{align}
 L_{g(z)}: W^{1,p}_{\Gamma_D}(\Omega;\R^d)\rightarrow
W^{-1,p}_{\Gamma_D}(\Omega;\R^d)
\end{align}
is an isomorphism.
Moreover, there exists a constant
 $c_{q,p_*}>0$ such that
for all %$f\in W^{-1,p_*}_{\Gamma_D}(\Omega)$ and
%all
$z\in W^{1,q}(\Omega)$ and all $p\in [p_*',p_*]$ it
holds (cf.\ notation \eqref{new-notation})
\begin{align}
\label{crucial-reg-estimate}
 \norm{L_{g(z)}^{-1}}_{\calY_{p}\rightarrow\calX_{p}} \leq c_{q,p_*}
(1 + \norm{\nabla
z}_{L^q(\Omega)})^{k_*  \frac{p_*\abs{p-2}}{p(p_*-2)}}.
\end{align}
%By interpolation and duality it follows that for all $p\in [p_*',p_*]$
%and $z\in W^{1,q}(\Omega)$ it holds
%\begin{align}
%\norm{L_{g(z)}^{-1} }_{\calY_{p}\rightarrow\calX_{p} \leq ???.
%\end{align}
\end{lemma}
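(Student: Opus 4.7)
The plan is to combine the isomorphism property (A$_\Omega 1$)(ii) for the unperturbed $L$ with a bootstrap iteration in the spirit of \cite{BabMil12}, and conclude by interpolation. A preliminary observation used throughout: $q>d$ triggers Morrey's embedding $W^{1,q}(\Omega)\hookrightarrow \rmC^{0,\alpha}(\overline\Omega)$ with $\alpha=1-d/q$, so $g(z)$ is Hölder (in particular uniformly) continuous on $\overline\Omega$, with seminorm controlled by $\norm{g'}_{L^\infty}\norm{\nabla z}_{L^q(\Omega)}$.

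\textbf{Step 1 ($p=2$).} By Korn's inequality on $\calX_2$, together with \eqref{elast-1}--\eqref{elast-2} and the two-sided bound \eqref{new-g}, the bilinear form $(u,v)\mapsto\langle L_{g(z)}u,v\rangle$ is continuous and coercive on $\calX_2$ with constants depending only on $\gamma_0,\gamma_1,\gamma_2,\bbC$ and the Korn constant. Lax-Milgram then gives an isomorphism $L_{g(z)}:\calX_2\to\calY_2$ with $\norm{L_{g(z)}^{-1}}_{\calY_2\to\calX_2}\leq C$ uniformly in $z$, which matches \eqref{crucial-reg-estimate} at $p=2$ (where the right-hand exponent vanishes).

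\textbf{Step 2 ($p=p_*$, bootstrap).} Given $f\in\calY_{p_*}$ and the Step-1 solution $u\in\calX_2$ of $L_{g(z)}u=f$, formally recast the equation as
\[ L u \,=\,\tfrac1{g(z)}\,f\;-\;\tfrac{g'(z)}{g(z)}\,\nabla z\cdot\bbC\varepsilon(u).
\]
Hence if $u\in\calX_{p_n}$ with $2\leq p_n<p_*$, the perturbative second term lies in $L^{s}(\Omega)$ with $\frac1s=\frac1q+\frac1{p_n}$; Sobolev embedding and duality place the whole right-hand side in $\calY_{p_{n+1}}$ where $\frac1{p_{n+1}}=\max\bigl(\frac1{p_n}-\frac{q-d}{dq},\,\frac1{p_*}\bigr)$. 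Applying the isomorphism (A$_\Omega 1$)(ii) of $L$ on $\calX_{p_{n+1}}$ upgrades $u$ to $\calX_{p_{n+1}}$ at the cost of a multiplicative factor $c\bigl(1+\norm{\nabla z}_{L^q}\bigr)$ arising from the $\nabla z\cdot\bbC\varepsilon(u)$ term. Since $\tfrac1{p_n}$ evolves by an arithmetic progression with common difference $\tfrac{q-d}{dq}$ starting from $\tfrac12$, $p_n$ saturates at $p_*$ in exactly $k_*$ iterations, $k_*$ being the smallest integer $>\tfrac{dq}{2(q-d)}$; this produces
\[ \norm{L_{g(z)}^{-1}}_{\calY_{p_*}\to\calX_{p_*}}\leq c_{q,p_*}\bigl(1+\norm{\nabla z}_{L^q(\Omega)}\bigr)^{k_*}.
\]

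\textbf{Step 3 (interpolation and duality).} By (A$_\Omega 1$)(i) the spaces $(\calX_p)_p$, and by duality $(\calY_p)_p$, form interpolation scales; complex interpolation between the estimates of Steps 1 and 2 yields, for every $p\in[2,p_*]$, the bound $c_{q,p_*}\bigl(1+\norm{\nabla z}_{L^q}\bigr)^{k_*\theta}$ with $\theta=\tfrac{p_*(p-2)}{p(p_*-2)}$, which is exactly \eqref{crucial-reg-estimate}. For $p\in[p_*',2]$, the symmetry of $\bbC$ makes the bilinear form self-adjoint, so $L_{g(z)}:\calX_p\to\calY_p$ is an isomorphism whenever $L_{g(z)}:\calX_{p'}\to\calY_{p'}$ is, with the same operator norm; a direct computation shows $\theta(p')=\tfrac{p_*\abs{p-2}}{p(p_*-2)}$, producing the absolute value in the announced exponent. \emph{Main obstacle.} The crux is Step 2: organizing the bootstrap so that the number of iterations matches $k_*$ while tracking the $(1+\norm{\nabla z}_{L^q})$-factor at every stage, since a one-shot Meyers-type perturbation would require smallness of the Hölder seminorm of $g(z)$—a smallness that the iteration of \cite{BabMil12} bypasses by performing a finite, explicit number of regularity improvements.
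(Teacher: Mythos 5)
Your proposal is correct and follows essentially the same route as the paper: reduction to $p=p_*$ plus interpolation/duality, Lax--Milgram at $p=2$, division of the equation by $g(z)$, and a finite bootstrap through the isomorphism property (A$_\Omega$1)(ii) with one factor $(1+\norm{\nabla z}_{L^q(\Omega)})$ per step. The only cosmetic difference is that the paper slows the iteration by introducing an auxiliary exponent $q_*\in(d,q]$ so as to land exactly on $p_*$ after exactly $k_*$ steps, whereas you cap the gain at $p_*$ and reach it in at most $k_*$ steps (your ``exactly $k_*$'' is not quite accurate, but harmless since the base is $\geq 1$), which yields the same estimate \eqref{crucial-reg-estimate}.
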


 Observe that
 \begin{equation}
 \label{interesting-exponent-added}
 \sup_{p\in [p_*',p_*]} \frac{p_*\abs{p-2}}{p(p_*-2)} \leq
1\,.
\end{equation}
\begin{proof}
It is sufficient to prove the lemma for $p=p_*$. The other assertions follow
with interpolation and duality arguments.
%p<2 follows from properties for adjoint operators
The proof extends the recursion argument from \cite{BabMil12}, %Babadjan eta al
where it is carried out for smooth domains and $W^{2,2}(\Omega;\R^d)$, to the
$W^{1,p}(\Omega;\R^d)$-setting  and to domains satisfying (A$_\Omega 1$).

Let $p_*>2$, $q>d$ and $k_*$ be chosen as stated in Lemma
\ref{lem_reg_babadjan}. Define $q_*$ via the relation
\begin{align*}
 p_*=\frac{2dq_*}{d q_* + 2 k_*(d-q_*)}, \quad\text{i.e.} \quad
q_*=\frac{2k_* p_* d}{p_*(2 k_* -d)+ 2d}.
\end{align*}
Observe that $ q_*\in (d,q]$.

%As above, we define $\calY_p:=W^{-1,p}_{\Gamma_D}(\Omega)$.
 Clearly, $\calY_p\subset W^{-1,2}_{\Gamma_D}(\Omega;\R^d) $ if $p\geq 2$.
Moreover, for all $z\in W^{1,q}(\Omega)$ the functions $g(z),g(z)^{-1}$ are
multiplicators for the spaces $W^{-1,p}_{\Gamma_D}(\Omega;\R^d)$
 and the following estimate is valid: there exists a
constant $c>0$ such that for all $p\in [2,p_*]$, all $z\in W^{1,q}(\Omega)$
and all $b\in \calY_p$ it holds
\begin{align}
 \norm{g(z)^{-1} b}_{\calY_p} \leq c(1 + \norm{\nabla
z}_{L^q(\Omega)})\norm{b}_{\calY_p}.
\label{est_mult_gf}
\end{align}
For $b\in\calY_{p_*}$, let $u\in W^{1,2}_{\Gamma_D}(\Omega ;\R^d )$ be the unique
function
satisfying for all $v\in W^{1,2}_{\Gamma_D}(\Omega;\R^d)$
\begin{align}
\label{eqn_lin_elast1}
 \langle L_{g(z)}u,v\rangle=\int_\Omega g(z)\bbC
\varepsilon(u):\varepsilon(v)\dx = \langle
b,v \rangle_{W^{1,2}(\Omega ;\R^d )} \,.
\end{align}
Due to the multiplier property of $g(z)$, using the product rule and choosing $v
= \frac1{g(z)} \wt v $ in \eqref{eqn_lin_elast1}, it follows that for all $\wt
v\in W^{1,2}_{\Gamma_D}(\Omega;\R^d)$ we have
\begin{align}
\label{eqn_lin_elast2}
\begin{aligned}
 \int_\Omega \bbC \varepsilon(u):\varepsilon(\wt v)\dx &  =  \left \langle \frac{1}{g(z)}
b,\wt v \right\rangle_{W^{1,2}(\Omega ;\R^d )}   +
\int_\Omega \frac{g'(z)}{g(z)}\bbC \varepsilon(u)\nabla z\cdot \wt v\dx \\ &  =:
\left\langle\frac{1}{g(z)} b,\wt v\right\rangle_{W^{1,2}(\Omega ;\R^d )}   +\int_\Omega h(u,z)\cdot \wt v\dx.
\end{aligned}
\end{align}
Since $\frac{1}{g(z)} b\in \calY_{p_*}$ with estimate \eqref{est_mult_gf} (for
$p=p_*$),
 in the following we only have to
concentrate on the term $h(u,z)$, which belongs to $L^{\alpha_0}(\Omega)$ with
$
 \alpha_0 = \frac{2q_*}{2+q_*}\in (1,2).$
Hence, by embedding it follows that
%\begin{align*}
 $h(z,u)\in L^{\alpha_0}(\Omega)
\subset \calY_{p_1}$
% \qquad \text{with} \quad
with
$p_1= \frac{2 q_*
d}{dq_* - 2(q_*-d)}.$
%\end{align*}
Observe that $2 <p_1\leq p_*$.
%\begin{align}
% \norm{h(u,z)}_{\calY_{p_1}} \leq c \norm{u}_{H^1(\Omega)}\norm{\nabla
%z}_{L^{q_*}(\Omega)}.
%\end{align}
Thanks to assumption (A$_\Omega$1) and estimate \eqref{est_unif_op} it follows
 from \eqref{eqn_lin_elast2}
that $u\in
W^{1,p_1}_{\Gamma_D}(\Omega;\R^d)$ with
\begin{align*}
 \norm{u}_{W^{1,p_1}(\Omega)}%&
 \leq
 \wt c
\left( 1+\norm{\nabla z}_{L^q(\Omega)}\right)\left(
\norm{u}_{W^{1,2}(\Omega)} +
\norm{b}_{\calY_{p_1}}\right)
%\\
%&
\leq c\left( 1+\norm{\nabla z}_{L^q(\Omega)}\right)
\norm{b}_{\calY_{p_1}},
\end{align*}
where the last inequality ensues from the estimate due to the Lax-Milgram
lemma, applied to the solution $u$ of \eqref{eqn_lin_elast1}. The constants
$\wt c, c$ are independent of $z$ and $p_1$.
%Again the constants
%$c,\wt c$ are independent of $z$ and $p_1$.

We now iterate this argument. Assume that $u\in W^{1,p_k}_{\Gamma_D}(\Omega;\R^d)$
 for some $p_k\in [2,p_*)$. Then, again by embedding, we have $h(z,u)\in
L^{\alpha_k}(\Omega)$ with $\alpha_k=\frac{p_k q_*}{p_k + q_*}$, and
$L^{\alpha_k}(\Omega) \subset \calY_{p_{k+1}}$ with
\begin{align}
 p_{k+1}=\frac{d p_k q_*}{d q_* + p_k(d - q_*)}\equiv \frac{2d q_*}{d q_* +
2(k+1)(d - q_*)}.
\end{align}
The last identity follows by induction starting with $p_0=2$.  This argument can
be repeated as long
as $k<k_*$, since for these values of $k$ it holds $p_k<p_{k+1}\leq p_*$.
Observe that $p_{k_*}=p_*$. This implies that $L^{\alpha_{k_*-1}}(\Omega)
\subset \calY_{p_*}$ and hence, again by (A$_\Omega$1),
 we have $ u\in W^{1,p_*}_{\Gamma_D}(\Omega;\R^d)$.
The estimate for $u$ follows recursively, namely
\begin{align*}
\norm{u}_{W^{1,p_*}(\Omega;\R^d)}
&\leq c \left(
(1+\norm{\nabla z}_{L^q(\Omega)})\norm{b}_{\calY_{p_*}}
+ \norm{u}_{W^{1 ,p_{k_*-1}}(\Omega)}\norm{\nabla z}_{L^{q_*}(\Omega)}
   \right)
\\
&\leq c_{k_*} (1 + \norm{\nabla
z}_{L^q(\Omega)})^{k_*}\norm{b}_{\calY_{p_*}}.
\end{align*}
The remaining norm estimates in Lemma \ref{lem_reg_babadjan} follow from
interpolation theory. %, see for instance \cite{ }.
\end{proof}
\begin{example}
\label{example_nonsmooth}
%We discuss now examples of nonsmooth domains that are admissible in
%view of assumption (A$_\Omega$).
A sufficient condition  such that
the interpolation scale property (A$_\Omega$1)(i) is satisfied is to assume
that $\Omega\subset\R^d$ is a bounded domain with Lipschitz boundary and that
the boundary sets  $\Gamma_D$ and $\Gamma_N$ are regular in the
sense of Gr\"oger
viz.,  loosely speaking, that the hypersurface separating $\Gamma_D$ and
$\Gamma_N$  is Lipschitz,
see \cite{ell:Gro89,GGKR02_interpol} for more details. A slightly more
general geometric setting is characterized in \cite{HDJKR12}. %HJKR12

In order to obtain also the  isomorphism property (A$_\Omega$1)(ii),  one can
apply the regularity theory for linear elliptic systems in polyhedral domains.
Sufficient conditions on the geometry of $\Omega$, the Dirichlet and the
Neumann
boundaries can be identified for instance with the help of
  \cite[Theorem 7.1]{MR03_polyhedral}
(applied for $\vec\beta=0$, $\vec\delta=0$, $l=1$ in the notation of
\cite[Section 7]{MR03_polyhedral}).

For example, for  the Lam{\'e}-operator (i.e.\ linear
isotropic elasticity) sufficient conditions for (A$_\Omega$1) to hold  are the
following: $d=3$, $\Omega\subset \R^3$ is a bounded domain of polyhedral type
(see
\cite[Section 7.1]{MR03_polyhedral}),  and  on each face either Dirichlet boundary
conditions or Neumann boundary conditions are prescribed. Furthermore,
the interior opening angles along Dirichlet-Dirichlet and along Neumann-Neumann
edges are less than $2\pi$ (i.e.\ no cracks),  and  the interior opening angle
along Dirichlet-Neumann edges is less than or equal to $\pi$ (more general
 situations are possible). Then the singular exponents along the edges of the
polyhedron $\Omega$ satisfy the conditions required in \cite[Theorem
7.1]{MR03_polyhedral} in order to allow for $p_*>3$ in (A$_\Omega$1).  We refer
for example to \cite{sin:Nic92} for estimates of the singular exponents along
edges in different geometric settings. %nicaise
Concerning the singular exponents
 associated with the vertices of the polyhedron, one  has
to guarantee that the
strip $\Set{\lambda\in \C}{-\frac12 <\Re \lambda \leq 0}$ contains at most the
singular exponent $\lambda=0$. In the case of pure Dirichlet conditions in the
vicinity of a given vertex there is no further geometric restriction in order to
guarantee this property,   \cite{sin:MP84}. In case of pure Neumann conditions
in the vicinity of a given vertex one has to assume that the boundary locally is
the graph of a function that is positively homogeneous of degree one,
\cite{sin:KM88}. In case of mixed boundary conditions in the vicinity of a
vertex, a sufficient condition is to assume that  the domain is convex in the
vicinity of the vertex and that at most one face belongs to the Dirichlet
boundary or that at most one face belongs to the Neumann boundary (see
\cite{sin:Nic92} for a more general condition, an example is illustrated in
 Figure \ref{fig_fich}(ii)).
We refer to \cite{sin:Kne02} %DK, preprint
 for an overview on the literature on  estimates for the corner and edge
singularities associated with the Laplace- and
Lam{\'e}-operator on  three-dimensional polyhedral domains. Clearly,
(A$_\Omega$1) as well as Lemma \ref{lem_reg_babadjan} can be extended to
coefficient matrices $\bbC$ with piecewise constant entries if certain
geometric conditions are satisfied.
The Fichera corner plotted in  Figure \ref{fig_fich} is  an example for a
 nonconvex, nonsmooth domain with mixed
boundary conditions that is admissible with respect to assumption (A$_\Omega$1),
in
connection with the Lam{\'e}-operator.
\end{example}
\begin{figure}[t]
\setlength{\unitlength}{1cm}
\begin{picture}(3,3)
\put(0,0){\includegraphics[width=3cm]{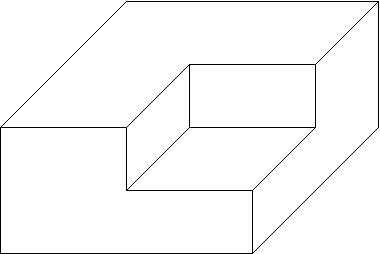}}
\put(0.7,0.1){\small $\Gamma_1$}
\end{picture}
\caption{Admissible domain if for example: (i)
Dirichlet-conditions on the bottom plane and Neumann conditions on the
remaining part of $\partial\Omega$ or (ii) $\Gamma_D=\Gamma_1$ and
 Neumann conditions on the rest.}
\label{fig_fich}
\end{figure}

%\paragraph{The reduced energy functional and its continuity and
%differentiability properties.}
\subsection{Properties of the energy functional}
\label{ss:2.3}
In what follows, we prove the
continuity and differentiability properties of
$\calI$ needed for our analysis. The following results shall also provide
fine estimates for  $|\partial_t \calI |$ and  for suitable norms of $\rmD_z \calI$, in terms of quantities which
continuously depend on $\norm{z}_{\calZ}$, and which are therefore bounded on sublevels of $\calI$.

 Hereafter, we shall work under these
additional conditions on the data $\ell$ and $u_D$:
\begin{assumption}
\label{ass:init}
%Assumptions on the initial data.}
We require that
\begin{align}
\label{a:data}
 \ell\in \rmC^{1,1}([0,T];W^{-1,p_*}_{\Gamma_D}(\Omega;\R^d)),
\quad u_D\in \rmC^{1,1}([0,T];W^{1,p_*}(\Omega;\R^d))\,\,\text{ with $p_*>d$
from  (A$_\Omega$1).}
\end{align}
\end{assumption}

From now on, in order to shorten the notation we introduce for $z_1,z_2\in \calZ$
and $k_*$ as
in Lemma \ref{lem_reg_babadjan} the quantity
\begin{align}
\label{def:Pzz}
 P(z_1,z_2):=
(1 + \norm{\nabla z_1}_{L^q(\Omega)} + \norm{\nabla z_2}_{L^q(\Omega)})^{k_*}.
\end{align}

 Our first  result   is based on Lemma
\ref{lem_reg_babadjan}.
 %and follows the very same lines of the proof of \cite[Lemma 2.1]{krz} to which the reader is referred.
\begin{lemma}[Existence of minimizers and their regularity]
\label{l:ex_min}
$ $\\
 Under Assumptions \ref{assumption:energy}, \ref{ass:init},
and (A$_\Omega$1),
 for every $(t,z)\in [0,T]\times \calZ$ there exists
a unique $\umin(t,z)\in \calU$, which minimizes $\calE(t,z,\cdot)$.
% and
%satisfies
%\begin{align}\forall\, (t,z)\in [0,T]\times \calZ\, : \ \
%\norm{\umin(t,z)}_{W^{1,2}(\Omega)} \leq
%c\big(\norm{\ell(t)}_{W^{-1,
%2}_{\Gamma_D}(\Omega)}
%+ \norm{u_D(t)}_{W^{1,2}(\Omega)}\big)
%\end{align}
%with $c$ independent of $t,z$.

 Moreover, there exist $c_0>0$ %which is independent of $t$ and $z$
%
% \CCCS and $a\geq0$ \CCCE
such that for all
    $p\in [p_*',p_*]$ %{\DDDS $\wt p\in [p',p]$ \DDDE}
 and $(t,z)\in [0,T]\times \calZ$ it holds that
 $\umin(t,z)\in W^{1, p}_{\Gamma_D}(\Omega;\R^d)$, and
\begin{align}
\label{e:w1p} % \forall\, (t,z)\in [0,T]\times \calZ\, : \ \
  \norm{\umin(t,z)}_{W^{1, p}(\Omega;\R^d)}   \leq
c_0%(1+ \norm{\nabla z}_{L^q(\Omega)})^{k_*}
 P(z,0)^\frac{p_*\abs{p-2}}{p(p_*-2)}
\big(\norm{\ell(t)}_{W^{-1,
p}_{\Gamma_D}(\Omega;\R^d)}
+  \norm{u_D(t)}_{W^{1, p}(\Omega;\R^d)} \big),
\end{align}
%where $k_*\in \N $ is the constant from Lemma \ref{lem_reg_babadjan}.
with $P$ as in \eqref{def:Pzz}, %\CCCS $a=0$ if $\wt p=2$, \CCCE
 and $p_*$ the exponent from  (A$_\Omega$1)(ii).
Furthermore, the
following coercivity inequality for $\calI$ is valid: There exist
constants $c_1,c_2>0$ such that for
 all $(t,z)\in [0,T]\times \calZ$ it holds
 \begin{align}
\label{est_coerc1} \calI(t,z)&\geq c_1\big(
 \norm{\nabla z}^\il_{L^\il(\Omega)}
 + \norm{z}_{L^1(\Omega)}
 + \norm{\umin(t,z)}_{W^{1,2}(\Omega;\R^d)}^2
 \big) -c_2.
\end{align}
 \end{lemma}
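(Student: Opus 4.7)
The Euler--Lagrange condition for minimizing $\calE(t,\cdot,z)$ over $\calU$ reads
\[
\int_\Omega g(z)\,\bbC\,\varepsilon(u+u_D(t)):\varepsilon(v)\,\dd x = \pairing{\calU}{\ell(t)}{v} \qquad\forall\,v\in\calU,
\]
which in operator form becomes $L_{g(z)}u = \ell(t) - L_{g(z)}u_D(t)$ in $\calY_2$. Since $g(z)\geq\gamma_1>0$ and $\bbC$ is symmetric positive-definite, $\calE(t,\cdot,z)$ is strictly convex and weakly lower semicontinuous on $\calU$, and Korn's inequality on $\calU=W^{1,2}_{\Gamma_D}(\Omega;\R^d)$ makes it coercive. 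Hence the direct method (equivalently, Lax--Milgram applied to $L_{g(z)}$) yields a unique minimizer $\umin(t,z)\in\calU$, characterized by $\umin(t,z)= L_{g(z)}^{-1}\bigl(\ell(t) - L_{g(z)}u_D(t)\bigr)$.

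For the $W^{1,p}$ estimate, I would simply invoke Lemma~\ref{lem_reg_babadjan}. Since $z\in\calZ=W^{1,q}(\Omega)$ with $q>d$, the operator $L_{g(z)}:\calX_p\to\calY_p$ is an isomorphism for every $p\in[p_*',p_*]$, so the identity $\umin(t,z)=L_{g(z)}^{-1}(\ell(t)-L_{g(z)}u_D(t))$ already delivers $\umin(t,z)\in\calX_p$. The estimate \eqref{crucial-reg-estimate} bounds $\|L_{g(z)}^{-1}\|_{\calY_p\to\calX_p}$ by $c_{q,p_*}P(z,0)^{p_*|p-2|/(p(p_*-2))}$, and the boundedness of $g$ and $\bbC$ (cf.\ \eqref{elast-1}, \eqref{new-g}) gives $\|L_{g(z)}u_D(t)\|_{\calY_p}\leq C\|u_D(t)\|_{W^{1,p}(\Omega;\R^d)}$. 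Combining these two bounds yields \eqref{e:w1p}.

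For the coercivity \eqref{est_coerc1}, I would split $\calI(t,z)=\calI_1(z)+\calI_2(t,z)$. From \eqref{e:Iq} and the lower bound $f(z)\geq K_1|z|-K_2$ we immediately obtain
\[
\calI_1(z) \geq \tfrac{1}{q}\|\nabla z\|_{L^q(\Omega)}^q + K_1\|z\|_{L^1(\Omega)} - K_2|\Omega|.
\]
For $\calI_2(t,z)=\calE_2(t,\umin(t,z),z)$, I would use $g\geq\gamma_1$, the ellipticity \eqref{elast-2}, and the elementary bound $\tfrac12|a+b|^2\geq \tfrac14|a|^2-\tfrac12|b|^2$ to get
\[
\int_\Omega g(z)W(x,\varepsilon(\umin+u_D))\dx
\geq \tfrac{\gamma_1\gamma_0}{4}\|\varepsilon(\umin)\|_{L^2(\Omega)}^2 - \tfrac{\gamma_1\gamma_0}{2}\|\varepsilon(u_D(t))\|_{L^2(\Omega)}^2.
\]
Korn's inequality on $\calU$ then gives $\|\varepsilon(\umin)\|_{L^2(\Omega)}^2\geq c_K\|\umin\|_{W^{1,2}(\Omega;\R^d)}^2$, while $|\pairing{\calU}{\ell(t)}{\umin}|$ is absorbed by Young's inequality. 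Since $u_D$ and $\ell$ are continuous in $t$ (Assumption~\ref{ass:init}), the remaining terms are bounded uniformly in $t\in[0,T]$, producing the claimed inequality.

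The main conceptual obstacle is the $p$-dependent estimate \eqref{e:w1p}, but Lemma~\ref{lem_reg_babadjan} (already proved above) reduces it to arithmetic; the remaining work is bookkeeping in the two elementary convexity/Korn arguments for coercivity.
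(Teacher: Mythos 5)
Your proof is correct and follows exactly the route the paper intends: the paper states Lemma~\ref{l:ex_min} without proof precisely because it is a direct consequence of Lemma~\ref{lem_reg_babadjan} (applied to $\umin(t,z)=L_{g(z)}^{-1}\bigl(\ell(t)-L_{g(z)}u_D(t)\bigr)$, noting $\|L_{g(z)}u_D(t)\|_{\calY_p}\leq C\|u_D(t)\|_{W^{1,p}(\Omega;\R^d)}$ by the bounds on $g$ and $\bbC$) together with the standard Lax--Milgram/direct-method argument for existence and uniqueness and the elementary coercivity estimate via \eqref{ass-eff}, \eqref{new-g}, \eqref{elast-2}, Korn's inequality on $W^{1,2}_{\Gamma_D}(\Omega;\R^d)$ and Young's inequality. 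Your bookkeeping of the exponent, i.e.\ $P(z,0)^{p_*|p-2|/(p(p_*-2))}=(1+\|\nabla z\|_{L^q(\Omega)})^{k_*p_*|p-2|/(p(p_*-2))}$, matches \eqref{crucial-reg-estimate}, so nothing is missing.
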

 \noindent
  Observe that, on the right-hand side of \eqref{e:w1p} the dependence
 on $\|z\|_{\calZ}$
 of the
 quantity which bounds $ \norm{\umin(t,z)}_{W^{1, p}(\Omega;\R^d)}$
  is very explicitly displayed. In particular, observe that for $p=2$ we have no dependence on  $\|z\|_{\calZ}$  as
 $P(z,0)^0=1$, while for the extreme case $p=p_*$ we have $P(z,0)$, cf.\ \eqref{interesting-exponent-added}.
 %%%%%%%
%%%%%%%%%%%
%%%%%%%%%
%%%%%%%%
%\DDDS SORRY!!! in the next lemmatas we have to go back to the old (worse)
%expression for $r$ !!!!
%(i.e. from our first preprint) THIS AFFECTS ALL ESTIMATES here below !!! in our
%intermediate drafts the ``new`` expressions for
%$r$ were wrong (wrong duality). But it seems, see the remark here below, that
%the estimates on $I_\beta$ and $I_\gamma$ are still possible ''in the spirit of
%Brescia``.  \DDDE

Our next result provides an estimate for the difference of two minimizers
$\umin(t_1,z_1)$ and $\umin(t_2,z_2)$, in terms of the data $t_i$ and $z_i$,
$i=1,2$.
\begin{lemma}[Continuous dependence on the data]%\footnote{\DDDS adapted to the
%new Babadjan-reg \DDDE}
\label{l:cddata}
$ $\\
Under Assumptions \ref{assumption:energy},  \ref{ass:init}, and (A$_\Omega$1),
 there exists a constant $c_3>0$ such that for all $\ell$ and $u_D$  as in
 \eqref{a:data},  all $t_1,t_2\in [0,T]$ and
 all %$\wt p\in [2,p)$
  {$\wt p\in [p_*',p_*)$ }
it holds
 with
 $r= p_* \wt p (p_* - \wt p)^{-1}$ and all $z_1,z_2\in
\calZ$
 \begin{multline}
\label{est_cont1}
\norm{\umin(t_1,z_1) - \umin(t_2,z_2)}_{W^{1,\wt
    p}(\Omega;\R^d)}
\\
\leq c_3 \big(\abs{t_1-t_2} + \norm{z_1-z_2}_{L^r(\Omega)} \big)
%(1+\norm{\nabla z_1}_{L^q(\Omega)}+\norm{\nabla z_2}_{L^q(\Omega)})^{2k_*}
P(z_1,z_2)^2
\big(  \norm{\ell}_{\rmC^1([0,T]; W_{\Gamma_D}^{-1,p_*}(\Omega;\R^d))} +
\norm{u_D}_{\rmC^1([0,T];W^{1,p_*}(\Omega;\R^d))}  \big).
\end{multline}
\end{lemma}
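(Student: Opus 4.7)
The strategy is to reduce the estimate to a perturbation argument for the linear elasticity operator, exploiting the quantitative isomorphism provided by Lemma \ref{lem_reg_babadjan} and the higher integrability of each minimizer given by Lemma \ref{l:ex_min}. Set $u_i := \umin(t_i,z_i)$ for $i=1,2$. Each $u_i$ satisfies, for all $v \in \calU$, the Euler--Lagrange identity
\begin{equation*}
\int_\Omega g(z_i)\,\bbC\,\eps(u_i + u_D(t_i)) : \eps(v) \dx = \pairing{\calU}{\ell(t_i)}{v}.
\end{equation*}
Subtracting the two identities and adding and subtracting the cross terms $\int_\Omega g(z_1)\bbC\eps(u_2):\eps(v)\dx$ and $\int_\Omega g(z_1)\bbC\eps(u_D(t_2)):\eps(v)\dx$, I rewrite the result as
\begin{equation*}
L_{g(z_1)}(u_1-u_2) \,=\, F_1 + F_2 + F_3 \quad \text{in } \calY_{\wt p},
\end{equation*}
where (understood via their action on $v\in W^{1,\wt p'}_{\Gamma_D}(\Omega;\R^d)$) $F_1 := (g(z_1)-g(z_2))\,\bbC\,\eps(u_2 + u_D(t_2))$, $F_2 := g(z_1)\bbC\,\eps(u_D(t_2) - u_D(t_1))$, and $F_3 := \ell(t_1)-\ell(t_2)$.

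Now I invoke Lemma \ref{lem_reg_babadjan} with $p=\wt p \in [p_*',p_*)$ to obtain
\begin{equation*}
\norm{u_1-u_2}_{W^{1,\wt p}(\Omega;\R^d)} \,\leq\, c_{q,p_*}\,(1+\norm{\nabla z_1}_{L^q(\Omega)})^{k_* \frac{p_*|\wt p-2|}{\wt p(p_*-2)}}\,\bigl(\norm{F_1}_{\calY_{\wt p}} + \norm{F_2}_{\calY_{\wt p}} + \norm{F_3}_{\calY_{\wt p}}\bigr),
\end{equation*}
and by \eqref{interesting-exponent-added} the exponent of $(1+\norm{\nabla z_1}_{L^q})$ is bounded by $k_*$, so the prefactor is at most $P(z_1,z_2)$. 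The terms $F_2$ and $F_3$ are harmless: using $g \in L^\infty$ with \eqref{new-g}, the Lipschitz-in-time regularity of $u_D$ and $\ell$ from \eqref{a:data}, and the continuous embedding $W^{-1,p_*}_{\Gamma_D} \hookrightarrow W^{-1,\wt p}_{\Gamma_D}$ (since $\wt p \leq p_*$), both yield contributions proportional to $|t_1-t_2|$ times $\bigl(\norm{u_D}_{\rmC^1} + \norm{\ell}_{\rmC^1}\bigr)$.

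The delicate term is $F_1$, which couples $z_1-z_2$ with $\eps(u_2)$. Using the Lipschitz continuity of $g$ (from $g'\in L^\infty$) and H\"older's inequality with exponents $(r, p_*, \wt p')$ satisfying $\tfrac{1}{r}+\tfrac{1}{p_*}+\tfrac{1}{\wt p'}=1$, which forces precisely $r = p_*\wt p/(p_*-\wt p)$, I estimate
\begin{equation*}
\norm{F_1}_{\calY_{\wt p}} \,\leq\, C\,\norm{z_1-z_2}_{L^r(\Omega)}\,\bigl(\norm{u_2}_{W^{1,p_*}(\Omega;\R^d)} + \norm{u_D(t_2)}_{W^{1,p_*}(\Omega;\R^d)}\bigr).
\end{equation*}
Here the higher integrability $u_2 \in W^{1,p_*}_{\Gamma_D}(\Omega;\R^d)$, together with the quantitative bound \eqref{e:w1p} of Lemma \ref{l:ex_min} (which for $p=p_*$ produces the factor $P(z_2,0) \leq P(z_1,z_2)$), yields $\norm{u_2}_{W^{1,p_*}} \leq c_0 P(z_1,z_2) \bigl(\norm{\ell}_{\rmC^0} + \norm{u_D}_{\rmC^0}\bigr)$. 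Combining the prefactor $P(z_1,z_2)$ from the inversion of $L_{g(z_1)}$ with this further $P(z_1,z_2)$ from $\norm{u_2}_{W^{1,p_*}}$ gives the total power $P(z_1,z_2)^2$ in \eqref{est_cont1}.

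The main obstacle is the bookkeeping of the exponents in $F_1$: one must split the triple product $(z_1-z_2)\cdot \eps(u_2+u_D)\cdot \eps(v)$ so that the gradient of the test function is paired in the natural dual exponent $\wt p'$, which is what dictates the precise Lebesgue scale $r = p_*\wt p/(p_*-\wt p)$ appearing in the statement. This choice is only possible because Lemma \ref{l:ex_min} delivers the sharp integrability $W^{1,p_*}$ for $u_2$, and because $\wt p<p_*$ strictly (hence $r<\infty$); both assumptions in the statement are therefore essential.
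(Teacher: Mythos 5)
Your proof is correct and follows essentially the same route as the paper: subtract the two Euler--Lagrange equations, regroup so that $L_{g(z_1)}(u_1-u_2)$ equals a right-hand side controlled in $\calY_{\wt p}$, invert via Lemma \ref{lem_reg_babadjan} (with \eqref{interesting-exponent-added} giving the factor $P(z_1,z_2)$), and estimate the $g(z_1)-g(z_2)$ term by H\"older with exponent $r=p_*\wt p(p_*-\wt p)^{-1}$ together with the $W^{1,p_*}$ bound \eqref{e:w1p}, which supplies the second power of $P(z_1,z_2)$. The only (immaterial) difference is the grouping of the Dirichlet-datum terms: you isolate a pure time-difference term $g(z_1)\bbC\eps(u_D(t_2)-u_D(t_1))$ and absorb $u_D(t_2)$ into the coefficient-difference term, whereas the paper keeps $\eps(u_2)$ alone and treats the combined term $g(z_1)\bbC\eps(u_D(t_1))-g(z_2)\bbC\eps(u_D(t_2))$ analogously.
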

\begin{remark}
\label{rmk:exponents-later-use}
\upshape
Observe that  for $\wt p\in [p_*',p_*)$ we have $r=p_* \wt p (p_* - \wt p)^{-1}
\in [ \frac{p_*}{p_*-2}, \infty)$, and $r$ is strictly increasing with respect to
$\wt p$.  In particular, for $\wt p= 3 p_*(3 + p_*)^{-1}$ we have $r=3$ and for
$\wt p:= 6 p_*(6 + p_*)^{-1}$ we have $r=6$.
\end{remark}

\begin{proof}
For $i=1,2$, let $u_i:=\umin(t_i,z_i)\in W^{1, p_*}(\Omega;\R^d)$, with $p_*$
from (A$_\Omega$1). From the corresponding Euler-Lagrange
equations %~\eqref{euler-lagrange}
 written for $u_{i}$, $i=1,2$, with
algebraic manipulations we obtain that $u_1-u_2$ satisfies for   all
$v\in W^{1,p_*}_{\Gamma_D}(\Omega;\R^d)$
\begin{equation}
\label{instrumental}
\begin{aligned}
  &   \int_\Omega g(z_1)\bbC \varepsilon(u_1-u_2){\colon}
\varepsilon(v)\dx
% \\
%&\!\!\!\! \!\!
  =\int_\Omega
\big(g(z_2)-g(z_1)\big)\bbC\varepsilon(u_2){\colon}\varepsilon(v)\dx
\\
& \quad  \quad \qquad \qquad -\int_\Omega\big(g(z_1)\bbC\varepsilon(u_D(t_1)) -
g(z_2)\bbC\varepsilon(u_D(t_2))\big){\colon}\varepsilon(v)\dx +
\pairing{\calU}{\ell(t_1)-\ell(t_2)}{v}.
\end{aligned}
\end{equation}
Hence, by density and Lemma \ref{lem_reg_babadjan},
 the function $u_1-u_2$ fulfills for all $\wt p\in [p_*', p_*]$ and all
 $v\in W^{1,\wt p'}_{\Gamma_D}(\Omega;\R^d)$ the relation
\[
\int_\Omega g(z_1)\bbC \varepsilon(u_1-u_2):\varepsilon(v)\dx=
\pairing{W^{1,\wt p'}_{\Gamma_D}(\Omega;\R^d)}{\tilde{\ell}_{1,2}}{v},
\]
where $\tilde{\ell}_{1,2} \in W_{\Gamma_D}^{-1,\wt p}(\Omega;\R^d)$
subsumes the terms on the right-hand side of \eqref{instrumental}.
Therefore, \eqref{crucial-reg-estimate} gives
\[
\norm{u_1-u_2}_{W^{1,\wt p}(\Omega;\R^d)} \leq c_0
%(1 + \norm{\nabla
%z_1}_{L^q(\Omega)})^{k_*}
P(z_1,0)
\norm{\tilde{\ell}_{1,2}}_{W_{\Gamma_D}^{-1,\wt p}(\Omega;\R^d)},
\]
whence we deduce the estimate
\begin{multline}
\label{est-inter} \norm{u_1-u_2}_{W^{1,\wt p}(\Omega;\R^d)} \leq
c_0
%(1 + \norm{\nabla z_1}_{L^q(\Omega)})^{k_*}
P(z_1,0)
\big( \norm{\ell(t_1)-\ell(t_2)}_{W_{\Gamma_D}^{-1,\wt p}(\Omega;\R^d)}
+
  \norm{(g(z_1)-g(z_2))\bbC \varepsilon(u_2)}_{L^{\wt p}(\Omega;\R^d)} \\
+  \norm{g(z_1)\bbC \varepsilon(u_D(t_1)) -
  g(z_2)\bbC \varepsilon(u_D(t_2))}_{L^{\wt p}(\Omega;\R^d)}  \big).
\end{multline}
 Now, the Lipschitz continuity of $g$ and H\"older's inequality
imply that
\begin{equation}
\label{auxiliary}
\begin{aligned}
 \norm{(g(z_1)-g(z_2))\bbC \varepsilon(u_2)}_{L^{\wt p}(\Omega;\R^d)}   & \leq
 C \norm{z_1-z_2}_{L^r(\Omega)} \norm{\varepsilon
(u_2)}_{L^{p_*}(\Omega;\R^d)}
\\ &  \leq C'
%(1 + \norm{\nabla z_2}_{L^q(\Omega)})^{k_*}
 P(z_2,0)
\norm{z_1-z_2}_{L^r(\Omega)}
\end{aligned}
\end{equation}
with $r= p_* \wt p (p_* - \wt p)^{-1}$,
where the second inequality follows from condition
  \eqref{a:data} and from estimate~\eqref{e:w1p}.
 We
use \eqref{auxiliary} to estimate the second term on the right-hand
side of~\eqref{est-inter}. In a similar way the third summand is
treated, where we use again %\eqref{auxiliary} and
 \eqref{a:data}.
%
%
% As for the third summand, we
%have
%\begin{multline}
%%\label{cited-later}
% \norm{g(z_1)\bbC \varepsilon(u_D(t_1)) -
%  g(z_2)\bbC \varepsilon(u_D(t_2))}_{L^{\wt p'}(\Omega)}
%\\
% \leq
%\norm{(g(z_1)-g(z_2))\bbC \varepsilon(u_D(t_1))}_{L^{\wt p'}(\Omega)} +
%  \norm{g(z_2)\bbC
%  (\varepsilon(u_D(t_1)))-\varepsilon(u_D(t_2)))}_{L^{\wt
%  p'}(\Omega)} \\
%\leq  C\big( \norm{z_1-z_2}_{L^r(\Omega)} \| u_D \|_{L^\infty
%(0,T;W^{1,p}(\Omega))} + \| u_D(t_1) - u_D(t_2)
%\|_{W^{1,p}(\Omega)}\big),
%\end{multline}
%where the latter inequality again follows from~\eqref{auxiliary},
%and from the fact that $g \in L^\infty (\R)$. Combining all of the
%above inequalities, and relying on~\eqref{a:data},  we finally
%arrive at \eqref{est_cont1}.
%
%
\end{proof}

We now address the differentiability properties of
$\calI$ as a function of  the time variable $t$. For the proof of Lemma
\ref{l:diff_time} below,
   the calculations are similar to those in
\cite[Lemma 2.3]{krz}, %KRZ, first publication
taking into account estimates \eqref{e:w1p} and \eqref{est_cont1}, therefore we choose not to detail
them.
\begin{lemma}[Differentiability and growth w.r.\ to time]
\label{l:diff_time}
%\footnote{\DDDS This lemma is adapted to the new ''old $r$'' and the Babadjan
%regularity.
%\DDDE
%}
$ $\\
 Under Assumptions \ref{assumption:energy},  \ref{ass:init}, and (A$_\Omega$1),
  for every $z\in \calZ$ the map $t\mapsto
\calI(t,z)$ belongs to $\rmC^{1}([0,T],\R)$ with
\begin{align}
\label{form-derivative} \partial_t\calI(t,z)= \int_\Omega g(z)\bbC
\varepsilon(\umin(t,z) + u_D(t)){\colon}\varepsilon(\dot u_D(t)) \dx -
\langle \dot\ell(t),\umin(t,z)\rangle_{W^{1,2}_{\Gamma_D}(\Omega;\R^d)}.
\end{align}
Moreover, there exists a constant $c_4>0$ such that for all $t\in
[0,T]$, $z\in \calZ$ and $u_D$, $\ell$ with \eqref{a:data} we
have
\begin{align}
\label{stim3} \abs{\partial_t\calI(t,z)}&\leq c_4 %P(z,0)
\big(\norm{u_D}^2_{\rmC^1([0,T];W^{1,2}(\Omega;\R^d))} +
\norm{\ell}^2_{\rmC^1([0,T];W^{-1,2}_{\Gamma_D}(\Omega;\R^d))} \big).
\end{align}
Finally,
 there exists a constant $c_5>0$ depending on
$\norm{\ell}_{\rmC^{1,1}([0,T];W^{-1,p_*}_{\Gamma_D}(\Omega;\R^d))}$ and
$\norm{u_D}_{\rmC^{1,1}([0,T];W^{1,p_*}(\Omega))}$ such that
for all $r\in [\frac{p_*}{p_*-2},\infty)$,
for all $t_i\in
  [0,T]$ and $z_i\in \calZ$  we have
\begin{align}
\label{stim5} \abs{\partial_t\calI(t_1,z_1) -
\partial_t\calI(t_2,z_2)} \leq
c_5
%(1 + \norm{\nabla z_1}_{L^q(\Omega)} + \norm{\nabla z_2}_{L^q(\Omega)}
%)^{2 k_*}
P(z_1,z_2)^2
\big(\abs{t_1-t_2} +
\norm{z_1-z_2}_{L^{r}(\Omega)}\big).
\end{align}
\end{lemma}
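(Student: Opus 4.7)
\emph{Derivation of \eqref{form-derivative} and $\rmC^1$-regularity.} Following the plan of \cite[Lemma 2.3]{krz}, I would first invoke the envelope theorem: since $\umin(t,z)$ minimizes $v \mapsto \calE_2(t,v,z)$ over $\calU$, the first variation of $\calE_2$ in $u$ vanishes at $\umin$, and only the explicit $t$-dependence through $u_D(t)$ and $\ell(t)$ contributes. Rigorously, I would sandwich the difference quotient $h^{-1}(\calI(t+h,z) - \calI(t,z))$ between $h^{-1}(\calE_2(t+h,\umin(t,z),z) - \calE_2(t,\umin(t,z),z))$ and the analogous expression evaluated at $\umin(t+h,z)$, using minimality in both directions; passing $h \to 0$ with the $\rmC^{1,1}$-smoothness from \eqref{a:data} yields \eqref{form-derivative} as the common limit. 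Continuity of $t \mapsto \partial_t\calI(t,z)$ then follows from continuous dependence of $\umin$ on $t$, a special case of Lemma \ref{l:cddata}.

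\emph{Uniform bound \eqref{stim3}.} Estimating the two summands in \eqref{form-derivative} directly: using $g \leq \gamma_2$, the boundedness of $\bbC$ and Cauchy-Schwarz gives
\[
\left|\int_\Omega g(z)\bbC\varepsilon(\umin + u_D){\colon}\varepsilon(\dot u_D)\dx\right| \leq C\bigl(\|\varepsilon(\umin(t,z))\|_{L^2} + \|\varepsilon(u_D(t))\|_{L^2}\bigr)\|\varepsilon(\dot u_D(t))\|_{L^2},
\]
while the duality term obeys $|\langle \dot\ell(t), \umin(t,z)\rangle| \leq \|\dot\ell(t)\|_{W^{-1,2}_{\Gamma_D}}\|\umin(t,z)\|_{W^{1,2}}$. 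The crucial point is that the exponent in \eqref{e:w1p} vanishes for $p=2$ (so $P(z,0)^0 = 1$), hence $\|\umin(t,z)\|_{W^{1,2}} \leq c_0(\|\ell(t)\|_{W^{-1,2}_{\Gamma_D}} + \|u_D(t)\|_{W^{1,2}})$ is independent of $z$; Young's inequality then yields \eqref{stim3}.

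\emph{Lipschitz-type estimate \eqref{stim5}.} Setting $u_i := \umin(t_i,z_i)$, I would write $\partial_t\calI(t_1,z_1) - \partial_t\calI(t_2,z_2)$ as the difference of the corresponding instances of \eqref{form-derivative} and perform an add-and-subtract telescoping so that in each summand only one of $g(z_1) - g(z_2)$, $u_1 - u_2$, $u_D(t_1) - u_D(t_2)$, $\dot u_D(t_1) - \dot u_D(t_2)$ or $\dot\ell(t_1) - \dot\ell(t_2)$ varies. The $g$-piece is handled by a three-factor H\"older inequality with exponents $(p_*/(p_*-2), p_*, p_*)$, using Lipschitz continuity of $g$, the bound $\|\varepsilon(u_2)\|_{L^{p_*}} \leq C P(z_2,0)$ from \eqref{e:w1p}, and $\|\varepsilon(\dot u_D)\|_{L^{p_*}}$ from \eqref{a:data}. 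The $(u_1 - u_2)$-piece is controlled by \eqref{est_cont1} with $\tilde p = p_*'$, paired (via two-factor H\"older) with $\varepsilon(\dot u_D(t_2)) \in L^{p_*}$; this yields $(|t_1-t_2| + \|z_1-z_2\|_{L^r}) P(z_1,z_2)^2$ with $r = p_*\tilde p/(p_*-\tilde p) = p_*/(p_*-2)$, consistent with Remark \ref{rmk:exponents-later-use}. The three remaining pieces invoke the $\rmC^{1,1}$-regularity from \eqref{a:data} and contribute factors proportional to $|t_1-t_2|$. Since $\|z_1-z_2\|_{L^{p_*/(p_*-2)}} \leq C_\Omega\|z_1-z_2\|_{L^r}$ for every $r \geq p_*/(p_*-2)$ (because $\Omega$ is bounded), the estimate extends to all such $r$, producing \eqref{stim5}.

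The main obstacle is the bookkeeping in the third step: one has to match the $L^r$-exponent produced by the $g$-piece (via the three-factor H\"older) with that produced by the $(u_1 - u_2)$-piece (via \eqref{est_cont1}), and ensure that the factors $P(z_i,0)$ coming from the $g$- and $\rmC^{1,1}$-pieces combine with the $P(z_1,z_2)^2$ of \eqref{est_cont1} into a single factor $P(z_1,z_2)^2$ via $P(z_i,0) \leq P(z_1,z_2)$. Once this is arranged, constants depending on $\|\ell\|_{\rmC^{1,1}}$ and $\|u_D\|_{\rmC^{1,1}}$ are absorbed into $c_5$.
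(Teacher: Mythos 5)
Your proposal is correct and follows essentially the route the paper intends: the paper omits the proof, referring to the analogous \cite[Lemma 2.3]{krz}, and your sandwich/envelope argument for \eqref{form-derivative}, the observation that \eqref{e:w1p} with $p=2$ gives a $z$-independent bound for \eqref{stim3}, and the telescoping based on \eqref{e:w1p} and \eqref{est_cont1} with $\wt p=p_*'$ (hence $r=p_*/(p_*-2)$) for \eqref{stim5} are precisely the ingredients indicated there. The only detail left implicit is the piece $\langle \dot\ell(t_2),u_1-u_2\rangle$, which is handled the same way since $\dot\ell(t_2)\in W^{-1,p_*}_{\Gamma_D}(\Omega;\R^d)$ pairs with $u_1-u_2\in W^{1,p_*'}_{\Gamma_D}(\Omega;\R^d)$.
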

%\begin{proof}
%\end{proof}

%\paragraph{Differentiability w.r.~to $z$.}
The differentiability of $\calI$ with respect to $z$ will be studied
in the $\calZ-\calZ^*$ duality. In particular,
$\rmD_z\calI(t,\cdot): \calZ\to \calZ^*$ shall denote the
G\^ateaux-differential of the functional $\calI(t,\cdot)$. We have the following
result, whose proof is
completely analogous to the proof of \cite[Lemma 2.4]{krz}.
\begin{lemma}[G\^ateaux-differentiability]
\label{l:gateaux}
%\footnote{\DDDS The Lemma on Gateaux-differentiability now should be correct,
%with the new-old $r$ and Babadjan-regularity.\DDDE }
$ $\\
Under Assumptions \ref{assumption:energy}, \ref{ass:init}, and
 (A$_\Omega$1), for all $t\in [0,T]$ the functional
$\calI(t,\cdot):\calZ \rightarrow\R$ is G\^ateaux-differentiable at all
$z\in \calZ$, and for all $\eta\in \calZ$ we have
\begin{align}
\label{form-gateau-derivative} \pairing{\calZ}{\rmD_z\calI(t,z)}{\eta}=
\pairing{\calZ}{ A_q z}{\eta} +
\int_\Omega f'(z)\eta\dx + \int_\Omega g'(z)\wt W(t,\nabla
\umin(t,z)) \eta\dx,
\end{align}
where we use the abbreviation $\wt W(t,\nabla v)= W(\nabla v +
\nabla u_D(t)) =\frac{1}{2} \bbC\varepsilon(v +
u_D(t)){ : }\varepsilon(v + u_D(t))$. In particular, the following
estimate holds with a constant $c_6$ that depends on the data $\ell, u_D$, but
is independent of $t$ and $z$:
\begin{equation}
\label{esti-gateau}  \forall\, (t,z) \in
[0,T]\times \calZ\, :  \ \ \norm{\rmD_z\calI(t,z)}_{\calZ^*} \leq
  c_6 \left( \|z\|_{\calZ}^{q-1} +
\norm{f'(z)}_{L^\infty(\Omega)}
+ 1 \right).
\end{equation}
\end{lemma}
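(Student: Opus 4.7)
I would prove Lemma \ref{l:gateaux} by decomposing $\calI = \calI_q + \int_\Omega f(z) \dx + \calI_2$ and treating each piece separately. The first term $\calI_q(z) = \frac1q \int_\Omega (1+|\nabla z|^2)^{q/2}\dx$ is classically Gâteaux differentiable on $\calZ = W^{1,q}(\Omega)$ with derivative $A_q z \in \calZ^*$; this follows from standard arguments using the smoothness of the integrand and dominated convergence, whose hypotheses are easily checked since $q\geq 2$. The functional $z \mapsto \int_\Omega f(z)\dx$ is differentiable with derivative $\eta \mapsto \int_\Omega f'(z)\eta \dx$: since $q>d$ ensures $\calZ \hookrightarrow L^\infty(\Omega)$, the value $z(x)$ stays in a fixed compact set for $z$ in any bounded neighborhood, so $f \in \rmC^2$ allows passage to the limit in the difference quotients via dominated convergence.

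The main step is the differentiability of $\calI_2(t, z) = \calE_2(t, \umin(t,z), z)$. Here I would use the envelope theorem. For $\eta \in \calZ$ and small $\tau$, the minimality of $\umin(t,\cdot)$ yields the two-sided estimate
\begin{equation*}
\frac{\calE_2(t,\umin(t,z{+}\tau\eta), z{+}\tau\eta) - \calE_2(t,\umin(t,z{+}\tau\eta),z)}{\tau}
\leq \frac{\calI_2(t,z{+}\tau\eta)-\calI_2(t,z)}{\tau}
\leq \frac{\calE_2(t,\umin(t,z),z{+}\tau\eta) - \calE_2(t,\umin(t,z),z)}{\tau}.
\end{equation*}
The upper bound converges as $\tau\to 0$ to $\int_\Omega g'(z) \wt W(t,\nabla\umin(t,z))\,\eta \dx$ by a direct Taylor expansion of $g$ (using $g \in \rmC^2$ and $\wt W(t,\nabla\umin(t,z))\in L^1(\Omega)$, which follows from $\umin(t,z)\in W^{1,2}(\Omega;\R^d)$ and \eqref{elast-1}). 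For the lower bound I need $\umin(t,z+\tau\eta) \to \umin(t,z)$ as $\tau \to 0$ strongly enough to pass to the limit in the quadratic elastic term. Lemma \ref{l:cddata} (applied with $\wt p = 2$) supplies $\umin(t,z+\tau\eta)\to \umin(t,z)$ in $W^{1,2}(\Omega;\R^d)$, which combined with the uniform bound of $g$ and the Lipschitz continuity of $g$ is sufficient to identify the limit. Summing the three contributions gives \eqref{form-gateau-derivative}.

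For the estimate \eqref{esti-gateau} I would bound each of the three terms in the $\calZ^*$ norm using the continuous embedding $\calZ \hookrightarrow L^\infty(\Omega)$. For the $q$-Laplacian term, Hölder's inequality yields
\begin{equation*}
\abs{\pairing{\calZ}{A_q z}{\eta}} \leq \int_\Omega (1+|\nabla z|^2)^{(q-1)/2} |\nabla \eta| \dx
\leq C(1+\|z\|_\calZ^{q-1})\|\eta\|_\calZ.
\end{equation*}
For the $f'$ term, the embedding gives $|\int_\Omega f'(z)\eta\dx| \leq \|f'(z)\|_{L^\infty(\Omega)} |\Omega|\, \|\eta\|_{L^\infty(\Omega)}\leq C \|f'(z)\|_{L^\infty(\Omega)} \|\eta\|_\calZ$. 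For the elastic term I use that $g' \in L^\infty(\R)$ by \eqref{new-g} together with Lemma \ref{l:ex_min} applied at the value $p=2$: estimate \eqref{e:w1p} gives $\|\umin(t,z)\|_{W^{1,2}(\Omega;\R^d)}\leq C(\ell, u_D)$ with no dependence on $\|z\|_\calZ$ since $P(z,0)^0 = 1$, so $\|\wt W(t,\nabla \umin(t,z))\|_{L^1(\Omega)}$ is uniformly controlled by the data, and hence
\begin{equation*}
\Bigl|\int_\Omega g'(z) \wt W(t,\nabla \umin(t,z)) \eta \dx\Bigr| \leq \|g'\|_{L^\infty(\R)} \|\wt W(t,\nabla \umin(t,z))\|_{L^1(\Omega)} \|\eta\|_{L^\infty(\Omega)} \leq C(\ell, u_D)\,\|\eta\|_\calZ.
\end{equation*}
Collecting these three bounds yields \eqref{esti-gateau}. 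The most delicate point I expect is the passage to the limit in the lower bound of the envelope argument, which relies crucially on Lemma \ref{l:cddata} for the continuous dependence of $\umin$ on $z$.
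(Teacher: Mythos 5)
Your proof is correct and follows essentially the route the paper intends: the paper omits the argument and refers to \cite[Lemma 2.4]{krz}, whose proof is precisely this decomposition of $\calI$ into $\calI_q$, the $f$-term and the reduced elastic energy $\calI_2$, with the two-sided minimality (envelope) estimate for $\calI_2$ resolved via the continuous dependence of $\umin$ on $z$ from Lemma \ref{l:cddata}, and with the $z$-independent bound of $\|\umin(t,z)\|_{W^{1,2}}$ (the case $p=2$ of \eqref{e:w1p}) giving \eqref{esti-gateau}. The only cosmetic point is to note that for $\tau<0$ the sandwich inequalities reverse but both bounds still converge to the same limit, so the two-sided Gâteaux limit exists.
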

%\begin{proof}
% The proof is similar
%\end{proof}
%
%
%
We define
\begin{equation}
\label{itilde}
\wt \calI(t,z):= \calI_2(t,z) + \int_\Omega f(z)\dx \quad \text{for
all $(t,z) \in [0,T]\times \calZ$}
\end{equation}
with $\calI_2$ from \eqref{reduced-energy} as the part of the reduced energy
collecting all lower order terms.
Accordingly, $\rmD_z \calI$ from \eqref{form-gateau-derivative} decomposes as
\begin{equation}
\label{der-decomp}
\rmD_z \calI(t,z) = A_q z + \rmD_z\wt{\calI}(t,z) \quad \text{for all } (t,z) \in [0,T]\times \calZ.
\end{equation}
 In Lemma~\ref{l:diff_contz}
below we prove that the maps $(t,z) \mapsto \wt{\calI}(t,z(t))$
%\footnote{  I've added the Lip.\ cont.\ of $\wt{\calI}(t,z(t))$ as well:
%we're going to need it in the proof of the chain rule..  }
and that $(t,z) \mapsto \rmD_z\wt{\calI}(t,z)$ are Lipschitz continuous
w.r.t.\
a suitable \emph{Lebesgue} norm. In view of this, and in order to
emphasize  that, in \eqref{der-decomp}, $\rmD_z\wt{\calI}(t,z) $ is a lower order term w.r.t.\ $A_q z$,
 from
now on we shall often resort to the following
\begin{notation}[Abuse of notation for $ \rmD_z\wt{\calI}(t,z)$]
\label{not:abuse}
In view of  \eqref{form-gateau-derivative}, the term
$\rmD_z\wt\calI(t,z)$ can be identified with an element of  $L^\mu(\Omega)$ for
some $\mu\geq 1$. The quantity $\norm{\rmD_z\wt
\calI(t,z)}_{L^\mu(\Omega)}$ will be interpreted in this sense, and with the symbol
$\rmD_z\wt\calI$, we shall denote both
the derivative of $\wt\calI$ as an operator and the corresponding
density in $L^1(\Omega)$.   Accordingly,
we shall write
\begin{equation}
\label{abuse-integral}
\text{ for  a given } v \in L^{\mu'}(\Omega) \quad
\int_\Omega  \rmD_z\wt\calI(t,z)
v  \,\dd x \quad \text{ in place of } \quad
 \langle  \rmD_z\wt\calI(t,z),
v\rangle_{L^{\mu'}(\Omega)}.
\end{equation}
\end{notation}

 For $h\in \rmC^0(\R)$ and $z_1,z_2\in\calZ$ let
\begin{equation}
\label{C_h}
C_{h}(z_1,z_2)= \max \Set{ |h(s)|}{ \ |s| \leq
 \|z_1\|_{L^\infty(\Omega)} + \|z_2 \|_{L^\infty(\Omega)} }.
\end{equation}
This notation will be used along the proof of the following lemma.
\begin{lemma}[Local Lipschitz continuity of $\wt \calI$ and $\rmD_z\wt{\calI}$]
\label{l:diff_contz}
$ $\\
Under Assumptions \ref{assumption:energy}, \ref{ass:init}, and
 (A$_\Omega$1),
 %together with \eqref{crucial-regularity}.
%, and set
%\begin{equation}
%\label{itilde}
%\wt \calI(t,z):= \calI_2(t,z) + \int_\Omega f(z)\dx \quad \text{for
%all $(t,z) \in [0,T]\times \calZ$.}
%\end{equation}
%
there exists a constant $c_7>0$
depending on
$\norm{\ell}_{\rmC^{1,1}([0,T];W^{1,-p_*}_{\Gamma_D}(\Omega;\R^d))}$ and
$\norm{u_D}_{\rmC^{1,1}([0,T];W^{1,p_*}_{\Gamma_D}(\Omega;\R^d))}$
%,
%and $\|z_1 \|_{L^\infty(\Omega)} + \|z_2 \|_{L^\infty(\Omega)}$,
such that for all  $t_i\in [0,T]$ and all $z_i\in \calZ$ it holds
\begin{equation}
\begin{aligned}
\label{Lip-cont-I}
\left| \wt{\calI} (t_1, z_1) - \wt{\calI} (t_2, z_2) \right|\leq c_7 (C_{f'}(z_1,z_2) +
 P(z_1,z_2)^{2}) \left( \abs{t_1-t_2}+\norm{z_1
-z_2}_{L^{2p_*/(p_*-2)}(\Omega)}
  \right),
\end{aligned}
\end{equation}
with $C_{f'}(z_1,z_2)$  as in \eqref{C_h}  (corresponding to  $
h(x)=f'(x) $).
Further, for every $\mu\in [1,p_*/2)$,
\begin{equation}
\begin{aligned}
\label{enhanced-stim-7}
 \norm{\rmD_z \wt \calI(t_1,z_1) - \rmD_z \wt \calI(t_2,z_2) }_{L^\mu(\Omega)}
&\leq c_7
%(1 + \norm{\nabla z_1}_{L^q(\Omega)} +
%\norm{\nabla z_2}_{L^q(\Omega)} )^{3 k_*}
\big(C_{f''}(z_1,z_2)
\\ & \qquad
+(1 + C_{g''}(z_1,z_2)) P(z_1,z_2)^3 \big)
%
%(1 + \norm{\nabla z_1}_{L^q(\Omega)} +
% \norm{\nabla z_2}_{L^q(\Omega)})^{3 k_*}
%
%\big( \abs{t_1 -t_2}+\norm{z_1 -z_2}_{L^r(\Omega)} \big)
%
\big( \abs{t_1-t_2}+\norm{z_1 -z_2}_{L^r(\Omega)}    \big),
\end{aligned}
\end{equation}
where $r=p_*\mu(p_* - 2\mu)^{-1}$,
and  for $\mu\in  [1,p_*/2]$,
%\footnote{  I also need this additional estimate in the proof of Lemma
%\ref{l:ch-rule-tilde}  }
\begin{equation}
\label{estimate-for-DI}
	\forall\, (t,z) \in [0,T] \times \calZ \, : \qquad \| \rmD_z \wt
\calI(t,z)  \|_{L^{\mu}(\Omega)}  \leq
%c_7 P(z, 0)^{2}\|z\|_{L^{\infty}(\Omega)}
c_7(1 + \norm{f'(z)}_{L^\infty(\Omega)} + P(z,0)^2).
\end{equation}
\end{lemma}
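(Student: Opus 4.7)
The plan is to prove the three estimates by exploiting the representation of $\wt\calI$ and $\rmD_z\wt\calI$ in terms of the minimizer $\umin(t,z)$, and then leaning on the regularity estimates from Lemmas \ref{lem_reg_babadjan}, \ref{l:ex_min}, and \ref{l:cddata}. Recall that, with the shorthand $\wt W(t,\nabla v)=\tfrac12 \bbC\,\varepsilon(v+u_D(t)):\varepsilon(v+u_D(t))$, we have
\[
\wt\calI(t,z)=\int_\Omega g(z)\wt W(t,\nabla \umin(t,z))\dx - \langle \ell(t),\umin(t,z)\rangle + \int_\Omega f(z)\dx
\]
and $\rmD_z\wt\calI(t,z)=f'(z)+g'(z)\wt W(t,\nabla \umin(t,z))$.

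For \eqref{Lip-cont-I}, I would use the usual minimality sandwich: write $u_i:=\umin(t_i,z_i)$ and bound $\calI_2(t_1,z_1)-\calI_2(t_2,z_2)$ from above by $\calE_2(t_1,u_2,z_1)-\calE_2(t_2,u_2,z_2)$ and from below by $\calE_2(t_1,u_1,z_1)-\calE_2(t_2,u_1,z_2)$. Splitting each difference into a $g(z_1)-g(z_2)$ part, a $u_D(t_1)-u_D(t_2)$ part, and an $\ell(t_1)-\ell(t_2)$ part, I apply H\"older with exponents $(p_*,2p_*/(p_*-2),2)$. The Lipschitz continuity of $g$ gives the factor $C\|z_1-z_2\|_{L^{2p_*/(p_*-2)}(\Omega)}$, the data regularity \eqref{a:data} yields the $|t_1-t_2|$ terms, and the $L^{p_*}$ bound on $\varepsilon(u_i+u_D(t_i))$ comes from \eqref{e:w1p} at $p=p_*$, producing the prefactor $P(z_1,z_2)^2$. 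The $\int_\Omega(f(z_1)-f(z_2))\dx$ summand is handled by the mean value theorem and the bound $|f'|\le C_{f'}(z_1,z_2)$ on the relevant range.

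The main obstacle is \eqref{enhanced-stim-7}, because the exponent matching is delicate. Writing
\[
\rmD_z\wt\calI(t_1,z_1)-\rmD_z\wt\calI(t_2,z_2)=(f'(z_1)-f'(z_2))+(g'(z_1)-g'(z_2))\wt W(t_1,\nabla u_1)+g'(z_2)\bigl(\wt W(t_1,\nabla u_1)-\wt W(t_2,\nabla u_2)\bigr),
\]
the first summand is bounded in $L^\mu(\Omega)$ via the mean value theorem and H\"older (cost $C_{f''}(z_1,z_2)\|z_1-z_2\|_{L^r(\Omega)}$). For the second, applying H\"older with $1/\mu=1/r+2/p_*$ (so that $s:=p_*/2$ is the conjugate exponent to $r$ inside $\mu$) yields $C_{g''}(z_1,z_2)\|z_1-z_2\|_{L^r}\|\wt W(t_1,\nabla u_1)\|_{L^{p_*/2}}$, and the last factor is controlled by $C\|u_1+u_D(t_1)\|_{W^{1,p_*}}^2\le C\,P(z_1,0)^2$ via \eqref{e:w1p}. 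For the third, the quadratic nature of $\wt W$ lets me factor
\[
|\wt W(t_1,\nabla u_1)-\wt W(t_2,\nabla u_2)|\le C\bigl(|\varepsilon(u_1+u_D(t_1))|+|\varepsilon(u_2+u_D(t_2))|\bigr)\,|\varepsilon(u_1-u_2)+\varepsilon(u_D(t_1)-u_D(t_2))|,
\]
and H\"older with exponents $(p_*,\tilde p)$, where $1/\mu=1/p_*+1/\tilde p$, matches exactly the pair $(\tilde p,r)$ from Lemma \ref{l:cddata}: one checks that $\tilde p=p_*\mu/(p_*-\mu)\in[p_*',p_*)$ and the associated $r$ from Remark \ref{rmk:exponents-later-use} equals $p_*\mu/(p_*-2\mu)$, as required. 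Invoking \eqref{est_cont1} for $\|\varepsilon(u_1-u_2)\|_{L^{\tilde p}}$, the data regularity \eqref{a:data} for $\|\varepsilon(u_D(t_1)-u_D(t_2))\|_{L^{\tilde p}}$, and \eqref{e:w1p} for the $L^{p_*}$ factor, I pick up an overall $P(z_1,z_2)^3$ and the $(|t_1-t_2|+\|z_1-z_2\|_{L^r})$ factor, completing \eqref{enhanced-stim-7}.

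Finally, for \eqref{estimate-for-DI}, the triangle inequality and the uniform bound $|g'|\le\|g'\|_{L^\infty}$ give
\[
\|\rmD_z\wt\calI(t,z)\|_{L^\mu(\Omega)}\le \|f'(z)\|_{L^\mu(\Omega)}+\|g'\|_{L^\infty(\R)}\|\wt W(t,\nabla \umin(t,z))\|_{L^\mu(\Omega)},
\]
and since $2\mu\le p_*$, the second term is controlled by $C\|\varepsilon(\umin(t,z)+u_D(t))\|_{L^{2\mu}(\Omega)}^2\le C\,P(z,0)^{2p_*|2\mu-2|/(2\mu(p_*-2))}\cdot(\cdots)^2$; the exponent on $P(z,0)$ is bounded by $1$ uniformly in $\mu\in[1,p_*/2]$ by \eqref{interesting-exponent-added}, so the second term is at most $C\,P(z,0)^2$. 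The first term is absorbed into $\|f'(z)\|_{L^\infty(\Omega)}$, and gathering all constants yields \eqref{estimate-for-DI}.
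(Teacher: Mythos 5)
Your proof is correct, and for \eqref{enhanced-stim-7} and \eqref{estimate-for-DI} it follows essentially the paper's own route: the same three-term splitting of $\rmD_z\wt\calI(t_1,z_1)-\rmD_z\wt\calI(t_2,z_2)$, the same H\"older pairing $1/\mu=1/r+2/p_*$ for the $(g'(z_1)-g'(z_2))\wt W(t_1,\nabla u_1)$ term, and exactly the same choice $\wt p=\mu p_*/(p_*-\mu)$ in Lemma \ref{l:cddata}, with $r=p_*\wt p/(p_*-\wt p)=\mu p_*/(p_*-2\mu)$, producing the prefactor $P(z_1,z_2)^3$. Where you genuinely differ is \eqref{Lip-cont-I}: the paper estimates $\wt\calI(t_1,z_1)-\wt\calI(t_2,z_2)$ directly, so both minimizers $u_1,u_2$ appear and the continuous-dependence estimate \eqref{est_cont1} (with $p=2$, $r=2p_*/(p_*-2)$) is needed to control $\|u_1-u_2\|_{W^{1,2}(\Omega;\R^d)}$, whereas your minimality sandwich $\calE_2(t_1,u_1,z_1)-\calE_2(t_2,u_1,z_2)\le \calI_2(t_1,z_1)-\calI_2(t_2,z_2)\le \calE_2(t_1,u_2,z_1)-\calE_2(t_2,u_2,z_2)$ keeps the displacement frozen in each comparison, so \eqref{Lip-cont-I} follows from \eqref{e:w1p} and the data regularity \eqref{a:data} alone; this is slightly more economical (Lemma \ref{l:cddata} is then invoked only for \eqref{enhanced-stim-7}), while the paper's direct route is uniform with the argument used for the derivative estimate. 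One bookkeeping slip in your treatment of \eqref{estimate-for-DI}: after squaring, the exponent on $P(z,0)$ is $2p_*|2\mu-2|/(2\mu(p_*-2))$, which \eqref{interesting-exponent-added} bounds by $2$, not by $1$ (it equals $2$ at $\mu=p_*/2$); this is harmless, since the resulting bound $P(z,0)^2$ is exactly what \eqref{estimate-for-DI} asks for.
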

%%%%%%
\begin{proof}
 In order to prove estimate \eqref{Lip-cont-I}, with elementary calculations we
observe that
\[
\begin{aligned}
\left| \wt\calI(t_1,z_1) - \wt\calI(t_2,z_2) \right| &  \leq
\int_\Omega |f(z_1)-f(z_2)| \,\dd x + \int_\Omega |g(z_1) - g(z_2)| |\wt
W(t_1,\nabla u_1)| \,\dd x
\\ & \quad + \int_\Omega |g(z_2)|  |\wt W(t_1,\nabla u_1)-\wt W(t_2,\nabla u_2)
| \,\dd x
 + |\pairing{\calU}{\ell(t_1) - \ell(t_2)}{u_1}|
 \\
 & \quad
+  |\pairing{\calU}{\ell(t_2)}{u_1-u_2}| \doteq I_1 +I_2 +I_3+I_4+I_5,
\end{aligned}
\]
where $u_i:=u_\text{min}(t_i,z_i)\in W^{1,p_*}(\Omega;\R^d)$ for $i=1,2$.
Since $f \in \rmC^1(\R)$ (cf.\ \eqref{ass-eff}),
\begin{equation}
\label{f-Lip}
 I_1 \leq C_{f'} (z_1,z_2) \| z_1 - z_2\|_{L^1(\Omega)} .
%\qquad \text{with } C_{f'} (z_1,z_2)= \max \{ |f'(s)|\, : \ |s| \leq \|z_1
%\|_{L^\infty(\Omega)} + |z_2 \|_{L^\infty(\Omega)} \}.
\end{equation}
Moreover, using \eqref{elast-1}, \eqref{new-g}, \eqref{a:data}, and the H\"older
inequality,
\[
\begin{aligned}
 I_2    \leq \| g(z_1) - g(z_2) \|_{L^{p_*/(p_*-2)} (\Omega)} \| \wt
W(t_1,\nabla u_1)  \|_{L^{p_*/2}(\Omega)}
 & \leq C \|z_1 - z_2 \|_{L^{p_*/(p_*-2)} (\Omega)} \| u_1
\|_{W^{1,p_*}(\Omega;\R^d)}^2
\\ & \leq C P(z_1,0)^2 \|z_1 - z_2 \|_{L^{p_*/(p_*-2)} (\Omega)}
\end{aligned}
\]
where  the constant $C$ also incorporates the data
%also incorporates $C_g (z_1,z_2)= \max \{ |g'(s)|\, : \ |s| \leq \|z_1
%\|_{L^\infty(\Omega)} + \|z_2 \|_{L^\infty(\Omega)} \}$,
and the last
inequality follows from
\eqref{e:w1p}  (with $p=p_*$).
Analogously,
\begin{align*}
 I_3  & \leq C  \int_\Omega |g(z_2)| |\varepsilon(u_1 +u_D(t_1))
+\varepsilon(u_2 +u_D(t_2))   | |\varepsilon(u_1 +u_D(t_1)) - \varepsilon(u_2
+u_D(t_2))   |\,\dd x
\\ &
 \leq C (\|u_1 + u_2\|_{W^{1,2}(\Omega;\R^d)} + 1)( \|u_1 - u_2\|_{W^{1,2}(\Omega;\R^d)}
+ \|u_D(t_1) - u_D(t_2)\|_{W^{1,2}(\Omega;\R^d)} )
\\
&
\leq C P(z_1,z_2)^2  (|t_1-t_2| +  \|z_1 - z_2 \|_{L^{2p_*/(p_*-2)} (\Omega)})
\end{align*}
due to \eqref{new-g} and  \eqref{a:data} and, for the last inequality, to
\eqref{e:w1p}  (with $p=2$),
  and \eqref{est_cont1} with  $p=2$, whence  $r= 2p_*/(p_*-2)$.
Finally,
\[
\begin{aligned}
& I_4 \leq \| \ell(t_1) - \ell(t_2)\|_{W^{-1,p_*}(\Omega;\R^d)} \| u_1
\|_{W^{1,p_*}(\Omega;\R^d)} \leq C |t_1-t_2|P(z_1,z_2),
\\
& I_5 \leq  \|  \ell(t_2)\|_{W^{-1,2}(\Omega;\R^d)} \| u_1-u_2
\|_{W^{1,2}(\Omega;\R^d)}
 \leq C P(z_1,z_2)^2  (|t_1-t_2| +  \|z_1 - z_2 \|_{L^{2p_*/(p_*-2)} (\Omega)})
\end{aligned}
\]
where the first estimate is due to \eqref{a:data} and \eqref{e:w1p}, and the
second one follows from $\ell \in \rmC^0 ([0,T]; W_{\Gamma_D}^{-1,2}(\Omega;\R^d)
)$ and again  \eqref{est_cont1}.
Collecting the above calculations, we conclude  \eqref{Lip-cont-I}.

Since $f'$ is  locally Lipschitz,
 for the
proof of \eqref{enhanced-stim-7} we confine ourselves to  investigating the
properties of
$\rmD_z\calI_2$, given by \eqref{reduced-energy}.
%For $i=1,2$, let $u_i:=u_\text{min}(t_i,z_i)\in W^{1,p_*}(\Omega)$ and
Let $\mu\in [1,p_*/2)$.  We have
\begin{align*}
&\norm{\rmD_z\calI_2(t_1,z_1)
-
  \rmD_z\calI_2(t_2,z_2)}_{L^{\mu}(\Omega)}
\\
&\qquad\leq
C_{g''}(z_1,z_2)
\norm{z_1 -z_2}_{L^r(\Omega)}\norm{\wt W(t_1,\nabla
  u_1)}_{L^{p_*/2}(\Omega)}
+ c \norm{\wt W(t_1,\nabla u_1) -
  \wt W(t_2,\nabla u_2)}_{L^{\mu}(\Omega)}
\\
&\qquad\leq
 C(1 + C_{g''}(z_1,z_2))P(z_1,z_2)^3
%(1 + \norm{\nabla z_1}_{L^q(\Omega)} +
% \norm{\nabla z_2}_{L^q(\Omega)})^{3 k_*}
%
\big( \abs{t_1 -t_2}+\norm{z_1 -z_2}_{L^r(\Omega)} \big)
% + \norm{z_1 -
 % z_2}_{L^\sigma(\Omega)}\big)
\end{align*}
and $C$ depends on the data $\ell$ and $u_D$.
 Indeed, the first inequality follows from
the form of  $\rmD_z \calI_2$ (cf.\ \eqref{form-gateau-derivative}).  The second one
ensues from \eqref{new-g} and
 the H\"older inequality for the first term, which is then estimated by means
  of \eqref{e:w1p} Lemma
\ref{l:ex_min}. For the term $\norm{\wt W(t_1,\nabla u_1) -
  \wt W(t_2,\nabla u_2)}_{L^{\mu}(\Omega)}$, we  again use the H\"older inequality. Ultimately,  this leads us to estimate the quantity
$  \| u_1 +u_2\|_{W^{1,p_*} (\Omega;\R^d)}$, for which we use  \eqref{e:w1p},  and the quantity
$  \| u_1 - u_2\|_{W^{1,\wt p} (\Omega;\R^d)}$ with
$\wt p= \mu p_*/(p_* - \mu)$ , for which we use
    \eqref{est_cont1}
   (observe that $\mu \leq \frac{p_*}{2}$),
   with
 $r=
p_*\mu/(p_* - 2\mu)$
 (indeed, $r=p_* \wt p/(p_* - \wt p)$).
% and , we find
%the
%Lipschitz continuity and boundedness of $g'$,
%For estimating the first term in the second line we have used , while the second term is estimated based on  Lemma
%\ref{l:cddata}, using that
 With completely analogous
calculations, we prove \eqref{estimate-for-DI}.
\end{proof}
%For estimating the second term in the
%second line we have
%applied \eqref{est_cont1} with $\wt p= p\sigma'/(p - \sigma')$.
%Observe that $\wt p\in[p', 2]$ if $\sigma\geq 2p/(p-2)$.

{F}rom \eqref{enhanced-stim-7}
we deduce  estimate  \eqref{very-useful-later} below,
%for the quantity $\int_{\Omega} (\rmD_z
%\wt\calI (t_1, z_1) - \rmD_z \wt\calI (t_2, z_2)) (z_1-z_2) \dd x$ (for given
%$(t_i,z_i) \in [0,T]\times \calZ$, $i=1,2$),
 which pops up in several of the  calculations  in Sec.\ \ref{s:5}.
\begin{corollary}\label{c:very-useful-later}
Under Assumptions \ref{assumption:energy}, \ref{ass:init}, and
 (A$_\Omega$1),  for every $w\in\calZ$  there holds
\begin{equation}
\label{very-useful-later}
\begin{aligned}
 &
%\left|
%\int_{\Omega} (\rmD_z \wt\calI (t_1, z_1) - \rmD_z \wt\calI (t_2,
%z_2)) (z_1-z_2) \dd x \right|
\! \!\!\! \!\!  \!\!  \!
\abs{\langle  \rmD_z \wt\calI (t_1, z_1) - \rmD_z \wt\calI (t_2,
 z_2) , w \rangle_\calZ}
\\ & \, \! \!\!\!  \!\!  \!\!  \!   \leq c_7 %P(z_1,z_2)^3
(C_{f''}(z_1,z_2) +(1 + C_{g''}(z_1,z_2)) P(z_1,z_2)^3)
 (|t_1-t_2| +
\|z_1-z_2\|_{L^{2p_*/(p_*-2)}(\Omega)} )
\| w \|_{L^{2p_*/(p_*-2)}(\Omega)}.
\end{aligned}
\end{equation}
\end{corollary}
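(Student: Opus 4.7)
The plan is to obtain the estimate by a direct application of Hölder's inequality combined with the $L^\mu$-bound \eqref{enhanced-stim-7} from Lemma~\ref{l:diff_contz}, after choosing the exponents appropriately. In view of Notation~\ref{not:abuse}, the duality pairing on the left-hand side is interpreted as the Lebesgue integral $\int_\Omega (\rmD_z\wt\calI(t_1,z_1) - \rmD_z\wt\calI(t_2,z_2))\,w\,\mathrm{d}x$, so the pairing splits into a product of two Lebesgue norms via Hölder.

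Concretely, I would set $\mu := 2p_*/(p_*+2)$, whose Hölder conjugate is exactly $\mu' = 2p_*/(p_*-2)$, matching the exponent that appears on the right-hand side in the statement. Since $p_*>d\geq 3>2$, one checks that $\mu<p_*/2$, so Lemma~\ref{l:diff_contz} is applicable with this $\mu$. Furthermore, the associated exponent appearing in \eqref{enhanced-stim-7},
\[
r \,=\, \frac{p_*\mu}{p_*-2\mu} \,=\, \frac{p_*\cdot 2p_*/(p_*+2)}{p_*(p_*-2)/(p_*+2)} \,=\, \frac{2p_*}{p_*-2},
\]
coincides with $\mu'$, which is precisely the $L^r$-norm appearing in the factor $\|z_1-z_2\|_{L^{2p_*/(p_*-2)}(\Omega)}$ on the right-hand side of \eqref{very-useful-later}. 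Thus both the exponent $\mu'$ for $w$ and the exponent $r$ for $z_1-z_2$ come out right.

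Putting this together, Hölder's inequality yields
\[
\bigl|\langle \rmD_z\wt\calI(t_1,z_1) - \rmD_z\wt\calI(t_2,z_2), w\rangle_\calZ\bigr| \,\leq\, \|\rmD_z\wt\calI(t_1,z_1) - \rmD_z\wt\calI(t_2,z_2)\|_{L^\mu(\Omega)}\,\|w\|_{L^{\mu'}(\Omega)},
\]
and inserting \eqref{enhanced-stim-7} with the above choice of $\mu$ and $r$ gives exactly \eqref{very-useful-later}. There is no real obstacle here; the only thing to be careful about is the choice of the dual pair $(\mu,\mu')$ and the verification that this choice makes both the admissibility condition $\mu\in[1,p_*/2)$ of Lemma~\ref{l:diff_contz} and the exponent identification $r=\mu'=2p_*/(p_*-2)$ hold simultaneously, which is guaranteed by $p_*>d\geq 3$.
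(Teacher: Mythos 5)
Your argument is exactly the paper's proof: the paper also interprets the pairing via Notation \ref{not:abuse}, applies H\"older with the pair $L^{2p_*/(p_*+2)}$--$L^{2p_*/(p_*-2)}$, and invokes \eqref{enhanced-stim-7} with $\mu = 2p_*/(p_*+2)$ and $r = 2p_*/(p_*-2)$. Your additional verifications that $\mu<p_*/2$ and that $r=\mu'$ are correct and merely make explicit what the paper leaves to the reader.
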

\begin{proof} To check \eqref{very-useful-later}, we use the H\"older inequality
and  estimate
 \[
 %\begin{aligned}
 %&
% \left|
%\int_{\Omega} (\rmD_z \wt\calI (t_1, z_1) - \rmD_z \wt\calI (t_2,
%z_2)) (z_1-z_2) \dd x \right|
\abs{\langle  \rmD_z \wt\calI (t_1, z_1) - \rmD_z \wt\calI (t_2,
 z_2) , w \rangle_\calZ}
%\\
 %&\quad
  \leq \|\rmD_z \wt\calI (t_1, z_1) - \rmD_z \wt\calI (t_2,
z_2)\|_{L^{2p_*/(p_*+2)} (\Omega)} \| w  \|_{L^{2p_*/(p_*-2)} (\Omega)},
%\\
%& \quad
%\leq c_7
%% P(z_1,z_2)^3
%\DDDS
%(C_{f''}(z_1,z_2) +(1 + C_{g''}(z_1,z_2)) P(z_1,z_2)^3)
%\DDDE
%\big( \abs{t_1-t_2}+\norm{z_1 -z_2}_{L^{2p_*/(p_*-2)}(\Omega)} \big) \|z_1-z_2
%\|_{L^{2p_*/(p_*-2)} (\Omega)},
% \end{aligned}
 \]
%where for the second inequality we have applied
(cf.\ Notation \ref{not:abuse}),
which in combination with  \eqref{enhanced-stim-7}
 for $\mu =2p_*/(p_*+2) $ and $r=2p_*/(p_*-2)  $ implies~\eqref{very-useful-later}.
\end{proof}
%
%
%
%%%%%%%%%%%%%%%%%%%%%%%%%%%%%%%%%%%%%%%%%%%%%%%%%%%%%
%As a consequence of the previous lemma the following convergence holds true:
%\begin{align}
% t_n\rightarrow t,\,\, z_n\rightharpoonup z \text{ in }\calZ \quad \Rightarrow
%\quad \rmD_z\wt\calI(t_n,z_n) \rightarrow \rmD_z\wt\calI(t,z) \,\,\text{
%strongly in }\calZ^*.
%\end{align}
\noindent We also have \bnnc the following monotonicity property for $\rmD_z\calI$. \ennc
\begin{corollary}
\label{cor:rosenheim}
$ $\\
Under Assumptions \ref{assumption:energy}, \ref{ass:init}, and
 (A$_\Omega$1),
there exist constants $c_8,\,c_{9}>0$ such that for all $t \in [0,T]$
and $z_i \in \calZ$, $i=1,2$, we have
\begin{equation}
\label{e:strong-monot-sez2}
\norm{z_1{-}z_2}_{L^2(\Omega)}^2+ \pairing{\calZ}{\rmD_z\calI(t,z_1){-}\rmD_z\calI(t,z_2)}{z_1{-}z_2} \geq c_8 \norm{z_1{-}z_2}_{W^{1,2}(\Omega)}^2 -
c_{9} \norm{z_1{-}z_2}_{L^{2} (\Omega)}^2\,.
\end{equation}
\end{corollary}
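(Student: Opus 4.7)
The plan is to exploit the decomposition \eqref{der-decomp}, $\rmD_z\calI(t,z)=A_q z+\rmD_z\wt\calI(t,z)$, and to treat the two summands separately: the principal part $A_q$ provides a coercive $\|\nabla(z_1-z_2)\|_{L^2}^2$-bound via strong monotonicity, while the lower-order contribution from $\rmD_z\wt\calI$ is controlled using \eqref{very-useful-later} and then absorbed via the interpolation inequality \eqref{lions-magenes}.

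Concretely, for the principal part I would use \eqref{e2.22}: since $q\geq 2$, one has $(1+|\nabla z_1|^2+|\nabla z_2|^2)^{(q-2)/2}\geq 1$ pointwise, and hence
\[
\pairing{\calZ}{A_q z_1 - A_q z_2}{z_1-z_2} \;\geq\; c_q \,\|\nabla(z_1-z_2)\|_{L^2(\Omega)}^2.
\]
For the lower-order part, I would apply Corollary \ref{c:very-useful-later} with $t_1=t_2=t$ and $w=z_1-z_2$, yielding
\[
\bigl|\pairing{\calZ}{\rmD_z\wt\calI(t,z_1)-\rmD_z\wt\calI(t,z_2)}{z_1-z_2}\bigr| \;\leq\; K(z_1,z_2)\,\|z_1-z_2\|_{L^{2p_*/(p_*-2)}(\Omega)}^2,
\]
with $K(z_1,z_2):=c_7\bigl(C_{f''}(z_1,z_2)+(1+C_{g''}(z_1,z_2))P(z_1,z_2)^{3}\bigr)$.

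The key step is then to upgrade the $L^{2p_*/(p_*-2)}$-norm using \eqref{lions-magenes}, which applies since $p_*>d$: for any $\rho>0$ there exists $C_\rho$ with
\[
\|z_1-z_2\|_{L^{2p_*/(p_*-2)}(\Omega)} \;\leq\; \rho\,\|z_1-z_2\|_{W^{1,2}(\Omega)} + C_\rho\,\|z_1-z_2\|_{L^2(\Omega)}.
\]
Squaring and using $(a+b)^2\leq 2a^2+2b^2$, the bad term becomes $2K\rho^2\|z_1-z_2\|_{W^{1,2}}^2+2KC_\rho^2\|z_1-z_2\|_{L^2}^2$. Choosing $\rho$ small enough so that $2K\rho^2\leq \tfrac12\min\{c_q,1\}$, the $W^{1,2}$-piece is absorbed into $c_q\|\nabla(z_1-z_2)\|_{L^2}^2+\|z_1-z_2\|_{L^2}^2$ (using that the extra $\|z_1-z_2\|_{L^2}^2$ on the left-hand side of \eqref{e:strong-monot-sez2} supplies the missing zero-order contribution to complete the $W^{1,2}$ norm). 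Combining these estimates yields \eqref{e:strong-monot-sez2} with $c_8:=\tfrac12\min\{c_q,1\}$ and $c_9$ incorporating the constant $2KC_\rho^2$.

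The main (minor) subtlety is that the bound from \eqref{very-useful-later} involves $K(z_1,z_2)$, which depends on $z_1,z_2$ through $C_{f''}$, $C_{g''}$ and $P$; consequently $c_9$ depends on these quantities as well, whereas $c_8$ is genuinely universal. This dependence is harmless for all the subsequent applications in Section \ref{s:5}, where Corollary \ref{cor:rosenheim} is used along bounded families of discrete solutions (so that $C_{f''},C_{g''},P$ remain controlled via the basic energy estimate together with the $L^\infty$-bound $0\leq z\leq 1$ furnished by the maximum principle).
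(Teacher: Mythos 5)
Your proof is correct and follows essentially the same route as the paper's: the decomposition $\rmD_z\calI=A_q+\rmD_z\wt\calI$, the monotonicity estimate \eqref{e2.22}, the lower-order bound \eqref{very-useful-later} applied with $t_1=t_2=t$ and $w=z_1-z_2$, and absorption of the $L^{2p_*/(p_*-2)}$-term via \eqref{lions-magenes}. Your observation that the resulting constant $c_9$ depends on $z_1,z_2$ through $C_{f''}$, $C_{g''}$ and $P(z_1,z_2)$ applies equally to the paper's own argument (which writes this factor as a generic constant $c$), and, as you note, it is harmless in the applications where the corollary is invoked along families with uniform $\calZ$- and $L^\infty$-bounds.
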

\begin{proof}
We observe that by \eqref{e2.22} and \eqref{very-useful-later}  there holds
\[
\begin{aligned}
 &  \norm{z_1{-}z_2}_{L^2(\Omega)}^2 + \pairing{\calZ}{\rmD_z\calI(t,z_1){-}\rmD_z\calI(t,z_2)}{z_1{-}z_2}
 \\
 &  =
 \norm{z_1{-}z_2}_{L^2(\Omega)}^2 + \pairing{\calZ}{A_q z_1 - A_q z_2} {z_1 - z_2} +
\pairing{\calZ}{\rmD_z\wt\calI(t,z_1){-}\rmD_z\wt\calI(t,z_2)}{z_1{-}z_2}
\\ & \geq
 \norm{z_1{-}z_2}_{L^2(\Omega)}^2 + c_q\int_{\Omega}(1 + |\nabla z_1|^2 +  |\nabla z_2|^2 )^{\frac{q-2}{2}}
 |\nabla z_1 - \nabla z_2|^2\, \dd x - c \|z_1-z_2\|^2_{L^{2p_*/(p_*-2)}(\Omega)}
\end{aligned}
\]
where $\wt\calI$ is defined as in \eqref{itilde}.
Then, \eqref{e:strong-monot-sez2} follows upon using \eqref{lions-magenes}.
\end{proof}
\noindent It is not difficult to check that Corollary \ref{cor:rosenheim} indeed implies that the functional $z \mapsto \calI(t,z)$ is
$\lambda$-convex w.r.t.\ the $L^2(\Omega)$-norm,  for some $\lambda \in \R$, viz.\ that
\[
\exists\, \lambda \in \R  \ \forall\, z_0,\, z_1 \in \calZ \ \forall\, \theta \in (0,1)\, : \quad
\calI(t, z_\theta) \leq (1-\theta)\calI(t,z_0) +\theta \calI(t,z_1) - \frac{\lambda \theta (1-\theta)}{2} \norm{z_0{-}z_1}_{L^{2} (\Omega)}^2\,.
\]
However, this property does not automatically guarantee the validity of the chain rule for $\calI$, cf.\ the discussion at the beginning of Sec.\
\ref{ss:3.1}.

As a summary of the previous lemmata we obtain
\begin{corollary}[Fr\'echet differentiability of $\calI$]
\label{coro-fre}
$ $\\
Under Assumptions \ref{assumption:energy}, \ref{ass:init}, and
 (A$_\Omega$1),
 the functional  $\calI $  is Fr{\'e}chet
differentiable on $[0,T]\times \calZ$ and
\begin{equation}
\label{strong-continuity} \text{$t_n\rightarrow t$ and
$z_n\to z$ strongly in $\calZ$ implies }   \rmD_z\calI(t_n,z_n) \to
\rmD_z\calI(t,z) \ \text{strongly  in} \  \calZ^*.
\end{equation}
Furthermore,
\begin{equation}
\label{weak-continuity}
\begin{aligned}
&
\text{$t_n\rightarrow t$ and
$z_n\rightharpoonup z$ weakly in $\calZ$
 implies }
\\
&
\liminf_{n \to \infty}\calI(t_n,z_n)\geq  \calI(t,z), \quad
\wt\calI(t_n,z_n)\rightarrow \wt\calI(t,z), \quad
\partial_t\calI(t_n,z_n)\rightarrow \partial_t\calI(t,z), \\
&
 \rmD_z\wt\calI(t_n,z_n) \to
\rmD_z\wt\calI(t,z) \ \text{strongly in} \  \calZ^*.
\end{aligned}
\end{equation}
\end{corollary}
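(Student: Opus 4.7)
The plan is to assemble the conclusions of Lemmas \ref{l:diff_time}, \ref{l:gateaux}, and \ref{l:diff_contz} and exploit the compact Sobolev embedding $\calZ = W^{1,q}(\Omega) \Subset \rmC^0(\overline{\Omega})$, valid because $q > d$; this in particular yields $\calZ \Subset L^r(\Omega)$ for every $r \in [1,\infty]$ together with the continuous dual embedding $L^1(\Omega) \hookrightarrow \calZ^*$.

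First I would establish the strong continuity \eqref{strong-continuity} by decomposing $\rmD_z\calI(t,z) = A_q z + \rmD_z\wt\calI(t,z)$ as in \eqref{der-decomp}. For the $q$-Laplacian contribution, apply the pointwise bound \eqref{A7-D} together with H\"older's inequality (exponents $\tfrac{q}{q-2}, q, q$) to obtain
\[
\|A_q z_n - A_q z\|_{\calZ^*} \le c\bigl(1+\|\nabla z_n\|_{L^q(\Omega)} + \|\nabla z\|_{L^q(\Omega)}\bigr)^{q-2}\|\nabla(z_n-z)\|_{L^q(\Omega)},
\]
which vanishes as $z_n \to z$ in $\calZ$. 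For the lower-order term, invoke \eqref{enhanced-stim-7} with, e.g., $\mu = 1$ and $r = p_*/(p_*-2)$; strong convergence in $\calZ$ entails convergence in every $L^r(\Omega)$ and hence, through $L^1(\Omega) \hookrightarrow \calZ^*$, convergence of $\rmD_z\wt\calI(t_n,z_n)$ to $\rmD_z\wt\calI(t,z)$ in $\calZ^*$. With \eqref{strong-continuity} in place, Fr\'echet differentiability of $\calI$ follows at once: Lemma \ref{l:gateaux} and Lemma \ref{l:diff_time} supply the two partial G\^ateaux derivatives, their joint continuity is guaranteed by \eqref{strong-continuity} and by \eqref{stim5}, and the standard criterion that continuous partial G\^ateaux derivatives imply Fr\'echet differentiability closes the argument.

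For the weak-convergence claims \eqref{weak-continuity}, the key observation is that $z_n \rightharpoonup z$ in $\calZ$ implies, via the same compact embedding, strong convergence in every $L^r(\Omega)$, while the $\calZ$-bound on $(z_n)$ keeps the polynomial factors $C_{f'}(z_n,z)$, $C_{f''}(z_n,z)$, $C_{g''}(z_n,z)$, $P(z_n,z)$ from the Lipschitz estimates of Lemma \ref{l:diff_contz} uniformly bounded. The three convergences $\wt\calI(t_n,z_n)\to\wt\calI(t,z)$, $\partial_t\calI(t_n,z_n)\to\partial_t\calI(t,z)$, and $\rmD_z\wt\calI(t_n,z_n)\to\rmD_z\wt\calI(t,z)$ in $\calZ^*$ then follow directly from \eqref{Lip-cont-I}, \eqref{stim5}, and \eqref{enhanced-stim-7}, respectively. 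For the lower semicontinuity I write $\calI = \calI_q + \wt\calI$ (cf.\ \eqref{itilde}): the functional $\calI_q(z) = \frac{1}{q}\int_\Omega (1+|\nabla z|^2)^{q/2}\,\dd x$ is convex and continuous on $\calZ$, hence weakly lower semicontinuous, while $\wt\calI$ is weakly continuous by what precedes, so $\liminf_n \calI(t_n,z_n) \ge \calI(t,z)$. The whole corollary is essentially a bookkeeping exercise, the only point requiring care being to match the exponents $\mu$ and $r$ from Lemma \ref{l:diff_contz} with the compactness regime of the Sobolev embedding; since $q > d$ makes every $L^r$ with $r < \infty$ accessible by compact embedding, this is painless.
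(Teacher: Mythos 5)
Your proof is correct and follows essentially the same route as the paper: decompose $\rmD_z\calI = A_q + \rmD_z\wt\calI$, get strong $\calZ^*$-continuity of the $A_q$ part from strong $\calZ$-convergence, handle the lower-order terms via the estimates of Lemma \ref{l:diff_contz}, and deduce the weak-continuity statements from \eqref{stim5}, \eqref{enhanced-stim-7} and the compact embedding $\calZ \Subset L^r(\Omega)$. The paper's proof is just a terser version of the same bookkeeping, so no further comment is needed.
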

\begin{proof}
Observe that $z_n \to z $ in $\calZ$ implies $\rmD_z\calI_\il (z_n) \to
\rmD_z\calI_\il(t,z) $ in $\calZ^*$.  Therefore, in view of Lemma
\ref{l:diff_contz},
  the G\^ateaux-differential  $\rmD_z \calI $
fulfills \eqref{strong-continuity}, which yields that $\calI$ is Fr\'echet
differentiable.
 The  continuity property~\eqref{weak-continuity} of
$\partial_t\calI$ and $\rmD_z \wt\calI$ is an immediate consequence of
estimates \eqref{stim5} and \eqref{enhanced-stim-7}, and of the compact
embedding $\calZ \Subset L^r (\Omega)$ for all $1 \leq r \leq \infty$.
%,  joint with the compact
%embedding \eqref{wonderful}.
\end{proof}

%%%%%%%%%%%%%%%%%%%%%%%%%%%%%%%%%%%%%%%%%%%%%%%%%%%%%%%%%%%%%%%%%%%%%%%
%
%%%%%%%%%%%%%%%%%%%%%%%%%% SECTION 3 %%%%%%%%%%%%%%%%%%%%%%%%%%%%%%%%%%
%
%%%%%%%%%%%%%%%%%%%%%%%%%%%%%%%%%%%%%%%%%%%%%%%%%%%%%%%%%%%%%%%%%%%%%%%

%********************************

\section{The viscous problem}
\label{s:3}
We now address the analysis of the viscous $L^2$-regularization of \eqref{dndia-eps}
of the rate-independent system \eqref{dndia}. To this aim,
we introduce the \emph{viscous} dissipation
potential
\begin{equation}
\label{def-viscous-intro}
\calR_{\epsilon} = \calR_1 + \calR_{2,\epsilon}  \quad\text{ with }
\calR_{2,
\epsilon}(\eta)=\frac{\epsilon}{2}
\norm{\eta}^2_{L^2(\Omega)},
\end{equation}
with $\calR_1$ from \eqref{def_R1}.
We denote by
 $\partial \calR_{\epsilon} : \calZ \rightrightarrows
\calZ^*$  its subdifferential (in the sense of convex
analysis),
in the duality between $\calZ^*$ and $\calZ$,
 and consider
\emph{viscous} doubly nonlinear evolution equation
\begin{equation}
\label{visc-eps-dne}
 \partial \calR_{\epsilon}(z'(t)) +
\rmD_z\calI(t,z(t))\ni 0 \quad \text{in $\calZ^*$ } \ \foraa\, t \in
(0,T),
\end{equation}
with the initial condition, featuring  $z_0 \in \calZ$,
\begin{equation}
\label{Cauchy-condition} z(0)=z_0.
\end{equation}
It follows from \cite[Cor.~IV.6]{aubin-ekeland84} that
\begin{equation}
\label{aforementioned-form}
 \partial \calR_{\epsilon}(\eta) = \partial \calR_1 (\eta) + \epsilon \eta \qquad \text{for all }
 \eta \in \calZ.
 \end{equation}
  Thus, also taking into account formula \eqref{form-gateau-derivative}
for $\rmD_z \calI$, we see that
\eqref{visc-eps-dne} translates into
\begin{equation}
\label{eps-reformulation}
\partial \calR_1 (z'(t)) + \epsilon  z'(t)
  + A_\il(z(t)) + f'(z(t)) + g'(z(t))\wt W(t,\nabla \umin(t,z(t))) \ni 0
 \text{ in }\calZ^*   \  \foraa \  t \in (0,T).
 \end{equation}
 %%%%
 %%%%%
 \subsection{Weak solutions: definition and existence result}
 \label{ss:3.1}
 We are going to prove an existence result for a suitable weak solution
notion for
the Cauchy problem associated with \eqref{visc-eps-dne}.
 Before defining such a concept, let us expound the reasons why 
we %are
 %not able 
%to solve
do not treat 
\eqref{visc-eps-dne} as a pointwise-in-time differential inclusion in
$\calZ^*$.
 Indeed, \eqref{visc-eps-dne} is equivalent to $-A_{\il} z(t) - \rmD_z
\wt{\calI}(t,z(t)) \in \partial \calR_{\epsilon}(z'(t))$ for almost all $t \in
(0,T)$ with $\wt{\calI}$ from \eqref{itilde}, viz.
\begin{equation}
\label{duality-problem}
\calR_\epsi(w) - \calR_\epsi (z'(t)) \geq \pairing{\calZ}{-A_{\il} z(t) - \rmD_z
\wt{\calI}(t,z(t))}{w-z'(t)} \quad \text{for all } w \in \calZ, \quad \foraa\, t
\in (0,T).
\end{equation}
In fact,  \eqref{duality-problem} implies the information that $z'(t) \in \calZ$ for almost all $t \in (0,T)$.
However, as we are going to show in what follows, the best spatial
regularity for $z'(t)$ we can obtain is 
$z'(t) \in W^{1,2}(\Omega)$, 
which is less than 
%and we are not able to improve it to
 $z'(t) \in \calZ$. For achieving the latter, % this,
given a sequence of approximate solutions
$(z_k)_k$ to
\eqref{visc-eps-dne} (in our case, constructed by time-discretization), one
would have to dispose of a $\calZ$-estimate for  $(z_k')_k$, uniform w.r.t.\ $k
\in \N$. This seems to be
out of reach, due to the doubly nonlinear character of \eqref{visc-eps-dne}, and
in particular to the fact that the multivalued, unbounded operator
$\partial\calR_\epsi$
acts on $z'(t)$.

Another possibility would be to interpret the duality pairing $\pairing{\calZ}{A_\il z(t) +  \rmD_z \wt{\calI}(t,z(t))}{w-z'(t)}$
as a duality pairing between Lebesgue spaces, i.e.\ $\int_\Omega (A_\il z(t) +  \rmD_z \wt{\calI}(t,z(t)))(w-z'(t)) \,\dd x $.
For this to make sense, it is needed that $z'(t) \in L^\sigma (\Omega)$  and
$A_\il z(t) +  \rmD_z \wt{\calI}(t,z(t)) \in L^{\sigma'} (\Omega)$ for some
$\sigma \in [1,\infty)$.
 This boils down to proving that $A_\il z(t) \in L^{\sigma'} (\Omega)$, since
the term $\rmD_z \wt{\calI}(t,z(t))$ may be considered of lower order due to
estimate
 \eqref{enhanced-stim-7}, and indeed $\int_\Omega  \rmD_z \wt{\calI}(t,z(t))
z'(t)\, \dd x $ makes sense thanks to \eqref{reg-z-below} (cf.\ Notation \ref{not:abuse} and the discussion in
the proof of Proposition \ref{l:ch-rule-tilde} ahead).

 However, an estimate for $(A_\il z_k)_k $ in $L^{\sigma'}(\Omega)$ (for a
sequence  of approximate solutions  $(z_k)_k$), is out of grasp,
 in our opinion, in the present context. Only for $\sigma=2$ it would be possible to estimate $(A_\il z_k)_k  $
 in  $L^\infty (0,T; L^{2}(\Omega))$, by testing (the approximate version of
\eqref{visc-eps-dne}) by the quantity $\partial_t (A_\il z_k + f'(z_k))$. Let us
mention that this technique
 is by now standard in the analysis of doubly nonlinear equations of the type
\eqref{visc-eps-dne} and dates back to %the seminal  paper
 \cite{bfl}. Nonetheless,
 to carry out the  calculations attached to this test,   one  would need to \bnnc exploit \ennc
  elliptic regularity results for $u$, which hold in smooth domains,
 while in this paper we aim to work under minimal regularity requirements on $\Omega$.

 Because of these reasons, we need to resort to the weak solution
 concept in Definition \ref{def:wsol} below, where for general $q>d$
  the duality pairing $\pairing{\calZ}{A_\il z(t) }{z'(t)}$ is
replaced by  the quantity
 \begin{equation}
 \label{quantity-added}
 \int_\Omega (1+ |\nabla z(t)|^2)^{\frac{q-2}{2}} \nabla z(t) \cdot \nabla z'(t) \,\dd x.
 \end{equation}
 %%%
%Now, while the term $\pairing{}{\calZ}{\rmD_z \wt
\begin{definition}[Weak solution]
\label{def:wsol}
We say that
\begin{equation}
\label{reg-z-below}
z \in L^\infty (0,T;W^{1,\il}(\Omega))\cap W^{1,2} (0,T; W^{1,2}(\Omega))
\end{equation}
fulfilling
\begin{equation}
\label{mixed-estimate}
\int_0^T \int_\Omega (1+|\nabla z(r)|^2)^{\frac{\il-2}{2}} |\nabla z' (r)|^2 \,\dd x  \,\dd r <\infty \,,
\end{equation}
is a \emph{weak solution} of
\eqref{visc-eps-dne},  if it complies with the variational inequality
\begin{equation}
\label{weak-def-sol}
\begin{aligned}
\calR_\epsi(w) - \calR_\epsi (z'(t)) \geq  &  \pairing{\calZ}{-A_{\il} z(t) }{w}  +  \int_\Omega (1+ |\nabla z(t)|^2)^{\frac{q-2}{2}} \nabla z(t) \cdot \nabla z'(t)\, \dd x
\\ & \quad
-  \int_{\Omega}\rmD_z \wt{\calI}(t,z(t)) (w-z'(t))\, \dd x 
 \quad \text{for all } w \in \calZ \qquad \foraa\, t \in (0,T)\,.
\end{aligned}
\end{equation}
\end{definition}
\noindent Observe that \eqref{quantity-added} is well defined as soon as $z$ fulfills \eqref{mixed-estimate},
cf.\ \eqref{guarantees} below.
 Hereafter, we shall refer to \eqref{mixed-estimate} as ``mixed estimate'', for
it involves both $z$ and $z'$. In fact, \eqref{mixed-estimate} shall result from the a priori estimates on the time-discretization of \eqref{visc-eps-dne}, contained in Lemma~\ref{l:enhanced-reg}.

The regularity \eqref{mixed-estimate} also guarantees the validity of the following \emph{chain rule} formula%\footnote{  for the moment I've stated it in a theorem, but maybe it's too much..  }
\begin{theorem}
\label{thm:chain-rule}
Under Assumptions  \ref{assumption:energy}, \ref{ass:init}, and
 (A$_\Omega$1), for every curve $z$ fulfilling \eqref{reg-z-below} and \eqref{mixed-estimate}, there holds:
 \begin{enumerate}
 \item
 the map $t \mapsto \calI (t,z(t)) $ is absolutely continuous on $(0,T)$;
 \item the following chain rule formula is valid:
 \begin{equation}
 \label{chain-rule-formula}
 \begin{aligned}
  & \frac{\dd }{\dd t} \calI (t,z(t)) - \partial_t \calI (t,z(t))
  \\
  &
   \quad =  \int_\Omega
(1+ |\nabla z(t)|^2)^{\frac{q-2}{2}} \nabla z(t) \cdot \nabla z'(t) \,\dd x
 + \pairing{W^{1,2}(\Omega) %\calZ
} {\rmD_z \wt \calI(t,z(t))}{
z'(t) }
\\
&
\quad
 =  \int_\Omega
(1+ |\nabla z(t)|^2)^{\frac{q-2}{2}} \nabla z(t) \cdot \nabla z'(t) \,\dd x
+ \int_\Omega \rmD_z \wt \calI(t,z(t))
z'(t)   \,\dd x
 \quad  \foraa\, t \in (0,T),
 \end{aligned}
 \end{equation}
 where for the second equality we refer to Notation \ref{not:abuse}.
\end{enumerate}
\end{theorem}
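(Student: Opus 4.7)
The strategy is to exploit the additive decomposition $\calI(t,z) = \calI_q(z) + \wt\calI(t,z)$, with $\calI_q$ the purely quasilinear part from \eqref{e:Iq} and $\wt\calI$ the lower-order part from \eqref{itilde}, treating the two pieces by different arguments tailored to their analytic structure: the contribution of $\wt\calI$ will be handled via the Lipschitz bounds of Lemma~\ref{l:diff_contz}, while $\calI_q$ will be treated via a pointwise-in-$x$ scalar chain rule followed by Fubini.

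For the lower order term $\wt\calI$, since $p_*>d$ the Sobolev embedding $W^{1,2}(\Omega)\hookrightarrow L^{2p_*/(p_*-2)}(\Omega)$ holds, so the regularity $z\in W^{1,2}(0,T;W^{1,2}(\Omega))$ makes $z$ absolutely continuous from $[0,T]$ into $L^{2p_*/(p_*-2)}(\Omega)$. Combined with the uniform bound $\sup_{t\in[0,T]}\|z(t)\|_\calZ<\infty$ (which controls the quantities $C_{f'}(z(t_1),z(t_2))$ and $P(z(t_1),z(t_2))$ appearing in \eqref{Lip-cont-I}), the local Lipschitz estimate of Lemma~\ref{l:diff_contz} yields absolute continuity of $t\mapsto\wt\calI(t,z(t))$. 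The derivative identity
\[
\frac{\dd}{\dd t}\wt\calI(t,z(t)) = \partial_t\calI(t,z(t)) + \int_\Omega \rmD_z\wt\calI(t,z(t))\,z'(t)\,\dd x
\]
then follows from the G\^ateaux differentiability of Lemma~\ref{l:gateaux}, the strong continuity of $\partial_t\calI$ and $\rmD_z\wt\calI$ from Corollary~\ref{coro-fre}, and the standard scalar AC chain rule at a.e.\ differentiability point of the $L^{2p_*/(p_*-2)}(\Omega)$-valued curve $z$; the Lebesgue pairing $\int_\Omega \rmD_z\wt\calI\,z'\,\dd x$ is well defined in view of Notation~\ref{not:abuse} and estimate~\eqref{estimate-for-DI} with $\mu = 2p_*/(p_*+2)$, which places $\rmD_z\wt\calI(t,z(t))$ in $L^{2p_*/(p_*+2)}(\Omega)$ and is dual to $z'(t)\in L^{2p_*/(p_*-2)}(\Omega)$.

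For the quasilinear part $\calI_q$, the idea is to reduce to a scalar one-dimensional fundamental theorem of calculus by freezing $x$. The regularity $z\in W^{1,2}(0,T;W^{1,2}(\Omega))$ identifies $z$ with a function in the classical Sobolev space $W^{1,2}(Q)$ whose spatial gradient $\nabla z$ belongs to $W^{1,2}(0,T;L^2(\Omega;\R^d))$ with time derivative $\nabla z'$. Applying the classical ACL/Fubini characterization of Sobolev functions on $Q$ componentwise, for a.e.\ $x\in\Omega$ the map $r\mapsto \nabla z(r,x)$ lies in $W^{1,2}(0,T;\R^d)\subset \mathrm{AC}([0,T];\R^d)$ with a.e.\ derivative $\nabla z'(\cdot,x)$. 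For any such $x$, the scalar function $r\mapsto\tfrac{1}{q}(1+|\nabla z(r,x)|^2)^{q/2}$ is then absolutely continuous on $[0,T]$ with a.e.\ derivative $(1+|\nabla z(r,x)|^2)^{(q-2)/2}\nabla z(r,x)\cdot\nabla z'(r,x)$; integrating the resulting scalar FTC identity over $x\in\Omega$ and exchanging the order of integration via Fubini-Tonelli yields absolute continuity of $t\mapsto\calI_q(z(t))$ together with the target derivative formula. The Fubini exchange is legitimate because Cauchy-Schwarz together with the uniform bound $\sup_r\|\nabla z(r)\|_{L^q(\Omega)}<\infty$ dominates the space-time integrand by $C\bigl(\int_\Omega(1+|\nabla z|^2)^{(q-2)/2}|\nabla z'|^2\,\dd x\bigr)^{1/2}$, which lies in $L^2(0,T)$ by the mixed estimate \eqref{mixed-estimate}. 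Summing the two contributions and using $\partial_t\calI_q\equiv 0$ gives \eqref{chain-rule-formula}.

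The main obstacle is the pointwise-in-$x$ step: one must identify an AC representative of $r\mapsto \nabla z(r,x)$ on a \emph{single} set of full $x$-measure, independent of the endpoints $s<t$, so that the scalar FTC holds simultaneously for all times and can then be integrated over $\Omega$. This is precisely what the ACL characterization of Sobolev functions on $Q$ supplies, once it is applied to each of the $d$ components of $\nabla z\in W^{1,2}(Q;\R^d)$; once this pointwise-in-$x$ time-AC trace is in place, the mixed estimate \eqref{mixed-estimate} controls all remaining integrability issues and the rest of the argument is bookkeeping.
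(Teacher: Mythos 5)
Your proposal is correct and follows essentially the same route as the paper's own proof: the same decomposition $\calI=\calI_q+\wt\calI$, with the lower-order part treated through the Lipschitz/continuity estimates of Lemma~\ref{l:diff_contz} (this is Proposition~\ref{l:ch-rule-tilde}, where the paper makes your ``standard AC chain rule'' step rigorous by an explicit difference-quotient computation using \eqref{stim5} and \eqref{enhanced-stim-7}), and the quasilinear part treated by a pointwise-in-$x$ scalar chain rule for $G_q(\nabla z(\cdot,x))$ integrated over $\Omega$, with the mixed estimate \eqref{mixed-estimate} supplying exactly the integrability needed for the Fubini/Lebesgue-point arguments (Proposition~\ref{l:ch-rule-iq} via Lemma~\ref{th:ch-rule-bis}). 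The only cosmetic difference is that you invoke the ACL characterization in the time direction where the paper phrases the same fact through properties of Bochner integrals and Lemma~\ref{ch-rule-added}.
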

\noindent
We postpone the proof of this result to Sec.\ \ref{ss:3.2}, and right away point
out a remarkable consequence of formula \eqref{chain-rule-formula}. Namely, that
the variational inequality in
\eqref{weak-def-sol} is equivalent to the energy inequality associated with \eqref{visc-eps-dne}.
The latter inequality involves the
Fenchel-Moreau convex conjugate $\calR_\epsi^*$ taken in the $\calZ-\calZ^*$
duality, and defined by
\[
\calR_\epsilon^* (\xi)
  =\sup \left\{ \pairing{\calZ}{\xi}{w} - \calR_\epsilon(w)\, : \ w \in \calZ\right\}.
\]
 In  \eqref{repre-conjugate} later on we \bnnc give \ennc the explicit formula  for $\calR_\epsilon^*$.
\begin{proposition}
\label{prop:equivalence}
Under Assumptions  \ref{assumption:energy}, \ref{ass:init}, and
 (A$_\Omega$1),
a  curve $z$ fulfilling \eqref{reg-z-below} and \eqref{mixed-estimate} is a weak solution of
\eqref{visc-eps-dne} in the sense of Definition \ref{def:wsol} if and only if it fulfills for all $0\leq s
\leq t \leq T$ the energy inequality
\begin{equation}
\label{energy-inequality}
\int_s^t \calR_\epsi (z'(r)) \,\dd r + \int_s^t \calR_\epsi^* (-\rmD_z \calI (r,z(r))) \,\dd r + \calI (t,z(t)) \leq  \calI (s,z(s))+ \int_s^t \partial_t \calI (r,z(r)) \,\dd r.
\end{equation}
\end{proposition}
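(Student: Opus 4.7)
The plan is to bridge the two formulations via the chain rule Theorem~\ref{thm:chain-rule} and the Fenchel--Young inequality. The key observation, underlying both directions, is that the right-hand side of \eqref{weak-def-sol}, after regrouping via the decomposition $\rmD_z\calI = A_q z + \rmD_z\wt\calI$ from \eqref{der-decomp}, produces the generalized pairing
$$
\calG(t) := \int_\Omega (1+|\nabla z(t)|^2)^{\frac{q-2}{2}}\nabla z(t)\cdot\nabla z'(t)\,\dd x + \int_\Omega \rmD_z\wt\calI(t,z(t))\,z'(t)\,\dd x,
$$
which by Theorem~\ref{thm:chain-rule} equals $\frac{\dd}{\dd t}\calI(t,z(t)) - \partial_t\calI(t,z(t))$. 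Both integrals composing $\calG(t)$ are well-defined: the first by Cauchy--Schwarz combined with the mixed estimate \eqref{mixed-estimate}, the second by Notation~\ref{not:abuse} together with the embedding $W^{1,2}(\Omega)\hookrightarrow L^{2p_*/(p_*-2)}(\Omega)$ and estimate \eqref{estimate-for-DI}.

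\emph{Weak solution $\Rightarrow$ energy inequality.} I would rearrange \eqref{weak-def-sol} as $\calR_\epsi(w) + \pairing{\calZ}{\rmD_z\calI(t,z(t))}{w} \geq \calR_\epsi(z'(t)) + \calG(t)$ for every $w\in\calZ$ and a.e.\ $t$. Minimizing the left-hand side over $w\in\calZ$ produces $-\calR_\epsi^*(-\rmD_z\calI(t,z(t)))$ by the very definition of the convex conjugate, so that
$$
\calR_\epsi(z'(t)) + \calR_\epsi^*(-\rmD_z\calI(t,z(t))) + \frac{\dd}{\dd t}\calI(t,z(t)) - \partial_t\calI(t,z(t)) \leq 0.
$$
Integrating on $[s,t]$ (the absolute continuity of $t\mapsto\calI(t,z(t))$ is granted by Theorem~\ref{thm:chain-rule}) yields \eqref{energy-inequality}.

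\emph{Energy inequality $\Rightarrow$ weak solution.} Conversely, substituting the chain-rule identity into \eqref{energy-inequality} reduces it to
$$
\int_s^t \Bigl[\calR_\epsi(z'(r)) + \calR_\epsi^*(-\rmD_z\calI(r,z(r))) + \calG(r)\Bigr]\,\dd r \leq 0.
$$
A suitably generalized Fenchel--Young inequality forces the integrand to be $\geq 0$ a.e., hence equality holds. The equality case of Fenchel--Young is then algebraically equivalent to \eqref{weak-def-sol}: running the first direction in reverse, minus the infimum over $w$, one recovers the variational inequality for every test function $w\in\calZ$.

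\emph{Main obstacle.} The delicate point is justifying Fenchel--Young at the ``substitution'' $w=z'(r)$, which lives in $W^{1,2}(\Omega)\supsetneq\calZ$. The identification of $\calG(r)$ with $-\pairing{\calZ}{\rmD_z\calI(r,z(r))}{z'(r)}$ is only formal: the actual duality pairing is in general not defined. I would resolve this by approximation, choosing $w_n\in\calZ$ with $w_n\to z'(r)$ in $W^{1,2}(\Omega)$ (and satisfying $w_n\leq 0$ whenever $z'(r)\leq 0$), applying the classical Fenchel--Young inequality to each $w_n$, and passing to the limit. Lower semicontinuity and the pointwise structure of $\calR_\epsi$ yield $\liminf_n \calR_\epsi(w_n) \geq \calR_\epsi(z'(r))$; convergence of the $A_q z$--contribution to $\pairing{\calZ}{\rmD_z\calI(r,z(r))}{w_n}$ follows from Cauchy--Schwarz and \eqref{mixed-estimate}, and convergence of the $\rmD_z\wt\calI$--contribution follows from Lemma~\ref{l:diff_contz} together with the $L^{2p_*/(p_*-2)}$ convergence of $w_n$.
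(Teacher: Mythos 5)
Your first implication (weak solution $\Rightarrow$ \eqref{energy-inequality}) is exactly the paper's argument: unfold the supremum over $w\in\calZ$ into $\calR_\epsi^*(-\rmD_z \calI(t,z(t)))$, insert the chain rule of Theorem \ref{thm:chain-rule}, and integrate. The converse, however, contains a genuine gap. You invoke a ``generalized Fenchel--Young inequality'' to claim $\calR_\epsi(z'(r))+\calR_\epsi^*(-\rmD_z\calI(r,z(r)))+\calG(r)\geq 0$ a.e., i.e.\ you identify $\calG(r)$ with the duality pairing $\pairing{\calZ}{\rmD_z\calI(r,z(r))}{z'(r)}$. This is precisely the identification the paper singles out as open (step $(2,?)$ in Remark \ref{r:failure-en-id}, and the discussion of $\calV_{\nabla z(t)}$ at the end of Section \ref{ss:3.1}): if it were available, one would obtain the energy \emph{identity} and the pointwise inclusion \eqref{visc-eps-dne}, which the authors explicitly state they cannot prove. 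Your approximation argument does not close it: if $w_n\in\calZ$ converges to $z'(r)$ only in $W^{1,2}(\Omega)$, you cannot pass to the limit in $\int_\Omega(1+|\nabla z(r)|^2)^{\frac{q-2}{2}}\nabla z(r)\cdot\nabla w_n\dx$, because the weight $(1+|\nabla z(r)|^2)^{\frac{q-2}{2}}$ is unbounded; Cauchy--Schwarz plus \eqref{mixed-estimate} makes the target quantity $\calG(r)$ finite but does not control the difference of the two integrals. What is needed is convergence of $w_n$ in the weighted norm $\norm{\cdot}_{\nabla z(r)}$, i.e.\ exactly the open question of whether $z'(r)$ lies in $\calV_{\nabla z(r)}=\overline{\calZ}^{\norm{\cdot}_{\nabla z(r)}}$ (Muckenhoupt-weight issue). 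A minor additional slip: for your limit passage you would need $\limsup_n\calR_\epsi(w_n)\leq\calR_\epsi(z'(r))$, not the $\liminf$ inequality you state.

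Fortunately the detour is unnecessary, and this is how the paper argues. From \eqref{energy-inequality} for all $0\leq s\leq t\leq T$, the absolute continuity of $r\mapsto\calI(r,z(r))$ and the chain rule reduce the inequality to the statement that an integrable function has nonpositive integral over every subinterval, whence $\calR_\epsi(z'(r))+\calR_\epsi^*(-\rmD_z\calI(r,z(r)))+\calG(r)\leq 0$ for a.a.\ $r\in(0,T)$. For any fixed $w\in\calZ$ the quantity $-\pairing{\calZ}{A_\il z(r)}{w}-\int_\Omega\rmD_z\wt\calI(r,z(r))\,w\dx-\calR_\epsi(w)$ is bounded above by the supremum defining $\calR_\epsi^*(-\rmD_z\calI(r,z(r)))$; inserting this bound into the pointwise inequality and rearranging gives \eqref{weak-def-sol} for every $w\in\calZ$. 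No substitution $w=z'(r)$, no positivity of the integrand, and no equality case of Fenchel--Young are required: the equivalence is obtained purely by taking (and undoing) the supremum over genuine test functions in $\calZ$, with $\calG(r)$ kept as a separate, well-defined term.
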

\begin{proof}
Taking into account that $w \in \calZ$ is arbitrary,
  \eqref{weak-def-sol} rephrases as
\[
\begin{aligned}
 \calR_\epsi (z'(t))
 & + \sup_{w\in \calZ} \left(
  -\pairing{\calZ}{A_{\il} z(t) }{w}  - \pairing{\calZ} { \rmD_z \wt{\calI}(t,z(t))}{ w }  - \calR_\epsi(w)  \right)
\\ &   +  \int_\Omega (1+ |\nabla z(t)|^2)^{\frac{q-2}{2}} \nabla z(t) \cdot \nabla z'(t)  \,\dd x + \int_\Omega \rmD_z \wt{\calI}(t,z(t))  z'(t) \,\dd x    \leq 0 \qquad \foraa\, t \in (0,T).
\end{aligned}
\]
Then, in view of  the definition of $\calR_\epsi^*$ and the chain rule formula \eqref{chain-rule-formula},  the above inequality is equivalent to
\[
 \calR_\epsi (z'(t)) + \calR_\epsi^* (-\rmD_z \calI(t,z(t))) + \frac{\dd}{\dd t }\calI(t,z(t)) \leq \partial_t \calI(t,z(t)) \qquad
  \foraa\, t \in (0,T),
 \]
i.e.\ \eqref{energy-inequality} upon integrating in time.
\end{proof}
\begin{remark}[Failure of energy identity]
\label{r:failure-en-id}
 It remains an open problem to improve \eqref{energy-inequality}
to an energy identity. 
 This would result from the following chain of
inequalities
\begin{equation*}
\begin{array}{ll}
&\int_s^t \calR_\epsi (z'(r))  \,\dd r + \int_s^t \calR_\epsi^* (-\rmD_z \calI (r,z(r)))  \,\dd r
\\ &  \leq  \calI (s,z(s)) - \calI (t,z(t)) + \int_s^t \partial_t \calI (r,z(r))  \,\dd r
 \\
 & \stackrel{(1)}{=} - \int_s^t \int_\Omega (1+ |\nabla z(r)|^2)^{\frac{q-2}{2}} \nabla z(r) \cdot \nabla z'(r) \,\dd x \,\dd r
- \int_s^t \int_\Omega  \rmD_z \wt \calI(r,z(r)) z'(r) \,\dd x \, \dd r
\\
&
\stackrel{(2, ?)}{=} -\int_s^t \langle \rmD_z \calI (r,z(r)),   z'(r) \rangle_{\calZ}  \,\dd r
\\
&
\stackrel{(3)}{\leq}  \int_s^t \calR_\epsi (z'(r)) \,\dd r + \int_s^t \calR_\epsi^* (-\rmD_z \calI (r,z(r)))  \,\dd r \,.
\end{array}
\end{equation*}
Now,  while $(1)$ follows from   an  integrated version of \eqref{chain-rule-formula} on the right-hand side of \eqref{energy-inequality}
 and $(3)$ from an elementary convex analysis inequality, \bnnc $(2, ?)$ (which is open, at the moment) \ennc
implies the information that $z'(t)\in \calZ$ for almost all $t \in (0,T)$, which we do not dispose of. 
Observe that, with this argument we would also conclude that $z$ fulfills the subdifferential inclusion \eqref{visc-eps-dne},
cf.\  the proofs of \cite[Thm.\ 4.4]{mrs2013}, \cite[Thm.\
3.1]{krz}.
Therefore, the validity of \eqref{visc-eps-dne}  and of the related
energy identity is at the moment open  for general
$q>d$. 
\end{remark}

%\paragraph{\bf Existence for the viscous problem.}
\noindent
We are now in the position of stating our existence result for the Cauchy
problem associated with  \eqref{visc-eps-dne}.
In fact, we need to impose a further, natural condition on the domain $\Omega$. This
is exploited in the proof of fine spatial regularity estimates on the discrete solutions, which lead to the enhanced
regularity \eqref{enhanced-reg} below for $z$, and will enable us to perform the passage to  the limit in the time-discretization scheme of~\eqref{visc-eps-dne}.
\begin{theorem}[Existence of weak solutions, $\epsi>0$]
\label{thm:ex-viscous}
Under Assumptions \ref{assumption:energy}, \ref{ass:init}, and
 (A$_\Omega$1),
 suppose in addition that
 %\footnote{  Question: does (A$_\Omega$2) imply (A$_\Omega$1), or are they independent from each other?   THEY ARE INDEPENDENT}
 \begin{itemize}
 \item[(A$_\Omega$2)] $\Omega\subset \R^d$ is a bounded domain and satisfies
the uniform cone condition.
\end{itemize}
Suppose that the initial datum $z_0 \in \calZ$ also fulfills
%\footnote{   somewhere we should comment on why we need this enhanced regularity for $z_0$... done }
\begin{equation}
\label{enhanced-initial-datum}
\rmD_z \calI(0,z_0) \in L^2 (\Omega).
\end{equation}
 Then,
   \begin{enumerate}
\item
 for every
$\epsilon>0$ there exists a weak solution (in the sense of Definition \ref{def:wsol})  $z_\epsilon \in
L^\infty (0,T;W^{1,\il}(\Omega))\cap W^{1,2} (0,T; W^{1,2}(\Omega)) $ to the Cauchy problem
\eqref{visc-eps-dne}--\eqref{Cauchy-condition},
fulfilling \eqref{mixed-estimate}
as well as the enhanced regularity
\begin{equation}
\label{enhanced-reg}
z_\epsilon \in L^{2\il} (0,T; W^{1+\beta, \il}(\Omega)) \qquad
\text{for every } \beta \in \left[0,  \frac{1}{q}\Big(
  1-\frac{d}q\Big)  \right). 
\end{equation}
%Furthermore, $z_\epsi$ satisfies for all $0 \leq s \leq t \leq T$ \begin{doubt} all $s $ or almost all $s$??.. let's keep in mind to recheck  this \end{doubt} the energy inequality \eqref{energy-inequality}.
 If in addition $f$ and $g$ comply with
\eqref{def-gg} (cf.\ Proposition~\ref{rem:nice} ahead)
 and if $z_0\in [0,1]$,
 then  $z_\epsilon(t,x)\in[0,1]$ for all $(t,x) \in [0,T] \times \Omega$.
 %the energy inequalit
%\begin{equation}
%\label{expl-en-id}
%\begin{aligned}
 %\int_s^t \big(
 %\calR_1 (z_\epsilon'(\tau)) {+} \epsi \calR_2 (z_\epsilon'(\tau)) & {+}
%\frac1{\epsilon} \min_{\mu \in
%\partial\calR_1 (0)}  \wt{\calR}_{2}(-\rmD_z \calI (\tau,z_\epsilon(\tau))-\mu)
 %\big) \,\mathrm{d}\tau
 % + \calI(t,z_\epsilon(t)) \\ & = \calI(s,z_\epsilon(s))
   %+\int_s^t \partial_t \calI(\tau,z_\epsilon(\tau)) \mathrm{d}\tau.
  % \end{aligned}
%\end{equation}
%%%%%%%%%%%%%%%%%%%
\item
There exists a family of viscous solutions $(z_\epsilon)_{\epsi>0}$
and a constant $C_0>0$ such that the following estimates hold \emph{uniformly} w.r.t.\ $\epsi$
%\footnote{  A comment on the estimates below: in fact, the
%uniform-wrt-$\epsi$  estimate for $z_\epsi $ in $L^{2 \il } (0,T; W^{1+\beta,
%\il}(\Omega))$ is not directly used in the reparameterization+passage to the
%limit argument. What is really need is \eqref{crucial-elle3-esti}, which
%however %at this level looks a bit mysterious, as well as
%\eqref{crucial-elle3-mixed}. In %fact, at this point of the paper the role of
%estimates %\eqref{crucial-elle3-esti}--\eqref{crucial-elle3-mixed} is a bit
%difficult to %understand (while it is clear, more or less, that
%\eqref{crucial-elle1-esti}  %will allow us to reparameterize by the arc-length
%of the curve.. at least, %it's %clear to people in the business). Either we
%explain %\eqref{crucial-elle3-esti}--\eqref{crucial-elle3-mixed} much more,
%here, or we %postpone them. For example, here we might just state
%\eqref{crucial-elle1-esti}, %and  we might state a Corollary at the end
%Section, where we say that the %viscous solutions of \eqref{visc-eps-dne} in
%addition fulfill estimates
%\eqref{crucial-elle2-esti}--\eqref{crucial-elle3-mixed}.. what do you
%say?  %We state everything here, saying that
%\eqref{crucial-elle3-esti}--\eqref{crucial-elle3-mixed} will be for later use
%}
\begin{align}
\label{crucial-elle1-esti} &  \sup_{\epsi>0}
\| z_\epsi \|_{W^{1,1} (0,T; L^2 (\Omega))}
 \leq C_0,
\\
&
\label{crucial-elle2-esti}
 \sup_{\epsi>0}
 \| z_\epsi \|_{L^{2 \il } (0,T; W^{1+\beta, \il}(\Omega)) \cap L^\infty (0,T; W^{1, \il}(\Omega)) } \leq C_0
 \qquad \text{for every } \beta \in \left[0, \frac{1}{q}\Big(
   1-\frac{d}q \Big) \right), 
 \\
 &
 \label{crucial-elle3-esti}
 \sup_{\epsi> 0} \int_0^T \| z_\epsi(t) \|_{W^{1+\beta,
     \il}(\Omega)}^{\il}
 \| z_\epsi'(t) \|_{L^{2}(\Omega)} \,\dd t \leq C_0 \qquad \text{for
   every } \beta \in \left[0,  \frac{1}{q}\Big( 1-\frac{d}q \Big) \right),
 \\
 &
 \label{crucial-elle3-mixed}
  \sup_{\epsi>0}
  \int_0^T \left(  \int_\Omega (1+ |\nabla
    z_\epsi(t)|^2)^{\frac{\il-2}{2}} |\nabla z_\epsi' (t)|^2 \,\dd x
  \right)^{\frac12} \,\dd t \leq C_0.
\end{align}
\end{enumerate}
\end{theorem}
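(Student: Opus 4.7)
The strategy, as announced in the introduction, is to construct discrete solutions via the time-incremental minimization scheme \eqref{time-discrete-min-intro} on a uniform grid of step $\tau=T/N$, and then pass to the limit $\tau\downarrow 0$ using a variational/energy-inequality approach rather than trying to pass to the limit directly in the Euler-Lagrange inclusion \eqref{time-el-intro}. Existence of minimizers $z_{k+1}^\tau$ is granted by the coercivity \eqref{est_coerc1} of $\calI$ on sublevels of $\calZ=W^{1,q}(\Omega)$ together with the weak lower semicontinuity from Corollary \ref{coro-fre}; minimality yields \eqref{time-el-intro} for the piecewise interpolants $\overline{z}_\tau$ and $\hat{z}_\tau$, and by comparing $z_{k+1}^\tau$ with $z_k^\tau$ in the minimization one obtains the discrete energy inequality (cf.\ \eqref{discr-enid} in the paper). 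From this, together with \eqref{stim3} of Lemma \ref{l:diff_time}, one extracts the first standard a priori bounds: $\overline{z}_\tau$ is bounded in $L^\infty(0,T;\calZ)$ and $\hat z_\tau$ in $W^{1,2}(0,T;L^2(\Omega))$ uniformly in $(\tau,\epsilon)$. Via Lemma \ref{lem_reg_babadjan} this promotes $\overline{u}_\tau$ to a uniform bound in $L^\infty(0,T;W^{1,p_*}(\Omega;\R^d))$ with $p_*>d$, which is what makes the quadratic term $g'(\overline{z}_\tau)\bbC\varepsilon(\overline{u}_\tau){:}\varepsilon(\overline{u}_\tau)$ tractable.

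The next, and central, block of estimates consists of three refined bounds. First, the higher-differentiability estimate \eqref{enh-reg-z-intro} on $\overline{z}_\tau$ in $W^{1+\beta,q}(\Omega)$ (Theorem \ref{app_reg_thm_z}), proved by a Nikol'skii/difference-quotient test on the discrete Euler-Lagrange equation \eqref{time-el-intro}; the uniform cone condition (A$_\Omega 2$) is used here to handle translations tangential to the boundary. Second, the mixed estimate \eqref{crucial-elle3-mixed}: formally, one differentiates \eqref{visc-eps-dne} in time and tests with $z'$, which at the discrete level corresponds to taking the difference of \eqref{time-el-intro} at consecutive steps and pairing with $(z_{k+1}^\tau-z_k^\tau)/\tau$, using the monotonicity \eqref{e2.22} of $A_q$ to generate $\int_\Omega(1+|\nabla\overline{z}_\tau|^2)^{(q-2)/2}|\nabla\hat z_\tau'|^2\dx$ and \eqref{very-useful-later} to absorb the lower-order commutator $\rmD_z\wt\calI(t,z_{k+1}^\tau)-\rmD_z\wt\calI(t,z_k^\tau)$. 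The hypothesis \eqref{enhanced-initial-datum} is what makes this estimate uniform also at $t=0$. Third, the BV-bound \eqref{crucial-elle1-esti}, which has to be obtained \emph{uniformly in $\epsilon$}: one tests the time-incremental inclusion with $(z_{k+1}^\tau-z_k^\tau)/\tau$, uses the $1$-homogeneity of $\calR_1$, the local $\lambda$-convexity in $L^2$ derived from Corollary \ref{cor:rosenheim}, and a Gronwall argument to bound $\int_0^T\|\hat z_\tau'\|_{L^2}\dt$ independently of $\epsilon$. The enhanced regularity \eqref{enhanced-reg} and \eqref{crucial-elle2-esti}--\eqref{crucial-elle3-esti} then follow by interpolation between the $\calZ$-bound and \eqref{enh-reg-z-intro} together with the BV-bound.

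For the passage $\tau\downarrow 0$, these estimates yield (up to subsequences) $\overline{z}_\tau, \hat z_\tau\weakto z$ in the appropriate spaces, and in particular strongly in $L^p(0,T;\calZ)$ via Aubin-Lions using \eqref{enh-reg-z-intro} and the BV-bound; hence $A_q\overline{z}_\tau$ passes to its limit and the quadratic term converges by continuity of $z\mapsto u_{\min}(\cdot,z)$ in $W^{1,p_*}$ (Lemma \ref{l:cddata}). Rather than pass to the limit in the pointwise inclusion \eqref{time-el-intro}, I would pass to the limit in the discrete energy inequality: lower semicontinuity gives $\int_s^t\calR_\epsilon(z'(r))\dr$ and $\int_s^t\calR_\epsilon^*(-\rmD_z\calI(r,z(r)))\dr$, for which a Young-measure argument (as recalled in Appendix \ref{s:a-1}) is needed to handle the nonlinear density $\calR_\epsilon^*$ evaluated along the limit together with the almost-everywhere convergence of $-\rmD_z\calI(\cdot,\overline{z}_\tau)$. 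The outcome is the energy inequality \eqref{energy-inequality}; since the limit $z$ enjoys \eqref{reg-z-below} and \eqref{mixed-estimate} by lower semicontinuity of the mixed integral \eqref{crucial-elle3-mixed}, Theorem \ref{thm:chain-rule} applies and Proposition \ref{prop:equivalence} then equates \eqref{energy-inequality} with the weak-solution formulation \eqref{weak-def-sol}. The maximum-principle claim $z_\epsilon\in[0,1]$ follows by the discrete argument of Prop.\ \ref{rem:nice}: under \eqref{def-gg}, truncating $z_{k+1}^\tau$ at $[0,1]$ strictly decreases the incremental functional unless $z_{k+1}^\tau\in[0,1]$ already.

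The main obstacles are two. The first is the enhanced spatial regularity \eqref{enh-reg-z-intro}: it requires a delicate difference-quotient argument on the fully nonlinear $q$-Laplacian term coupled with the quadratic elastic contribution, and in particular a careful use of Lemma \ref{lem_reg_babadjan} to control translates of $\overline{u}_\tau$ in $W^{1,p_*}$ from translates of $\overline{z}_\tau$. The second, more subtle, is ensuring that the mixed estimate \eqref{mixed-estimate} survives the limit $\tau\downarrow 0$ despite the fact that the integrand $(1+|\nabla\overline{z}_\tau|^2)^{(q-2)/2}|\nabla\hat z_\tau'|^2$ is nonconvex jointly in $(\nabla\overline{z}_\tau,\nabla\hat z_\tau')$; here one has to exploit the strong convergence of $\nabla\overline{z}_\tau$ (provided by \eqref{enh-reg-z-intro} and Aubin-Lions) to rewrite the integral as a convex functional of $\nabla\hat z_\tau'$ with a continuously converging weight, and then invoke a standard Ioffe-type semicontinuity result.
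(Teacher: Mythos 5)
Your plan reproduces the paper's architecture almost step by step (incremental minimization, difference-quotient regularity under (A$_\Omega$2), discrete mixed estimate, limit passage through the discrete energy inequality, chain rule plus Proposition \ref{prop:equivalence}, maximum principle via Proposition \ref{rem:nice}), but the one genuinely delicate $\epsi$-uniform bound, the $\BV$-estimate \eqref{crucial-elle1-esti}, would not come out of the argument you describe. Testing the incremental inclusion with $(z_{k+1}^\tau-z_k^\tau)/\tau$ is nothing other than the basic energy estimate: it yields $\int_0^T\big(\calR_1(\hat z_\tau')+\epsi\|\hat z_\tau'\|_{L^2(\Omega)}^2\big)\dd t\leq C$, i.e.\ an $L^1(0,T;L^1(\Omega))$ bound on $\hat z_\tau'$ and an $\epsi$-degenerate $L^2(L^2)$ bound, but not $\int_0^T\|\hat z_\tau'\|_{L^2(\Omega)}\dd t\leq C$ uniformly in $\epsi$. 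What the paper does in Lemma \ref{l:uniform-w11} is to subtract the Euler--Lagrange inclusions at consecutive midpoints (the same ``discretely differentiated'' structure as for the mixed estimate), test with $\hat z_\tau'(m_k)$, use the monotonicity \eqref{e2.22} of $A_q$ and the Lipschitz estimate \eqref{very-useful-later} for $\rmD_z\wt\calI$ (the $\lambda$-convexity of Corollary \ref{cor:rosenheim} alone is not the engine here), and then normalize so that the viscous term becomes an increment of $\epsi\|\hat z_\tau'\|_{L^2(\Omega)}$; the resulting recursion is closed by a \emph{weighted discrete Gronwall} lemma which requires the coupling $\tau\leq 2\epsi$, and the first-step term $\epsi\|\hat z_\tau'(m_1)\|_{L^2(\Omega)}$ is controlled by \eqref{enhanced-est-3}, which is exactly where \eqref{enhanced-initial-datum} enters the $\BV$ bound (not only the mixed estimate, as your plan suggests). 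Without the differenced inclusion, the $\tau\leq2\epsi$ constraint and the first-step control, the claimed uniform $W^{1,1}(0,T;L^2(\Omega))$ bound is unsupported.

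Two smaller discrepancies. First, the Young-measure machinery is not needed for $\calR_\epsi^*$: the paper obtains the strong convergence \eqref{conve-4-conseq} of $\rmD_z\calI(\overline t_\tau,\overline z_\tau)$ in $\calZ^*$ (via the $\epsi$-dependent estimates \eqref{conve-5}, \eqref{enhanced-est-1}) and then uses plain lower semicontinuity; Young measures plus Jensen (Lemma \ref{lemma:eps-indep-est}) are instead what transfers the $\epsi$-uniform bounds \eqref{crucial-elle3-esti}--\eqref{crucial-elle3-mixed} from the interpolants to the limit $z_\epsi$, since those integrands are not jointly convex (your ``strongly converging weight'' device is indeed the paper's argument for \eqref{mixed-estimate} itself). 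Second, \eqref{crucial-elle3-esti} does not follow by interpolation from a uniform pointwise bound \eqref{enh-reg-z-intro} plus the $\BV$ bound: the regularity estimate \eqref{unif_reg_est_z_discr} carries the extra term $\epsi^{1/q}\|\hat z_\tau'\|_{L^2(\Omega)}^{1/q}$, and the paper exploits the resulting product structure $\|z\|^q_{W^{1+\beta,q}(\Omega)}\|\hat z_\tau'\|_{L^2(\Omega)}\leq C(\|\hat z_\tau'\|_{L^2(\Omega)}+\epsi\|\hat z_\tau'\|^2_{L^2(\Omega)})$ in Lemma \ref{l:eps-mixed-unif}, the two right-hand terms being uniformly integrable thanks to \eqref{W11} and the energy estimate.
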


The \emph{proof},  which is given in Section \ref{s:6}, relies on the time-discretization
 analysis performed in Section~\ref{s:4} and on the a priori
estimates provided in Section~\ref{s:5}. Indeed we prove via time discretization
that there exists a function $z_\epsilon$ as in \eqref{reg-z-below},  satisfying
the energy inequality \eqref{energy-inequality}. Moreover, for
$z_\epsilon$ the \emph{mixed estimate} \eqref{mixed-estimate} holds. Therefore, thanks
to Proposition~\ref{prop:equivalence}, $z_\epsilon$ turns out to be a weak
solution to the Cauchy problem \eqref{visc-eps-dne}--\eqref{Cauchy-condition}.

% following this outline: .....................
The uniform w.r.t.\ $\epsi$ estimates
\eqref{crucial-elle1-esti}--\eqref{crucial-elle3-mixed}
%\eqref{epsilon-uniform-estimates}
are the starting point for the vanishing viscosity analysis which we develop in
Section~\ref{s:7}. We shall prove them
in Section~\ref{s:5} arguing on
the time-discretization of \eqref{visc-eps-dne} and thus deduce them only for the viscous solutions $z_\epsi$ to
\eqref{visc-eps-dne}, which arise in the limit of the time-discretization scheme
of Sec.~\ref{s:4}. %  At   the end of Sec.~\ref{s:5}
%we shall discuss to which extent the a priori bounds \eqref{epsilon-uniform-estimates}
%hold \emph{for all} solutions of \eqref{visc-eps-dne}.

The additional condition
 \eqref{enhanced-initial-datum} on  the initial datum $z_0$
 is needed
  in order to prove
the enhanced regularity estimates for $z$, as well as the uniform discrete
$W^{1,1}$-estimate (see Sections \ref{ss:5.4} and \ref{ss:5.5.}).

\paragraph{A discussion on the interpretation of weak solutions.}  
%SKETCH:\footnote{ C: to me we can remove the word SKETCH and leave the
%  text as it is.} 
For $\xi\in W^{1,q}(\Omega)$ let
\begin{align}
\label{def_weighted-norm-1}
\norm{\xi}_{\nabla z(t)}:=\left( \norm{\xi}_{L^2(\Omega)}^2 +
\int_\Omega (1+\abs{\nabla z(t)}^2)^{\frac{q-2}{2}}\abs{\nabla
\xi}^2\dx\right)^\frac{1}{2}
\end{align}
and  define $\calV_{\nabla z(t)}(\Omega):=
\overline{\calZ}^{\norm{\cdot}_{\nabla z(t)}}$. Observe that the set
$\calZ_-:=\Set{z\in \calZ}{z\leq 0}$ is dense in
$\calV_{\nabla z(t),-}:=\Set{v\in \calZ_{\nabla z(t)}}{v\leq
  0}$.
%\footnote{D: can be elaborated... \begin{lastnew}  we can also leave
%    it like this, what do you say? \end{lastnew} C: I agree to leave
%  it as it is} 
 This implies that the conjugate functional of $\calR_\epsilon$
 calculated with respect to the $\calZ-\calZ^*$ duality (which in this
 context we denote by $\calR_\epsilon^{*_\calZ}$), and the conjugate
 functional $\calR_\epsilon^{*_{\calV_{\nabla
      z(t)}}}$ with respect to the $\calV_{\nabla z(t)}-\calV_{\nabla
   z(t)}^*$ duality, coincide on $\calV_{\nabla
   z(t)}^*$.
Now, let
 $z\in L^\infty(0,T;W^{1,q}(\Omega))\cap W^{1,2}(0,T;W^{1,2}(\Omega))$
 be a weak solution
to the Cauchy problem
\eqref{visc-eps-dne}--\eqref{Cauchy-condition} in the sense of
Definition \ref{def:wsol}, with the enhanced
 regularity
\eqref{enhanced-reg}, and assume in addition that $z'(t)\in
\calV_{\nabla z(t)}$ for almost all $t\in (0,T)$. As it  will be discussed below
this is not a trivial assumption, and at the moment it is open whether
the solution $z$ satisfies this assumption at all.

Now
% (at least for $d=3$)\footnote{to be checked
%  carefully \begin{lastnew} I think that this is true, if  
%$\calV_{\nabla z(t)} \subset W^{1,2}(\Omega)$.. if so, then $\rmD_z\calI(t,z(t))\in
%\calV_{\nabla z(t)}^*$, because the $q$-Laplacian contribution
%obviously is in $\calV_{\nabla z(t)}^*$, and  
%$\rmD_z\calI(t,z(t))\in W^{1,2}(\Omega)^*$, right?
%\end{lastnew} }
 we can verify
directly relying on Section \ref{ss:2.3} that $\rmD_z\calI(t,z(t))\in
\calV_{\nabla z(t)}^*$. Having this, %\CCCS 
with the additional assumption 
 that $z'(t)\in
\calV_{\nabla z(t)}$ for almost all $t\in (0,T)$,
% \CCCE\footnote{C: I think it is clearer to repeat it here once more}
from the local version of \eqref{energy-inequality}
 in combination with the chain rule \eqref{chain-rule-formula}
 and the
Young-Fenchel inequality for conjugate functionals we deduce that for
almost all $t \in (0,T)$ it holds
\begin{align*}
\calR_\epsilon(z'(t)) & +
\calR_\epsilon^{*_\calZ}(-\rmD_z\calI(t,z(t)))
 \leq \langle
-\rmD_z\calI(t,z(t)), z'(t)\rangle_{\calV_{\nabla z(t)}}\\
&\leq \calR_\epsilon(z'(t))  + \calR_\epsilon^{*_{\calV_{\nabla
      z(t)}}} (-\rmD_z\calI(t,z(t)))
= \calR_\epsilon(z'(t))  + \calR_\epsilon^{*_\calZ}(-\rmD_z\calI(t,z(t))).
\end{align*}
Hence, for almost all $t \in (0,T)$ the inclusion
\begin{align*}
0&\in \partial\calR_\epsilon(z'(t)) + \rmD_z\calI(t,z(t)) \quad \foraa\, t \in (0,T).
\end{align*}
is satisfied  in the $\calV_{\nabla z(t)}-\calV_{\nabla
  z(t)}^*$ duality and
 in \eqref{energy-inequality} we have equality instead of an
inequality.

However, proving that 
 $z'(t)\in\calV_{\nabla z(t)}$ is at the moment an open problem: Due to the mixed estimate, for almost all $t$ the function
$z'(t)$ belongs to the Banach space $\calW_{\nabla z(t)}:=\Set{v\in
  H^1(\Omega)}{\norm{v}_{\calV_{\nabla z(t)}}<\infty}$.
If the weight $\omega(t):=(1+\abs{\nabla
  z(t)}^2)^{\frac{q-2}{2}}$ can be shown to be a Muckenhoupt weight,
then the spaces $\calV_{\nabla z(t)}$ and $\calW_{\nabla z(t)}$
coincide, see for instance  \cite{ChiadoPiat-SerraCassano}. 
 However, we do not see how to deduce this
property for our solution. Another possibility would be to prove directly
from the construction of the solutions (via a time-incremental
procedure), that $z(t)\in \calV_{\nabla z(t)}$. But also this is not
clear for us.

\subsection{Proof of the chain rule of Theorem \ref{thm:chain-rule}}
\label{ss:3.2}
 Recalling the decomposition $\calI(t,z) = \calI_q (z) +
\wt\calI(t,z)$, we separately address the chain rule
properties of the functionals $\calI_q$ and $\wt \calI$.

As for the latter, we observe that the Fr\'echet differentiability stated in Corollary \ref{coro-fre} allows us to conclude the validity of the chain
rule formula \eqref{ch-rule-tilde}, only if the curve $z$ is in $W^{1,1}([0,T];\calZ)$, which is not granted by \eqref{reg-z-below} and
\eqref{mixed-estimate}. In the proof of Proposition \ref{l:ch-rule-tilde} below, we in fact exploit the finer estimates on $\wt \calI$ and $\rmD_z \wt\calI$
provided by Lemma \ref{l:diff_contz}, and combine them with the regularity \eqref{reg-z-below}  for $z$.  Note that the \emph{mixed estimate} \eqref{mixed-estimate}
is not needed.
\begin{proposition}
\label{l:ch-rule-tilde}
Under Assumptions \ref{assumption:energy}, \ref{ass:init}, and
 (A$_\Omega$1), for every curve $z$ fulfilling \eqref{reg-z-below}
 the map $t \mapsto \wt\calI (t,z(t)) $ is absolutely continuous on $(0,T)$ and
there holds (cf.\ Notation \ref{not:abuse})
\begin{equation}
\label{ch-rule-tilde}
\frac{\dd }{\dd t} \wt\calI (t,z(t)) - \partial_t \wt\calI (t,z(t))
  =  %\pairing{W^{1,2}(\Omega)
%\calZ
 \int_\Omega \rmD_z \wt\calI(t,z(t))   z'(t) \,\dd x  \qquad \foraa\, t \in
(0,T).
\end{equation}
\end{proposition}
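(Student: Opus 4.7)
The key point is that although the Fréchet differentiability of $\wt\calI$ from Corollary~\ref{coro-fre} would suffice for the classical chain rule along curves in $W^{1,1}(0,T;\calZ)$, here we only have $z\in W^{1,2}(0,T;W^{1,2}(\Omega))\cap L^\infty(0,T;\calZ)$, so $z'(t)$ need not lie in $\calZ$. The plan is to exploit the finer local Lipschitz estimates of Lemma~\ref{l:diff_contz}, which control increments of $\wt\calI$ and $\rmD_z\wt\calI$ only by \emph{Lebesgue} norms of the $z$-increments, in combination with the Sobolev embedding $W^{1,2}(\Omega)\hookrightarrow L^{2p_*/(p_*-2)}(\Omega)$, which holds because $p_*>d$.

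First I would establish absolute continuity of $t\mapsto\wt\calI(t,z(t))$. Since $\|z\|_{L^\infty(0,T;\calZ)}<\infty$ and $\calZ\hookrightarrow L^\infty(\Omega)$, the quantities $C_{f'}(z(t),z(s))$ and $P(z(t),z(s))$ are uniformly bounded in $s,t\in[0,T]$, so \eqref{Lip-cont-I} yields
\[
  |\wt\calI(t,z(t))-\wt\calI(s,z(s))| \leq C\bigl(|t-s|+\|z(t)-z(s)\|_{L^{2p_*/(p_*-2)}(\Omega)}\bigr) \leq C'\bigl(|t-s|+\|z(t)-z(s)\|_{W^{1,2}(\Omega)}\bigr).
\]
Since $z\in W^{1,2}(0,T;W^{1,2}(\Omega))$ is absolutely continuous as a map into $W^{1,2}(\Omega)$, the composition $t\mapsto\wt\calI(t,z(t))$ inherits absolute continuity, hence is a.e.\ differentiable.

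To identify the derivative, at a.e.\ $t\in(0,T)$ where $z$ is differentiable in $W^{1,2}(\Omega)$ I would split
\[
  \wt\calI(t+h,z(t+h))-\wt\calI(t,z(t)) = \underbrace{\wt\calI(t+h,z(t+h))-\wt\calI(t+h,z(t))}_{A(h)} + \underbrace{\wt\calI(t+h,z(t))-\wt\calI(t,z(t))}_{B(h)}.
\]
Lemma~\ref{l:diff_time} gives $B(h)/h\to\partial_t\wt\calI(t,z(t))$. For $A(h)$, both $z(t),z(t+h)\in\calZ$, so applying the fundamental theorem of calculus to the absolutely continuous function $\theta\mapsto\wt\calI(t+h,z(t)+\theta(z(t+h)-z(t)))$ on $[0,1]$ (whose derivative is delivered by Lemma~\ref{l:gateaux}) produces
\[
  A(h) = \int_0^1\!\!\int_\Omega \rmD_z\wt\calI\bigl(t+h,\,z(t)+\theta(z(t+h)-z(t))\bigr)\,(z(t+h)-z(t))\,\dd x\,\dd\theta,
\]
the inner pairing being read, via Notation~\ref{not:abuse}, as an $L^\mu$--$L^{\mu'}$ duality with $\mu=2p_*/(p_*+2)$ and $\mu'=2p_*/(p_*-2)=:r$. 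Then \eqref{enhanced-stim-7} (with this choice of $\mu,r$) bounds
\[
  \|\rmD_z\wt\calI(t+h,z(t)+\theta(z(t+h)-z(t)))-\rmD_z\wt\calI(t,z(t))\|_{L^\mu(\Omega)} \leq C\bigl(|h|+\theta\|z(t+h)-z(t)\|_{L^r(\Omega)}\bigr),
\]
which tends to $0$ as $h\to 0$ uniformly in $\theta\in[0,1]$, while $h^{-1}(z(t+h)-z(t))\to z'(t)$ in $W^{1,2}(\Omega)\hookrightarrow L^{\mu'}(\Omega)=L^r(\Omega)$. Hölder's inequality together with dominated convergence in $\theta$ (justified by the uniform bound on $\rmD_z\wt\calI$ from \eqref{estimate-for-DI}, since $z(t)+\theta(z(t+h)-z(t))$ stays in a bounded set of $\calZ$) then yields $A(h)/h\to\int_\Omega\rmD_z\wt\calI(t,z(t))\,z'(t)\,\dd x$, which is \eqref{ch-rule-tilde}.

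The main technical obstacle lies in the final limit passage under the $\theta$-integral: the careful matching of exponents $\mu=2p_*/(p_*+2)$ and $r=\mu'=2p_*/(p_*-2)$ is essential so that \eqref{enhanced-stim-7} provides strong $L^\mu$-convergence of $\rmD_z\wt\calI$ from mere $L^r$-convergence of its argument, while at the same time the Sobolev embedding $W^{1,2}(\Omega)\hookrightarrow L^r(\Omega)$ (which requires precisely $p_*>d$) makes it possible to test against the $W^{1,2}$-derivative $z'(t)$. Note that the mixed bound \eqref{mixed-estimate} plays no role at this stage; it intervenes only when the present chain rule for $\wt\calI$ is combined with the corresponding statement for the $q$-Laplacian part $\calI_q$ to complete the proof of Theorem~\ref{thm:chain-rule}.
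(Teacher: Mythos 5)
Your proposal is correct and follows essentially the same route as the paper's proof: absolute continuity via \eqref{Lip-cont-I} and the embedding $W^{1,2}(\Omega)\subset L^{2p_*/(p_*-2)}(\Omega)$, then the split of the increment into a time part and a state part treated by the fundamental theorem of calculus along the segment, with \eqref{enhanced-stim-7} (at the exponents $\mu=2p_*/(p_*+2)$, $r=\mu'=2p_*/(p_*-2)$) giving strong $L^\mu$-convergence of $\rmD_z\wt\calI$ against the $W^{1,2}$-difference quotient. The only deviation is that you freeze the time increment at $z(t)$ and the state increment at time $t+h$ (the paper does the reverse), which is an immaterial symmetric variant since \eqref{enhanced-stim-7} and \eqref{stim5} control both the $t$- and $z$-dependence.
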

\begin{proof}
For any fixed $z \in L^\infty (0,T;W^{1,\il}(\Omega))\cap W^{1,2} (0,T;
W^{1,2}(\Omega)) $, the map $t \mapsto \wt \calI (t,z(t))$ is absolutely
continuous on $[0,T]$: indeed,
it follows from \eqref{Lip-cont-I} that
\[
\left| \wt\calI(t,z(t)) - \wt\calI(s,z(s)) \right| \leq C (|t-s| + \| z(t) -
z(s)\|_{L^{2p_*/(p_*-2)}}) \leq  C (|t-s| + \| z(t) - z(s)\|_{W^{1,2}(\Omega)})
\]
where the last inequality follows from  the continuous embedding
$W^{1,2}(\Omega)\subset L^{2p_*/(p_*-2)}(\Omega) $, cf.\ \eqref{lions-magenes}.
We now prove the chain rule formula \eqref{ch-rule-tilde}.
%As already observed in the proof of Theorem \ref{thm:ex-ssol},
The integral
on the right-hand side of \eqref{ch-rule-tilde} is well defined
%\footnote{C: here there was a ref to the canceled Thm 3.7 so I copied
%  the part that was missing} 
since
 $\rmD_z\wt\calI(t,z(t))\in
L^\mu(\Omega)$ and  $z'(t)\in L^{\mu'}(\Omega)$
 with $\mu= 2 p_*/(p_*+2)$ (and $\mu'=2p_*/(p_*-2)$), cf.\  Lemma~\ref{l:diff_contz}).
 In fact, since
 $
L^\mu(\Omega) \subset W^{1,2}(\Omega)^*$,
it follows that $
\rmD_z\wt\calI(t,z(t)) $ can
 be identified with an element in $W^{1,2}(\Omega)^*$ for a.a.\,$t \in (0,T)$.
% since, by the
%above arguments,
% $z' \in L^2 (0,T; L^{2p_*/(p_*-2)}(\Omega))$, while $\rmD_z \wt\calI
%(\cdot,z(\cdot)) \in L^\infty (0,T; L^{2p_*/(p_*+2) %p_*/2
%}(\Omega))  $ due to
%\eqref{estimate-for-DI}  and the fact that $z\in L^\infty
%(0,T;W^{1,\il}(\Omega))$.
We fix $t \in (0,T)$, out of a negligible set, such that $\exists\, \frac{\dd
}{\dd t} \wt\calI (t,z(t)) $, and compute
\[
\begin{aligned}
%\frac{
h^{-1}(\wt\calI(t & +h, z(t+h))  -\wt\calI(t, z(t)))
%  }{h}
\\
&  =
%\frac{
h^{-1}(
\wt\calI(t+h,
z(t+h)) -\wt\calI(t, z(t+h))
)
%  }{h}
 +
% \frac{
h^{-1}
(\wt\calI(t, z(t+h)) -\wt\calI(t,
z(t))
)
%  }{h}
\\& = \frac1h \int_t^{t+h} \partial_t \wt\calI (s,z(t+h)) \,\dd s
\\ & \quad + \frac1h  \int_\Omega \int_0^1 \rmD_z \wt\calI (t,(1-\theta) z(t)
+\theta z(t+h)) (z(t+h) - z(t)) \,\dd\theta \,\dd x
\doteq I_h^1 +I_h^2.
\end{aligned}
\]
We have that
\[
I_h^1 =  \frac1h \int_t^{t+h} \partial_t \wt\calI (s,z(t))\,\dd s  +  \frac1h
\int_t^{t+h}\left(  \partial_t \wt\calI (s,z(t+h)) -\partial_t \wt\calI (s,z(t))
\right)   \,\dd s.
\]
The first term on the right-hand side converges to $\partial_t \wt\calI
(s,z(t))$ as $h \to 0$, while the second one tends to zero in view of
\eqref{stim5}
and of the fact that $z \in \rmC^0 ([0,T]; \rmC^0(\overline{\Omega}))$ by interpolation in \eqref{reg-z-below}. In order to take the
limit as $h \to 0$ of $I_h^2$, we first of all observe that
for almost all $t \in (0,T)$ $\frac{z(t+h) - z(t)}h \to z'(t)$ in $W^{1,2}
(\Omega) \subset L^{2p_*/(p_*-2)}(\Omega)$  as $h \to 0$, due to the fact that
$z' \in L^2 (0,T; W^{1,2} (\Omega))$ (cf., e.g.,  \cite[Prop.\ A.6]{Brez73OMMS}).
 Moreover,
in view of \eqref{enhanced-stim-7},  the family
$j_h (t,\cdot) := \int_0^1 \rmD_z \wt\calI (t,(1-\theta) z(t,\cdot) +\theta
z(t+h,\cdot)) \,\dd \theta $ converges to $j(t,\cdot):= \rmD_z \wt\calI (t,
z(t,\cdot)) $
in $ L^{2p_*/(p_*+2)}(\Omega)$ as $h \to 0$. Hence,
%we have that
 $I_h^2 \to  \int_\Omega   \rmD_z \wt\calI(t,z(t)) z'(t) \,\dd x $ as
$h \to 0$, and \eqref{ch-rule-tilde} follows.
\end{proof}
For the functional $\calI_q$, we have the following result.
\begin{proposition}
\label{l:ch-rule-iq}
For every curve $z$ fulfilling \eqref{reg-z-below} and \eqref{mixed-estimate}
 the map $t \mapsto \calI_q (z(t)) $ is absolutely continuous on $(0,T)$ and
there holds
\begin{equation}
\label{ch-rule-iq}
\frac{\dd }{\dd t} \calI_q (z(t)) =  \int_\Omega (1+ |\nabla z(t)|^2)^{\frac{q-2}{2}} \nabla z(t) \cdot \nabla z'(t) \,\dd x \qquad \foraa\, t \in (0,T).
\end{equation}
\end{proposition}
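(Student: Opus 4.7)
The plan is to carry out the chain rule pointwise in the spatial variable $x$ and then integrate, with Fubini's theorem legitimized by the mixed estimate \eqref{mixed-estimate}.

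Since $z\in L^\infty(0,T;W^{1,q}(\Omega))\cap W^{1,2}(0,T;W^{1,2}(\Omega))$, a standard Fubini-type argument on the weak time derivative (testing the defining identity against a countable dense family of time test functions) shows that, for almost every $x\in\Omega$, the curve $t\mapsto\nabla z(t,x)$ lies in $W^{1,2}(0,T;\R^d)$ with weak derivative $\nabla z'(\cdot,x)$; by the one-dimensional Sobolev embedding $W^{1,2}(0,T)\hookrightarrow C^{1/2}([0,T])$ such a curve is absolutely continuous and bounded on $[0,T]$. Writing $G_q(A):=\tfrac{1}{q}(1+|A|^2)^{q/2}\in C^1(\R^d)$, so that $\calI_q(z)=\int_\Omega G_q(\nabla z)\,\dd x$ and $DG_q(A)=(1+|A|^2)^{(q-2)/2}A$, the classical one-dimensional chain rule for the composition of a $C^1$ function with an absolutely continuous curve yields, for a.e.\ $x\in\Omega$ and every $0\leq s\leq t\leq T$,
\[
G_q(\nabla z(t,x))-G_q(\nabla z(s,x))=\int_s^t DG_q(\nabla z(r,x))\cdot\nabla z'(r,x)\,\dd r.
\]
Integrating over $\Omega$ and exchanging the two integrations via Fubini would then give
\[
\calI_q(z(t))-\calI_q(z(s))=\int_s^t\int_\Omega (1+|\nabla z|^2)^{\frac{q-2}{2}}\nabla z\cdot\nabla z'\,\dd x\,\dd r,
\]
from which both the absolute continuity of $t\mapsto\calI_q(z(t))$ and the pointwise identity \eqref{ch-rule-iq} follow by Lebesgue differentiation.

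The delicate step is the justification of Fubini, namely the absolute integrability of the integrand on $\Omega\times(s,t)$. This is precisely where the mixed estimate \eqref{mixed-estimate} enters: by the weighted Cauchy--Schwarz inequality in $\Omega$,
\[
\int_\Omega \bigl|DG_q(\nabla z(r))\cdot\nabla z'(r)\bigr|\,\dd x \leq \Bigl(\int_\Omega (1+|\nabla z(r)|^2)^{\frac{q-2}{2}}|\nabla z(r)|^2\,\dd x\Bigr)^{1/2}\Bigl(\int_\Omega (1+|\nabla z(r)|^2)^{\frac{q-2}{2}}|\nabla z'(r)|^2\,\dd x\Bigr)^{1/2}.
\]
The first factor is bounded by $(q\calI_q(z(r)))^{1/2}$, uniformly in $r$ thanks to $z\in L^\infty(0,T;W^{1,q}(\Omega))$, while the square of the second factor is integrable in $r$ exactly by \eqref{mixed-estimate}. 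One more Cauchy--Schwarz in $r$ then produces an explicit $(t-s)^{1/2}$-bound on the full double integral, which legitimizes Fubini and simultaneously shows that the right-hand side of \eqref{ch-rule-iq} lies in $L^2(0,T)\subset L^1(0,T)$. The main obstacle of the proof is exactly this integrability step, and it explains why the mixed estimate---coupling the degeneracy weight of $A_q$ with $|\nabla z'|^2$---is built into the very notion of weak solution.
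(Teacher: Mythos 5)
Your proof is correct and follows essentially the same route as the paper, which applies its Lemma \ref{th:ch-rule-bis} with $F=\nabla z$: a pointwise-in-$x$ one-dimensional chain rule for $G_q(\nabla z(\cdot,x))$, integration over $\Omega$ with Fubini, and Lebesgue differentiation, where the mixed estimate \eqref{mixed-estimate} together with the $L^\infty(0,T;W^{1,q}(\Omega))$ bound supplies the needed integrability (cf.\ \eqref{guarantees}). Your justification of the slice-wise chain rule via boundedness of the $W^{1,2}(0,T;\R^d)$ slices is a minor variant of the paper's appeal to Lemma \ref{ch-rule-added}, and changes nothing of substance.
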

We are going to deduce Proposition~\ref{l:ch-rule-iq} from applying the result below to $F:= \nabla z$.
\begin{lemma}
\label{th:ch-rule-bis}
 Define $\calG_q :  L^q (\Omega; \R^d )\to [0,\infty)$ by
$
\calG_q (F):=
\int_\Omega G_q (F(x))  \,\dd x =
 \frac1q \int_\Omega (1+|F(x)|^2)^{\frac q2} \,\dd x$.
If $F \in L^\infty (0,T;L^{q}(\Omega;\R^d)) \cap W^{1,1} (0,T;L^2(\Omega;\R^d))$
fulfills
\begin{equation}
\label{mixed-estimate-F}
\int_0^T \int_\Omega (1+|F|^2)^{\frac{q-2}{2}} |F_t|^2 \,\dd x \,\dd t <\infty,
\end{equation}
then the map $t \mapsto \calG_q(F(t)) $ is absolutely continuous on $(0,T)$, and there holds \begin{equation}
\label{ch-ruke-F}
\frac{\dd }{\dd t} \calG_q (F(t)) =
 \int_\Omega (1+|F(t)|^2)^{\frac{q-2}{2}} F(t) \cdot F_t (t) \,\dd x \qquad \foraa\, t \in (0,T)\,.
\end{equation}
\end{lemma}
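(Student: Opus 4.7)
The plan is to exploit the mixed estimate \eqref{mixed-estimate-F} to secure integrability of the candidate derivative and then apply a fibrewise classical chain rule, recombining the result by Fubini. The key observation is that the regularity $F\in W^{1,1}(0,T;L^2(\Omega;\R^d))$ lets one choose a representative of $F$ that is pointwise absolutely continuous in $t$ for almost every $x\in\Omega$, which in turn makes the scalar chain rule for $G_q\in\rmC^1(\R^d)$ directly applicable along each fibre.

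First I would check that the right-hand side of \eqref{ch-ruke-F} defines an $L^1(0,T)$ function of $t$. Using $(1+|F|^2)^{(q-2)/2}|F|^2\leq (1+|F|^2)^{q/2}$ together with Cauchy--Schwarz on $Q=\Omega\times(0,T)$,
\[
\int_Q (1+|F|^2)^{\frac{q-2}{2}}|F||F_t|\,\dd x\,\dd t \leq \Big(\int_Q (1+|F|^2)^{\frac{q}{2}}\,\dd x\,\dd t\Big)^{1/2}\Big(\int_Q (1+|F|^2)^{\frac{q-2}{2}}|F_t|^2\,\dd x\,\dd t\Big)^{1/2},
\]
the first factor being finite by $F\in L^\infty(0,T;L^q(\Omega;\R^d))$ and the second by the mixed estimate \eqref{mixed-estimate-F}.

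Next I would fix a pointwise representative of $F$: from $F\in W^{1,1}(0,T;L^2(\Omega;\R^d))$ the Bochner fundamental theorem gives $F(t)=F(0)+\int_0^t F_t(r)\,\dd r$ in $L^2$ for every $t\in[0,T]$, while $F_t\in L^1(0,T;L^2(\Omega))$ together with $|\Omega|<\infty$ yields $F_t\in L^1(Q)$. Hence by Fubini the representative
\[
\widetilde F(t,x):= F(0,x)+\int_0^t F_t(r,x)\,\dd r
\]
coincides with $F$ almost everywhere on $Q$, and for almost every $x\in\Omega$ the map $r\mapsto \widetilde F(r,x)$ is absolutely continuous on $[0,T]$ with pointwise classical derivative $F_t(r,x)$ a.e.\ in $r$.

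Replacing $F$ by this representative, I would apply the classical chain rule for the $\rmC^1$ function $G_q$, whose gradient is $\nabla G_q(A)=(1+|A|^2)^{(q-2)/2}A$, along each admissible fibre to obtain, for almost every $x\in\Omega$ and all $0\leq s\leq t\leq T$,
\[
G_q(F(t,x))-G_q(F(s,x)) = \int_s^t (1+|F(r,x)|^2)^{\frac{q-2}{2}}F(r,x)\cdot F_t(r,x)\,\dd r.
\]
Integrating in $x\in\Omega$ and swapping the order of integration via Fubini, now fully justified by the first step, I would arrive at
\[
\calG_q(F(t))-\calG_q(F(s)) = \int_s^t\int_\Omega (1+|F|^2)^{\frac{q-2}{2}}F\cdot F_t\,\dd x\,\dd r, \qquad 0\leq s\leq t\leq T.
\]
Since the right-hand side is the indefinite integral of an $L^1(0,T)$ function, $t\mapsto\calG_q(F(t))$ is absolutely continuous on $[0,T]$, and Lebesgue differentiation a.e.\ in $t$ then yields formula \eqref{ch-ruke-F}. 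The main technical point is the construction of the absolutely continuous fibre representative $\widetilde F$; once this is in hand the remaining steps reduce to the scalar chain rule plus Fubini, which are made simultaneously available by the integrability granted by the mixed estimate.
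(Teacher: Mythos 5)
Your proof is correct and follows essentially the same route as the paper's: a fibrewise (fixed-$x$) chain rule for $G_q$ yielding the integrated identity in time, integration over $\Omega$ justified by the joint integrability of $(1+|F|^2)^{\frac{q-2}{2}}F\cdot F_t$ over $Q$ coming from \eqref{mixed-estimate-F} and $F\in L^\infty(0,T;L^q(\Omega;\R^d))$, and then absolute continuity plus a.e.\ differentiation of the indefinite integral. The only (cosmetic) difference is that you justify the fibrewise step by constructing an absolutely continuous representative via the Bochner fundamental theorem and Fubini, whereas the paper invokes its auxiliary Lemma~\ref{ch-rule-added} on the fibres, using \eqref{guarantees} for the required integrability.
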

\begin{proof}
We split the proof in three claims.

\paragraph{\bf Claim $1$:} \emph{There holds for all $0 \leq s \leq t\leq T $ and for almost all $x\in \Omega$
\begin{equation}
\label{formula-x}
\begin{aligned}
%\frac1\il  (1+|F(t,x)|^2)^{\il/2}-\frac1\il  (1+|F(s,x)|^2)^{\il/2}
 G_q (F(t,x)) - G_q (F(s,x)) = \int_s^t
 (1+|F(r,x)|^2)^{\frac{\il-2}{2}} F(r,x) \cdot F_t (r,x) \,\dd r \,.%\\ & \quad
 %\text{ for all } \quad \foraa\, .
 \end{aligned}
\end{equation}
}
Indeed, \eqref{formula-x} follows from integrating in time the chain rule at fixed $x$
\begin{equation}
\label{fixed-x-chain-rule}
\frac{\dd}{\dd t} G_q (F(t,x)) =(1+|F(r,x)|^2)^{\frac{\il-2}{2}} F(r,x) \cdot F_t (r,x)
\end{equation}
which in turn ensues from applying
\eqref{classical-ch-rule} below with $\eta(t)=F(t,x)$ (here $x$ is fixed outside a negligible set) and $\varphi= G_q$.
%and $\dd \mu= f\, \dd \mathcal{L}$ with
%\[
%f(t)= (1+|F(t,x)|^2)^{(p-2)/2}.
%\]
Indeed, \eqref{mixed-estimate-F} guarantees
\begin{equation}
\label{guarantees}
\begin{aligned}
& (t,x)\mapsto (1+|F(t,x)|^2)^{\frac{\il-2}{4}} F_t(t,x) \, \in L^2 (0,T;L^2(\Omega;\R^d)),
\\
&
(t,x)\mapsto (1+|F(t,x)|^2)^{\frac{\il-2}{4}} F(t,x) \, \in L^2 (0,T;L^2(\Omega;\R^d)).
\end{aligned}
\end{equation}
Hence, by the properties of Bochner integrals we have for almost all $x \in \Omega$ that
$ t \mapsto
(1+|F(t,x)|^2)^{\frac{\il-2}{4}} F_t(t,x) \in L^2 (0,T) $ and
$t \mapsto (1+|F(t,x)|^2)^{\frac{\il-2}{4}} F(t,x) \in L^2 (0,T) $, therefore
\[
\foraa\, x \in \Omega \quad t \mapsto  | (1+|F(t,x)|^2)^{\frac{\il-2}{2}}  F(t,x) |  | F_t(t,x) | \in L^1 (0,T),
\]
and we  can apply Lemma~\ref{ch-rule-added}.

\paragraph{\bf Claim $2$:}  \emph{the map
 $t
\mapsto \calG_q (F(t))$ is absolutely continuous on $[0,T]$.}
\\
Indeed,
integrating w.r.t.\ $x\in \Omega$ formula \eqref{formula-x} we find
\begin{equation}
\label{quoted-later}
\begin{aligned}
\calG_q (F(t)) - \calG_q (F(s))  = \int_s^t
\int_\Omega
 (1+|F(r,x)|^2)^{\frac{\il-2}{2}} F(r,x) \cdot F_t (r,x) \,\dd r \,\dd x \ \
 \text{ for all } 0 \leq s \leq t \leq T.
 \end{aligned}
 \end{equation}
 We use this to estimate the difference
  $|\calG_q(F(t))-\calG_q(F(s))| $. In view of \eqref{guarantees}
   (and the properties of
 Bochner integrals), the map
 \[
  t \mapsto \int_\Omega (1+|F(t,x)|^2)^{\frac{\il-2}{2}} F(t,x) \cdot F_t(t,x) \,\dd x \in L^1 (0,T),
 \]
 therefore the absolute continuity of $t
\mapsto \calG_q (F(t))$ follows from \eqref{quoted-later} and
 the absolute continuity property of the Lebesgue integral.

\noindent
\paragraph{\bf Claim $3$:} \emph{formula \eqref{ch-ruke-F} holds.}
Let us fix $t$ outside a negligible set such that
 $\frac{\dd }{\dd t} \calI(F(t))$ exists as limit of the difference quotient.
 Writing \eqref{quoted-later} at $t$ and $t+h$ yields
 \[
 \begin{aligned}
 &
\frac1h \left( \calG_q (F(t+h)) - \calG_q (F(t) ) \right)  = \frac1h \int_t^{t+h}
\int_\Omega
 (1+|F(r,x)|^2)^{\frac{\il-2}{2}} F(r,x) \cdot F_t (r,x) \,\dd r \,\dd x.
 \end{aligned}
 \]
 Then, it remains to observe that as $h \to 0$
 \[
  \frac1h \int_t^{t+h}
\int_\Omega
 (1+|F(r,x)|^2)^{\frac{\il-2}{2}} F(r,x) \cdot F_t (r,x) \,\dd r \,\dd x \to
\int_\Omega
 (1+|F(t,x)|^2)^{\frac{\il-2}{2}} F(t,x) \cdot F_t (t,x)  \,\dd x.
 \]
 This is true for almost all $t \in (0,T)$ thanks to the Lebesgue point property of the
 map $ t \mapsto \int_\Omega (1+|F(t,x)|^2)^{\frac{\il-2}{2}} F(t,x) \cdot F_t(t,x) \,\dd x \in L^1 (0,T)$.
\end{proof}

\noindent We conclude by stating, for the sake of completeness, the following auxiliary result.
\begin{lemma}
\label{ch-rule-added}
Given $\varphi \in \rmC^1 (\R^d;\R) $, for every $\eta \in W^{1,2}(0,T;\R^d)$ such that $t \mapsto |\nabla \varphi (\eta(t))| | \eta'(t)| \in L^1 (0,T) $ there holds
\begin{equation}
\label{classical-ch-rule}
\begin{gathered}
\text{the map } t \mapsto \varphi(\eta(t)) \text{ is absolutely continuous on $(0,T)$ and}
\\
\frac{\dd}{\dd t} \varphi(\eta(t))= \nabla \varphi(\eta(t)) \cdot \eta'(t)\quad \foraa\, t \in (0,T).
\end{gathered}
\end{equation}
\end{lemma}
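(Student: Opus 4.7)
\textbf{Proof plan for Lemma \ref{ch-rule-added}.} The strategy is the standard approximation-by-mollification argument. Since $\eta\in W^{1,2}(0,T;\R^d)$, it admits an absolutely continuous representative (still denoted by $\eta$), and hence $K:=\eta([0,T])\subset\R^d$ is compact. As $\varphi\in \rmC^1(\R^d;\R)$, the gradient $\nabla\varphi$ is uniformly continuous on a bounded neighborhood of $K$. I would extend $\eta$ to all of $\R$ by continuous (say, affine-constant) extension outside $[0,T]$, and consider the mollifications $\eta_n:=\eta*\rho_{1/n}$, where $(\rho_{1/n})_n$ is a standard family of smooth mollifiers. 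Then $\eta_n\in \rmC^\infty(\R;\R^d)$, $\eta_n\to\eta$ uniformly on $[0,T]$, and $\eta_n'\to\eta'$ in $L^2(0,T;\R^d)$.

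For each $n$, the classical chain rule applied to the $\rmC^1$-composition yields, for every $0\le s\le t\le T$,
\begin{equation*}
\varphi(\eta_n(t))-\varphi(\eta_n(s))=\int_s^t \nabla\varphi(\eta_n(r))\cdot\eta_n'(r)\,\dd r.
\end{equation*}
Since $\eta_n\to\eta$ uniformly and $\nabla\varphi$ is uniformly continuous on a neighborhood of $K$, we have $\nabla\varphi(\eta_n(\cdot))\to\nabla\varphi(\eta(\cdot))$ uniformly on $[0,T]$; combined with the $L^2$-convergence $\eta_n'\to\eta'$, this forces $\nabla\varphi(\eta_n)\cdot\eta_n'\to\nabla\varphi(\eta)\cdot\eta'$ in $L^1(0,T)$. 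Passing to the limit $n\to\infty$ we obtain
\begin{equation*}
\varphi(\eta(t))-\varphi(\eta(s))=\int_s^t\nabla\varphi(\eta(r))\cdot\eta'(r)\,\dd r\qquad\text{for all }0\le s\le t\le T.
\end{equation*}

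The hypothesis $t\mapsto|\nabla\varphi(\eta(t))||\eta'(t)|\in L^1(0,T)$ (in fact automatic by compactness of $K$) guarantees that the right-hand side above is an absolutely continuous function of $t$ with respect to $s$, via the absolute continuity of the Lebesgue integral. Therefore $t\mapsto\varphi(\eta(t))$ is absolutely continuous on $[0,T]$, and the Lebesgue differentiation theorem applied to the integral representation yields
\begin{equation*}
\frac{\dd}{\dd t}\varphi(\eta(t))=\nabla\varphi(\eta(t))\cdot\eta'(t)\qquad\foraa\,t\in(0,T),
\end{equation*}
which is \eqref{classical-ch-rule}. No real obstacle arises here; the only point that requires a little care is the uniform convergence of $\nabla\varphi\circ\eta_n$, which hinges on the compactness of the range of the continuous representative of $\eta$ and the continuity of $\nabla\varphi$.
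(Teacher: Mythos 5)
Your proof is correct. The paper itself does not spell out an argument: it establishes the absolute continuity by referring to the increment estimate in the proof of \cite[Thm.\ 1.2.5]{AGS2008} (i.e.\ one bounds $|\varphi(\eta(t))-\varphi(\eta(s))|$ directly by $\int_s^t|\nabla\varphi(\eta(r))|\,|\eta'(r)|\,\dd r$, using the mean value theorem along the curve together with the assumed $L^1$-integrability of this integrand), and then invokes ``classical arguments'' for the a.e.\ identity. You instead give a self-contained approximation argument: mollify the (constantly extended) curve, apply the smooth chain rule, and pass to the limit in the integral identity $\varphi(\eta_n(t))-\varphi(\eta_n(s))=\int_s^t\nabla\varphi(\eta_n)\cdot\eta_n'\,\dd r$ using uniform convergence of $\nabla\varphi\circ\eta_n$ (compact range plus continuity of $\nabla\varphi$) and $L^2$-convergence of $\eta_n'$; absolute continuity and the a.e.\ derivative then both follow from the resulting integral representation. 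Both routes work; yours has the advantage of being fully explicit, while the paper's is shorter by citation and, by estimating increments through the stated integrand, makes visible why the hypothesis $t\mapsto|\nabla\varphi(\eta(t))|\,|\eta'(t)|\in L^1(0,T)$ is the relevant one (it is checked pointwise in $x$ in the application, Claim~1 of Lemma~\ref{th:ch-rule-bis}). Your side remark that this hypothesis is automatic is indeed valid in the finite-dimensional setting of the statement, since the absolutely continuous representative of $\eta$ has compact range and $\eta'\in L^2(0,T)\subset L^1(0,T)$.
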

\begin{proof}
The absolute continuity property can be shown by arguing in the very same way as in the proof of \cite[Thm.\ 1.2.5, page 28]{AGS2008}. The chain rule formula follows
from classical arguments.
\end{proof}
%%%%%%%

\section{Time-discretization for the viscous problem}
\label{s:4} We consider the following time-discrete incremental
minimization problem:
 Given $\epsilon>0$,
$z_0\in \calZ$ % with $0\leq z_0\leq 1$ \DDDE
and a  uniform  partition $\{0=t_0^\tau<\ldots<t_N^\tau=T\}$ of the time
interval $[0,T]$ with fineness $\tau = %\sup_{0\leq k\leq N}(
t^\tau_{k+1} - t^\tau_k =T/N$   (cf.\ Remark \ref{rmk:fixed-time-step} ahead),
 the elements  $(z^\tau_{k})_{0\leq
k\leq
N}$ are determined through $z_0^\tau=z_0$ and
\begin{align}
\label{def_time_incr_min_problem_eps}
z_{k+1}^\tau \in
\Argmin\Set{\calI(t_{k+1}^\tau,z) +
\tau\calR_\epsilon\left(\frac{z
    -z_k^\tau}{\tau}\right)}{z\in\calZ}.
\end{align}
%Here, $\tau_k=t_{k+1}^\tau - t_k^\tau$.
The existence of minimizers can be checked via the direct method in the calculus of variations, thanks
to the properties of the reduced energy $\calI$ formulated in
Section~\ref{sec:red_energy}. It follows from the
representation formula \eqref{aforementioned-form} for $\partial \calR_\epsi$
 %(cf.\
%\cite[Cor.~IV.6]{aubin-ekeland84}),
that, any family
 $\{z_1^\tau,\ldots,z_N^\tau\}\subset\calZ$
of minimizers of the incremental problem
\eqref{def_time_incr_min_problem_eps} satisfies for all
$k\in\{0,\ldots, N-1\}$ the \emph{discrete Euler-Lagrange} equation
\begin{align}
\label{pruni1:e1}  \partial\calR_1\left(\frac{z_{k+1}^\tau -
z_k^\tau}{\tau}\right) + \epsilon  \frac{z_{k+1}^\tau -
z_k^\tau}{\tau} + \rmD_z\calI(t_{k+1}^\tau,z_{k+1}^\tau) \ni 0 \qquad
\text{in } \calZ^*.
\end{align}
\begin{proposition}
\label{rem:nice} Under Assumptions \ref{assumption:energy}, \ref{ass:init}, and
 (A$_\Omega$1), for $\tau$ sufficiently small
 the minimum problem
\eqref{def_time_incr_min_problem_eps} admits a unique solution.

Suppose in addition that $f$ and $g$ comply with the following condition
\begin{equation}
\label{def-gg}
  f(0) \leq f(z), \quad  g(0) \leq g(z) \quad  \text{  for all } z \leq
 0,
\end{equation}
 and that  the initial datum $z_0$ fulfills
%\begin{equation}
%\label{constraint-zero}
$ z_0(  x)\in [0,1]$  for almost all $x\in \Omega$.
%  \end{equation}
   Then,
the minimizer $z_k^\tau$ from \eqref{def_time_incr_min_problem_eps}
   also fulfills
 % \begin{equation}
 % \label{constraint-z}
  $z_k^\tau(x) \in  [0,1]$ for almost all $x\in \Omega$.
% \quad
%  \foraa\, x\in \Omega. %, \text{ for all } t \in[0,T].
%  \end{equation}
\end{proposition}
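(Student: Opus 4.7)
The plan is to handle existence and uniqueness first, then to prove the pointwise bound by induction on $k$. The upper estimate $z_{k+1}^\tau\leq 1$ comes for free from the unidirectionality encoded in $\calR_1$, while the lower estimate $z_{k+1}^\tau\geq 0$ requires a truncation/competitor argument in which assumption \eqref{def-gg} plays a decisive role.

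First I would show existence by the direct method applied to
\[
F(z):=\calI(t_{k+1}^\tau,z)+\tau\calR_\epsi\!\left(\tfrac{z-z_k^\tau}{\tau}\right)\qquad\text{on }\calZ :
\]
coercivity comes from \eqref{est_coerc1} together with the $L^2$-control provided by the viscous penalty, and weak $W^{1,q}$-lower semicontinuity follows from the convexity of $\calI_q$ and $\calR_\epsi$ combined with the continuity properties of $\wt\calI$ in Corollary~\ref{coro-fre}. For uniqueness, I use that by Corollary~\ref{cor:rosenheim} the reduced energy $\calI(t,\cdot)$ is $\lambda$-convex w.r.t.\ $\|\cdot\|_{L^2(\Omega)}$ for some $\lambda\in\R$, whereas the quadratic viscous term $\tfrac{\epsi}{2\tau}\|z-z_k^\tau\|_{L^2(\Omega)}^2$ is $\tfrac{\epsi}{\tau}$-strongly convex in $L^2(\Omega)$. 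Hence, provided that $\tau$ is so small that $\tfrac{\epsi}{\tau}+\lambda>0$, the functional $F$ is strictly convex on $\calZ$ in the $L^2$-sense, and the minimizer is unique.

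For the pointwise bounds I argue by induction on $k$, the case $k=0$ being the assumption on $z_0$. Assuming $z_k^\tau(x)\in[0,1]$ a.e., the finiteness of $\calR_1(z_{k+1}^\tau-z_k^\tau)$ forces $z_{k+1}^\tau\leq z_k^\tau\leq 1$ a.e., which gives the upper bound. For the lower bound, suppose by contradiction that $A:=\{x\in\Omega:z_{k+1}^\tau(x)<0\}$ has positive measure and consider the competitor $\tilde z:=z_{k+1}^\tau\vee 0\in\calZ$, whose weak gradient satisfies $\nabla\tilde z=\nabla z_{k+1}^\tau\,\mathbf{1}_{\Omega\setminus A}$. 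On $A$, where $\tilde z=0$, the integrand of $\calI_q$ decreases and \eqref{def-gg} yields the pointwise inequalities $f(\tilde z)\leq f(z_{k+1}^\tau)$ and $g(\tilde z)\leq g(z_{k+1}^\tau)$. Since $z_k^\tau\geq 0$, the increment $\tilde z-z_k^\tau$ is again non-positive and thus admissible for $\calR_1$, and on $A$ one has $|\tilde z-z_k^\tau|=z_k^\tau<z_k^\tau-z_{k+1}^\tau$, which gives $\calR_1(\tilde z-z_k^\tau)\leq\calR_1(z_{k+1}^\tau-z_k^\tau)$ and, crucially, the \emph{strict} inequality $\|\tilde z-z_k^\tau\|_{L^2(\Omega)}^2<\|z_{k+1}^\tau-z_k^\tau\|_{L^2(\Omega)}^2$ because $A$ has positive measure. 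Summing, $F(\tilde z)<F(z_{k+1}^\tau)$, contradicting the minimality of $z_{k+1}^\tau$; hence $|A|=0$ and the induction closes.

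The only non-routine step is the comparison of the reduced elastic energies $\calI_2(t_{k+1}^\tau,\tilde z)$ and $\calI_2(t_{k+1}^\tau,z_{k+1}^\tau)$, each defined as an infimum over $v\in\calU$. I handle this by testing the former against the specific admissible displacement $\umin(t_{k+1}^\tau,z_{k+1}^\tau)$, so that
\[
\calI_2(t_{k+1}^\tau,\tilde z)\leq\calE_2\bigl(t_{k+1}^\tau,\umin(t_{k+1}^\tau,z_{k+1}^\tau),\tilde z\bigr),
\]
and the comparison reduces to the pointwise inequality $g(\tilde z)\leq g(z_{k+1}^\tau)$ a.e., which is exactly what \eqref{def-gg} delivers.
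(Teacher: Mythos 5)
Your proposal is correct. For uniqueness the paper argues differently in form, though with the same engine: it takes two minimizers, writes the Euler--Lagrange inclusions \eqref{pruni1:e1} for both, tests the difference with $z_1-z_2$, and uses the monotonicity of $\partial\calR_1$ together with \eqref{e:from-D-thesis}, \eqref{very-useful-later} and \eqref{lions-magenes} (i.e.\ exactly the content of Corollary~\ref{cor:rosenheim}) to absorb the lower-order term once $\tau/\epsi$ is small; you instead combine the $\lambda$-convexity of $\calI(t,\cdot)$ along segments (which the paper itself notes follows from Corollary~\ref{cor:rosenheim}) with the $\tfrac{\epsi}{\tau}$-strong $L^2$-convexity of the viscous penalty to get strict convexity of the incremental functional. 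The two arguments are equivalent in substance and need the same smallness condition on $\tau$ relative to $\epsi$; your version avoids invoking the Euler--Lagrange inclusion altogether, which is slightly cleaner at this stage. For the bound $z_k^\tau\in[0,1]$ the paper gives no argument at all (it refers to the proof of Prop.~4.5 of \cite{krz}), whereas you carry out the truncation argument explicitly: the upper bound from finiteness of $\calR_1$, and the lower bound from the competitor $z_{k+1}^\tau\vee 0$, with \eqref{def-gg} controlling the $f$- and $g$-terms, the reduced energy compared by testing with the fixed displacement $\umin(t_{k+1}^\tau,z_{k+1}^\tau)$, and the strict decrease of the viscous term on $\{z_{k+1}^\tau<0\}$ yielding the contradiction. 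All steps check out, including the admissibility $\tilde z\leq z_k^\tau$ (which uses the inductive hypothesis $z_k^\tau\geq 0$) and the nonnegativity of $W$ needed in the elastic comparison.
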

\noindent The proof of uniqueness
 follows from standard arguments, exploiting  estimate
\eqref{e:strong-monot-sez2} from
Corollary \ref{cor:rosenheim}.
For the property $z_k^\tau(x)\in [0,1]$, we refer the reader to the proof of
\cite[Prop.\ 4.5]{krz}.
%%%%%%
%%%%%%%%%%%%%%%%%%%%%
\begin{notation}
\label{not-interp} \upshape
 The following piecewise constant and piecewise
linear interpolation functions will be used in the sequel:
\begin{alignat*}{2}
\overline z_\tau (t) &= z_{k+1}^\tau&\quad&\text{for }t\in
(t_k^\tau,t_{k+1}^\tau],\\
\underline z_\tau(t)&=  z_k^\tau &&\text{for }t\in
[t_k^\tau,t_{k+1}^\tau),\\
\hat z_\tau(t)&=z_k^\tau + \frac{t - t_{k}^\tau}{\tau}
(z_{k+1}^\tau - z_k^\tau)&&\text{for }t\in [t_k^\tau,t_{k+1}^\tau].
\end{alignat*}
 Furthermore, we shall use the
notation
\begin{alignat*}{2}
 \tau(r) &= \tau &\quad&\text{for } r\in (t_k^\tau, t_{k+1}^\tau),
\\
\overline{t}_\tau(r)&= t_{k+1}^\tau &\quad&\text{for } r\in (t_k^\tau,
t_{k+1}^\tau],
\\
\underline{t}_\tau(r)&= t_{k}^\tau &\quad&\text{for } r\in [t_k^\tau,
t_{k+1}^\tau),
\end{alignat*}
as well as %\footnote{   It seems to me that the piecewise constant interpolant for the $u_k$s is useful; the pcw linear is used only in Lemma \ref{l:highinteu}.. we have to decide if it is worthwhile keep  this result..  }
\begin{alignat*}{2}
\overline u_\tau (r) & = \umin (\overline{t}_\tau(r), \overline
z_\tau (r)) &  \quad&\text{for }r\in
(t_k^\tau,t_{k+1}^\tau],\\
\underline u_\tau (r) & = \umin (\underline{t}_\tau(r), \underline
z_\tau (r)) &&\text{for }r\in [t_k^\tau,t_{k+1}^\tau),
\\
\hat u_\tau(r)&= \underline u_\tau (r) + \frac{r -
\underline{t}_\tau(r)}{\tau} (\overline u_\tau (r) - \underline u_\tau
(r)) &&\text{for }r\in [t_k^\tau,t_{k+1}^\tau].
\end{alignat*}

 Clearly,
\begin{equation}
\label{quoted -later}
 \overline{t}_\tau(t), \, \underline{t}_\tau(t) \to t
 \quad \text{ as
$\tau \to 0$ for all $t \in [0,T]$.}
\end{equation}
 Moreover,
for any given function  $b$  which is piecewise constant on the
intervals $(t_i^\tau,t_{i+1}^\tau)$ we set
\[
 \triangle_{\tau(r)} b(r) = b(r) - b(s) \text{ for $r\in (t_k^\tau,
t_{k+1}^\tau)$ and $s\in (t_{k-1}^\tau, t_k^\tau)$.}
\]
\end{notation}

 With the above notation, \eqref{pruni1:e1} can be
reformulated as
\begin{equation}
\label{cont-reformulation}
 \partial\calR_1\left(\hat{z}_\tau'(t)\right) +  \epsilon  \hat{z}_\tau'(t)   + \rmD_z\calI(\overline{t}_\tau(t),
 \overline{z}_\tau(t)) \ni 0 \quad \foraa\, t \in (0,T),
\end{equation}
viz.\
\begin{equation}
\label{cont-reformulation-bis}
\begin{cases}
  \overline{\omega}_\tau (t)+  \epsilon
  \hat{z}_\tau'(t)
  + A_\il \overline{z}_\tau(t) +
   \rmD_z\wt \calI(\overline{t}_\tau(t),
 \overline{z}_\tau(t)) = 0,
 \\
 \overline{\omega}_\tau (t) \in
 \partial\calR_1\left(\hat{z}_\tau'(t)\right)
 \end{cases}
  \quad \foraa\, t \in (0,T).
\end{equation}
%%%
\begin{notation}
\upshape In what follows, we will  denote most of the positive constants
occurring in the calculations  by the symbols $c,\, C'$,
whose meaning may vary even within the same line.
Furthermore, the symbols $I_i,$
$S_i$, $F_i$,  $i=0,1,\ldots,$
 will be used as place-holders for  several integral terms popping in the various
 estimates: we warn the reader that we will not be self-consistent with the numbering, so that, for instance,
$I_1$ will appear several times with different meanings.
\end{notation}
%%%%%%%%
%%%%%%%%
%\begin{appendix}
 \subsection{Global higher differentiability of the time-discrete damage variable}
\label{ss:4.2} In this section we study the higher
differentiability of the solutions $z_k^\tau$ of the
time-incremental mi\-ni\-mi\-za\-tion problem
\eqref{def_time_incr_min_problem_eps}, which leads to Theorem\
\ref{app_reg_thm_z} below.
 Its proof relies on a
difference quotient argument in the spirit of
\cite{savare97,ell:EF99,kne:Kne05}.
%savare, Ebmeyer/Frehse, knees
Note that it is in the proof of Theorem~\ref{app_reg_thm_z}
that we need to resort to the additional condition (A$_\Omega$2) on
$\Omega$ stated in Theorem~\ref{thm:ex-viscous}.
%, namely that\footnote{  we have already stated (A$_\Omega$2) in Thm.\
%\ref{thm:ex-viscous}, shall we repeat it, or shall we simply recall it?  }
%
%\begin{itemize}
 %\item[(A$_\Omega$2)] $\Omega\subset \R^d$ is a bounded domain and satisfies
%the uniform cone condition.
%\end{itemize}
We refer to \cite{sin:Gri85} for a precise definition of the
\emph{uniform cone condition}. Observe that this condition is
equivalent to the property that $\partial\Omega$ can locally be
represented as the graph of a Lipschitz continuous mapping.

 We address the higher differentiability of minimizers for
\eqref{def_time_incr_min_problem_eps} in a more general context. In
particular, in view of future developments
 we deal with an $L^\alpha$-viscosity term instead of $L^2$-viscosity.
%replace
%$\calR_{2,\epsi}$ in \eqref{def_time_incr_min_problem_eps} (cf.\ ???
%) with
%\[
%\calR_{\alpha, \epsi}(\eta): = \frac{\epsi}{\alpha}
%\|\eta\|_{L^{\alpha}(\Omega)}^\alpha \qquad \text{with } \alpha \geq
%2.
%\]
Therefore,
 let $q>d$, $p>2$. %and $\alpha\geq 2$.
  For $z,\zeta\in
W^{1,q}(\Omega)$, $w\in W^{1,p}_{\Gamma_D}(\Omega;\R^d)$, $\tau>0$,
$\epsilon\geq 0$ and $\alpha\geq 2$
 we
define
\begin{align}
 \label{app_def_F}
\calF(z;\alpha,\tau,\epsilon,\zeta,w):= \int_\Omega
\frac12 g(z)\bbC
\varepsilon(w):\varepsilon(w) +  f(z ) +\frac{1}{q}(1+\abs{\nabla
z}^2)^{\frac{q}{2}} \dx+ \calR_1(z-\zeta) + \frac{\epsilon
\tau}{\alpha}\norm{\frac{z-\zeta}{\tau}}^\alpha_{L^\alpha(\Omega)}
\end{align}
with $\calR_1$ as in \eqref{def_R1}.
 Observe that with this definition the time-incremental minimization problem
\eqref{def_time_incr_min_problem_eps} can be rewritten as
\begin{align}
 z_{k+1}^\tau\in
\Argmin\Set{\calF(z;2,\tau,\epsilon,z_k^\tau,u_\text{min}(t_{k+1}^\tau,
z)+u_D(t_{k+1}^\tau))
} { z\ \in \calZ}.
\end{align}
\begin{theorem}[Spatial differentiability of the damage variable]
\label{app_reg_thm_z}
Under Assumptions \ref{assumption:energy},
 (A$_\Omega$1), and (A$_\Omega$2), suppose
further that $w\in W^{1,p}_{\Gamma_D}(\Omega;\R^d)$ for some $p\geq 2$,
and  that $\tau>0$, $\epsilon\geq 0$, $\alpha\geq 2$ and
$q>d$.

Let $z\in \calZ=W^{1,q}(\Omega)$ be a minimizer of
$\calF(\cdot;\alpha,\tau,\epsilon,\zeta,w)$ over $\calZ$.
 Then for all $0\leq \beta<  \frac{1}{q}\left(1-\frac{d}{q}\right)$
 we have $z\in W^{1+\beta,q}(\Omega)$.
Moreover, there exists a constant $c_\beta>0$ such that
 \begin{align}
 \label{quoted-later-ohyes}
\norm{z}_{W^{1 + \beta,q}(\Omega)}
%{ \calN^{1 + \frac{\gamma}{q},q}(\Omega)}
\leq c_\beta (1+\norm{z}_{W^{1,q}(\Omega)})
\left( 1
+ \norm{f'(z)}_{L^\infty(\Omega)}^{\frac{1}{q}}
+
\norm{w}^{\frac{2}{q}}_{W^{1,p}(\Omega;\R^d)} + \epsilon^{\frac{1}{q}}
\norm{\frac{z-\zeta}{\tau}}^{\frac{\alpha-1}{q}}_{L^\alpha(\Omega)}
\right),
\end{align}
and the constant $c_\beta$ is independent of
$\alpha,\epsilon,\tau,z,w$ and $\zeta$.
\end{theorem}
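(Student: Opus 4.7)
The plan is to establish a Nikolskii-type translation estimate for $\nabla z$ via a difference-quotient argument in the spirit of Savar\'e, Ebmeyer--Frehse, and Knees. The uniform cone condition (A$_\Omega$2) provides a finite open cover $\{U_j\}_{j=1}^J$ of $\overline\Omega$ and, for each $j$, an open cone $\Xi_j\subset S^{d-1}$ of admissible directions with the property that $x+h\xi\in\Omega$ for every $x\in U_j\cap\Omega$, every $\xi\in\Xi_j$, and every $h\in(0,h_0]$; working with a subordinate partition of unity $(\eta_j)_j$, the task reduces to proving, for each $j$ and each $\xi\in\Xi_j$, a local bound of the form
\[
\int_{U_j\cap\Omega}\eta_j^{\,q}\,|\nabla z(x+h\xi)-\nabla z(x)|^{q}\dx \leq C\,h^{q\beta}\,R(z,w,\zeta)^{q}
\qquad \forall\,\beta<\tfrac1q\bigl(1-\tfrac dq\bigr),
\]
with $R(z,w,\zeta)$ of the form of the right-hand side of \eqref{quoted-later-ohyes}. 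This Nikolskii-type bound embeds into a fractional Sobolev bound on $\nabla z$ and hence into $z\in W^{1+\beta,q}(\Omega)$ throughout the claimed range.

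Because $\calR_1(z-\zeta)$ forces $z\leq\zeta$ a.e.\ and is linear in $z$ on the feasible set, the minimality of $z$ is equivalent to an obstacle variational inequality. I would test it against perturbations $v$ of $z$ built from its own translations $z_{\pm h}(x):=z(x\pm h\xi)$---specifically, asymmetric combinations of the form
\[
v_\pm \;=\; z \;-\; \theta\,\eta_j^{\,q}\,(z-z_{\pm h})_+,\qquad \theta>0\text{ small.}
\]
The crucial feature of this choice is that $v_\pm\leq z\leq\zeta$ pointwise, so the obstacle constraint is respected \emph{without ever translating} $\zeta$. The two resulting inequalities will be summed after the change of variable $x\mapsto x+h\xi$ in the one involving $z_{-h}$, so that both involve the single pair $(\nabla z,\nabla z_h)$.

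For the leading $A_q$-contribution I would invoke the strong-convexity estimate \eqref{q-convexity} applied to $(A,B)=(\nabla z,\nabla z_h)$, which, combined with $|z_h-z|=(z_h-z)_++(z-z_h)_+$ and its gradient analogue, produces the combined lower bound
\[
c_q\!\int_{U_j\cap\Omega}\!\eta_j^{\,q}\bigl(1+|\nabla z|^{2}+|\nabla z_h|^{2}\bigr)^{\frac{q-2}{2}}|\nabla(z_h-z)|^{2}\dx \;+\; \tilde c_q\!\int_{U_j\cap\Omega}\!\eta_j^{\,q}|\nabla(z_h-z)|^{q}\dx.
\]
The right-hand side of the variational inequality, by H\"older's and Young's inequalities, the Lipschitz bounds in \eqref{elast-1} and \eqref{new-g}, and the $W^{1,p}$-regularity of $w$, is in turn dominated by $C\,h\,\|z_h-z\|_{L^\infty(U_j)}\cdot\Lambda$ with
\[
\Lambda \;=\; 1+\|f'(z)\|_{L^\infty(\Omega)}+\|w\|^{2}_{W^{1,p}(\Omega;\R^d)}+\epsilon\,\tau^{1-\alpha}\|(z-\zeta)/\tau\|_{L^\alpha(\Omega)}^{\alpha-1},
\]
where the extra factor $h$ originates from the $L^1$-translation bound $\|z_h-z\|_{L^1}\leq C\,h\,\|\nabla z\|_{L^1}$ applied to the cutoff-gradient cross-terms. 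The Morrey embedding $W^{1,q}(\Omega)\hookrightarrow C^{0,1-d/q}(\overline\Omega)$, valid because $q>d$, then supplies $\|z_h-z\|_{L^\infty(U_j)}\leq C\,h^{1-d/q}\,\|z\|_{W^{1,q}}$, and a careful bookkeeping of the constants---extracting the $L^q$-norm from the $|\nabla(z_h-z)|^q$ piece and balancing the two sides via Young's inequality with the appropriate exponents---produces the sharp threshold $\beta<\frac1q(1-\frac dq)$ together with the explicit form \eqref{quoted-later-ohyes}.

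The main technical obstacle is precisely the admissibility of the perturbations under the one-sided obstacle, given that $\zeta$ has only $W^{1,q}$-regularity and therefore cannot be translated without losing control; the asymmetric choice $v_\pm\leq z$ described above is tailored to circumvent this by ensuring feasibility automatically. A secondary difficulty is that the subdifferential $\partial\calR_1$ contains no pointwise upper bound on the associated Lagrange multiplier (it may blow up on the set $\{\dot z=0\}$), which is why the whole argument is kept at the level of the variational inequality rather than passing to a pointwise PDE for $A_q z$. The $\alpha$-viscosity term is strictly convex and lower-order and contributes only the last summand in $\Lambda$ via a straightforward monotonicity estimate; the fact that the final constant $c_\beta$ is independent of $\alpha,\epsilon,\tau,\zeta$ and $w$ can be read off from the bookkeeping above, since $\tau,\epsilon$ appear only through the $\Lambda$-term in the combination $\epsilon\tau^{1-\alpha}\|(z-\zeta)/\tau\|_{L^\alpha}^{\alpha-1}$ already displayed in \eqref{quoted-later-ohyes}.
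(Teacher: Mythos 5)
Your overall framework (cover from the cone condition, cutoff, translation, Nikolskii estimate, Morrey embedding from $q>d$) matches the paper's, and the idea of using downward perturbations $v_\pm\leq z\leq\zeta$ to respect the obstacle without touching $\zeta$ is genuinely appealing. But the argument has a gap exactly at its central step, the symmetrization. The cone condition (A$_\Omega$2) only guarantees \emph{inward} translations: for $x\in U_j\cap\Omega$ in a boundary layer of thickness $\sim h$ the point $x-h\xi$ lies outside $\Omega$, so $z_{-h}$ and hence $v_-$ are simply not defined where $\eta_j\neq 0$. If you try to repair this by defining the backward test function through the change of variables (i.e.\ as $y\mapsto \eta_j^q(y-h\xi)\,(z(y)-z(y-h\xi))_+$ supported in $(U_j\cap\Omega)+h\xi$, extended by zero), the extension jumps across the shifted piece of $\partial\Omega$, which is now an interior hypersurface, so the competitor is not in $W^{1,q}(\Omega)$ and cannot be inserted into the variational inequality \eqref{pr_vi_reg_z}. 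This is precisely why boundary difference-quotient arguments are one-sided, and it is where your proof breaks down.

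Moreover, even where both translations are available (interior), the truncated tests do not produce the coercive term you claim. Testing with $v_+$ yields only $\int \eta_j^q\,\rmD G_q(\nabla z)\cdot\nabla(z-z_h)\,\dx$ on $\{z>z_h\}$, and the (reparameterized) backward test yields only $\rmD G_q(\nabla z_h)\cdot\nabla(z_h-z)$ on $\{z_h>z\}$; neither set carries both terms, so you never form the monotonicity pairing $(\rmD G_q(\nabla z_h)-\rmD G_q(\nabla z))\cdot\nabla(z_h-z)$. Applying \eqref{q-convexity} separately on each set leaves residuals $\pm\big(G_q(\nabla z)-G_q(\nabla z_h)\big)$ on disjoint sets which do not cancel and are of the same order as the quantity you are trying to bound, so the claimed lower bound ``$c_q\int(1+|\nabla z|^2+|\nabla z_h|^2)^{\frac{q-2}{2}}|\nabla(z_h-z)|^2+\dots$'' does not follow. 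The paper avoids both problems with a single inward shift $T_h(x)=x+h\varphi(x)e$ combined with a \emph{vertical} lowering $z_h=z\circ T_h-\delta_h$, $\delta_h= h^{1-d/q}\norm{z}_{\rmC^{0,1-d/q}(\overline\Omega)}$ (this is where $q>d$ enters), which is admissible because $z\leq\zeta$; convexity \eqref{q-convexity} then produces the full weighted difference quotient from this one test, and the leftover energy-difference term $S_1=\int_\Omega G_q(\nabla(z\circ T_h))-G_q(\nabla z)\,\dx$ is controlled by the change of variables $y=T_h(x)$ together with $T_h(\Omega)\subset\Omega$ and $G_q\geq 0$, giving $S_1\leq c\,h\,(1+\norm{z}_{W^{1,q}(\Omega)}^q)$. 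Your proposal contains no substitute for this step, and without it the estimate \eqref{quoted-later-ohyes} is not reached. (A minor further point: your right-hand-side bookkeeping, $C\,h\,\norm{z_h-z}_{L^\infty}\Lambda$, would give an exponent $2-d/q$ better than the true threshold, which signals that the cross-terms are not being tracked correctly; the actual lower-order terms are of size $h^{1-d/q}$, not $h\cdot h^{1-d/q}$.)
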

\begin{remark}
 For  $\epsilon=0$, Theorem \ref{app_reg_thm_z} yields a regularity
result for global energetic solutions associated with the energy
$\calI(\cdot,\cdot)$ from \eqref{reduced-energy} and the dissipation
potential $\calR_1$. Indeed,  let $z:[0,T]\rightarrow \calZ$ be a
global energetic solution associated with $\calI$ and $\calR_1$. The
stability condition,
 that is satisfied by  global energetic  solutions, implies that  for all $t \in [0,T]$
the function $z(t)$ minimizes
$\calF(\cdot;2,1,0,z(t),u_\text{min}(t,z(t)))$. Hence, by Theorem
\ref{app_reg_thm_z},
 for all $t\in [0,T]$ it
holds $z(t)\in W^{1+\beta,q}(\Omega)$ with $\sup_{t\in
[0,T]}\norm{z(t)}_{W^{1+\beta,q}(\Omega)} <\infty$. We refer to
\cite{MR06,TM10} for the analysis of damage models in the context of
global energetic solutions.
\end{remark}

\begin{proof}[Proof of Theorem \ref{app_reg_thm_z}]
 As already announced the proof relies on a difference quotient technique.
Since spatially shifted versions of the minimizer $z$ of $\calF$ not
necessarily lie below the function $\zeta$, we also have to shift
the function $z$ in ``vertical'' direction.

Let $\Omega\subset\R^d$ satisfy (A$_\Omega$2). Let $x_0\in
\partial\Omega$ be arbitrary and choose $e\in \R^d$ with $\abs{e}=1$
in such a way that there exist constants $R,h_0>0$ such that for all
$y\in \overline{\Omega}\cap B_R(x_0)$ and all $0< h\leq h_0$ we have
$y+he\in \Omega$. Since $\Omega$ satisfies the uniform cone
condition it is possible to find a basis of $\R^d$ such that every
basis vector has this property.

Let $\varphi\in \rmC_0^\infty(B_R(x_0))$ be a cut-off function with
$0\leq \varphi\leq 1$ and $\varphi\big|_{B_{R/2}(x_0)} \equiv 1$. Further,
let us define the transformation $T_h:\R^d\rightarrow \R^d$ by $T_h(x):=x
+ h \varphi(x) e$. If $h\in [0,h_0]$ is small enough, this mapping
is an isomorphism with $T_h(\Omega)\subset\Omega$ and
it coincides with the identity outside of the
ball $B_R(x_0)$. For $w\in W^{1,p}_{\Gamma_D}(\Omega;\R^d)$ and $\zeta\in
W^{1,q}(\Omega)$  let
\begin{align}
\label{def_min_pr_reg_z}
 z\in \Argmin\Set{\calF(\wt z; \alpha,\tau,\epsilon,\zeta,w)}{\wt z\in \calZ}.
\end{align}
From the definition of $\calR_1$ it follows that $z\leq \zeta$
almost everywhere in $\Omega$. Moreover, since $q>d$, we have $z\in
\rmC^{0,\gamma}(\overline{\Omega})$ with $\gamma = 1 - \frac{d}{q}>0$.
For $h>0$ let  $\delta_h:=
h^\gamma\norm{z}_{\rmC^{0, \gamma}(\overline\Omega)}\leq c
h^\gamma\norm{z}_{W^{1,q}(\Omega)}$.  Observe that
\begin{equation}
\label{diff-quot-later}
\abs{z(x) - z(T_h(x))}\leq \delta_h \qquad
\text{for all $x\in
\Omega$}.
\end{equation}
Hence, the function
\begin{equation*}
 z_h(x):= z(T_h(x)) - \delta_h
\end{equation*}
is an admissible test function for the minimization problem
\eqref{def_min_pr_reg_z} in the sense that $\calR_1(z_h-\zeta)$ is
finite. Indeed,  % for all $x\in \Omega$ it holds
\begin{align*}
 \zeta(x) - z_h(x)= \zeta(x) - z(x) + (z(x)- z(T_h(x))+\delta_h(x))\geq 0 \ \text{  for all $x\in
\Omega$}.
\end{align*}
Since $z$ is a minimizer, for all $\wt z\in W^{1,q}(\Omega)$ it
satisfies the variational inequality
\begin{align}
 \label{pr_vi_reg_z}
\calR_1&(\wt z - \zeta) - \calR_1( z-\zeta) \nonumber
\\
&\geq -\langle A_q z,\wt z - z\rangle_{\calZ} -\int_\Omega \big( \frac12
g'(z)\bbC
\varepsilon(w):\varepsilon(w) + f'(z) \big)(\wt z- z)\dx -\epsilon
\int_\Omega \abs{\frac{z-\zeta}{\tau}}^{\alpha-2}
\frac{z-\zeta}{\tau} (\wt z - z)\dx.
\end{align}
With the special choice $\wt z=z_h$, taking into account the definition of $\calR_1$ this variational inequality can
be rewritten as
\[\begin{aligned}
-\int_\Omega (1 + \abs{\nabla z}^2)^{\frac{q-2}{2}}\nabla z \cdot
\nabla (z\circ T_h -z)\dx \nonumber
 & \leq \int_\Omega \rho(z - z_h)\dx + \int_\Omega \big(\tfrac{1}{2}
 g'(z)\bbC
\varepsilon(w):\varepsilon(w) + f'(z) \big)( z_h- z)\dx
\nonumber\\
&\phantom{\leq} + \epsilon \int_\Omega
\abs{\frac{z-\zeta}{\tau}}^{\alpha-2} \frac{z-\zeta}{\tau} (z_h -
z)\dx.
\end{aligned}
\]
%For $q>2$ the following convexity inequality is valid, \cite{
%}: $a,b\in \R^d$,
%then %z.B. Dr.-Arb, oder ???
%\begin{align}
%\label{q-convexity} \wt c_q(\abs{a-b}^2+\abs{a-b}^q) \leq
%c_q(1+\abs{a}^2 +\abs{b}^2)^\frac{q-2}{2}\abs{a-b}^2 \leq -(1 +
%\abs{a}^2)^\frac{q-2}{2}a \cdot(b-a) + W_q(b) - W_q(a)
%\end{align}
%with $W_q(a)=\frac1q (1 + \abs{a}^2)^\frac{q}{2}$.%\footnote{  I
%suggest to move \eqref{q-convexity} to Section 2, where in a paragraph we will
%collect all auxiliary inequalities, like \eqref{lions-magenes}...; how about
%changing the notation for $W_q$, since the latter $W$ has already been used?
% }
Now we apply inequality \eqref{q-convexity}
with $a=\nabla z$ and $b=\nabla (z\circ T_h)$, and setting
$\triangle_h z:= z\circ T_h -z$ we thus obtain  the estimate
(recall that $G_q (A) = \frac1q (1+|A|^2)^{\frac q2}$)
\[\begin{aligned}
 c_q \int_\Omega & (1 + \abs{\nabla z}^2 +
\abs{\nabla{z_h}}^2)^\frac{q-2}{2}\abs{\nabla\triangle_h z}^2\dx
\leq \int_\Omega  G_q
(\nabla  (z\circ T_h) )- G_q(\nabla z)\dx
\\
&\phantom{\leq }
 + \int_\Omega \rho\abs{\triangle_h z -\delta_h}\dx +
\int_\Omega \big( g'(z)\bbC \varepsilon(w):\varepsilon(w) + f'(z)
\big)( z_h- z)\dx
\nonumber\\
&\phantom{\leq} + \epsilon \int_\Omega
\abs{\frac{z-\zeta}{\tau}}^{\alpha-2} \frac{z-\zeta}{\tau} (z_h -
z)\dx
\\
&=: S_1 + S_2+S_3 + S_4.
\end{aligned}
\]
The goal is to show that there exists a $\beta\in (0,1)$ such that
the right-hand side can be estimated by $ch^\beta$. This estimate
then implies that $z\big|_{B_{R/2}(x_0)}$ belongs to the Nikolskii
space $\calN^{1 + \frac{\beta}2,2}(\Omega) \cap  \calN^{1 +
\frac{\beta}q,q} (\Omega)$, which is continuously embedded in $W^{1+
\frac{\beta}2-\delta,2}(\Omega)\cap W^{1 + \frac{\beta}q-\delta,q}(\Omega)$ for all
$\delta>0$.

Since by assumption we have $z\in W^{1,q}(\Omega)$, the term $S_2$
can be estimated as
\[
\begin{aligned}
 S_2&\leq \int_\Omega \rho\abs{\triangle_h z}\dx +\delta_h\abs{\Omega}
\leq c (\abs{h} + \abs{h}^\gamma)  \left( 1 + 
  \norm{z}_{W^{1,q}(\Omega)}\right),
\end{aligned}
\]
and the constant $c$ depends on $\Omega$ and the chosen
cut-off-function, but is independent of $h$ and $z$.

Taking into account \eqref{ass-eff}
%, which implies that
%$\abs{f'(s)}\leq c(1+\abs{s})$,    $s\in \R$,
 the second part of $S_3$ can be
estimated as follows
\[
\begin{aligned}
 \int_\Omega \abs{f'(z)}\abs{z_h-z}\dx & \leq c
 \norm{f'(z)}_{L^\infty(\Omega)}
\norm{z}_{W^{1,q}(\Omega)}
(\abs{h}^\gamma + \abs{h}).
\end{aligned}
\]
H\"older's inequality applied to the first component of $S_3$ yields
\[
\begin{aligned}
 \int_\Omega  g'(z)\bbC \varepsilon(w):\varepsilon(w)(z_h - z)\dx
\leq c \norm{w}_{W^{1,p}(\Omega;\R^d)}^2(\norm{\triangle_h
z}_{L^\frac{p}{p-2}(\Omega)} + \delta_h),
\end{aligned}
\]
where we have used that $p\geq 2$.  By \eqref{diff-quot-later}, the term in
brackets on the right-hand side is bounded by $c\delta_h\leq \wt c \abs{h}^\gamma
\norm{z}_{W^{1,q}(\Omega)}$. Putting together these estimates we
obtain
\[
\begin{aligned}
 \abs{S_3} &\leq
 c(\abs{h} + \abs{h}^\gamma)
\norm{z}_{W^{1,q}(\Omega)}
( \norm{f'(z)}_{L^\infty(\Omega)} +\norm{w}^2_{W^{1,p}(\Omega;\R^d)})
%(1+\norm{z}_{W^{1,q}(\Omega)}^2)
%(1+ \norm{w}^2_{W^{1,p}(\Omega)}),
\end{aligned}
\]
and the constant $c$ is independent of $h,z,w$.

In a similar way we obtain for $S_4$, applying again H\"older's
inequality,
\[
\begin{aligned}
 \abs{S_4}&\leq c \epsilon
\norm{\frac{z-\zeta}{\tau}}^{\alpha-1}_{L^\alpha(\Omega)} \big(
\norm{\triangle_h z}_{L^{\alpha}(\Omega)} + \delta_h \big)
%\\
%&
\leq c \epsilon
\norm{\frac{z-\zeta}{\tau}}^{\alpha-1}_{L^\alpha(\Omega)}
(\abs{h}+\abs{h}^\gamma)\norm{z}_{W^{1,q}(\Omega)}.
\end{aligned}
\]
It remains to estimate $S_1$. Here we use an argument
%\footnote{  \berin here, references  TO ADD  \erin}
that relies on a change of variables in the first term
\DDDS (cf.\ \cite{ell:EF99,savare97,Kne04}): \DDDE
 With $y=T_h(x)$
it follows
\[
\begin{aligned}
\int_\Omega G_q(\nabla z (T_h(x)) \nabla T_h(x))\dx =
\int_{T_h(\Omega)}  G_q(\nabla z(y) \nabla
T_h (T_h^{-1}(y)))  \det \nabla_y T_h^{-1}(y) \dy
\end{aligned}
\]
Observe that due to the special choice of the vector $e$ it holds
$T_h(\Omega)\subset\Omega$ for $0\leq h<h_0$.
  Hence, since
%$\det
%\nabla T_h^{-1}(y) W_q(\nabla z \nabla T_h\big|_{T_h^{-1}(y)})\geq
%0$
 $G_q(\nabla z)\geq 0$
 almost everywhere, we arrive at
\[
\begin{aligned}
S_1&\leq  \int_{T_h(\Omega) }
 G_q(\nabla z (y)
(\nabla
T_h^{-1}(y))^{-1}) \det \nabla T_h^{-1}(y)  \dy
-  \int_{T_h(\Omega)}
 G_q(\nabla z)\dx.
\end{aligned}
\]
Elementary calculations (based on the fact that $\det\nabla T_h^{-1} \sim  (1 -
h\norm{\varphi}_{\rmC^1(\overline{\Omega})})$ and a Taylor expansion of
$G_q$) show that $S_1$ can be further estimated as
\[
\begin{aligned}
 S_1&\leq c \abs{h}(1+\norm{z}^q_{W^{1,q}(\Omega)}).
\end{aligned}
\]
Again, the constant $c$ is independent of $h$ and $z$.
Collecting all estimates we finally arrive at
\[
\begin{aligned}
\int_\Omega & (1 + \abs{\nabla z}^2 +
\abs{\nabla{z_h}}^2)^\frac{q-2}{2}\abs{\nabla\triangle_h z}^2\dx
\nonumber
\\
&\leq
c(\abs{h}+\abs{h}^\gamma)
(1 + \norm{z}^q_{W^{1,q}(\Omega)})
\left(1
+ \norm{f'(z)}_{L^\infty(\Omega)}
+
 \norm{w}^2_{W^{1,p}(\Omega;\R^d)}
+
\epsilon\norm{\frac{z-\zeta}{\tau}}^{\alpha-1}_{L^\alpha(\Omega)}\right).
\end{aligned}
\]
Since $x_0\in \partial\Omega$ was chosen arbitrarily, after covering
$\overline\Omega$ with a finite number of balls $B_{R_{x_0}}(x_0)$
we finally obtain that $z\in \calN^{1 + \frac{\gamma}{q},q}(\Omega)$
with
\[
\begin{aligned}
\norm{z}_{ \calN^{1 + \frac{\gamma}{q},q}(\Omega)} \leq c
(1+\norm{z}_{W^{1,q}(\Omega)})
\left( 1
+ \norm{f'(z)}_{L^\infty(\Omega)}^{\frac{1}{q}}
+
\norm{w}^{\frac{2}{q}}_{W^{1,p}(\Omega;\R^d)} + \epsilon^{\frac{1}{q}}
\norm{\frac{z-\zeta}{\tau}}^{\frac{\alpha-1}{q}}_{L^\alpha(\Omega)}
\right),
\end{aligned}
\]
and the constant $c$ is independent of $\alpha,\epsilon,\tau,z,w$
and $\zeta$.
\end{proof}

\section{A priori estimates}
\label{s:5} This section
%\footnote{  Again, a change in the structure - in comparison to \cite{krz}
%-: since a priori estimates are so rich, in this paper, I'd like to try and
%see how it looks like to devote a whole section to them... for the time being,
%I've split this section in various subsections which correspond to the
%different estimates, but if you find this too heavy, we can regroup together
%in a single section the estimates energy--enhanced, and isolate from the rest
%the $W^{1,1}$-estimate, which is much longer. We might also devote a remark
%to an outlook at the estimates on the time-continuous level: I've sketched it
%at the end of Sec.\ \ref{s:5}...  }
is devoted to deriving for the approximate solutions $(\overline
z_\tau, \hat{z}_\tau, \overline{u}_\tau, \hat{u}_\tau )$
constructed from the time-incremental minimization problem
\eqref{def_time_incr_min_problem_eps}
 a number of a priori
estimates, uniform w.r.t.\ $\tau>0$. These will allow us to pass to
the limit in the approximate differential inclusion
\eqref{cont-reformulation} and conclude the existence of  weak viscous
 solutions
to (the Cauchy problem for) \eqref{visc-eps-dne}. In view of the
vanishing viscosity analysis in Sec.\ \ref{s:7}, in the
following we will specify which estimates are, in addition, uniform
w.r.t.\ $\epsilon>0$. However, for notational simplicity we shall omit to indicate the dependence of
the interpolants
$(\overline
z_\tau, \hat{z}_\tau, \overline{u}_\tau,   \hat{u}_\tau )$ on~$\epsilon$.
%%%
%%%
\subsection{Energy estimate}
\label{ss:5-enest} We start by stating the basic energy estimate
derived from the time-incremental minimization
\eqref{def_time_incr_min_problem_eps}. It holds uniformly with respect to $\tau$ and $\epsilon>0$.
\begin{lemma}
\label{l:basicenest} Under Assumptions \ref{assumption:energy}, \eqref{ass:init}, and (A$_\Omega$1),
for every $z_0 \in \calZ$ there exists a constant
$C_1>0$ such that for all $\tau>0$ and $\epsi>0$ there holds
\begin{align}
& \label{basic-enest-1}  \sup_{t \in [0,T]}
\calI (\overline t_\tau
(t), \overline z_\tau (t))  + \int_0^T \calR_\epsi (\hat{z}_\tau'
(r)) \,\dd r \leq C_1,
\\
& \label{basic-enest-2} \sup_{t\in[0,T]}\| \overline{z}_\tau(t)
\|_%{L^\infty (0,T;
{W^{1,\il} (\Omega) } + \sup_{t\in[0,T]}\| \hat{z}_\tau(t) \|_%{L^\infty (0,T;
{W^{1,\il}(\Omega) } \leq C_1.
\\
&  \label{basic-enest-3}  \| \overline{u}_\tau \|_{L^\infty (0,T;
W^{1,p_*} (\Omega;\R^d)) } \leq C_1.
\end{align}
\end{lemma}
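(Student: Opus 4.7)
The strategy is the standard derivation of a discrete energy inequality via the incremental minimization \eqref{def_time_incr_min_problem_eps}, followed by coercivity and the regularity lemma. The key structural point is that the entire argument produces bounds that are uniform in $\epsi$ because $\calR_\epsi \geq \calR_1 \geq 0$ and the upper bound depends only on $\calI(0,z_0)$ and data norms that are $\epsi$-independent.

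First, I would test the minimality of $z_{k+1}^\tau$ against the competitor $z_k^\tau$ in \eqref{def_time_incr_min_problem_eps}; since $\calR_\epsi(0)=0$, this yields
\[
\calI(t_{k+1}^\tau, z_{k+1}^\tau) + \tau\calR_\epsi\!\left(\tfrac{z_{k+1}^\tau - z_k^\tau}{\tau}\right) \leq \calI(t_{k+1}^\tau, z_k^\tau).
\]
Using the $\mathrm{C}^1$-regularity of $t\mapsto \calI(t,z_k^\tau)$ from Lemma \ref{l:diff_time}, the right-hand side can be rewritten as $\calI(t_k^\tau, z_k^\tau) + \int_{t_k^\tau}^{t_{k+1}^\tau}\partial_t\calI(s,z_k^\tau)\,\dd s$. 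Telescoping over $k=0,\dots,N'-1$ for any $N'\leq N$, and recalling that $\hat z_\tau'\equiv (z_{k+1}^\tau-z_k^\tau)/\tau$ on $(t_k^\tau,t_{k+1}^\tau)$, I obtain the discrete energy inequality
\[
\calI(\overline t_\tau(t),\overline z_\tau(t)) + \int_0^{\overline t_\tau(t)}\calR_\epsi(\hat z_\tau'(r))\,\dd r \leq \calI(0,z_0) + \int_0^{\overline t_\tau(t)}\partial_t\calI(s,\underline z_\tau(s))\,\dd s.
\]

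Next, the crucial observation is that the bound \eqref{stim3} for $|\partial_t\calI|$ is \emph{independent of $z$} and controlled purely in terms of $\|\ell\|_{\mathrm{C}^1}$ and $\|u_D\|_{\mathrm{C}^1}$ (cf.\ \eqref{a:data}). Hence the right-hand side above is bounded by a constant $C$ depending only on $\calI(0,z_0)$, $T$, and the data, and in particular independent of both $\tau$ and $\epsi$. Since $\calR_\epsi\geq 0$, taking the supremum over $t\in[0,T]$ immediately gives \eqref{basic-enest-1}. The estimate \eqref{basic-enest-2} on $\overline z_\tau$ then follows by inserting \eqref{basic-enest-1} into the coercivity bound \eqref{est_coerc1}, which controls $\|\nabla \overline z_\tau\|_{L^q(\Omega)}^q$ and $\|\overline z_\tau\|_{L^1(\Omega)}$; the analogous bound for $\hat z_\tau$ is immediate because $\hat z_\tau(t)$ is a convex combination of $\underline z_\tau(t)$ and $\overline z_\tau(t)$, both uniformly bounded in $W^{1,q}(\Omega)$, and by convexity of $z\mapsto\int_\Omega(1+|\nabla z|^2)^{q/2}\,\dd x$.

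Finally, for \eqref{basic-enest-3} I would apply the regularity estimate \eqref{e:w1p} of Lemma \ref{l:ex_min} with $p=p_*$ to $\overline u_\tau(t)=\umin(\overline t_\tau(t),\overline z_\tau(t))$:
\[
\|\overline u_\tau(t)\|_{W^{1,p_*}(\Omega;\R^d)} \leq c_0\, P(\overline z_\tau(t),0)\big(\|\ell(\overline t_\tau(t))\|_{W^{-1,p_*}_{\Gamma_D}} + \|u_D(\overline t_\tau(t))\|_{W^{1,p_*}}\big).
\]
The factor $P(\overline z_\tau(t),0)=(1+\|\nabla\overline z_\tau(t)\|_{L^q(\Omega)})^{k_*}$ is uniformly bounded thanks to \eqref{basic-enest-2}, and the data factors are bounded by \eqref{a:data}, so \eqref{basic-enest-3} follows.

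\textbf{Main obstacle.} The proof is essentially routine; the only delicate point is identifying \emph{why} the bounds are uniform in $\epsi$. This rests on two observations: (i) $\calR_\epsi\geq 0$ so the viscous dissipation can be safely kept on the left-hand side with a favorable sign, and (ii) the estimate \eqref{stim3} driving the right-hand side is $z$-independent, so no Gronwall-type argument is needed. The $W^{1,p_*}$-bound \eqref{basic-enest-3} is not self-evident from the energy and genuinely requires the regularity Lemma \ref{lem_reg_babadjan}, which is the only non-routine ingredient.
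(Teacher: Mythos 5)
Your proposal is correct and follows essentially the same route as the paper: testing \eqref{def_time_incr_min_problem_eps} with the competitor $z_k^\tau$, using the $z$-independent bound \eqref{stim3} on $\partial_t\calI$, telescoping, then invoking the coercivity \eqref{est_coerc1} and the regularity estimate \eqref{e:w1p} with $p=p_*$. The only cosmetic difference is that the paper passes from the coercivity bound (which controls $\|\nabla \overline z_\tau\|_{L^q}$ and $\|\overline z_\tau\|_{L^1}$) to the full $W^{1,q}$-norm via the Poincar\'e inequality, a step you leave implicit, and your convex-combination argument for $\hat z_\tau$ is a fine (indeed simpler, triangle-inequality) substitute for what the paper leaves unsaid.
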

\begin{proof}
From \eqref{def_time_incr_min_problem_eps} (with competitor
$z=z_k^\tau$) we deduce
\begin{equation}
\label{basic-k}
\begin{aligned}
\calI(t_{k+1}^\tau, z_{k+1}^\tau) + \tau_k \calR_\epsi
\left(\frac{z_{k+1}^\tau-z_{k}^\tau}{\tau_k} \right)   \leq
\calI(t_{k+1}^\tau, z_{k}^\tau)   = \calI(t_{k}^\tau,
z_{k}^\tau) + \int_{t_k^\tau}^{t_{k+1}^\tau} \partial_t \calI (s,
z_{k}^\tau) \,\dd s.
\end{aligned}
\end{equation}
Then, we observe that $\sup_{t \in [0,T]} |\partial_t \calI (t,
z_{k}^\tau)| \leq C$ thanks to \eqref{stim3} in Lemma \ref{l:diff_time}. Hence,
\eqref{basic-enest-1} follows upon adding \eqref{basic-k}  up for
$k=0, \ldots, N-1$. Observe that \eqref{basic-enest-1} yields
\eqref{basic-enest-2}  thanks to \eqref{est_coerc1} in Lemma \ref{l:ex_min} and the
Poincar\'e inequality. Finally, \eqref{basic-enest-3} follows from
\eqref{basic-enest-2} via estimate~\eqref{e:w1p}.
\end{proof}

%%%%%%%
%%%%%%%
\subsection{Higher spatial  differentiability for the damage variable}
\label{ss:5.2}
\noindent Theorem \ref{app_reg_thm_z} yields an enhanced
differentiability estimate for $\overline{z}_\tau$ and  $
\hat{z}_\tau$,
uniform w.r.t.\ $\tau$ and $\epsi$.
\begin{lemma}
 \label{prop-diff_z}
Under Assumptions \ref{assumption:energy}, \ref{ass:init}, (A$_\Omega$1) and
(A$_\Omega$2), for every
$\beta\in[0, \frac{1}{q}(1-\frac{d}{q}))$,  for every
$z_0 \in \calZ$
 it holds
\[
  \overline{z}_\tau(t), \, \hat{z}_\tau(t)\,  \in
  W^{1+\beta,q}(\Omega) \quad \text{for all $\tau>0$ and all $t\in (0,T]$}.
\]
Moreover there exists a constant $C_2>0$ such that for all
 $\beta\in [0, \frac{1}{q} (1-\frac{d}{q}))$,
  $\tau>0$,
and $\epsi>0$ there holds
\begin{equation}
\label{high-diff-zeta} \|\overline{z}_\tau\|_{L^{2\il} (0,T;
W^{1+\beta,q}(\Omega))} + \|\hat{z}_\tau\|_{L^{2\il} (0,T;
W^{1+\beta,q}(\Omega))} \leq   C_2\,.
%(1+\epsilon)\,.
\end{equation}
\end{lemma}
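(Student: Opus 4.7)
The plan is to apply the pointwise higher-differentiability bound of Theorem~\ref{app_reg_thm_z} to each discrete minimizer $z_{k+1}^\tau$, and then integrate the resulting estimate in the $2q$-th power, using the basic energy inequality of Lemma~\ref{l:basicenest} to absorb the viscosity-dependent term in a way that is uniform in $\epsilon$.

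Fix $k$ and write $u_{k+1}^\tau:=u_{\min}(t_{k+1}^\tau,z_{k+1}^\tau)$. By construction, $z_{k+1}^\tau$ minimizes $\calF(\cdot;2,\tau,\epsilon,z_k^\tau,u_{k+1}^\tau+u_D(t_{k+1}^\tau))$ over $\calZ$. Theorem~\ref{app_reg_thm_z} (with $\alpha=2$, $p=p_*$) therefore yields $z_{k+1}^\tau\in W^{1+\beta,q}(\Omega)$ for every admissible $\beta$, with
\begin{equation*}
\|z_{k+1}^\tau\|_{W^{1+\beta,q}}\leq c_\beta(1+\|z_{k+1}^\tau\|_{W^{1,q}})\Bigl(1+\|f'(z_{k+1}^\tau)\|_{L^\infty}^{1/q}+\|u_{k+1}^\tau+u_D(t_{k+1}^\tau)\|_{W^{1,p_*}}^{2/q}+\epsilon^{1/q}\bigl\|\tfrac{z_{k+1}^\tau-z_k^\tau}{\tau}\bigr\|_{L^2}^{1/q}\Bigr).
\end{equation*}

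The three non-viscous terms are uniformly controlled: by \eqref{basic-enest-2} one has $\|z_{k+1}^\tau\|_{W^{1,q}}\leq C_1$; since $q>d$ the Morrey embedding $W^{1,q}(\Omega)\hookrightarrow L^\infty(\Omega)$ together with $f\in\rmC^2(\R)$ yields $\|f'(z_{k+1}^\tau)\|_{L^\infty}\leq C$ with $C$ depending only on $C_1$; and \eqref{basic-enest-3} together with \eqref{a:data} controls $\|u_{k+1}^\tau+u_D(t_{k+1}^\tau)\|_{W^{1,p_*}}$. Rewriting the estimate in terms of the interpolants, there is a constant $C_2'>0$ independent of $\tau,\epsilon,k$ such that for every $t\in(t_k^\tau,t_{k+1}^\tau]$,
\begin{equation*}
\|\overline z_\tau(t)\|_{W^{1+\beta,q}}\leq C_2'\bigl(1+\epsilon^{1/q}\|\hat z_\tau'(t)\|_{L^2}^{1/q}\bigr).
\end{equation*}

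Raising to the $2q$-th power, using $(a+b)^{2q}\leq c(a^{2q}+b^{2q})$, integrating over $(0,T)$, and invoking the basic energy bound \eqref{basic-enest-1}, which implies $\epsilon\int_0^T\|\hat z_\tau'(r)\|_{L^2}^2\,\mathrm dr\leq 2C_1$, we arrive at
\begin{equation*}
\int_0^T\|\overline z_\tau(t)\|_{W^{1+\beta,q}}^{2q}\,\mathrm dt\leq c\,C_2'^{2q}\Bigl(T+\epsilon^2\!\int_0^T\!\|\hat z_\tau'(t)\|_{L^2}^2\,\mathrm dt\Bigr)\leq c\,C_2'^{2q}\bigl(T+2C_1\epsilon\bigr),
\end{equation*}
which is bounded uniformly in $\tau>0$ and in $\epsilon$ ranging in any bounded set. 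For the piecewise-affine interpolant $\hat z_\tau(t)=(1-\theta)\underline z_\tau(t)+\theta\overline z_\tau(t)$ with $\theta\in[0,1]$ one has $\|\hat z_\tau(t)\|_{W^{1+\beta,q}}\leq \|\overline z_\tau(t)\|_{W^{1+\beta,q}}+\|\underline z_\tau(t)\|_{W^{1+\beta,q}}$, and since $\underline z_\tau(t)=\overline z_\tau(t-\tau)$ on $(\tau,T)$ the same estimate transfers to $\hat z_\tau$ (with a harmless modification on the initial layer $(0,\tau]$). Combining these bounds delivers \eqref{high-diff-zeta}.

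The genuinely delicate point is the uniformity in $\epsilon$: the exponent $1/q$ multiplying $\epsilon$ in Theorem~\ref{app_reg_thm_z} is exactly matched to the $L^{2q}$-integrability one aims for, so that the $\epsilon^2$ arising after raising to the power $2q$ absorbs the $1/\epsilon$ blow-up of $\int_0^T\|\hat z_\tau'\|_{L^2}^2\,\mathrm dt$ controlled by the basic energy estimate. Without this precise matching between the viscous scaling in the regularity estimate and the viscous dissipation, no uniform bound in $\epsilon$ would be available, and the subsequent vanishing-viscosity analysis of Section~\ref{s:7} would break down.
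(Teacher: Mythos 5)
Your proof is correct and follows essentially the same route as the paper: apply Theorem \ref{app_reg_thm_z} with $\alpha=2$, $p=p_*$ to each minimizer, bound the non-viscous factors via \eqref{basic-enest-2}--\eqref{basic-enest-3}, raise to the power $2q$ so that the $\epsilon^{1/q}$ scaling turns into $\epsilon^2\|\hat z_\tau'\|_{L^2}^2$, and absorb it with the dissipation term in \eqref{basic-enest-1}. Your extra remarks (Morrey control of $f'(z)$, the treatment of $\hat z_\tau$ by convexity, and the observation on the matching of exponents) only make explicit what the paper leaves implicit.
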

\begin{proof}
Applying Theorem \ref{app_reg_thm_z} with $\alpha=2$,
$\zeta=z_{k}^\tau$,  $w=u_\text{min}(t_{k+1}^\tau,z_{k+1}^\tau) +
u_D(t_{k+1}^\tau)$ and $p=p_*$, we find
\begin{align}
\label{unif_reg_est_z_discr}
\begin{aligned}
 \norm{z^\tau_{k+1}}_{W^{1+\beta,q}(\Omega)}
 \leq  & c_\beta (1+ \norm{z_k^\tau}_{W^{1,\il}(\Omega)})
 \\
 &
  \times \left( 1 +
\norm{u_\text{min}(t_{k+1}^\tau,z_{k+1}^\tau)+
u_D(t_{k+1}^\tau)}_{W^{1,p_*}(\Omega;\R^d)}^\frac{1}{q}
 +
\epsilon^\frac{1}{q}\norm{\frac{z_{k+1}^\tau-z_k^\tau}{\tau}}_{L^2(\Omega)}^{\frac{1}{q}
} \right),
\end{aligned}
\end{align}
with $c_\beta$ independent of $\tau$  \emph{and} $\epsi$. Taking into
account the previously proved uniform estimates
\eqref{basic-enest-2} and \eqref{basic-enest-3} for $z_k^\tau$ and
$u_\text{min}(t_{k+1}^\tau,z_{k+1}^\tau)$, we then have
\[
\norm{\overline{z}_\tau (t)}_{W^{1+\beta,q}(\Omega)}^{2q} \leq C
\left(1+ \epsilon^2 \norm{\hat{z}_\tau'(t)}_{L^2(\Omega)}^2 \right)
\qquad \foraa\, t \in (0,T).
\]
Then, \eqref{high-diff-zeta} follows from integrating the above
estimate on $(0,T)$ and using \eqref{basic-enest-1}.
% (on account of the fact
%that
%$\epsilon^2 \norm{\hat{z}_\tau'(t)}_{L^2(\Omega)}^2 \leq \epsilon
%\norm{\hat{z}_\tau'(t)}_{L^2(\Omega)}^2$
%since we may suppose $0<\epsi \leq 1$).
\end{proof}

\subsection{Enhanced temporal regularity estimates}
\label{ss:5.4} The proof of the enhanced regularity estimates
\eqref{enhanced-est-1} and \eqref{enhanced-est-2} below relies on the higher regularity for
$z_0$ guaranteed by  condition  \eqref{enhanced-initial-datum}, i.e.,
$\rmD_z \calI (0,z_0) \in L^2(\Omega)$.
We also provide estimate \eqref{enhanced-est-3}, which shall be used in the proof of Lemma \ref{l:uniform-w11}, cf.\
\eqref{e:disc_L1_13} below. Observe that the bounds in
\eqref{enhanced-est-1}--\eqref{enhanced-est-3} \DDDS might \DDDE
explode as $\epsi \to 0$.
\begin{lemma}
\label{l:enhanced-reg}
 Under Assumptions \ref{assumption:energy}, \ref{ass:init}, (A$_\Omega$1) and (A$_\Omega$2), for every $z_0 \in \calZ$ such that
 \eqref{enhanced-initial-datum} is valid
 and for every $\epsi>0$ there exists a constant $C_3=C_3(\epsi)>0$, with $C_3(\epsi) \to \infty$ as $\epsi \to 0$,  such that for all $\tau>0$ there
holds%\footnote{  in the previous draft from Sept.\ 2012 (cf.\Lemma 4.4 therein), the formula for estimates \eqref{enhanced-est-1} and \eqref{enhanced-est-2} was more precise, it contained the information that $C_3(\epsi)$ blows up to $\infty$ exponentially as $\epsi \to 0$. For the moment I've omitted it, in order to write\eqref{enhanced-est-1 and \eqref{enhanced-est-2} in the same style as the previous estimates... However,  if you wish we might re-instate the previous formulae, and possibly state the estimate in Lemma \ref{l:basicenest}  in a more precise way  Not needed}
\begin{align}
\label{enhanced-est-1} & %\int_0^T \int_\Omega (1+ |\nabla \overline{z}_\tau (r)|^2 +|\nabla \underline{z}_\tau (r)|^2 )^{(q-2)/2} |\nabla \hat{z}_\tau'(r)|^2 \dd x \dd r
\int_0^T \int_\Omega (1+ |\nabla \hat{z}_\tau (r)|^2 )^{\frac{q-2}{2}}
|\nabla
\hat{z}_\tau'(r)|^2 \,\dd x \,\dd r
\leq C_3(\epsi),
\\
\label{enhanced-est-2} & \epsi \norm{\hat{z}_\tau'}_{L^\infty (0,T;
L^2(\Omega))}^2 \leq C_3(\epsi),
\\
& \epsilon\norm{\hat z_\tau'\left(\frac{t_1^\tau}{2}\right)}_{L^2(\Omega)}
\leq
 C_{3,1} \exp{(  C_{3,2}   \tau/\epsilon)}
%% C_3(\epsi), %C_{13} \left( \sqrt{\epsilon}+
%%%}\epsilon t_1^\tau\right),
\label{enhanced-est-3}
\end{align}
 where $t^\tau_1$ is the first non-zero node of the
partition of $[0,T]$
and the constants $ C_{3,1},\, C_{3,2}$ do not
depend on $\epsilon$ or $\tau$.
\end{lemma}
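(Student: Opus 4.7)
The strategy is a time-discrete analogue of testing the differentiated-in-time viscous equation by $z'_\epsi$, i.e.\ a ``formal second order'' estimate. Starting from the discrete Euler-Lagrange equation \eqref{pruni1:e1}, written at step $k{+}1$ and at step $k$, I would subtract the two and test the resulting identity in the $\calZ{-}\calZ^*$ duality with the discrete velocity $\delta_k:=(z_{k+1}^\tau-z_k^\tau)/\tau=\hat z_\tau'$ on $(t_k^\tau,t_{k+1}^\tau)$. The four terms produced by this test are then controlled as follows:
\begin{itemize}
\item $\langle \omega_{k+1}{-}\omega_k,\delta_k\rangle\geq 0$: since $\calR_1$ is positively $1$-homogeneous and $\omega_j\in\partial\calR_1(\delta_{j-1})$, the identity $\calR_1(\delta_k)=\langle\omega_{k+1},\delta_k\rangle$ together with the subgradient inequality $\langle\omega_k,\delta_k\rangle\le\calR_1(\delta_k)$ yields the sign.
\item $\epsi\langle \delta_k{-}\delta_{k-1},\delta_k\rangle_{L^2}\ge\tfrac{\epsi}{2}(\|\delta_k\|_{L^2}^2-\|\delta_{k-1}\|_{L^2}^2)$, by the usual discrete chain-rule trick.
\item $\langle A_q z_{k+1}^\tau{-}A_q z_k^\tau,\delta_k\rangle=\tau^{-1}\langle A_q z_{k+1}^\tau{-}A_q z_k^\tau,z_{k+1}^\tau{-}z_k^\tau\rangle\ge c_q\tau\!\int_\Omega(1{+}|\nabla z_{k+1}^\tau|^2{+}|\nabla z_k^\tau|^2)^{\frac{q-2}{2}}|\nabla\delta_k|^2\dx$ by the monotonicity inequality \eqref{e2.22}.
\item For the lower-order contribution $\langle\rmD_z\wt\calI(t_{k+1}^\tau,z_{k+1}^\tau){-}\rmD_z\wt\calI(t_k^\tau,z_k^\tau),\delta_k\rangle$ I would invoke the local Lipschitz estimate \eqref{very-useful-later} with $r=2p_*/(p_*{-}2)$, bounding it by $c\tau(1+\|\delta_k\|_{L^{r}})\|\delta_k\|_{L^{r}}$; here the prefactors involving $C_{f''},C_{g''},P$ are uniformly controlled thanks to the basic energy bound \eqref{basic-enest-2}.
\end{itemize}

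The resulting inequality, after applying \eqref{lions-magenes} to absorb $\|\delta_k\|_{L^{r}}^2$ into $\|\nabla\delta_k\|_{L^2}^2$ (which is dominated by the weighted gradient term since $q\ge 2$) plus a manageable $L^2$-part, reads
\[
\tfrac{\epsi}{2}\bigl(\|\delta_k\|_{L^2}^2-\|\delta_{k-1}\|_{L^2}^2\bigr)+\tfrac{c_q}{2}\tau\!\int_\Omega(1+|\nabla z_{k+1}^\tau|^2+|\nabla z_k^\tau|^2)^{\frac{q-2}{2}}|\nabla\delta_k|^2\dx\le c\tau\bigl(1+\|\delta_k\|_{L^2}^2\bigr).
\]
Summing over $k=1,\dots,n$ and applying the discrete Gr\"onwall lemma (with the parameter $\epsi$ treated as a fixed constant) yields $\epsi\|\delta_n\|_{L^2}^2\le (\epsi\|\delta_0\|_{L^2}^2+cT)\exp(cT/\epsi)$, which gives \eqref{enhanced-est-2} once the initial step is handled; summing also the weighted-gradient contributions delivers the mixed estimate \eqref{enhanced-est-1}.

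The initial step is where the extra assumption \eqref{enhanced-initial-datum} enters. I would test the EL equation at $k{=}0$ with $\delta_0$ itself: the subdifferential term gives $\langle\omega_1,\delta_0\rangle=\calR_1(\delta_0)\ge 0$, the $A_q$ term is nonnegative again by monotonicity, and I would split the $\rmD_z\calI$ contribution as $\langle\rmD_z\calI(0,z_0),\delta_0\rangle+\langle\rmD_z\calI(t_1^\tau,z_1^\tau){-}\rmD_z\calI(0,z_0),\delta_0\rangle$. The first summand is bounded in the $L^2{-}L^2$ duality by $\|\rmD_z\calI(0,z_0)\|_{L^2}\|\delta_0\|_{L^2}$ thanks to \eqref{enhanced-initial-datum}; the second is treated exactly as in the general inductive step (monotonicity of $A_q$ plus \eqref{very-useful-later}). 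After a Young inequality, this produces $\epsi\|\delta_0\|_{L^2}^2\le C+c\tau(1+\|\delta_0\|_{L^2}^2)$, whence \eqref{enhanced-est-3} follows in the form $\epsi\|\hat z'_\tau(t_1^\tau/2)\|_{L^2}=\epsi\|\delta_0\|_{L^2}\le C_{3,1}\exp(C_{3,2}\tau/\epsi)$, with the exponential factor arising from inverting $(1-c\tau/\epsi)$; plugging this bound for $\|\delta_0\|_{L^2}^2$ back into the Gr\"onwall argument gives the final $\epsi$-dependent constant $C_3(\epsi)$ in \eqref{enhanced-est-1}--\eqref{enhanced-est-2}.

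\textbf{Main obstacle.} The delicate point is the initial step: without \eqref{enhanced-initial-datum} one cannot start the Gr\"onwall iteration, since pairing $\rmD_z\calI(0,z_0)$ against $\delta_0$ only makes sense in $\calZ^*{-}\calZ$ duality, and a bound for $\|\delta_0\|_{\calZ}$ is exactly what is unavailable for the viscous problem (as discussed in Sec.~\ref{ss:3.1}). Requiring $\rmD_z\calI(0,z_0)\in L^2(\Omega)$ is precisely what allows us to interpret the duality in the $L^2$-sense. A secondary (but purely bookkeeping) difficulty is that all prefactors arising from \eqref{very-useful-later} must be controlled uniformly in $k$ and $\tau$, which is guaranteed by \eqref{basic-enest-2} and the continuous dependence of the quantities $C_{f''}, C_{g''}$ and $P$ on the $\calZ$-norm.
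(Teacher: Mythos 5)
Your proposal is correct and takes essentially the same route as the paper's proof: the paper likewise exploits the $1$-homogeneity of $\calR_1$ to difference the discrete Euler--Lagrange inclusion at consecutive steps and test with $\hat z_\tau'$, controls the three resulting terms by the discrete chain rule for the viscous part, the monotonicity inequality \eqref{e:from-D-thesis} for $A_q$, and \eqref{very-useful-later} together with \eqref{basic-enest-2} for the lower-order part, then absorbs via \eqref{lions-magenes} and applies a Gronwall argument, treating the first time step exactly as you do by splitting off $\rmD_z\calI(0,z_0)$ and using \eqref{enhanced-initial-datum} in the $L^2$-duality. The only cosmetic difference is that the paper obtains \eqref{enhanced-est-3} by evaluating the Gronwall output \eqref{more-general-below} at $t_1^\tau/2$ rather than by directly inverting the first-step inequality as you suggest.
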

\begin{proof}
For $t\in (t_k^\tau,t_{k+1}^\tau)$ we define $\overline{h}_\tau(t)
:=\epsilon  \hat z_\tau'(t)   + A_\il\overline z_\tau(t) +
\rmD_z\wt\calI(\overline t_\tau(t),\overline z_\tau(t))$. % where
%$\wt\calI$ is  as in \eqref{itilde}.
 Hence, relation
\eqref{cont-reformulation} is equivalent to $-
\overline{h}_\tau(t)\in
\partial\calR_1(\hat z'_\tau(t))$ for $t\in
(t_k^\tau,t_{k+1}^\tau)$. By the $1$-homogeneity of $\calR_1$   we
deduce
\begin{align}
\forall \, t\in (t_k^\tau,t_{k+1}^\tau) &&-\calR_1(\hat z_\tau'(t))
=\langle \overline{h}_\tau(t),  \hat
z_\tau'(t)\rangle_{\calZ},%{\calZ^*,\calZ},
\label{pruni1:e5}\\
\forall\, r\in [0,T]\backslash\{t_0^\tau,\ldots,t_N^\tau\}
&&\calR_1(\hat z_\tau'(t))\geq
 \langle - \overline{h}_\tau(r),  \hat z_\tau'(t)\rangle_{\calZ}.
 \label{pruni1:e5bis}
\end{align}
Adding both relations and choosing $\rho\in (t_i^\tau,
t_{i+1}^\tau)$ and $\sigma\in (t_{i-1}^\tau, t_i^\tau)$,  it follows
\begin{align*}
0\geq \tau^{-1}\langle \overline{h}_\tau(\rho) -
\overline{h}_\tau(\sigma),\hat z_\tau'(\rho) \rangle_{\calZ}.
\end{align*}
This relation can be rewritten as
\begin{multline}
\label{pruni1:e2} \epsilon \tau^{-1}  \ddd{ \pairing{L^2(\Omega)}{
  \hat z_\tau'(\rho) -  \hat z_\tau'(\sigma)}
{\hat z_\tau'(\rho)}}{$=I_1$}{}   + \ddd{ \tau^{-1}
\pairing{\calZ}{A_\il \overline z_\tau(\rho) -
     A_\il \overline
z_\tau(\sigma))} {\hat z_\tau'(\rho)} }{$=I_2$}{}
\\\leq
\ddd{-\tau^{-1}  \pairing{\calZ}{\rmD_z\wt\calI(\overline
t_\tau(\rho), \overline z_\tau(\rho)) -
 \rmD_z\wt\calI(\overline t_\tau(\sigma), \overline z_\tau(\sigma))}{ \hat
 z_\tau'(\rho)}}{$=I_3$}{}.
\end{multline}
Now, we observe that
\begin{equation*}
%\label{est-for-I1}
I_1 \geq \frac12\int_\Omega \left(  |\hat z_\tau'(\rho)|^{2} - |\hat
z_\tau'(\sigma)|^{2}  \right) \,\dd x
\end{equation*}
whereas, relying on inequality \eqref{e:from-D-thesis},  we
find
\begin{equation}
\label{est-for-I2}
I_2 \geq c\int_\Omega (1+ |\nabla \overline{z}_\tau (\rho) |^2  +
|\nabla \overline{z}_\tau (\sigma) |^2 )^{\frac{\il-2}{2}}   |\nabla
\hat{z}'_\tau (\rho) |^2  \,\dd x
  \geq c \int_\Omega (1+ |\nabla \hat{z}_\tau (\rho) |^2 )^{\frac{\il-2}{2}}
  |\nabla
\hat{z}'_\tau (\rho) |^2  \,\dd x,
\end{equation}
 where the second inequality is due to the fact that

  $|\nabla \hat{z}_\tau (\rho)|^2\leq  2|\nabla \overline{z}_\tau (\rho)|^2 + 2 |\nabla \overline{z}_\tau (\sigma)|^2$.
Finally, relying on estimate \eqref{very-useful-later}, we obtain
\begin{equation}
\label{est-for-I3}
|I_3| \leq C  (1 +  \| \hat z_\tau'(\rho)
\|_{L^{2p_*/(p_*-2)}(\Omega)} ) \| \hat z_\tau'(\rho)
\|_{L^{2p_*/(p_*-2)}(\Omega)}\,.
\end{equation}
All in all,
inserting the above calculation
in \eqref{pruni1:e2} and multiplying by $\tau$
 we find
\begin{align}
\label{pruni1:2e} \frac{\epsilon}{2}\norm{\hat
  z_\tau'(\rho)}^2_{L^2(\Omega)}
&+ \tau C  \int_\Omega\big(1 + \abs{\nabla \hat z_\tau(\rho)}^2 \big)^\frac{q-2}{2}\abs{\nabla \hat z'_\tau(\rho)}^2\dx
 %\int_\Omega\big(1 + \abs{\nabla \bar z_\tau(\rho)}^2 + \abs{\nabla \bar z_\tau(\sigma)}^2  \big)^\frac{q-2}{2}\abs{\nabla \hat z'_\tau(\rho)}^2\dx
\\
&\leq \frac{\epsilon}{2}\norm{\hat
  z_\tau'(\sigma)}^2_{L^2(\Omega)}
+\tau C (1+\norm{\hat
z'_\tau(\rho)}_{L^{2p_*/(p_*-2)}(\Omega)})\norm{\hat
  z'_\tau(\rho)}_{L^{2p_*/(p_*-2)}(\Omega)}.
\nonumber
\end{align}
Hence, ``integrating'' \eqref{pruni1:2e} on the time interval
$(t_0,t)$ with $t_0\in (0,t_1^\tau)$ and $t\in (t_k^\tau,t_{k+1}^\tau)$ we arrive at
\begin{align}
\frac{\epsilon}{2}\norm{\hat
  z_\tau'(t)}^2_{L^2(\Omega)}
&+  C\int_{t_1^\tau}^{\overline t_\tau(t)}
\int_\Omega\big(1 + \abs{\nabla \hat z_\tau(\rho)}^2 \big)^\frac{q-2}{2}\abs{\nabla \hat z'_\tau(\rho)}^2\,\dd x\,\dd\rho
% \int_\Omega\big(1 + \abs{\nabla \bar z_\tau(\rho)}^2 + \abs{\nabla \underline z_\tau(\rho)}^2 \big)^\frac{q-2}{2}\abs{\nabla \hat z'_\tau(\rho)}^2\dx\d\rho
\nonumber\\
&\leq \frac{\epsilon}{2}\norm{\hat
  z_\tau'(t_0)}^2_{L^2(\Omega)}
+C \int_{t_0}^{\overline t_\tau(t)}
 (1+\norm{\hat z'_\tau(\rho)}_{L^{2p_*/(p_*-2)}(\Omega)}^2) \,\d\rho,
\label{pruni1:3e}
\end{align}
where we have also used Young's inequality.
 For the first time step with $t_0\in (0,t_1^\tau)$
 and using  \eqref{enhanced-initial-datum}  %$\rmD_z\calI(0,z_0)\in L^{2}(\Omega)$
we obtain from \eqref{pruni1:e5}:
\[
\begin{aligned}
0=\calR_1(\hat z_\tau'(t_0)) + \langle \bar h_\tau(t_0),\hat
z_\tau'(t_0)\rangle_\calZ \geq \epsi\norm{\hat
z_\tau'(t_0)}_{L^2(\Omega)}^2 + \langle
\rmD_z\calI(t_1^\tau,z_1^\tau),\hat z_\tau'(t_0)\rangle_\calZ.
\end{aligned}
\]
We now use that
 $
\rmD_z\calI(t_1^\tau,z_1^\tau) = \rmD\calI_q(z_1^\tau)-\rmD\calI_q(z_0)
+ \rmD_z\wt\calI(t_1^\tau,z_1^\tau)-  \rmD_z\wt\calI(0,z_0)
+ \rmD_z\calI(0,z_0),
$
 and with Young's inequality we find
 \begin{equation}
 \label{1st-time}
\begin{aligned}
&\epsilon\norm{\hat z_\tau'(t_0)}_{L^2(\Omega)}^2
+c\tau \int_\Omega(1+\abs{\nabla\hat z_\tau(t_0)}^2 )^{\frac{q-2}{2}}\abs{\nabla\hat z_\tau'(t_0)}^2 \dx
%\int_\Omega(1+\abs{\bar z_\tau(t_0)}^2 + \abs{\underline z_\tau(t_0)}^2)^{\frac{q-2}{2}}\abs{\hat z_\tau'(t_0)}^2 \dx
\nonumber\\
&\quad \leq -\langle \rmD_z\calI(0,z_0),\hat z_\tau'(t_0)\rangle_\calZ +
\langle \rmD_z \wt\calI(0,z_0)-\rmD_z\wt\calI(t_1^\tau,z_1^\tau),
\hat z_\tau'(t_0)\rangle_\calZ
\nonumber\\
&\quad \leq \frac{\epsilon}{2}\norm{\hat
  z_\tau'(t_0)}_{L^2(\Omega)}^2
+ \epsilon^{-1} \norm{\rmD_z\calI(0,z_0)}_{L^{2}(\Omega)}^{2}
  + c\tau (1 + \norm{\hat z_\tau'(t_0)}_{L^{{2p_*/(p_*-2)}}(\Omega)})\norm{\hat
    z_\tau'(t_0)}_{L^{{2p_*/(p_*-2)}}(\Omega)}.
\nonumber
\end{aligned}
\end{equation}
We  %apply once again Young's inequality in the last term on the
%right-hand side and
sum the above estimate with \eqref{pruni1:3e}.
Adding the term $ \int_{0}^{\overline t_\tau(t)} \|\hat
z'_\tau(\rho)\|_{L^{2}(\Omega)}^2 \,\dd \rho $ to both sides of the
resulting inequality, we obtain
\begin{align}
 & \frac{\epsilon}{2}\norm{\hat z_\tau'(t)}^2_{L^2(\Omega)}
+ % \overline{C}
 C_1
\int_{0}^{\overline t_\tau(t)} \norm{\hat
z'_\tau(\rho)}_{L^{2}(\Omega)}^2 \,\d\rho
+ C_1 \int_0^{\bar t_\tau(t)} \int_\Omega(1+\abs{\nabla \hat
z_\tau(\rho)}^2)^{\frac{q-2}{2}} \abs{\nabla \hat z'(\rho)}^2\,\dx\,\d\rho
\nonumber\\
&\leq
 \epsilon^{-1}
\norm{\rmD_z\calI(0,z_0)}_{L^{2}(\Omega)}^{2}
+  C_1  \int_{0}^{\overline
t_\tau(t)} \|\hat z'_\tau(\rho)\|_{L^{2}(\Omega)}^2 \,\dd \rho
+ C_2
\int_{0}^{\overline t_\tau(t)}
 (1+\norm{\hat z'_\tau(\rho)}_{L^{2p_*/(p_*-2)}(\Omega)}^2)
\,\d\rho
\nonumber \\
 &
 \leq
C + \epsilon^{-1} \norm{\rmD_z\calI(0,z_0)}_{L^{2}(\Omega)}^{2}
+
 C
\int_{0}^{\overline t_\tau(t)} \|\hat
z'_\tau(\rho)\|_{L^{2}(\Omega)}^2 \,\dd \rho
 +  \frac{ C_1}4
\int_{0}^{\overline t_\tau(t)} \norm{\hat
z'_\tau(\rho)}_{W^{1,2}(\Omega)}^2 \,\d\rho,
 \label{pruni1:4e}
\end{align}
where for the last inequality we have applied estimate
\eqref{lions-magenes} in such a way as to absorb $ \frac{C_1}4
\int_{0}^{\overline t_\tau(t)} \norm{\hat
z'_\tau(\rho)}_{W^{1,2}(\Omega)}^2 \d\rho$
 into the corresponding term on the left-hand side.
Now with the Gronwall inequality, we conclude that for %all $\tau$ and
 all $t\in [0,T]\backslash\{t_0^\tau,\ldots,
t_N^\tau\}$
\begin{align}
\label{more-general-below} \epsilon\norm{\hat
z_\tau'(t)}^2_{L^2(\Omega)} \leq \left(C' +
  \frac{1}{2\epsilon}\norm{\rmD_z\calI(0,z_0)}^2_{L^2(\Omega)}\right)
 \exp(C\overline{t}_\tau(t)/\epsilon),
\end{align}
from which we derive %after multiplying with $\epsilon$ and taking the  root,
 \eqref{enhanced-est-2},  \eqref{enhanced-est-3} and \eqref{enhanced-est-1}.
%Now with the Gronwall Lemma we deduce \eqref{enhanced-est1}.
%\footnote{  in the previous September draft, we had in the statement also the estimates at the initial time, deduced from \eqref{1st-time}. Shall we re-instate them?  }
% Estimate
%\eqref{est3} is a specialization of \eqref{est1} to the first time
%step.
\end{proof}

 The following result, providing a $W^{1,2} (0,T;W^{1,2} (\Omega;\R^d)) $-estimate for
 $\hat{u}_\tau$ that is not uniform w.r.t.\ $\epsi$,
  %\footnote{  e can decide whether we want to keep this or not, in fact it's not directly used in the estimates, but it might be interesting to highlight that we have all these properties for the STATIC variable $u$, also for reference in future papers..  }
 is a direct consequence of estimate \eqref{enhanced-est-1} of Lemma \ref{l:enhanced-reg}, via Lemma \ref{l:cddata}.
\begin{lemma}
\label{l:highinteu} Under Assumptions \ref{assumption:energy}, \ref{ass:init}, (A$_\Omega$1), and (A$_\Omega$2), for every $z_0 \in \calZ$  such that \eqref{enhanced-initial-datum} is valid %$\rmD_{z} {\calI}(0,z_0)\in L^2(\Omega)$ \CCCE
there exists a constant $C_4 = C_4(\epsi)>0$,   with $C_4(\epsi)  \to \infty$ as $\epsi \to 0$,     such that for all $\tau>0$ there holds
\begin{align}
&
 \label{basic-highinte-1}
 \| \hat{u}_\tau \|_{W^{1,2} (0,T;
W^{1,2} (\Omega;\R^d)) } \leq  C_4(\epsi).
\end{align}
\end{lemma}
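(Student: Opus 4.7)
The plan is to split the estimate into two parts: a uniform-in-$\epsi$ bound on $\hat{u}_\tau$ in $L^\infty(0,T;W^{1,2}(\Omega;\R^d))$, which follows immediately from the basic energy estimate \eqref{basic-enest-3} (since $W^{1,p_*}(\Omega;\R^d)\subset W^{1,2}(\Omega;\R^d)$), and an $\epsi$-dependent bound on $\hat{u}_\tau'$ in $L^2(0,T;W^{1,2}(\Omega;\R^d))$, which is the nontrivial contribution. The key tool for the latter is the continuous-dependence estimate \eqref{est_cont1} from Lemma \ref{l:cddata}, combined with the enhanced estimates \eqref{enhanced-est-1}--\eqref{enhanced-est-2} for $\hat{z}_\tau'$.

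On each subinterval $(t_k^\tau,t_{k+1}^\tau)$ the piecewise linear interpolant satisfies $\hat u_\tau'(t)=(u_{k+1}^\tau-u_k^\tau)/\tau$ with $u_i^\tau=\umin(t_i^\tau,z_i^\tau)$. Applying \eqref{est_cont1} with $\wt p=2$ (so that $r=2p_*/(p_*-2)$), one obtains
\begin{align*}
\|\hat u_\tau'(t)\|_{W^{1,2}(\Omega;\R^d)}
\leq c_3\Bigl(1+\tfrac{\|z_{k+1}^\tau-z_k^\tau\|_{L^{r}(\Omega)}}{\tau}\Bigr)
P(z_{k+1}^\tau,z_k^\tau)^{2}\,M,
\end{align*}
where $M$ subsumes the data norms of $\ell$ and $u_D$ appearing in \eqref{est_cont1}. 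The uniform bound \eqref{basic-enest-2} ensures that $\sup_{t\in(0,T)} P(\overline{z}_\tau(t),\underline{z}_\tau(t))\leq C$ uniformly in $\tau$ and $\epsi$, and rewriting the difference quotient in terms of $\hat{z}_\tau'$ gives
\begin{align*}
\|\hat u_\tau'(t)\|_{W^{1,2}(\Omega;\R^d)}
\leq C\bigl(1+\|\hat z_\tau'(t)\|_{L^{r}(\Omega)}\bigr)
\quad\text{for a.a. } t\in(0,T).
\end{align*}

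Squaring and integrating, the task reduces to bounding $\hat{z}_\tau'$ in $L^2(0,T;L^{r}(\Omega))$ with a constant that depends on $\epsi$. The Lions--Magenes-type inequality \eqref{lions-magenes} yields, for arbitrary $\rho>0$,
\begin{align*}
\|\hat z_\tau'(t)\|_{L^{r}(\Omega)}
\leq \rho\,\|\hat z_\tau'(t)\|_{W^{1,2}(\Omega)}+C_\rho\|\hat z_\tau'(t)\|_{L^{2}(\Omega)}.
\end{align*}
Since $q\geq 2$ implies $(1+|\nabla\hat z_\tau|^2)^{(q-2)/2}\geq 1$, estimate \eqref{enhanced-est-1} controls $\int_0^T\|\nabla\hat{z}_\tau'(t)\|_{L^2(\Omega)}^2\,\dd t$ by $C_3(\epsi)$, while \eqref{enhanced-est-2} controls $\|\hat{z}_\tau'\|_{L^\infty(0,T;L^2(\Omega))}$ by $C_3(\epsi)^{1/2}\epsi^{-1/2}$; in particular $\hat{z}_\tau'\in L^2(0,T;W^{1,2}(\Omega))$ with a bound $\widetilde{C}(\epsi)$ that may blow up as $\epsi\downarrow 0$. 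Combining this with the continuous embedding $W^{1,2}(\Omega)\subset L^{r}(\Omega)$ yields the required $L^2(0,T;L^{r}(\Omega))$-bound for $\hat{z}_\tau'$, whence $\|\hat{u}_\tau'\|_{L^2(0,T;W^{1,2}(\Omega;\R^d))}\leq C_4(\epsi)$.

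There is no genuine analytic obstacle here: the argument is a straightforward chaining of Lemma \ref{l:cddata}, the uniform $W^{1,q}$-bound \eqref{basic-enest-2} on the discrete damage variable, the Gagliardo--Nirenberg-type inequality \eqref{lions-magenes}, and the $\epsi$-dependent estimates of Lemma \ref{l:enhanced-reg}. The only delicate point is keeping track of the $\epsi$-dependence, which enters solely through $C_3(\epsi)$ in the temporal regularity of $\hat{z}_\tau'$, and which we simply absorb in the constant $C_4(\epsi)$.
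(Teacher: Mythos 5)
Your argument is correct and follows exactly the route the paper indicates (the paper states the lemma as a direct consequence of \eqref{enhanced-est-1} via Lemma \ref{l:cddata} without writing out the details): apply \eqref{est_cont1} with $\wt p=2$ to the difference quotients $\hat u_\tau'$, bound $P(\overline z_\tau,\underline z_\tau)$ uniformly by \eqref{basic-enest-2}, and control $\hat z_\tau'$ in $L^2(0,T;L^{2p_*/(p_*-2)}(\Omega))$ through the $\epsi$-dependent bounds of Lemma \ref{l:enhanced-reg}. Your bookkeeping of the $\epsi$-dependence is accurate, so there is nothing to add.
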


\subsection{A uniform discrete $\BV$-estimate }
\label{ss:5.5.}
 The following estimates will be used to pass to the vanishing viscosity limit $\epsi\to0$ and therefore
are uniform both w.r.t.\ $\tau$ \emph{and} w.r.t.\  $\epsi$.
\begin{lemma}
\label{l:uniform-w11}
 Under Assumptions \ref{assumption:energy},  \ref{ass:init}, (A$_\Omega$1) and (A$_\Omega$2),
 for every $z_0 \in \calZ$ such that  \eqref{enhanced-initial-datum} %$\rmD_z \calI (0,z_0) \in L^2 (\Omega)$
 is valid  there exists a constant $C_5>0$ such that for all $\tau>0$  \emph{and} $\epsi>0$
 with $\tau\leq 2\epsilon$
 there holds
  %\footnote{  again, in accord with \cite[(5.32) in Prop. 5.7 in
%the preprint version]{krz} we might give more precise information in estimate
%\eqref{W11}...   not needed}
\begin{align}
\label{W11} & \int_0^T
 \norm{\hat{z}_\tau'(t)}_{W^{1,2}(\Omega))} \,\dd t  \leq C_5,
 \\
 &
 \label{mixed-uniform-epsi}
 \int_0^T  \norm{\hat{z}_\tau'(t)}_{L^{2}(\Omega))} \,\dd t +  \int_0^T \left(
\int_\Omega (1+  |\nabla \hat{z}_\tau (r)|^2
%\overline{z}_\tau (r)|^2 +|\nabla \underline{z}_\tau (r)|^2
)^{\frac{q-2}{2}} |\nabla \hat{z}_\tau'(r)|^2 \,\dd x \right)^{\frac{1}{2}} \,\dd r  \leq C_5.
\end{align}
\end{lemma}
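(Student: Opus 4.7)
I first observe that \eqref{W11} is an immediate consequence of \eqref{mixed-uniform-epsi}: since $(1+|\nabla \hat z_\tau|^2)^{(q-2)/2}\geq 1$ for $q\geq 2$, one has
\[
\|\hat z_\tau'\|_{W^{1,2}(\Omega)} \leq \|\hat z_\tau'\|_{L^2(\Omega)} + \|\nabla \hat z_\tau'\|_{L^2(\Omega)} \leq \|\hat z_\tau'\|_{L^2(\Omega)} + \Bigl(\int_\Omega (1+|\nabla \hat z_\tau|^2)^{\frac{q-2}{2}}|\nabla \hat z_\tau'|^2\,\dd x\Bigr)^{1/2},
\]
so integrating in time reduces \eqref{W11} to \eqref{mixed-uniform-epsi}. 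The core of the proof is therefore the bound \eqref{mixed-uniform-epsi}, and my strategy is to reach it via uniform-in-$\epsilon$ $L^2$-in-time bounds which, by Cauchy--Schwarz, imply the desired $L^1$-in-time bounds.

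I would begin from the per-step inequality \eqref{pruni1:2e} already established in the proof of Lemma~\ref{l:enhanced-reg}, namely, writing $v=\hat z_\tau'$ and $Y=L^{2p_*/(p_*-2)}(\Omega)$,
\[
\frac{\epsilon}{2\tau}\bigl(\|v(\rho)\|_{L^2}^2-\|v(\sigma)\|_{L^2}^2\bigr) + c\int_\Omega(1+|\nabla \hat z_\tau(\rho)|^2)^{\frac{q-2}{2}}|\nabla v(\rho)|^2\,\dd x \leq C(1+\|v(\rho)\|_Y)\|v(\rho)\|_Y.
\]
Using the interpolation \eqref{lions-magenes} together with $\|v\|_{W^{1,2}}^2\leq \|v\|_{L^2}^2 + \int_\Omega(1+|\nabla \hat z_\tau|^2)^{(q-2)/2}|\nabla v|^2\,\dd x$, I would choose the interpolation parameter small enough to absorb the full $\|v\|_Y^2$ contribution from the right-hand side into the weighted gradient term on the left. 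Multiplying by $\tau$ and summing over the time nodes then yields
\[
\frac{\epsilon}{2}\|v(T)\|^2_{L^2} + \frac{c}{2}\int_{t_1^\tau}^T\!\!\int_\Omega(1+|\nabla \hat z_\tau|^2)^{\frac{q-2}{2}}|\nabla v|^2\,\dd x\,\dd r \leq \frac{\epsilon}{2}\|v(t_0)\|^2_{L^2} + C + C\int_0^T\|v(r)\|_{L^2}^2\,\dd r.
\]

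The hypothesis $\tau\leq 2\epsilon$ enters decisively at this stage. First, \eqref{enhanced-est-3} combined with $\tau/\epsilon\leq 2$ gives $\epsilon\|v(t_0)\|_{L^2} \leq C_{3,1}\exp(C_{3,2}\tau/\epsilon)\leq C_{3,1}\mathrm{e}^{2C_{3,2}}$, which is a uniform bound in $\epsilon$ and, iterated once more using the same per-step inequality restricted to the first time-step (as in the derivation of \eqref{1st-time}), yields a uniform bound on $\epsilon\|v(t_0)\|^2_{L^2}$. Second, to handle the remaining term $\int_0^T\|v\|_{L^2}^2$ on the right-hand side, I would combine the Poincar\'e-type splitting $\|v\|_{L^2}^2 \leq C\|\nabla v\|_{L^2}^2 + C'(\int_\Omega v\,\dd x)^2$ (using that $v\leq 0$, so $|\int v|=\|v\|_{L^1}$) with the uniform dissipation bound $\int_0^T\|v\|_{L^1}\,\dd r\leq C$ that comes from \eqref{basic-enest-1}, to control $\int_0^T\|v\|_{L^2}^2$ by $\int_0^T|v|_{\nabla \hat z_\tau}^2 + C$. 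Absorbing the resulting copy of the weighted gradient integral back into the left-hand side then closes the estimate uniformly in $\epsilon$. Once the uniform $L^2$-in-time bounds on $\|v\|_{L^2}$ and on the weighted gradient norm are secured, Cauchy--Schwarz in $t$ produces \eqref{mixed-uniform-epsi}, and hence \eqref{W11}.

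The hard part is precisely the uniform-in-$\epsilon$ closure of this Gronwall-type loop: the basic energy estimate only yields $\int_0^T\epsilon\|v\|^2\leq C$ (not uniform after division by $\epsilon$), and the enhanced estimate \eqref{enhanced-est-1} from Lemma~\ref{l:enhanced-reg} carries an $\epsilon^{-1}$ factor originating from $\|\rmD_z\calI(0,z_0)\|^2_{L^2}$ in \eqref{1st-time}. The subtle interplay of the hypothesis $\tau\leq 2\epsilon$ (which neutralises the exponential factor coming from discrete Gronwall), the uniform $L^1$-dissipation bound, and the Poincar\'e-type reduction of $\|v\|^2_{L^2}$ to the weighted gradient term is what allows the uniform closure and is the main technical point of the proof.
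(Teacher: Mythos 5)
Your reduction of \eqref{W11} to \eqref{mixed-uniform-epsi} is fine, but the core of your strategy --- establishing \emph{uniform-in-$\epsi$} $L^2$-in-time bounds for $\norm{\hat z_\tau'}_{L^2(\Omega)}$ and for the weighted gradient term, and then concluding by Cauchy--Schwarz --- cannot work. Such $L^2(0,T)$ bounds are exactly what fails in the vanishing-viscosity regime: uniform bounds of this type would force the limit curves to lie in $W^{1,2}(0,T;L^2(\Omega))$, excluding the jumps that the whole vanishing-viscosity construction is designed to capture, and indeed Lemma~\ref{l:enhanced-reg} only provides them with a constant $C_3(\epsi)\to\infty$. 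Your closure of the Gronwall loop also breaks down at two concrete points. First, the Poincar\'e-type splitting $\norm{v}_{L^2}^2\lesssim \norm{\nabla v}_{L^2}^2+\norm{v}_{L^1}^2$ requires, after time integration, a uniform bound on $\int_0^T\norm{\hat z_\tau'}_{L^1(\Omega)}^2\,\dd t$, whereas the energy estimate \eqref{basic-enest-1} only gives $\int_0^T\norm{\hat z_\tau'}_{L^1(\Omega)}\,\dd t\leq C$; the quadratic quantity is not controlled uniformly in $\epsi$ (only $\epsi\int_0^T\norm{\hat z_\tau'}_{L^2}^2\leq C$ is available). Second, the initial term: \eqref{enhanced-est-3} with $\tau\leq 2\epsi$ bounds $\epsi\norm{\hat z_\tau'(t_0)}_{L^2}$, but your summed inequality needs $\epsi\norm{\hat z_\tau'(t_0)}_{L^2}^2$, which is of order $1/\epsi$ and not uniform; ``iterating once more'' does not repair this. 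Once these terms survive, the only way to close is a Gronwall estimate against $\epsi\norm{\hat z_\tau'}_{L^2}^2$, which inevitably produces the factor $\exp(CT/\epsi)$ already seen in \eqref{more-general-below}.

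The paper's proof avoids the $L^2$-in-time level altogether and works directly with the quantities that are uniformly controlled. Starting from \eqref{pruni1:e2} at midpoints $m_k$, one adds $\norm{\hat z_\tau'(m_k)}_{L^2}^2$ to both sides and, crucially, estimates the right-hand side by $\tfrac12\mixed{k}^2+C\bigl(1+\norm{\hat z_\tau'(m_k)}_{L^2}\,\calR_1(\hat z_\tau'(m_k))\bigr)$, i.e.\ with the \emph{rate-independent} dissipation $\calR_1(\hat z_\tau')$, whose time integral is uniformly bounded by \eqref{basic-enest-1}, instead of $\norm{\hat z_\tau'}_{L^2}^2$, whose time integral is not. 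Multiplying by $4\tau/\epsi$ yields the structure $2a_i(a_i-a_{i-1})+2\gamma a_i^2+b_i^2\leq c_i^2+2a_id_i$ with $a_i=\norm{\hat z_\tau'(m_{i+1})}_{L^2}$, $b_i=(\tau/\epsi)^{1/2}\mixed{i+1}$, $\gamma=\tau/(2\epsi)$, and one then applies a \emph{weighted discrete Gronwall lemma of BV type} (\cite[Lemma 4.1]{krz}, \cite[Lemma 3.17]{NSV00}); this is precisely where $\tau\leq 2\epsi$ enters, and it delivers \eqref{e:disc_L1_13}, namely $\sum_k\tau\mixed{k}\leq C\bigl(T+\epsi\norm{\hat z_\tau'(m_1)}_{L^2}+\sum_k\tau\calR_1(\hat z_\tau'(m_k))\bigr)$, with the initial datum entering only \emph{linearly} through $\epsi\norm{\hat z_\tau'(m_1)}_{L^2}$, which \eqref{enhanced-est-3} does control uniformly. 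This gives the $L^1$-in-time bound \eqref{mixed-uniform-epsi} directly, without ever claiming (false) uniform $L^2$-in-time bounds; that specific algebraic structure and the weighted discrete Gronwall lemma are the missing ideas in your argument.
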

% Again the functions $\hat z_\tau$ denote the linear interpolants
%constructed from the time-incremental minimizers in
%\eqref{def_time_incr_min_problem_eps} for different $\epsilon>0$.
\noindent
Note that, in comparison with the previous \eqref{enhanced-est-1}, formula
\eqref{mixed-uniform-epsi} has an $L^1$-character, in the sense that it can be
rewritten as
\begin{equation}
\label{mixd-estim-later}
\|\mixed{\tau} \|_{L^1 (0,T)} \leq C \quad \text{with} \quad
\mixed{\tau} (t):= \left( \norm{\hat{z}_\tau'(t)}^2_{L^{2}(\Omega))} +
\int_\Omega (1+  |\nabla \hat{z}_\tau (r)|^2 %|\nabla\overline{z}_\tau (t)|^2 +|\nabla \underline{z}_\tau (t)|^2
)^{\frac{q-2}{2}} |\nabla \hat{z}_\tau'(t)|^2 \,\dd x \right)^{\frac{1}{2}}.
\end{equation}
%\footnote{  here we might add a comment on \eqref{mixed-uniform-epsi}: in comparison with the previous \eqref{enhanced-est-1}, it has an $L^1$ character, in the sense that it can be rewritten as $\|m \|_{L^1 (0,T)} \leq C$  with
%\[
%m (t):= \left(
%\int_\Omega (1+ |\nabla
%\overline{z}_\tau (t)|^2 +|\nabla \underline{z}_\tau (t)|^2
%)^{(q-2)/2} |\nabla \hat{z}_\tau'(t)|^2 \dd x \right)^{1/2}.
%\]
%This  $L^1$ character is not surprising, in the sense that it is in accord with the $W^{1,1}$-estimate \eqref{W11}. Then we should say a few words on the role of this estimate on  the time-continuous level, provided that we manage to solve the open problem on page \pageref{open-problem}..  .. }
\begin{proof}
We start from \eqref{pruni1:e2}, written for $\rho=m_k$ and
$\sigma=m_{k-1}$, where $m_k:=\frac{1}{2}(t_{k-1}^\tau + t_k^\tau)$
and
 $k\in
\{2,\ldots,N\}$. Adding the term $\norm{\hat
z_\tau'(m_k)}^2_{L^2(\Omega)}$ on both sides, we obtain
\begin{multline}
\label{e:disc_L1_1}
\frac{\epsilon}{\tau}\langle \hat z_\tau'(m_k) -
\hat z_\tau'(m_{k-1}), \hat z_\tau'(m_k)\rangle_{L^2(\Omega)} +
\tau^{-1}\langle A_q\overline{z}_\tau (m_k) -
 A_q\overline{z}_\tau (m_{k-1}),\hat z_\tau'(m_k)\rangle_{\calZ}
+\norm{\hat z_\tau'(m_k)}^2_{L^2(\Omega)}
\\
\leq
-\tau^{-1} \langle \rmD_z\wt\calI(t_k,
 \overline{z}_\tau (m_k)) -
\rmD_z\wt\calI(t_{k-1},   \overline{z}_\tau (m_{k-1})), \hat
z_\tau'(m_k)\rangle_{\calZ}
+\norm{\hat z_\tau'(m_k)}^2_{L^2(\Omega)},
\end{multline}
where $\wt\calI$ is defined as in \eqref{itilde}.
 Thanks to estimate \eqref{e2.22} and the fact that
   $|\nabla\hat{z}_\tau(m_k)|^2\leq 2  |\nabla\overline{z}_\tau(m_k)|^2 + 2  |\nabla\overline{z}_\tau(m_{k-1})|^2$,
   the left-hand side of \eqref{e:disc_L1_1} can be bounded by
\begin{equation}\label{eq:lhs}
\text{L.H.S. }  \geq \frac{\epsilon}{2\tau} \norm{\hat
z_\tau'(m_k)}_{L^2(\Omega)} \left(  \norm{\hat
z_\tau'(m_k)}_{L^2(\Omega)} - \norm{\hat
z_\tau'(m_{k-1})}_{L^2(\Omega)} \right)  + \mixed{k}^2,
\end{equation}
where we use the place-holder  (cf.\ notation \eqref{mixd-estim-later})
\[
 \mixed{k}^2 := c_q\int_{\Omega} (1+ |\nabla\hat{z}_\tau(m_k)|^2 )^{\frac{q-2}{2}}
|\nabla\hat z'_\tau(m_k)|^2 \,\dd x + \norm{\hat z_\tau'(m_k)}^2_{L^2(\Omega)}
\]
with a constant $c_q\in (0,1]$.
Using   the previously proved estimate \eqref{est-for-I3} for the
first term on the
right-hand side of \eqref{e:disc_L1_1},
 and the fact that $W^{1,2}(\Omega)$ is compactly
embedded in $L^{{2p_*}/{p_*-2}}(\Omega)\subseteq L^1(\Omega)$, we have
that the right-hand side of \eqref{e:disc_L1_1}
can be bounded as follows  (see the proof of \cite[Proposition~4.3]{krz})
\begin{align*}
\text{R.H.S.}& \leq \frac{ c_q }{2}\norm{\hat
z_\tau'(m_k)}^2_{W^{1,2}(\Omega)} +
C\left(1 + \norm{\hat z_\tau'(m_k)}_{L^1(\Omega)} \calR_1(\hat
 z_\tau'(m_k))\right)
\\&  \leq
\frac{1}{2}\mixed{k}^2
+ C\left(1 + \norm{\hat z_\tau'(m_k)}_{L^2(\Omega)} \calR_1(\hat
 z_\tau'(m_k))\right).
\end{align*}
Hence, estimate \eqref{e:disc_L1_1}  yields
\begin{align*}
&\frac{\epsilon}{2\tau}
\norm{\hat z_\tau'(m_k)}_{L^2(\Omega)}
\left(  \norm{\hat z_\tau'(m_k)}_{L^2(\Omega)} -
\norm{\hat z_\tau'(m_{k-1})}_{L^2(\Omega)}
\right)
+ \frac{1}{2}\mixed{k}^2
%+\frac{c}{4} \norm{\hat z_\tau'(m_k)}^2_{W^{1,2}(\Omega)}+ \frac{c_q}{2}\nu_k
\\ &\quad \leq
C\left(1 + \norm{\hat z_\tau'(m_k)}_{L^2(\Omega)} \calR_1(\hat
 z_\tau'(m_k))
\right),
\end{align*}
where the constant $C$ is independent of $\tau,k$ and $\epsilon$.
Multiplying this inequality by $4\tau/\epsilon$ and taking into account that $\mixed{k}^2  \geq \norm{\hat z_\tau'(m_k)}^2_{L^2(\Omega)}$
we arrive at
\begin{equation}
\label{e:disc_L1_3}
\begin{split}
&2
\norm{\hat z_\tau'(m_k)}_{L^2(\Omega)}
\left(  \norm{\hat z_\tau'(m_k)}_{L^2(\Omega)} -
\norm{\hat z_\tau'(m_{k-1})}_{L^2(\Omega)}
\right)
+
\frac{\tau}{\epsilon} \norm{\hat z_\tau'(m_k)}^2_{L^2(\Omega)}
+ \frac{\tau}{\epsilon} \mixed{k}^2 %\left( \norm{\hat z_\tau'(m_k)}^2_{W^{1,2}(\Omega)} + \nu_k\right)
\\ &\leq
\frac{4\tau C}{\epsilon}
+\frac{4\tau C}{\epsilon}
\norm{\hat z_\tau'(m_k)}_{L^2(\Omega)} \calR_1(\hat z_\tau'(m_k)),
\end{split}
\end{equation}
which is valid for all $2\leq k\leq N$.
We define now for $0\leq i\leq N-1$
\begin{gather*}
a_i = \norm{\hat z_\tau'(m_{i+1})}_{L^2(\Omega)}, \quad
b_i =
(\tau/\epsilon)^\frac{1}{2} \mixed{i+1},\quad %\left( \norm{\hat z_\tau'(m_{i+1})}^2_{W^{1,2}(\Omega)} + \nu_k \right)^\frac{1}{2},
c_i= (4\tau C/\epsilon)^\frac{1}{2}, \quad
d_i=\frac{2\tau C}{\epsilon}\calR_1(\hat z_\tau'(m_{i+1})), \quad \gamma
=\frac{\tau}{2\epsilon},
\end{gather*}
 so that \eqref{e:disc_L1_3} can be rewritten as
$
2a_i(a_i - a_{i-1}) + 2\gamma a_i^2 + b_i^2 \leq c_i^2 + 2 a_i d_i, $
which holds for $1\leq i\leq N-1$.
At this point, we follow the lines of the proof of \cite[Proposition~4.3]{krz}
(cf.\ also \cite[Lemma 3.4]{MZ10}), which relies on a time-discrete Gronwall
estimate with weights (see \cite[Lemma 4.1]{krz}, \cite[Lemma 3.17]{NSV00}): this is
where the assumption $\tau\leq 2\epsilon$ is needed. Hence,
  we
arrive at
\begin{equation}
\label{e:disc_L1_13}
\sum_{k=2}^N \tau \mixed{k}  %\left(\norm{\hat z_\tau'(m_{k})}_{W^{1,2}(\Omega)} + \nu_k\right)
  \leq
  C  \left(T + \epsilon \norm{\hat z_\tau'(m_1)}_{L^2(\Omega)}
  + \sum_{k=2}^N\tau \calR_1(\hat z_\tau'(m_k))\right).
\end{equation}
Thanks to \eqref{enhanced-est-3} from Lemma \ref{l:enhanced-reg}
we conclude that \eqref{mixed-uniform-epsi} and therefore \eqref{W11} hold.
\end{proof}

For later use
%\footnote{  here, give more details about the role of the following estimate. For example we might say: while estimate \eqref{high-diff-zeta} (which in fact is uniform w.r.t.\ $\epsi$) will play a crucial role in the passage to the limit as $\tau \to 0$ and the existence of viscous solutions, it will not be sufficient for the vanishing viscosity limit. Essentially this is because this estimate is not "invariant"  for time-rescalings  - cf.\ Sec.\ \ref{s:7} for more details and comments.
%For the reparameterization argument in the vanishing viscosity limit, we will need \eqref{high-diff-zeta-mysterious} below.
%  }
we pin down a crucial consequence of  the higher differentiability estimate \eqref{unif_reg_est_z_discr} for $\overline{z}_\tau$,
 and of
the uniform $W^{1,1} (0,T; L^2(\Omega))$-estimate for $\hat{z}_\tau$, combined with \eqref{basic-enest-1}.
\begin{lemma}
\label{l:eps-mixed-unif}
Under Assumptions \ref{assumption:energy}, \ref{ass:init}, (A$_\Omega$1) and (A$_\Omega$2),
for every $z_0 \in \calZ$ with \eqref{enhanced-initial-datum},
there exists a constant $C_6>0$ such that for all $\beta\in[0,
\frac{1}{q}(1-\frac{d}{q}))$, $\tau>0$,
and $\epsi>0$ there holds
\begin{equation}
\label{high-diff-zeta-mysterious}
\int_0^T
\|\overline{z}_\tau(t)\|_{
W^{1+\beta,q}(\Omega)}^\il \|\hat{z}_\tau' (t) \|_{L^{2}
(\Omega)}  \,\dd t \leq C_6\,.
\end{equation}
\end{lemma}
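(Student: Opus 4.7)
\textbf{Proof plan for Lemma~\ref{l:eps-mixed-unif}.} The plan is to combine the discrete higher-differentiability estimate \eqref{unif_reg_est_z_discr}, raised to the $q$-th power and simplified using the uniform bounds \eqref{basic-enest-2}--\eqref{basic-enest-3}, with the $W^{1,1}(0,T;L^2(\Omega))$-bound from Lemma~\ref{l:uniform-w11} and the basic energy estimate \eqref{basic-enest-1}.

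First, I would start from \eqref{unif_reg_est_z_discr}. Using the elementary inequality $(a+b+c)^q \leq C_q(a^q + b^q + c^q)$ on the second factor on the right-hand side (which raises $\|u_{\min}+u_D\|_{W^{1,p_*}}^{1/q}$ and $\epsi^{1/q}\|(z_{k+1}^\tau-z_k^\tau)/\tau\|_{L^2}^{1/q}$ to the $q$-th power, yielding $\|u_{\min}+u_D\|_{W^{1,p_*}}$ and $\epsi\|(z_{k+1}^\tau-z_k^\tau)/\tau\|_{L^2}$ respectively), one obtains
\begin{equation*}
 \|z_{k+1}^\tau\|_{W^{1+\beta,q}(\Omega)}^q \leq C (1+\|z_k^\tau\|_{W^{1,q}(\Omega)})^q \left( 1 + \|u_{\min}(t_{k+1}^\tau,z_{k+1}^\tau)+u_D(t_{k+1}^\tau)\|_{W^{1,p_*}(\Omega;\R^d)} + \epsi \left\|\tfrac{z_{k+1}^\tau - z_k^\tau}{\tau}\right\|_{L^2(\Omega)}\right)
\end{equation*}
for all $k$. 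The point is that raising to the $q$-th power turns the fractional $\epsi^{1/q}$ prefactor into a full $\epsi$ and the $W^{1,p_*}$-norm of $u_{\min}+u_D$ appears linearly.

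Next, I would absorb all terms independent of $\epsi\|\hat z_\tau'(t)\|_{L^2}$. By the basic energy estimate, \eqref{basic-enest-2} yields $\sup_{\tau,t}\|z_k^\tau\|_{W^{1,q}(\Omega)}\leq C_1$, while \eqref{basic-enest-3} together with \eqref{a:data} yields $\sup_{\tau,t}\|u_{\min}(t_{k+1}^\tau,z_{k+1}^\tau)+u_D(t_{k+1}^\tau)\|_{W^{1,p_*}(\Omega;\R^d)} \leq C$. Rephrasing the above inequality in terms of the interpolants $\overline z_\tau$ and $\hat z_\tau$ (using that $\hat z_\tau'(t) = (z_{k+1}^\tau - z_k^\tau)/\tau$ for $t\in(t_k^\tau,t_{k+1}^\tau)$), one concludes
\begin{equation*}
 \|\overline z_\tau(t)\|_{W^{1+\beta,q}(\Omega)}^q \leq C\bigl(1 + \epsi \|\hat z_\tau'(t)\|_{L^2(\Omega)}\bigr) \qquad \foraa\, t\in(0,T),
\end{equation*}
with $C$ independent of $\tau$ and $\epsi$.

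Finally, I would multiply this pointwise bound by $\|\hat z_\tau'(t)\|_{L^2(\Omega)}$ and integrate in time:
\begin{equation*}
 \int_0^T \|\overline z_\tau(t)\|_{W^{1+\beta,q}(\Omega)}^q \|\hat z_\tau'(t)\|_{L^2(\Omega)}\dd t \leq C\int_0^T \|\hat z_\tau'(t)\|_{L^2(\Omega)}\dd t + C\epsi \int_0^T \|\hat z_\tau'(t)\|_{L^2(\Omega)}^2 \dd t.
\end{equation*}
The first integral is controlled by the uniform $W^{1,1}(0,T;L^2(\Omega))$-estimate \eqref{W11} of Lemma~\ref{l:uniform-w11}, while the second is precisely $2\int_0^T \calR_{2,\epsi}(\hat z_\tau'(t))\dd t \leq 2\int_0^T \calR_\epsi(\hat z_\tau'(t))\dd t \leq 2C_1$ by \eqref{basic-enest-1}. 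Both bounds are uniform in $\tau$ and $\epsi$, which gives \eqref{high-diff-zeta-mysterious}.

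No single step is delicate: the whole proof is essentially bookkeeping once \eqref{unif_reg_est_z_discr}, \eqref{basic-enest-1}--\eqref{basic-enest-3} and \eqref{W11} are in hand. The mild ``trick'' is simply noting that raising \eqref{unif_reg_est_z_discr} to the power $q$ linearizes the $\epsi^{1/q}$-contribution so that the excess power of $\|\hat z_\tau'\|_{L^2}$ combines with the multiplicative factor already present in the integrand to give a uniformly bounded quadratic dissipation term.
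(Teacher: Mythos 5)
Your proposal is correct and is essentially the paper's own argument: raising \eqref{unif_reg_est_z_discr} to the $q$-th power and absorbing the factors bounded by \eqref{basic-enest-2}--\eqref{basic-enest-3} gives exactly the pointwise bound $\|\overline z_\tau(t)\|_{W^{1+\beta,q}(\Omega)}^{\il}\leq C\bigl(1+\epsi\|\hat z_\tau'(t)\|_{L^2(\Omega)}\bigr)$, i.e.\ the paper's \eqref{to-be-integrated} after multiplication by $\|\hat z_\tau'(t)\|_{L^2(\Omega)}$. Integrating in time and invoking \eqref{W11} for the linear term and \eqref{basic-enest-1} for the viscous quadratic term is precisely how the paper concludes, so nothing further is needed.
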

\begin{proof}
From \eqref{unif_reg_est_z_discr}, again taking into account
the previously proved uniform estimates
\eqref{basic-enest-2} and \eqref{basic-enest-3} for $z_k^\tau$ and
$u_\text{min}(t_{k+1}^\tau,z_{k+1}^\tau)$, we also gather
\[
 \norm{z^\tau_{k+1}}_{W^{1+\beta,q}(\Omega)}^{\il}
\leq C\left( 1 +
\epsilon\norm{\frac{z_{k+1}^\tau-z_k^\tau}{\tau}}_{L^2(\Omega)} \right),
\]
whence
\begin{equation}
\label{to-be-integrated}
 \norm{z^\tau_{k+1}}_{W^{1+\beta,q}(\Omega)}^{\il}\norm{\frac{z_{k+1}^\tau-z_k^\tau}{\tau}}_{L^2(\Omega)}
\leq C\left(  \norm{\frac{z_{k+1}^\tau-z_k^\tau}{\tau}}_{L^2(\Omega)}+
\epsilon\norm{\frac{z_{k+1}^\tau-z_k^\tau}{\tau}}_{L^2(\Omega)}^2 \right).
\end{equation}
Then, \eqref{high-diff-zeta-mysterious}  follows by integrating \eqref{to-be-integrated} in time, taking into account
the basic energy estimate \eqref{basic-enest-1} as well as estimate \eqref{W11}.
\end{proof}

\begin{remark}
\upshape
\label{rmk:fixed-time-step}
Observe that the a priori estimates from Lemmas
\ref{l:basicenest}--\ref{l:highinteu} could be obtained also in the case of a time-discretization scheme with  \emph{variable} time step $\tau_k= t_{k+1}^\tau - t_k^\tau$, with fineness $\tau = \sup_{0\leq k\leq N}(t^\tau_{k+1} - t^\tau_k)$. Accordingly,  part 1 of Theorem~\ref{thm:ex-viscous}  could be extended to  the  variable time step framework, like in \cite{krz}.
The reason why we have confined ourselves to a \emph{constant} time step is in fact  related to the validity of some calculations in the proof of Lemma  \ref{l:uniform-w11}.
%Note that holds also with the choice of  while for the proof of \eqref{crucial-elle1-esti} a fixed time step is needed. For this reason we stay with a fixed time step $\tau$.
\end{remark}

%%%%%%%%%%%%%%%%%%%%%%%%%%%%%%%%%%%%%
%%%%%%%%%%%%%%%%%%%%%%%%%%%%%%%%%%%%%

\section{Proof of Theorem \ref{thm:ex-viscous}  on the existence of viscous solutions}
\label{s:6}

In this section $\epsi>0$ is fixed and the limit as $\tau$ tends to zero is discussed.
  In order to pass to the limit in the time-discretization scheme of the viscous problem,
as in \cite{krz} we are going to adopt a \emph{variational} approach, along the lines of \cite{mrs2013}. Namely, instead of
taking the limit of the discrete subdifferential inclusion \eqref{cont-reformulation}, we shall pass to the limit in the discrete energy
inequality \eqref{discr-enineq} derived in Lemma~\ref{l:discr-enineq} below.   Observe that,
one of the peculiarities of this problem is that we have not used inequality \eqref{discr-enineq} to deduce the basic energy  estimates for the approximate solutions  like it could be
expected. In fact, the last remainder term
on the right-hand side of \eqref{discr-enineq}
prevents us from doing so. Instead, relying on the a priori estimates obtained in Section \ref{s:5} and on suitable compactness arguments (see the forthcoming Prop.~\ref{prop:compactness}), we are going to show that this remainder tends to zero,  cf.\ \eqref{conve-4} ahead).
%we will use \eqref{discr-enineq} to pass to the limit as $\tau \down 0$. In order to do so, and to ensure that
%the last remainder term
%on the right-hand side of \eqref{discr-enineq} tends to $0$, we will rely on the forthcoming estimate \eqref{conve-4} in Proposition~\ref{prop:compactness} below.

\begin{lemma}[Discrete energy inequality]
\label{l:discr-enineq}
$ $\\
Under Assumptions \ref{assumption:energy}, \ref{ass:init}, and (A$_\Omega$1), %and (A$_\Omega$2),
%Assume \eqref{groger},\eqref{ass-eff}--\eqref{new-g}, and \eqref{a:data}. Then,
the discrete solutions of \eqref{cont-reformulation} satisfy the \emph{discrete energy inequality} for all $0 \leq s \leq t \leq T$
\begin{equation}
\label{discr-enineq}
\begin{aligned}
&
\int_{\underline{t}_{\tau}(s)}^{\overline{t}_{\tau}(t)}
\left(\calR_\epsi (\hat{z}'_{\tau})(r)+\calR_\epsi^* (-\rmD_z
\calI (\overline{t}_{\tau}(r),\overline{z}_{\tau}(r)))  \right)
\,\mathrm{d}r +
\calI( t,\hat{z}_{\tau}(t))
\\ &
\leq \calI( s,\hat{z}_{\tau}(s)) +
\int_{\underline{t}_{\tau}(s)}^{\overline{t}_{\tau}(t)}
\partial_t \calI (r,\hat{z}_{\tau}(r)) \, \mathrm{d}r
%-\int_{\underline{t}_{\tau}(s)}^{\overline{t}_{\tau}(t)}
%\pairing{\calZ}{\rmD_z \calI (\overline{t}_{\tau}(r),
%  \overline{z}_{\tau}(r)) -  \rmD_z \calI (r, \hat{z}_{\tau}(r))
%}{\hat{z}'_{\tau}(r)}
%\,\mathrm{d}r\,.
\\
&+ C \sup_{t\in[0,T]}\|\overline{z}_{\tau}(t)-\hat{z}_{\tau}(t)\|_{L^{2p_*/(p_*-2)}(\Omega)}
\int_{\underline{t}_{\tau}(s)}^{\overline{t}_{\tau}(t)}(|(\overline{t}_{\tau}(r)-r | + \|\overline{z}_{\tau}(r)-\hat{z}_{\tau}(r)\|_{L^{2p_*/(p_*-2)}(\Omega)} )\,\dd r.
\end{aligned}
\end{equation}
\end{lemma}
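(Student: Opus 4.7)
The plan is to combine three ingredients: the Fenchel--Young equality arising from the discrete Euler--Lagrange inclusion \eqref{cont-reformulation}, the convexity of the $q$-Laplacian part $\calI_q$, and the chain rule for $\wt\calI$ provided by Proposition~\ref{l:ch-rule-tilde}. Since $-\rmD_z\calI(\overline t_\tau(r), \overline z_\tau(r)) \in \partial\calR_\epsi(\hat z_\tau'(r))$ for a.e.\ $r$, the Fenchel--Young equality yields
\[
\calR_\epsi(\hat z_\tau'(r)) + \calR_\epsi^*(-\rmD_z\calI(\overline t_\tau(r), \overline z_\tau(r))) = -\pairing{\calZ}{\rmD_z\calI(\overline t_\tau(r), \overline z_\tau(r))}{\hat z_\tau'(r)}.
\]
Splitting $\rmD_z\calI = A_q + \rmD_z\wt\calI$ and integrating over $(\underline t_\tau(s), \overline t_\tau(t))$ thus reduces the proof to identifying the resulting dual pairing with a $\calI(\cdot,\hat z_\tau(\cdot))$-increment, up to a remainder stemming from the replacement of $(\overline t_\tau, \overline z_\tau)$ by $(r, \hat z_\tau(r))$.

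Next, I would reconstruct the $\calI(t, \hat z_\tau(t))$-increment by handling $\calI_q$ and $\wt\calI$ separately. For the $q$-Laplacian part, the convexity of $\calI_q$ yields on each subinterval $\calI_q(z_{k+1}^\tau) - \calI_q(z_k^\tau) \leq \tau\pairing{\calZ}{A_q\overline z_\tau}{\hat z_\tau'}$, and telescoping gives
\[
\calI_q(\hat z_\tau(\overline t_\tau(t))) - \calI_q(\hat z_\tau(\underline t_\tau(s))) \leq \int_{\underline t_\tau(s)}^{\overline t_\tau(t)} \pairing{\calZ}{A_q \overline z_\tau(r)}{\hat z_\tau'(r)}\,\dd r.
\]
For $\wt\calI$, the temporal regularity $\hat z_\tau \in W^{1,2}(0,T; W^{1,2}(\Omega))$ allows me to apply Proposition~\ref{l:ch-rule-tilde}, yielding the \emph{equality}
\[
\wt\calI(\overline t_\tau(t), \hat z_\tau(\overline t_\tau(t))) - \wt\calI(\underline t_\tau(s), \hat z_\tau(\underline t_\tau(s))) = \int_{\underline t_\tau(s)}^{\overline t_\tau(t)} \Bigl[\partial_t\calI(r, \hat z_\tau(r)) + \int_\Omega \rmD_z\wt\calI(r, \hat z_\tau(r))\,\hat z_\tau'(r)\,\dd x \Bigr]\,\dd r.
\]
Adding these two contributions (noting that $\partial_t\calI$ coincides with $\partial_t\wt\calI$ since $\calI_q$ is $t$-independent) and decomposing $\rmD_z\wt\calI(r, \hat z_\tau(r)) = \rmD_z\wt\calI(\overline t_\tau(r), \overline z_\tau(r)) + [\rmD_z\wt\calI(r, \hat z_\tau(r)) - \rmD_z\wt\calI(\overline t_\tau(r), \overline z_\tau(r))]$, the $A_q$-pairing and the first $\rmD_z\wt\calI$-pairing combine via the Fenchel--Young identity into $-\int[\calR_\epsi(\hat z_\tau') + \calR_\epsi^*(-\rmD_z\calI(\overline t_\tau, \overline z_\tau))]\,\dd r$; rearrangement then produces \eqref{discr-enineq} up to the error generated by the decomposition.

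The remaining task is to control
\[
\mathrm{Err} := \int_{\underline t_\tau(s)}^{\overline t_\tau(t)} \int_\Omega \bigl[\rmD_z\wt\calI(r, \hat z_\tau(r)) - \rmD_z\wt\calI(\overline t_\tau(r), \overline z_\tau(r))\bigr]\, \hat z_\tau'(r)\,\dd x\,\dd r,
\]
and here I would appeal to Corollary~\ref{c:very-useful-later} with test function $w = \hat z_\tau'(r)$, which lies in $L^{2p_*/(p_*-2)}(\Omega)$ via the embedding $W^{1,2}(\Omega)\hookrightarrow L^{2p_*/(p_*-2)}(\Omega)$. The $(C_{f''}, C_{g''}, P)$-prefactors are uniformly bounded in $\tau$ thanks to the energy estimate \eqref{basic-enest-2} and the $L^\infty$-bound on $\overline z_\tau$ provided by Proposition~\ref{rem:nice}, so that pointwise in $r$ one obtains $|\int_\Omega[\cdot]\hat z_\tau'\,\dd x| \leq C(|\overline t_\tau(r) - r| + \|\overline z_\tau(r) - \hat z_\tau(r)\|_{L^{2p_*/(p_*-2)}})\|\hat z_\tau'(r)\|_{L^{2p_*/(p_*-2)}}$. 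The key observation is that $\hat z_\tau'$ is constant on each $(t_k^\tau, t_{k+1}^\tau)$ with $\sup_{r\in (t_k^\tau, t_{k+1}^\tau)}\|\overline z_\tau(r) - \hat z_\tau(r)\|_{L^{2p_*/(p_*-2)}} = \|z_{k+1}^\tau - z_k^\tau\|_{L^{2p_*/(p_*-2)}} = \tau\|\hat z_\tau'\|_{L^{2p_*/(p_*-2)}}$, so that $\|\hat z_\tau'(r)\|_{L^{2p_*/(p_*-2)}} \leq \tau^{-1}\sup_{t\in[0,T]}\|\overline z_\tau(t) - \hat z_\tau(t)\|_{L^{2p_*/(p_*-2)}}$. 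The main technical obstacle is precisely the balancing between this $\tau^{-1}$ factor and the $O(\tau)$ smallness of the integrand $(|\overline t_\tau(r) - r| + \|\overline z_\tau - \hat z_\tau\|_{L^{2p_*/(p_*-2)}})$; once performed, the $\sup$-factor can be pulled outside the integral and the remainder takes exactly the form stated in \eqref{discr-enineq}. Its vanishing as $\tau \to 0$ will then follow from the uniform infinitesimality $\sup_{t}\|\overline z_\tau(t) - \hat z_\tau(t)\|_{L^{2p_*/(p_*-2)}}\to 0$ (granted by Lemma~\ref{l:enhanced-reg} and the embedding) combined with the uniform BV-estimate \eqref{W11}.
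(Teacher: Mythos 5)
Your proposal follows essentially the same skeleton as the paper's proof: the Fenchel--Young identity coming from \eqref{cont-reformulation}, the identification of the energy increment along $\hat z_\tau$, and the control of the discrepancy $\rmD_z\wt\calI(\overline t_\tau(r),\overline z_\tau(r))-\rmD_z\wt\calI(r,\hat z_\tau(r))$ tested against $\hat z_\tau'(r)$ via Corollary \ref{c:very-useful-later}. Your handling of the $q$-Laplacian part by convexity and telescoping is the paper's observation $F_1\geq 0$ in disguise: the paper applies the classical chain rule to the whole map $r\mapsto\calI(r,\hat z_\tau(r))$ (legitimate, since $\hat z_\tau$ is a $\calZ$-valued Lipschitz curve and $\calI$ is Fr\'echet differentiable by Corollary \ref{coro-fre}) and then discards $-F_1\leq 0$ using \eqref{e2.22} together with $\overline z_\tau(r)-\hat z_\tau(r)=(\overline t_\tau(r)-r)\,\hat z_\tau'(r)$. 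For the same reason, invoking Proposition \ref{l:ch-rule-tilde} for $\wt\calI$ is more machinery than needed, though harmless. One small caveat: bound $C_{f''},C_{g''}$ through \eqref{basic-enest-2} and the embedding $W^{1,q}(\Omega)\hookrightarrow \rmC^0(\overline\Omega)$ rather than through Proposition \ref{rem:nice}, which requires \eqref{def-gg} and $z_0\in[0,1]$, hypotheses not made in this lemma.

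Where your argument does not close is the final step, which you flag as ``the main technical obstacle'' but do not resolve. Corollary \ref{c:very-useful-later} gives the pointwise bound $C\big(|\overline t_\tau(r)-r|+\|\overline z_\tau(r)-\hat z_\tau(r)\|_{L^{2p_*/(p_*-2)}(\Omega)}\big)\,\|\hat z_\tau'(r)\|_{L^{2p_*/(p_*-2)}(\Omega)}$, and your substitution $\|\hat z_\tau'(r)\|\leq\tau^{-1}\sup_t\|\overline z_\tau(t)-\hat z_\tau(t)\|$ produces the remainder of \eqref{discr-enineq} multiplied by $\tau^{-1}$; nothing in $\int(|\overline t_\tau(r)-r|+\|\overline z_\tau(r)-\hat z_\tau(r)\|)\,\dd r$ is left over to absorb this factor, since that integral is exactly the one appearing in the statement. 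So, as written, you prove \eqref{discr-enineq} only with a remainder $\tau^{-1}$ times larger than claimed. That weaker inequality would still serve the only purpose the lemma is used for (at fixed $\epsi$, $\tau^{-1}\sup_t\|\overline z_\tau-\hat z_\tau\|\int(\cdots)\,\dd r\leq C(\epsi)\sup_t\|\overline z_\tau-\hat z_\tau\|_{W^{1,2}(\Omega)}\to0$ by \eqref{enhanced-est-1} and \eqref{conve-4}), but it is not the stated estimate. Be aware that you have hit precisely the terse point of the paper's own proof: in the bound for $F_2$ the last factor is written as $\|\overline z_\tau(r)-\hat z_\tau(r)\|$, whereas a direct application of \eqref{very-useful-later} with $w=\hat z_\tau'(r)$ yields $\|\hat z_\tau'(r)\|=(\overline t_\tau(r)-r)^{-1}\|\overline z_\tau(r)-\hat z_\tau(r)\|$; to get the remainder literally as in \eqref{discr-enineq} you must either justify this replacement or restate the lemma with the (still vanishing) remainder your estimate actually delivers.
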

\begin{proof}
From \eqref{cont-reformulation} and  as a consequence of the Fenchel-Moreau theorem we get  %well-known convex analysis relation
 \begin{equation}\label{ing-1}
  \calR_\epsilon \left(\hat{z}_\tau'(r)\right) + \calR_\epsilon^* \left(- \rmD_z\calI(\overline{t}_\tau(r),
 \overline{z}_\tau(r)) \right) = \pairing{\calZ}{- \rmD_z\calI(\overline{t}_\tau(r),
 \overline{z}_\tau(r))}{\hat{z}_\tau'(r)} \qquad \foraa\, r \in (0,T)\,.
 \end{equation}
 On the other hand, since $\hat{z}_\tau\in \rmC_{\mathrm{lip}}^0([0,T];\calZ)$,
 the \emph{standard} chain rule  yields
\[
\frac{\dd }{\dd t } \calI (r,\hat {z}_\tau (r)) =  \partial_t \calI (r,\hat{z}_{\tau}(r)) +  \pairing{\calZ}{ \rmD_z\calI(r,
 \hat{z}_\tau(r))}{\hat{z}_\tau'(r)} \quad \foraa\, r \in (0,T)
\]
%yields for almost all $$.
Thus the right-hand side of \eqref{ing-1} can be rewritten as
 \begin{equation}\label{ing2}
 \begin{aligned}
  &
  \pairing{\calZ}{ \rmD_z\calI(\overline{t}_\tau(r),
 \overline{z}_\tau(r))}{\hat{z}_\tau'(r)}   \\ & =  \frac{\dd }{\dd t } \calI (r,\hat {z}_\tau (r))  -   \partial_t \calI (r,\hat{z}_{\tau}(r))
 +  \pairing{\calZ}{\rmD_z \calI (\overline{t}_{\tau}(r), \overline{z}_{\tau}(r)) -  \rmD_z \calI (r, \hat{z}_{\tau}(r)) }{\hat{z}'_{\tau}(r)}.
 \end{aligned}
 \end{equation}
 Then, combining \eqref{ing-1} and \eqref{ing2} and
  integrating on the interval
$({\underline{t}_{\tau}(s)},{\overline{t}_{\tau}(t)} )$ we get
\begin{equation}\label{discr-enid}
\begin{aligned}
&\int_{\underline{t}_{\tau}(s)}^{\overline{t}_{\tau}(t)}
\left(\calR_\epsi (\hat{z}'_{\tau})(r)+\calR_\epsi^* (-\rmD_z
\calI (\overline{t}_{\tau}(r),\overline{z}_{\tau}(r)))  \right)
\,\mathrm{d}r +
\calI( t,\hat{z}_{\tau}(t))
\\ &
= \calI(s,\hat{z}_{\tau}(s)) +
\int_{\underline{t}_{\tau}(s)}^{\overline{t}_{\tau}(t)}
\partial_t \calI (r,\hat{z}_{\tau}(r)) \, \mathrm{d}r
-\int_{\underline{t}_{\tau}(s)}^{\overline{t}_{\tau}(t)}
\pairing{\calZ}{\rmD_z \calI (\overline{t}_{\tau}(r),
  \overline{z}_{\tau}(r)) -  \rmD_z \calI (r, \hat{z}_{\tau}(r))
}{\hat{z}'_{\tau}(r)}
\,\mathrm{d}r\,.
\end{aligned}
\end{equation}
Let us estimate now the %remainder term.
last term on the right-hand side:
\begin{equation*}
\begin{aligned}
&\int_{\underline{t}_{\tau}(s)}^{\overline{t}_{\tau}(t)}\pairing{\calZ}{\rmD_z \calI (\overline{t}_{\tau}(r),
  \overline{z}_{\tau}(r)) -  \rmD_z \calI (r, \hat{z}_{\tau}(r))
}{\hat{z}'_{\tau}(r)}  \,\dd r
\\
&= \int_{\underline{t}_{\tau}(s)}^{\overline{t}_{\tau}(t)}\pairing{\calZ}{A_q \overline{z}_{\tau}(r) - A_q \hat{z}_{\tau}(r)}{\hat{z}'_{\tau}(r)}\,\dd r +  \int_{\underline{t}_{\tau}(s)}^{\overline{t}_{\tau}(t)}\pairing{\calZ}
{\rmD_z \wt{\calI} (\overline{t}_{\tau}(r),  \overline{z}_{\tau}(r)) -  \rmD_z \wt{\calI} (r, \hat{z}_{\tau}(r))}
{\hat{z}'_{\tau}(r)}\,\dd r
\\
&=: F_1 + F_2.
\end{aligned}
\end{equation*}
 Now, from the definition of $ \hat{z}_{\tau}$ and \eqref{e2.22} it follows that $F_1\geq0$.
To estimate $F_2$, we use \eqref{very-useful-later} from Corollary \ref{c:very-useful-later}
and, observing that $P(\overline{z}_{\tau},\hat{z}_{\tau})$ (for $P(z_1,z_2)$ defined as in \eqref{def:Pzz})
is bounded uniformly in $\tau$ thanks to \eqref{basic-enest-2}, we get
\[
\begin{aligned}
&|F_2|\leq C \int_{\underline{t}_{\tau}(s)}^{\overline{t}_{\tau}(t)}(|(\overline{t}_{\tau}(r)-r | + \|\overline{z}_{\tau}(r)-\hat{z}_{\tau}(r)\|_{L^{2p_*/(p_*-2)}(\Omega)} )
\|\overline{z}_{\tau}(r)-\hat{z}_{\tau}(r)\|_{L^{2p_*/(p_*-2)}(\Omega)}\,\dd r
\\
&\leq C \sup_{t\in[0,T]}\|\overline{z}_{\tau}(t)-\hat{z}_{\tau}(t)\|_{L^{2p_*/(p_*-2)}(\Omega)}
\int_{\underline{t}_{\tau}(s)}^{\overline{t}_{\tau}(t)}(|(\overline{t}_{\tau}(r)-r | + \|\overline{z}_{\tau}(r)-\hat{z}_{\tau}(r)\|_{L^{2p_*/(p_*-2)}(\Omega)} )\,\dd r,
\end{aligned}
\]
which together with \eqref{discr-enid} and the fact that $-F_1\leq 0$ gives \eqref{discr-enineq}.
\end{proof}

 \noindent As a consequence of the a priori estimates of Sec.\ \ref{s:5}, we have the following result.
\begin{proposition}[Compactness]
\label{prop:compactness}
$ $\\
Under Assumptions \ref{assumption:energy},  \ref{ass:init}, (A$_\Omega$1), and (A$_\Omega$2),
for every $z_0\in\calZ$ such that $\rmD_z\calI(0,z_0)\in L^2(\Omega)$ and
 for every sequence of time-steps $(\tau_j)_j$ tending to $0$
 there exist a (not-relabeled) subsequence and
 \[
 \begin{gathered}
z
\in  %L^{2q} (0,T; W^{1+\beta, \il }(\Omega)) \cap
L^\infty (0,T;\calZ) \cap W^{1,2} (0,T; W^{1,2}(\Omega))
\end{gathered}
\]
%and moreover $z\in \mathrm{C}^0_{\text weak}([0,T];\calZ)$,
fulfilling the mixed estimate \eqref{mixed-estimate},  as well as the enhanced regularity \eqref{enhanced-reg},
and such that the following convergences hold:  for all
$\beta\in [0, \frac{1}{q}(1-\frac{d}{q}))$
\begin{gather}
\label{conve-1}
\overline{z}_{\tau_j}, \, \hat{z}_{\tau_j}  \weaksto z
\qquad
   \text{in }  L^{2q} (0,T; W^{1+\beta, q }(\Omega))\cap L^\infty
(0,T;\calZ),
\\
\label{conve-2}
\hat{z}_{\tau_j}  \weakto z \qquad
   \text{in }   W^{1,2} (0,T;W^{1,2}(\Omega)),
\\
\label{conve-3}
  \hat{z}_{\tau_j}   \to z \qquad
  \text{strongly in } L^{2q}(0,T;W^{1+\beta, q}(\Omega)),
%\mathrm{C}^0_{\text{weak}} ([0,T];\calZ) ,
  \\
\label{conve-4}
% % \overline{z}_{\tau_j}   \to z \qquad  & \text{in } L^\infty (0,T;\calZ)\,.
\sup_{t\in[0,T]}\norm{\overline{z}_{\tau_j} (t) - \hat{z}_{\tau_j}(t)}_{W^{1,2}(\Omega)}  \leq C(\epsi)\sqrt{\tau_j} %\to 0,
  \\
\label{conve-5}
\sup_{t\in [0,T]}
\norm{ \rmD_z
\calI (\overline{t}_{\tau_j}(t),\overline{z}_{\tau_j}(t)) -  \rmD_z
\calI (t, \hat{z}_{\tau_j}(t))}_{\calZ^*}\leq C(\epsi)\sqrt{\tau_j}.
\end{gather}
Therefore,  \eqref{conve-3}, \eqref{conve-4} and \eqref{conve-5} imply
\begin{equation}
\label{conve-4-conseq}
\rmD_z
\calI (\overline{t}_{\tau_j}(t),\overline{z}_{\tau_j}(t)) \to  \rmD_z
\calI (t, z(t))  \qquad \text{strongly in }\calZ^* \  \foraa\ t\in (0,T).
\end{equation}
Moreover,
\begin{align}
\label{conve-6}
&
\hat{z}_{\tau_j}(t) \weakto z(t)     \  \text{ in } \calZ  &&    \text{for all } t \in [0,T],
\\
\label{conve-7}
&
\calI(t,\hat{z}_{\tau_j}(t) ) \to \calI(t,z(t)) &&  \text{for almost all } t \in (0,T).
\end{align}
\end{proposition}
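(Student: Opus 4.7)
\textbf{Proof plan for Proposition \ref{prop:compactness}.}
The plan is to first extract, via Banach--Alaoglu, a subsequence from the uniform bounds of Section \ref{s:5}, then to upgrade to strong convergence by Aubin--Lions, and finally to establish the pointwise and discrete--to--continuous estimates by exploiting the explicit structure of the interpolants together with the continuity properties of $\rmD_z \calI$ from Section \ref{ss:2.3}. Concretely, the basic energy estimate \eqref{basic-enest-2}, the higher differentiability estimate \eqref{high-diff-zeta}, and the $\epsilon$-dependent estimates \eqref{enhanced-est-1}--\eqref{enhanced-est-2} yield that $\overline z_\tau,\hat z_\tau$ are uniformly bounded in $L^\infty(0,T;\calZ)\cap L^{2q}(0,T;W^{1+\beta,q}(\Omega))$ for every admissible $\beta$, and that $\hat z_\tau'$ is uniformly bounded in $L^2(0,T;W^{1,2}(\Omega))\cap L^\infty(0,T;L^2(\Omega))$ with constants $C(\epsi)$. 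Hence we extract a limit $z$ with the claimed regularity, obtaining \eqref{conve-1} and \eqref{conve-2}; the enhanced regularity \eqref{enhanced-reg} follows by weak lower semicontinuity of the $L^{2q}(0,T;W^{1+\beta,q})$-norm, while the mixed estimate \eqref{mixed-estimate} passes to the limit by Ioffe-type semicontinuity of the convex weighted integrand $\int (1+|\nabla z|^2)^{(q-2)/2}|\nabla z'|^2$, using the forthcoming strong convergence of $\nabla \hat z_\tau$ in $L^q$ and the weak $L^2$-convergence of $\nabla \hat z_\tau'$.

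For the strong convergence \eqref{conve-3}, given $\beta \in [0, \tfrac{1}{q}(1-\tfrac{d}{q}))$ I would pick $\beta' \in (\beta, \tfrac{1}{q}(1-\tfrac{d}{q}))$, so that $W^{1+\beta',q}(\Omega) \Subset W^{1+\beta,q}(\Omega)$ compactly by fractional Sobolev embedding. The uniform bound of $\hat z_\tau$ in $L^{2q}(0,T;W^{1+\beta',q})$ together with the uniform $L^2(0,T;W^{1,2})$-bound on $\hat z_\tau'$ then gives compactness in $L^{2q}(0,T;W^{1+\beta,q})$ by Aubin--Lions. Next, \eqref{conve-4} follows by a direct computation: on $(t_k^\tau,t_{k+1}^\tau)$ we have $\overline z_\tau(t)-\hat z_\tau(t)= \tfrac{t_{k+1}^\tau-t}{\tau}(z_{k+1}^\tau-z_k^\tau)$, whence
\[
\|\overline z_\tau(t)-\hat z_\tau(t)\|_{W^{1,2}}^2 \leq \|z_{k+1}^\tau-z_k^\tau\|_{W^{1,2}}^2 = \tau \int_{t_k^\tau}^{t_{k+1}^\tau}\|\hat z_\tau'(r)\|_{W^{1,2}}^2\,\dd r \leq \tau\,C(\epsi)
\]
uniformly in $k$, which yields the $\sqrt\tau$-estimate in $W^{1,2}$ and, via the embedding \eqref{lions-magenes}, in $L^{2p_*/(p_*-2)}(\Omega)$.

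The main obstacle, and the technical heart of the proof, is \eqref{conve-5}. I would split $\rmD_z\calI(\overline t_\tau,\overline z_\tau)-\rmD_z\calI(t,\hat z_\tau)= (A_q\overline z_\tau - A_q \hat z_\tau) + (\rmD_z\wt\calI(\overline t_\tau,\overline z_\tau)-\rmD_z\wt\calI(t,\hat z_\tau))$. For the lower-order part, the inequality \eqref{very-useful-later} in Corollary \ref{c:very-useful-later}, combined with the boundedness of $P(\overline z_\tau,\hat z_\tau)$ coming from \eqref{basic-enest-2} and with $|\overline t_\tau(t)-t|\leq \tau$ and the $\sqrt\tau$-estimate from step \eqref{conve-4}, yields the $\sqrt\tau$-bound in $L^{2p_*/(p_*+2)}(\Omega) \hookrightarrow \calZ^*$. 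For the $A_q$ part, inequality \eqref{A7-D} gives, for test functions $w$ with $\|w\|_\calZ \leq 1$,
\[
|\langle A_q \overline z_\tau - A_q \hat z_\tau, w\rangle|\leq c\int_\Omega(1+|\nabla \overline z_\tau|^2+|\nabla \hat z_\tau|^2)^{\frac{q-2}{2}}|\nabla(\overline z_\tau-\hat z_\tau)||\nabla w|\,\dd x,
\]
which I would control by a three-factor H\"older inequality, using the uniform $L^\infty(0,T;L^{q/(q-2)})$-bound on the weight (from \eqref{basic-enest-2}), the uniform $L^q$-bound on $\nabla w$, and an interpolation between the uniform $L^\infty(0,T;L^q)$-bound and the $\sqrt\tau$-decay in $L^2$ for $\nabla(\overline z_\tau-\hat z_\tau)$.

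Finally, the pointwise convergence \eqref{conve-6} follows from the uniform H\"older continuity $\|\hat z_\tau(t)-\hat z_\tau(s)\|_{L^2}\leq C(\epsi)\sqrt{|t-s|}$ (implied by $\hat z_\tau' \in L^2(0,T;L^2)$ uniformly) combined with the $L^\infty(0,T;\calZ)$-bound and a refined Arzel\`a--Ascoli argument giving weak $\calZ$-convergence at \emph{every} $t$. Then \eqref{conve-4-conseq} is obtained by combining \eqref{conve-3} (which, after extracting a further subsequence, gives strong $\calZ$-convergence $\hat z_\tau(t)\to z(t)$ for a.a.\ $t$), the continuity statement \eqref{strong-continuity} of Corollary \ref{coro-fre}, and \eqref{conve-5}. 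The same strong convergence a.a.\ $t$, together with the weak continuity properties in Corollary \ref{coro-fre}, yields \eqref{conve-7}.
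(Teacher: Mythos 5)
Most of your plan coincides with the paper's proof: \eqref{conve-1}--\eqref{conve-2} by weak compactness from \eqref{basic-enest-2}, \eqref{high-diff-zeta} and \eqref{enhanced-est-1}; \eqref{conve-3} by Aubin--Lions/Simon compactness through an intermediate exponent $\beta'$; \eqref{conve-4} from the $L^2(0,T;W^{1,2}(\Omega))$-bound on $\hat z_\tau'$; the lower-order part of \eqref{conve-5} via Corollary \ref{c:very-useful-later}; and \eqref{conve-6}, \eqref{conve-7}, the mixed estimate \eqref{mixed-estimate} and \eqref{enhanced-reg} by the same pointwise and lower-semicontinuity arguments the paper uses. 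The one step that does not work as you describe it is the $A_q$-contribution to \eqref{conve-5}.

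Starting from \eqref{A7-D}, the weight $(1+|\nabla \overline z_{\tau}|^2+|\nabla \hat z_{\tau}|^2)^{\frac{q-2}{2}}$ is controlled, via \eqref{basic-enest-2}, only in $L^{q/(q-2)}(\Omega)$, and $\nabla w$ only in $L^{q}(\Omega)$; since $\frac{q-2}{q}+\frac1q+\frac1q=1$, your three-factor H\"older inequality then forces you to measure $\nabla(\overline z_{\tau}-\hat z_{\tau})$ exactly in $L^{q}(\Omega)$. But the only smallness you have is the $\sqrt{\tau}$-decay in $L^{2}(\Omega)$, and interpolating $L^q$ between $L^2$ and $L^q$ gives exponent zero on the $L^2$-factor (for $q>2$), so your interpolation produces a bound that is merely $O(1)$ in $\tau$ -- not $O(\sqrt\tau)$, and not even $o(1)$; there is also no available bound on $\nabla(\overline z_\tau-\hat z_\tau)$ in any $L^r(\Omega)$ with $r>q$ that would make the interpolation bite. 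The paper closes this step differently: it splits the weight itself, applying H\"older with exponents $2$, $\frac{2q}{q-2}$, $q$, so that one factor is the \emph{weighted} quantity $\big(\int_\Omega (1+|\nabla\overline z_\tau|^2+|\nabla\underline z_\tau|^2)^{\frac{q-2}{2}}|\nabla(\overline z_\tau-\hat z_\tau)|^2\,\dd x\big)^{1/2}$. Since $\nabla(\overline z_\tau(t)-\hat z_\tau(t))=(\overline t_\tau(t)-t)\,\nabla\hat z_\tau'(t)$ with $|\overline t_\tau(t)-t|\le\tau$, and since this integrand is piecewise constant in time, the $\epsi$-dependent weighted space--time bound \eqref{enhanced-est-1} (which you invoke for the mixed estimate but not here) converts into the pointwise-in-$t$ bound $C(\epsi)\sqrt{\tau}$ claimed in \eqref{conve-5}. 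So the missing idea is to keep half of the weight attached to $\nabla(\overline z_\tau-\hat z_\tau)$ and exploit \eqref{enhanced-est-1} together with piecewise constancy in time, rather than unweighted interpolation; as written, your estimate of the $A_q$ term fails, and with it both \eqref{conve-5} and the route to \eqref{conve-4-conseq}.
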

\begin{proof}
Convergences \eqref{conve-1}--\eqref{conve-2} are a straightforward consequence of estimates \eqref{basic-enest-2}, \eqref{high-diff-zeta}, and \eqref{enhanced-est-1} via the Banach selection principle.

 Estimate \eqref{high-diff-zeta} implies that $\hat{z}_{\tau_j}
 \weakto z$ in $L^{2q} (0,T; W^{1+\beta, q }(\Omega))$ for every
 $\beta\in [0,\frac{1}{q}\big(1-\frac{d}{q}\big))$.
 %\cap L^\infty (0,T;\calZ)$.
This fact,
 together with \eqref{conve-2} and \cite[Corollary 4]{simon87}
yields the strong convergence \eqref{conve-3} due to the compact
embedding $W^{1+\beta_1, q }(\Omega) \subset W^{1+\beta_2, q
}(\Omega)$ for $\beta_1>\beta_2$.
Now,  \eqref{conve-4} follows from  the bound
 $\| \hat{z}'_{\tau} \|_{L^2 (0,T; W^{1,2}(\Omega))} \leq C $ (cf.\ estimates \eqref{enhanced-est-1} and \eqref{enhanced-est-2}).
% we have that, as $j \to \infty$,
% \begin{equation}
% \label{e:sqrt-tau}
% \|  \hat{z}_{\tau_j} - \overline{z}_{\tau_j} \|_{L^\infty (0,T; W^{1,2}(\Omega))} \leq C\tau_j^{1/2} \to 0\,,
% \end{equation}
% which yields \eqref{conve-4}.

In order to prove  estimate \eqref{conve-5}, we notice that for every $w\in\calZ$
\begin{equation}
\label{e:c5.1}
\begin{aligned}
&|\pairing{\calZ}{ \rmD_z\calI (\overline{t}_{\tau_j}(t),\overline{z}_{\tau_j}(t)) -  \rmD_z
\calI (t, \hat{z}_{\tau_j}(t))}{w}|
\\
&\leq
|\pairing{\calZ}{A_q\overline{z}_{\tau_j}(t) - A_q\hat{z}_{\tau_j}(t)}{w}|
+ |\pairing{\calZ}{ \rmD_z\wt{\calI} (\overline{t}_{\tau_j}(t),\overline{z}_{\tau_j}(t)) -  \rmD_z
\wt{\calI} (t, \hat{z}_{\tau_j}(t))}{w}| =: F_1 + F_2.
\end{aligned}
\end{equation}
By estimate \eqref{A7-D} and a careful application of the H\"older inequality with $\frac12 +\frac{q-2}{2q}+\frac1q=1$ %[PhD-Doro, A.7]
\begin{equation*}
\begin{aligned}
F_1 %&\leq \int_\Omega (1+|\nabla \overline{z}_{\tau_j}(t)|^2 + |\nabla \hat{z}_{\tau_j}(t)|^2)^{\frac{q-2}{2}} |\nabla (\overline{z}_{\tau_j}(t) - \hat{z}_{\tau_j}(t))| |\nabla\eta| \, \dd x
%\\
 \leq & C\left( \int_\Omega (1+|\nabla \overline{z}_{\tau_j}(t)|^2 + |\nabla \hat{z}_{\tau_j}(t)|^2)^{\frac{q-2}{2}} |\nabla (\overline{z}_{\tau_j}(t) - \hat{z}_{\tau_j}(t))|^2\,\dd x\right)^{\frac{1}{2}}
 \\
& \times (1 + \|\overline{z}_{\tau_j}(t)\|_{W^{1,q}(\Omega)} +
\|\hat{z}_{\tau_j}(t)\|_{W^{1,q}(\Omega)} )^{\frac{q-2}{2}} \|\nabla
w\|_{L^q(\Omega)}.
\end{aligned}
\end{equation*}
Therefore, by using the energy estimate \eqref{basic-enest-2} we obtain
\begin{equation*}
\begin{aligned}
&\|A_q\overline{z}_{\tau_j}(t) - A_q\hat{z}_{\tau_j}(t)\|_{\calZ^*} \leq
C\left( \int_\Omega (1+|\nabla \overline{z}_{\tau_j}(t)|^2 + |\nabla\underline{z}_{\tau_j}(t)|^2)^{\frac{q-2}{2}} \tau^2|\nabla \hat{z}'_{\tau_j}(t)|^2\,\dd x\right)^{\frac{1}{2}}
\\
&
\leq C \sqrt{\tau} \int_0^T\left(\int_\Omega (1+|\nabla \overline{z}_{\tau_j}(t)|^2 + |\nabla\underline{z}_{\tau_j}(t)|^2)^{\frac{q-2}{2}} |\nabla \hat{z}'_{\tau_j}(t)|^2\,\dd x\right)^{\frac{1}{2}} \,\dd t
\leq     C \sqrt{\tau C_3(\epsi)  },
\end{aligned}
\end{equation*}
where for the second estimate we have again used the H\"older inequality and  \eqref{enhanced-est-1} for the last one.
All in all,
\begin{equation}
\label{e:c5.2}
\|A_q\overline{z}_{\tau_j}(t) - A_q\hat{z}_{\tau_j}(t)\|_{\calZ^*} \leq C(\epsi) \sqrt{\tau}.
\end{equation}
Now we estimate $F_2$. By Corollary \ref{c:very-useful-later} and the embedding of $W^{1,2}(\Omega)$ in $L^{2p_*/(p_*-2)}(\Omega)$
\begin{equation*}
\begin{aligned}
F_2&\leq C (|\overline{t}_{\tau_j}(t)-t| + \|\overline{z}_{\tau_j}(t) - \hat{z}_{\tau_j}(t)\|_{L^{2p_*/(p_*-2)}(\Omega)} )
\| w\|_{L^{2p_*/(p_*-2)}(\Omega)}\\
&\leq C(\tau + \|\overline{z}_{\tau_j}(t) - \hat{z}_{\tau_j}(t)\|_{W^{1,2}(\Omega)})\| w\|_{L^{2p_*/(p_*-2)}(\Omega)}.
\end{aligned}
\end{equation*}
Therefore, taking into account \eqref{conve-4} we get
\begin{equation}
\label{e:c5.3}
\|\rmD_z\wt{\calI} (\overline{t}_{\tau_j}(t),\overline{z}_{\tau_j}(t)) -  \rmD_z
\wt{\calI} (t, \hat{z}_{\tau_j}(t))\|_{\calZ*} \leq C(\tau + \sqrt{\tau}),
\end{equation}
and \eqref{e:c5.1}--\eqref{e:c5.3} give \eqref{conve-5}.

Now, from \eqref{conve-3} it follows that $ \hat{z}_{\tau_j}(t)   \to z(t)$ strongly in $W^{1+\beta, q}(\Omega)) $ for a.a.\ $t\in(0,T)$. Thus, by \eqref{strong-continuity} in
Corollary~\ref{coro-fre},
$\rmD_z \calI (t, \hat{z}_{\tau_j}(t)) \to \rmD_z \calI (t, z(t))$ strongly in $\calZ^*$ for a.a.\ $t\in(0,T)$.
 This,
together with %\eqref{conve-4} and
\eqref{conve-5} yields \eqref{conve-4-conseq}.
 %%%%
%Taking into account that $W^{1+\beta-\delta, \il }(\Omega)\Subset W^{1,\il}(\Omega)$
% and \cite[Corollary 4]{simon87} we conclude the strong convergence \eqref{conve-3} for
% $( \hat{z}_{\tau_j})_j$. Now, in view of the bound
% $\| \hat{z}'_{\tau} \|_{L^2 (0,T; W^{1,2}(\Omega))} \leq C $ (cf.\ estimates \eqref{basic-enest-2}, \eqref{enhanced-est-1}, and \eqref{enhanced-est-2}, we have that, as $j \to \infty$,
% \[
% \|  \hat{z}_{\tau_j} - \overline{z}_{\tau_j} \|_{L^\infty (0,T; W^{1,2}(\Omega))} \leq C\tau_j^{1/2} \to 0\,,
% \]
% which yields \eqref{conve-4}, whence \eqref{conve-4-conseq}
%thanks to............
% Therefore,
% using the interpolation inequality
% \[
% \|  \hat{z}_{\tau_j} - \overline{z}_{\tau_j} \|_{L^\infty (0,T; \calZ)}\leq C  \|  \hat{z}_{\tau_j} - \overline{z}_{\tau_j} \|_{L^\infty (0,T; W^{1+\beta-\delta, \il }(\Omega))}^\theta
%   \|  \hat{z}_{\tau_j} - \overline{z}_{\tau_j} \|_{L^\infty (0,T; H^1(\Omega))}^{1-\theta}
% \]
% for some $\theta \in (0,1)$ and taking into account   convergence \eqref{conve-3}  and the previously proved estimate
%\eqref{est3}, we conclude \eqref{conve-4}, whence \eqref{conve-4-conseq}
% thanks to...

The mixed estimate \eqref{mixed-estimate} follows
 from estimate \eqref{enhanced-est-1}
by lower semicontinuity of the
functional $(A,B)\mapsto \int_0^T\int_\Omega
(1+\abs{A}^2)^{\frac{q-2}{2}}\abs{B}^2 \,\dd x \,\dd t $,  which is
 convex in $B$,
observing that \eqref{conve-2} implies $\nabla \hat z_{\tau_j}'\rightharpoonup
\nabla z'$ in
$L^2((0,T)\times \Omega)$ and that \eqref{conve-3} implies $\nabla \hat
z_{\tau_j}  \to \nabla z$   in $L^1((0,T)\times \Omega)$ (see e.g.\
\cite[Theorem 3.23]{dacorogna08}).
%uniformly in  $C^0_{\text{weak}}([0,T];L^q(\Omega))$.

Convergence \eqref{conve-6} follows from the fact that $L^\infty (0,T;\calZ)
\cap W^{1,2} (0,T; W^{1,2}(\Omega)) $ is compactly embedded in $\rmC^0([0,T];
\calX)$ for every $\calX$ such that
$\calZ \Subset \calX \subset W^{1,2}(\Omega)$ (cf., e.g.,
\cite{simon87}), combined with the estimate
$\sup_{j\in\N}\sup_{t\in[0,T]}\|\hat{z}_{\tau_j}\|_{W^{1,q}(\Omega)}\leq C$,
cf.~\eqref{basic-enest-2}.

Finally, from \eqref{conve-3} we get pointwise convergence in $W^{1+\beta,
q}(\Omega)) $ for a.a.\ $t$. Then, the continuity of $z\mapsto \calI(t,z)$
ensues~\eqref{conve-7}.
\end{proof}

  The convergences \eqref{conve-1}--\eqref{conve-7} are  sufficient to pass to the limit in the time-discretization scheme, and conclude
the existence of a weak solution (in the sense of Def.\ \ref{weak-def-sol}), to the Cauchy problem \eqref{visc-eps-dne}--\eqref{Cauchy-condition}.
In order  to   \bnnc deduce by lower semicontinuity arguments \ennc the uniform w.r.t.\ $\epsi$-estimates
\eqref{crucial-elle1-esti}--\eqref{crucial-elle3-mixed} for \emph{any} family of solutions $(z_\eps)$
arising from the time-discretization procedure of Sec.~\ref{s:4},    additional compactness arguments are needed, which we develop in the forthcoming
Lemma
\ref{lemma:eps-indep-est}. We postpone its statement and proof after
the proof of Theorem \ref{thm:ex-viscous}.
%%%%

\begin{proof}[Proof of Theorem \ref{thm:ex-viscous}]
For fixed $\epsilon>0$ let  $(\tau_j)_{j\in \N}$ be a sequence along which the convergences in
Proposition~\ref{prop:compactness} are valid. Proposition~\ref{prop:compactness} also ensures that, for the limit curve $z$ fulfills the mixed estimate \eqref{mixed-estimate} holds.

First of all, we pass to the limit in the discrete energy inequality
\eqref{discr-enineq}.
Thanks to convergence \eqref{conve-6}, for all $t\in [0,T]$ it holds
that
 $\liminf_{j\to\infty}\calI( t,\hat{z}_{\tau_j}(t))\geq \calI(t,z(t))$
 while, from \eqref{conve-7}
$\calI(s,\hat{z}_{\tau_j}(s)) \rightarrow \calI(s,z(s))$
for a.a.\ $s\in (0,T)$.
%, which follows from the proof of estimate \eqref{est2} from Lemma \ref{lemma:energ-est} (see estimate \eqref{est-f2}).
%%%
The convergence of the term involving
 $\partial_t\calI$ is an immediate consequence of the convergence
 stated in \eqref{conve-3}, taking into account the
continuity properties of $\partial_t\calI$ (see estimate \eqref{stim5} in Lemma \ref{l:diff_time}).
Due to \eqref{conve-4-conseq} and the lower semicontinuity of $\calR_\epsilon^*$
we conclude that
\[
\liminf_{\tau_j}\int_{\underline{t}_{\tau_j}(s)}^{\bar
  t_{\tau_j}(t)} \calR_\epsilon^*(-\rmD_z\calI(\bar t_{\tau_j}(r),\bar
z_{\tau_j}(r)))\dr
\geq \int_{s}^{t} \calR_\epsilon^*(-\rmD_z\calI(r,
z(r)))\dr.
\]
Similarly, from \eqref{conve-2}, by lower semicontinuity it
follows that
\[
\liminf_{\tau_j} \int_{\underline{t}_{\tau_j}(s)}^{\bar
  t_{\tau_j}(t)} \calR_\epsilon(\hat z'_{\tau_j}(r))\dr \geq
\int_{s}^{t} \calR_\epsilon( z'(r))\dr.
\]
Moreover, the remainder term on the right-hand side of \eqref{discr-enineq}
tends to zero thanks to \eqref{conve-4}  and the embedding of $W^{1,2}(\Omega)$ in $L^{2p_*/(p_*-2)}(\Omega)$.
Altogether we arrive at the energy inequality
\begin{equation}
\label{epsi-lim-enineq}
\begin{aligned}
\int_s^t
\left(\calR_\epsi (z'(r))+\calR_\epsi^* (-\rmD_z
\calI (r,z(r))  \right)
\,\dd r +
\calI(t,z(t))
\leq \calI(s,z(s))
+
\int_s^t
\partial_t \calI (r,z(r)) \,\dd r,
\end{aligned}
\end{equation}
for all $t\in[0,T]$, % and a.a.\ $s\leq t$.
 for $s=0$, and for almost all $0 <s<t$.

We now check that \eqref{epsi-lim-enineq} holds for all $0\leq s\leq t$. Let
$s_n\nearrow s$ be a sequence of points for which \eqref{epsi-lim-enineq} is satisfied. Thus,
\[
\begin{aligned}
&\int_{s_n}^s
\left(\calR_\epsi (z'(r))+\calR_\epsi^* (-\rmD_z \calI (r,z(r))  \right)
\,\dd r
\leq \calI(s_n,z(s_n)) - \calI(s,z(s)) +
\int_{s_n}^s \partial_t \calI (r,z(r)) \,\dd r
\\
& =
- \int_{s_n}^s \int_\Omega (1+ |\nabla z(r)|^2)^{(q-2)/2} \nabla z(r) \cdot \nabla z'(r) \,\dd x \,\dd r
-  \int_{s_n}^s  \int_\Omega \rmD_z \wt \calI(r,z(r)) z'(r) \,\dd x \,\dd r
\end{aligned}
\]
where the equality follows by an integrated-in-time version of the chain rule formula~\eqref{chain-rule-formula}.
Passing to the limit as $s_n\nearrow  s $
  and using the absolute continuity of the Lebesgue integral, from the second inequality
we derive $\calI(s_n,z(s_n)) \to
\calI(s,z(s)) $, and therefore we obtain
\eqref{energy-inequality} for all $s$ and $t$. Thanks to Proposition~\ref{prop:equivalence}, we conclude that
$z$ is a weak solution (in the sense of Def.\ \ref{weak-def-sol}), to the Cauchy problem \eqref{visc-eps-dne}--\eqref{Cauchy-condition}.

Estimates \eqref{crucial-elle1-esti}--\eqref{crucial-elle3-mixed} follow from Lemma~\ref{lemma:eps-indep-est} below.
\end{proof}
%%%%%%%%%%%%%%%%%%%%%%%%%%%%%%%%%%%%%%%%

\noindent The proof of the following Lemma exploits Young measure tools, which we recall in Appendix \ref{s:a-1}.
\begin{lemma}
\label{lemma:eps-indep-est}
 Under Assumptions \ref{assumption:energy}, \ref{ass:init}, (A$_\Omega$1) and (A$_\Omega$2),
for every $z_0 \in \calZ$ such that $\rmD_z \calI
(0,z_0) \in L^2 (\Omega)$ and for every $\epsi>0$ estimates \eqref{crucial-elle1-esti}--\eqref{crucial-elle2-esti} hold.
In addition, $z$ from Proposition~\ref{prop:compactness} also fulfills
\begin{equation}
\label{liminf-1}
\int_0^T \|z(t)\|^q_{W^{1+\beta-\delta, q }(\Omega)} \|z'(t)\|_{L^2(\Omega)}\,\dd t\leq
\liminf_{j\to 0} \int_0^T \|\overline{z}_{\tau_j}(t)\|^q_{W^{1+\beta, q }(\Omega)} \|\hat{z}'_{\tau_j}(t)\|_{L^2(\Omega)}\,\dd t
\end{equation}
for all $\beta\in[0, \frac{1}{q}\big(1-\frac{d}{q}\big))$,
  and
\begin{equation}
\label{liminf-2}
\begin{aligned}
&
\int_0^T\left( \int_\Omega (1+|\nabla z(t)|^2)^{\frac{q-2}{2}} |\nabla z'(t)|^2 \, \dd x  \right)^{\frac12}\,\dd t
\\
 &\quad
 \leq
\liminf_{j\to 0} \int_0^T\left( \int_\Omega (1+ |\nabla \hat{z}_{\tau_j}(t)|^2  %|\nabla \overline{z}_{\tau_j}(t)|^2 + |\nabla \underline{z}_{\tau_j}(t)|^2
)^{\frac{q-2}{2}} |\nabla \hat{z}'_{\tau_j}(t)|^2 \, \dd x  \right)^{\frac12}\,\dd t.
\end{aligned}
\end{equation}
As a consequence, estimates \eqref{crucial-elle3-esti}--\eqref{crucial-elle3-mixed} hold.
\end{lemma}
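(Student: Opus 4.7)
My plan is to derive each of the four asserted bounds by passing to the $\liminf$ as $\tau_j\to 0$ in the corresponding discrete estimate from Section~\ref{s:5}, all of which have been crafted there to be uniform in both $\tau$ \emph{and} $\epsilon$. The bulk estimate \eqref{crucial-elle2-esti} is the easiest: the pointwise $W^{1,q}(\Omega)$-bound \eqref{basic-enest-2} combined with the pointwise weak convergence \eqref{conve-6} and lower semicontinuity of the norm yields the $L^\infty(0,T;W^{1,q})$-part, while the $L^{2q}(0,T;W^{1+\beta,q})$-bound \eqref{high-diff-zeta} paired with the weak convergence \eqref{conve-1} gives the remaining part. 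Estimate \eqref{crucial-elle1-esti} will follow similarly from the $\epsilon$-uniform bound in \eqref{mixed-uniform-epsi}, together with $\hat z'_{\tau_j}\weakto z'$ in $L^2(0,T;L^2(\Omega))$ (from \eqref{conve-2}) and a duality argument based on testing against $h\in L^\infty(0,T;L^2(\Omega))\subset L^2(0,T;L^2(\Omega))$ with $\|h\|\le 1$ and taking the supremum.

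The heart of the proof is the pair of liminf inequalities \eqref{liminf-1} and \eqref{liminf-2}: these, combined with the uniform discrete bounds \eqref{high-diff-zeta-mysterious} and \eqref{mixed-uniform-epsi} respectively, will immediately deliver \eqref{crucial-elle3-esti} and \eqref{crucial-elle3-mixed}. I plan to prove both via an Ioffe-type lower semicontinuity argument based on the Young measure tools recapped in Appendix~\ref{s:a-1}, applied to a normal integrand that is convex in the variable paired with weak convergence and lsc in the variable paired with strong/a.e.\ convergence. For \eqref{liminf-2} the integrand is
\[
\phi(F,G) := \left(\int_\Omega (1+|F|^2)^{(q-2)/2}|G|^2 \,\dd x\right)^{\!1/2}
\]
on $L^q(\Omega;\R^d)\times L^2(\Omega;\R^d)$: for fixed $F$ this is the norm of a weighted $L^2$-space, hence convex and positively homogeneous in $G$, and it is lsc in $F$ by Fatou. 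Together with the a.e.\ convergence $\nabla\hat z_{\tau_j}\to\nabla z$ on $(0,T)\times\Omega$ (up to subsequence, from \eqref{conve-3}) and $\nabla\hat z'_{\tau_j}\weakto\nabla z'$ in $L^2((0,T)\times\Omega;\R^d)$ (from \eqref{conve-2}), the Young measure generated by the pair is of the form $\delta_{\nabla z(t,x)}\otimes\mu_{t,x}$ with barycenter $\nabla z'(t,x)$, and Jensen's inequality applied pointwise with standard Young measure lsc produces \eqref{liminf-2}. For \eqref{liminf-1} I plan to use analogously
\[
\Phi(F,G) := \|F\|^q_{W^{1+\beta-\delta,q}(\Omega)}\|G\|_{L^2(\Omega)},
\]
continuous in $F$ and linear (hence convex) in $G$. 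The compact embedding $W^{1+\beta,q}(\Omega)\Subset W^{1+\beta-\delta,q}(\Omega)$ combined with \eqref{conve-3} yields strong $L^{2q}(0,T;W^{1+\beta-\delta,q})$-convergence of $\hat z_{\tau_j}$; the identity $\overline z_\tau - \hat z_\tau = (\overline t_\tau - \mathrm{id})\hat z'_\tau$ and the $\epsilon$-uniform $L^1(0,T;W^{1,2})$-bound from Lemma~\ref{l:uniform-w11}, interpolated against the $L^\infty(0,T;W^{1,q})$-bound \eqref{basic-enest-2}, will pass the strong convergence to $\overline z_{\tau_j}$, so that (after a further extraction) $\overline z_{\tau_j}(t)\to z(t)$ in $W^{1+\beta-\delta,q}(\Omega)$ for a.e.\ $t$. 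The same Young measure / Jensen argument as before then yields \eqref{liminf-1}.

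The main technical hurdle will be exactly the interpolation step in the proof of \eqref{liminf-1}: one has to upgrade the $\epsilon$-uniform but only $L^1$-in-time $W^{1,2}$-control of $\hat z'_{\tau_j}$ coming from Lemma~\ref{l:uniform-w11} into strong convergence of $\overline z_{\tau_j} - \hat z_{\tau_j}$ to zero in $L^q(0,T;W^{1+\beta-\delta,q}(\Omega))$, so as to recover pointwise-in-time strong convergence in the fractional Sobolev space required by the Young measure framework, while staying uniform in~$\epsilon$. Once this interpolation is in place, combining the two liminf inequalities with the uniform discrete bounds \eqref{high-diff-zeta-mysterious} and \eqref{mixed-uniform-epsi} closes the argument.
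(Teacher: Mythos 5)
Your overall strategy is the paper's: both \eqref{liminf-1} and \eqref{liminf-2} are obtained there exactly as you propose, by applying the infinite-dimensional Young measure compactness result of Appendix~\ref{s:a-1} to a pair converging strongly in the first component and weakly in the second, disintegrating the limiting measure into a Dirac times a measure whose barycenter is $z'(t)$ (resp.\ $\nabla z'(t)$), and concluding with Jensen's inequality; \eqref{crucial-elle1-esti}--\eqref{crucial-elle2-esti} are lower semicontinuity consequences of the discrete bounds, and \eqref{crucial-elle3-esti}--\eqref{crucial-elle3-mixed} then follow from \eqref{high-diff-zeta-mysterious} and \eqref{mixed-uniform-epsi}, as you say.

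Two corrections, though. First, the step you single out as the ``main technical hurdle'' --- upgrading the closeness of $\overline z_{\tau_j}$ and $\hat z_{\tau_j}$ \emph{uniformly in} $\epsi$ --- is a misreading of where the $\epsi$-uniformity enters: the lemma is proved at \emph{fixed} $\epsi>0$, and the uniformity of \eqref{crucial-elle3-esti}--\eqref{crucial-elle3-mixed} comes solely from the $\epsi$-uniform right-hand sides \eqref{high-diff-zeta-mysterious} and \eqref{mixed-uniform-epsi}, not from the convergences. Hence you may simply use \eqref{conve-4} of Proposition~\ref{prop:compactness}, $\sup_{t}\|\overline z_{\tau_j}(t)-\hat z_{\tau_j}(t)\|_{W^{1,2}(\Omega)}\le C(\epsi)\sqrt{\tau_j}$, which together with \eqref{conve-3} gives $\overline z_{\tau_j}(t)\to z(t)$ strongly in $W^{1,q}(\Omega)$ for a.a.\ $t$; this already identifies the first marginal of the Young measure as $\delta_{z(t)}$, and the factor $\|\cdot\|_{W^{1+\beta,q}(\Omega)}^q$ in the integrand is handled by weak lower semicontinuity of the norm, so no strong convergence in the fractional space (and no interpolation through Lemma~\ref{l:uniform-w11}) is needed --- this is what the paper does, and it even yields \eqref{liminf-1} with $\beta$ in place of $\beta-\delta$ on the left. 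Second, for \eqref{liminf-2} you must stay within the Bochner-space framework you started from: since the square root sits \emph{outside} the integral over $\Omega$, the quantity is not of the form $\int_0^T\int_\Omega\psi(\nabla\hat z_{\tau_j},\nabla\hat z'_{\tau_j})\,\dd x\,\dd t$, so a Young measure parameterized by $(t,x)$ with values in $\R^d\times\R^d$ (your ``$\delta_{\nabla z(t,x)}\otimes\mu_{t,x}$'') does not apply directly. Take instead, as in the paper, Young measures parameterized by $t$ with values in $L^2(\Omega;\R^d)\times L^2(\Omega;\R^d)$, use \eqref{conve-3} to get $\widetilde\mu_t=\delta_{\nabla z(t)}\otimes\widetilde\nu_t$ with barycenter $\nabla z'(t)$, and apply Jensen to the convex map $B\mapsto\bigl(\int_\Omega(1+|\nabla z(t)|^2)^{(q-2)/2}|B|^2\,\dd x\bigr)^{1/2}$. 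With these adjustments your argument coincides with the paper's proof.
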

\begin{proof}
Estimates \eqref{crucial-elle1-esti}--\eqref{crucial-elle2-esti}
follow from \eqref{basic-enest-1}, \eqref{basic-enest-2}, and \eqref{high-diff-zeta} by convergences \eqref{conve-1}--\eqref{conve-2} and lower semicontinuity arguments.

In order to prove inequalities \eqref{liminf-1}  and
\eqref{liminf-2}, we resort to a Young measure argument, based on
Appendix \ref{s:a-1}.
Indeed, we can apply the compactness theorem \ref{thm.balder-gamma-conv}, with the space $V=W^{1+\beta,q}(\Omega) \times W^{1,2}(\Omega) $, to  the sequence $(\overline{z}_{\tau_j}, \hat{z}_{\tau_j}')_j$,
 bounded in
$L^{2} (0,T; W^{1+\beta,q}(\Omega) \times W^{1,2}(\Omega))$  for all
$\beta\in[0, \frac{1}{q}(1-\frac{d}{q}))$.
Therefore, up to a not relabeled subsequence, $(\overline{z}_{\tau_j}, \hat{z}_{\tau_j}')_j$ admits a limiting
Young measure $\mmu = \{ \mu_t \}_{t \in (0,T)} \in \mathscr{Y} (0,T;W^{1+\beta,q}(\Omega) \times W^{1,2}(\Omega)) $, such that for almost all $t \in (0,T)$
the measure $\mu_t$ is
concentrated on the  limit points of  $(\overline{z}_{\tau_j}(t), \hat{z}_{\tau_j}'(t))_j$, w.r.t.\ the
$W^{1+\beta,q}(\Omega) \times W^{1,2}(\Omega)$-weak topology.
 Now, by  \eqref{conve-3}--\eqref{conve-4}  %\eqref{conve-5}
 we have that $\overline{z}_{\tau_j}(t) \to z(t)$ strongly in $W^{1,q}(\Omega)$.
 Therefore, denoting by $\pi_1$ the projection operator $(z,v) \in W^{1+\beta,q}(\Omega) \times W^{1,2}(\Omega) \mapsto z \in W^{1,q}(\Omega)$, it is immediate to check that the projection measure $(\pi_1)_{\sharp}(\mu_t)$ coincides with the Dirac delta $\delta_{z(t)}$. With a disintegration argument we in fact see that $\mu_t$ is of the form $ \delta_{z(t)} \otimes \nu_t$, and that the parameterized measure $\{\nu_t\}_{t \in (0,T)}$
 fulfills
 \begin{equation}
 \label{bary-v}
 \int_{W^{1,2}(\Omega)} v \,\dd \nu_t(v) = z'(t) \qquad \text{ for almost all $t \in (0,T)$.}
 \end{equation}
Then, the $\liminf$-inequality \eqref{heq:gamma-liminf} with the
normal
 integrand $\mathcal{H}(t,(z,v)) :=
\|z\|^q_{W^{1+\beta, q }(\Omega)} \|v\|_{L^2(\Omega)}
$
yields
\[
\begin{aligned}
 & \liminf_{j\to 0} \int_0^T \|\overline{z}_{\tau_j}(t)\|^q_{W^{1+\beta, q }(\Omega))} \|\hat{z}'_{\tau_j}(t)\|_{L^2(\Omega)}\,\dd t
 \\
 & \quad \geq \int_0^T \iint_{W^{1+\beta, q }(\Omega) \times W^{1,2}(\Omega)}
 \| z\|^q_{W^{1+\beta, q }(\Omega))} \|v\|_{L^2(\Omega)}\,\dd (\delta_{z(t)} \otimes \nu_t)(z,v)
 \,\dd t
 \\ &
 \geq \int_0^T \| z(t)\|^q_{W^{1+\beta, q }(\Omega))} \left\|\int_{W^{1,2}(\Omega)} v \,\dd \nu_t(v)
\right \|_{L^2(\Omega)}
%\\ &
 = \int_0^T \|z(t)\|^q_{W^{1+\beta, q }(\Omega)} \|z'(t)\|_{L^2(\Omega)}\,\dd t
\end{aligned}
\]
where the second estimate is due to Jensen's inequality and the last equality to
\eqref{bary-v}. This gives \eqref{liminf-1}.

As for \eqref{liminf-2},   we now consider the
 sequence
of gradients
 $(\nabla  \hat{z}_{\tau_j}, %\overline{z}_{\tau_j},\underline{z}_{\tau_j},
\nabla\hat{z}_{\tau_j}')_j$,  bounded in $L^2 (0,T; L^2(\Omega; \R^d) \times L^2(\Omega; \R^d) )$.
Relying on Theorem  \ref{thm.balder-gamma-conv}, we  associate with $(\nabla  \hat{z}_{\tau_j}, %\overline{z}_{\tau_j},\underline{z}_{\tau_j},
\nabla\hat{z}_{\tau_j}')_j$
its limiting Young measure
 $\widetilde{\mmu}= \{\widetilde{\mu}_t \}_{t \in (0,T)} \in
\mathscr{Y} (0,T;L^2(\Omega; \R^d) \times L^2(\Omega; \R^d) )$, concentrated on the set of the weak-$L^2(\Omega; \R^d) \times L^2(\Omega; \R^d) $ limit points of $(\nabla  \hat{z}_{\tau_j}, %\overline{z}_{\tau_j},\underline{z}_{\tau_j},
\nabla\hat{z}_{\tau_j}')_j$.
On account of the strong convergence \eqref{conve-3}, arguing as in the above lines we
 conclude that $\widetilde{\mmu}_t= \delta_{\nabla z(t)} \otimes \widetilde{\nu}_t$ for almost all $t \in (0,T)$, with
 $\{  \widetilde{\nu}_t \}_{t \in (0,T)}$
 satisfying
 \begin{equation}
 \label{bary-tilde-nu}
 \int_{L^2(\Omega; \R^d)} B \,\dd \widetilde{\nu}_t (B) =\nabla  z'(t) \qquad \text{ for almost all $t \in (0,T)$.}
 \end{equation}
Therefore, inequality \eqref{heq:gamma-liminf} with the normal integrand
 $\mathcal{H}(t, (A,B)) :=
\left(  \int_\Omega
(1+\abs{A}^2)^{\frac{q-2}{2}}\abs{B}^2 \dx \right)^{\frac{1}{2}}$
yields
\[
\begin{aligned}
&
\liminf_{j\to 0} \int_0^T\left( \int_\Omega (1+ |\nabla \hat{z}_{\tau_j}(t)|^2  %|\nabla \overline{z}_{\tau_j}(t)|^2 + |\nabla \underline{z}_{\tau_j}(t)|^2
)^{\frac{q-2}{2}} |\nabla z'_{\tau_j}(t)|^2 \,\dd x  \right)^{\frac12}\,\dd t
\\ & \quad
\geq \int_0^T  \iint_{L^2(\Omega; \R^d) \times L^2(\Omega; \R^d) }
\left(  \int_\Omega
(1+\abs{A}^2)^{\frac{q-2}{2}}\abs{B}^2 \dx \right)^{\frac{1}{2}}
 \,\dd ( \delta_{\nabla z(t)} \otimes \widetilde{\nu_t})(A,B)\,\dd t
%\iiint_{W^{1+\beta, q }(\Omega) \times W^{1+\beta, q }(\Omega) \times  W^{1,2}(\Omega)}\left( \int_\Omega (1+|\nabla z|^2 + |\nabla \tilde z|^2)^{\frac{q-2}{2}} |\nabla v|^2 \, \dd x \right)^{\frac12}\dd (\delta_{z(t)} \otimes \delta_{z(t)} \otimes \nu_t)(z,\tilde z ,v)\dd t
\\
 & \quad
 = \int_0^T \int_{L^2(\Omega; \R^d)}
\left( \int_\Omega (1+|\nabla z(t)|^2)^{\frac{q-2}{2}} |B|^2  \,\dd x \right)^{\frac12}
\,\dd  \widetilde{\nu}_t(B)
\,\dd t
\\
& \quad
\geq
\int_0^T \left(
 \int_\Omega (1+|\nabla z(t)|^2)^{\frac{q-2}{2}} \, \left| \int_{L^2(\Omega; \R^d)} B  \,\dd  \widetilde{\nu}_t(B)  \right|^2    \,\dd x \right)^{\frac12}
\,\dd t
\end{aligned}
\]
where the latter estimate again follows from Jensen's inequality. Then,  in view of \eqref{bary-tilde-nu},    \eqref{liminf-2} ensues.

Estimates \eqref{crucial-elle3-esti}--\eqref{crucial-elle3-mixed} are then a consequence of \eqref{liminf-1} and
\eqref{liminf-2}, combined with the bounds \eqref{high-diff-zeta-mysterious} and \eqref{mixed-uniform-epsi},  respectively.
% ...\begin{rcomm}
%I haven't concluded this because you might have changed the formulae which have to be referred to.
%\end{rcomm}
 \end{proof}

\section{Vanishing viscosity limit}
\label{s:7}
 \noindent  Throughout this section, we shall work
with a family
 $(z_\epsilon)_\epsi \subset L^\infty (0,T; W^{1,\il} (\Omega)) \cap W^{1,2} (0,T;W^{1,2}(\Omega)) $  of
 \emph{weak} solutions  (in the sense of Definition \ref{def:wsol}),  to the
$\epsilon$-viscous Cauchy problem
\eqref{visc-eps-dne}--\eqref{Cauchy-condition}. We shall suppose that for $(z_\epsilon)_\epsi$
 the following estimates, uniform w.r.t.\ the parameter $\epsi$, are valid:
\begin{subequations}
\label{epsilon-uniform-estimates}
\begin{align}
\label{epsilon-uniform-estimates-1} &  \sup_{\epsi>0}
\| z_\epsi \|_{W^{1,1} (0,T; L^2 (\Omega))}
 \leq C,
\\
&
\label{epsilon-uniform-estimates-2}
 \sup_{\epsi>0}
 \| z_\epsi \|_{L^{2\il}(0,T, W^{1+\beta, \il}(\Omega)) \cap L^\infty (0,T; W^{1, \il}(\Omega)) } \leq C,
 \\
 &
 \label{epsilon-uniform-estimates-3}
 \sup_{\epsi> 0} \int_0^T \| z_\epsi(t) \|_{W^{1+\beta,
     \il}(\Omega)}^{\il} \| z_\epsi'(t) \|_{L^{2}(\Omega)} \,\dd t
 \leq C, \qquad \text{for every }
 \beta \in \left[0, \frac{1}{q}\big( 1-\frac{d}q\big) \right),
 \\
 &
 \label{epsilon-uniform-estimates-4}
  \sup_{\epsi>0}
  \int_0^T \left(  \int_\Omega (1+ |\nabla z_\epsi(t)|^2)^{\frac{\il-2}{2}} |\nabla z_\epsi' (t)|^2 \,\dd x \right)^{\frac12} \,\dd t \leq C.
\end{align}
\end{subequations}
The existence of \bnnc solutions $(z_\epsi)_\epsi$
fulfilling \eqref{epsilon-uniform-estimates} \ennc
is ensured by Theorem
\ref{thm:ex-viscous}, under the  condition
that  the initial datum $z_0 \in
\calZ$ also fulfills $\rmD_z \calI (0,z_0) \in L^2 (\Omega)$.

  In what follows, we shall reparameterize the curves $(z_\epsi)_\epsi$ by their $L^2(\Omega)$-arclength, and study the asymptotic behavior of the reparameterized  trajectories as $\epsi \to 0$.
This leads (cf.\ Theorem \ref{main-thm-vanvisc}  below) to the notion of \emph{weak parameterized}
 solution to the rate-independent damage system
\eqref{dndia}, which we introduce in Definition \ref{def:parametrized-solution}.

\subsection{ Weak parameterized solutions }
\label{s:7.1}
\noindent   The starting point for the passage of the vanishing viscosity limit is the energy inequality
\eqref{energy-inequality}, which lies at the core of the notion of \emph{weak} solutions to the viscous problem.
Taking into account  the definition of $\calR_\epsi$,  and the fact  that   $\calR_\epsilon^*$ is given by  (see, e.g., \cite[Theorem 3.3.4.1]{ioffe-tihomirov})  % or \cite[Lemma 3.1]{krz}):
\begin{equation}
\label{repre-conjugate}
\begin{aligned}
\calR_\epsilon^* (\sigma)  %=\inf_{\mu \in \partial\calR_1 (0)} \frac1{\epsilon}\wt{\calR}_{2}(\sigma-\mu)  &
= \frac1{\epsilon}
\min_{\mu \in
\partial\calR_1 (0)}  \wt{\calR}_{2}(\sigma-\mu),
 \quad\text{ with }  \wt{\calR}_{2} (\sigma):= \begin{cases} \tfrac{1}{2}
\norm{\sigma}^2_{L^2(\Omega)} & \text{if $\sigma \in L^2 (\Omega)$,}
\\
\infty & \text{if $\sigma \in \calZ^* {\setminus} L^2 (\Omega)$,}
\end{cases}
 \end{aligned}
 \end{equation}
inequality \eqref{energy-inequality} rephrases as
\begin{equation}
\label{energy-inequality-explicit}
\begin{aligned}
\int_s^t \calR_1 (z'_\epsi(r))  & + \frac{\epsi}{2}\|z'_\epsi(r)\|^2_{L^2(\Omega)}\,\dd r + \int_s^t \frac1{\epsilon} \min_{\mu \in
\partial\calR_1 (0)}  \wt{\calR}_{2}(-\rmD_z \calI (r,z_\epsilon(r))-\mu)
 \big) \,\dd r + \calI (t,z_\epsi(t))
 \\
& \leq  \calI (s,z_\epsi(s))+ \int_s^t \partial_t \calI (r,z_\epsi(r)) \,\dd r \qquad \text{for all $0\leq s\leq t\leq T$.}
\end{aligned}
\end{equation}
%\[
%\wt{\calR}_{2} (\sigma):= \begin{cases} \tfrac{1}{2}
%\norm{\sigma}^2_{L^2(\Omega)} & \text{if $\sigma \in L^2 (\Omega)$,}
%\\
%+\infty & \text{if $\sigma \in \calZ^* {\setminus} L^2 (\Omega)$.}
%\end{cases}
%\]
%\subsection{The vanishing viscosity analysis}
%\label{ss:6.1}

 Now, for every $\epsi>0$ we consider
 the
 $L^2(\Omega)$-arclength parameterization of the curve $z_\epsi$, viz.
%graph $ \text{Graph}(z_\epsilon):=
%\Set{(t,z_\epsilon(t))}{t\in[0,T]}\subset[0,T]\times\calZ$ and its
%$\calZ$
\begin{equation}
\label{z-arc}
   s_\epsilon(t)=t+\int_0^t \norm{z'_\epsilon(r)}_{L^2(\Omega)}\dr.
\end{equation}
Let $S_\epsilon= s_\epsilon(T)$:
it follows from \eqref{epsilon-uniform-estimates-1} that
\begin{equation}
\label{est-Seps}
\sup_{\epsi>0} S_\epsilon<\infty.
\end{equation}
We introduce the functions
$\tilde{t}_\epsilon:[0,S_\epsilon]\to [0,T]$ and
$\tilde{z}_\epsilon:[0,S_\epsilon]\to\calZ$
\begin{equation}\label{d:hat}
   \tilde{t}_\epsilon(s):= s_\epsilon^{-1}(s),
   \quad
   \tilde{z}_\epsilon(s):=z_\epsilon(\tilde{t}_\epsilon(s))
\end{equation}
 fulfilling the \emph{normalization condition}
\begin{equation}
\label{true-normalization} \tilde{t}_\epsilon'(s) +
\norm{\tilde{z}_\epsilon'(s)}_{L^2(\Omega)} =1 \quad \foraa\, s \in
(0,S_\epsi)
\end{equation}
and study the limiting behavior as $\epsilon\to0$ of the
parameterized trajectories
$\Set{(\tilde{t}_\epsilon(s),\tilde{z}_\epsilon(s))}{s\in[0,S_\epsilon]}$.
%,
%which fulfill the \emph{normalization condition}
%\begin{equation}
%\label{true-normalization} \tilde{t}_\epsilon'(s) +
%\norm{\tilde{z}_\epsilon'(s)}_{L^2(\Omega)} =1 \quad \foraa\, s \in
%(0,S_\epsi).
%\end{equation}
It follows from \eqref{est-Seps} that,
  up to a subsequence,
$ S_\epsilon \to S \text{ as $\epsi \to 0$, with } S \geq T$
(the latter inequality follows from the fact that $s_\epsilon(t)
\geq t$). With no loss of generality, we may consider the
parameterized trajectories to be defined on the fixed time interval
$[0,S]$.

From the energy inequality \eqref{energy-inequality-explicit}
%... of $\calR_\epsi$,\footnote{  at some point, previously, we might rewrite
%\eqref{energy-inequality} with the explicit expression of $\calR_\epsi$,
%like we did in \cite[preprint version, formula (3.10)]{krz}  }
 we deduce that the parameterized
trajectories $(\tilde{t}_\epsilon(s),\tilde{z}_\epsilon(s))_{s \in
[0,S]}$
fulfill
\[
\begin{aligned}
 \int_{\sigma_1}^{\sigma_2} \Big(
 \calR_1 (\tilde{z}_\epsilon'(s))
&  {+} \frac{\epsi}{2\tilde{t}_\epsilon'(s)} \norm{\tilde{z}_\epsilon'(s)}_{L^2(\Omega)}^2  {+}
\frac{\tilde{t}_\epsilon'(s)}{2\epsilon} \twodis^2(-\rmD_z \calI
(\tilde{t}_\epsilon(s),\tilde{z}_\epsilon(s)), \partial\calR_1(0))
 \Big) \,\mathrm{d}s
  + \calI(\tilde{t}_\epsilon(\sigma_2),\tilde{z}_\epsilon(\sigma_2)) \\ & \leq
   \calI(\tilde{t}_\epsilon(\sigma_1),\tilde{z}_\epsilon(\sigma_1))
   +\int_{\sigma_1}^{\sigma_2} \partial_t
   \calI(\tilde{t}_\epsilon(s),\tilde{z}_\epsilon(s))\tilde{t}_\epsilon'(s)\,
   \mathrm{d}s \quad \forall\, (\sigma_1,\sigma_2) \subset [0,S],
   \end{aligned}
\]
where we have used the short-hand notation
\[
\twodis(\xi, \partial\calR_1 (0)):= \min_{\mu \in
\partial\calR_1(0)} \sqrt{2\wt{\calR}_2(\xi{-}\mu)}.
\]
Upon introducing the functional
 (cf. \cite[Sec.~3.2]{mrs2009dcds})
\begin{equation}
\label{def-me}  \mathcal{M}_\epsi: (0,\infty) \times L^2(\Omega)
\times [0,\infty) \to [0,\infty], \quad \mathcal{M}_\epsi
(\alpha,v,\zeta):= \calR_1 (v) + \frac{\epsi}{2\alpha} \norm{v}_{L^2
(\Omega)}^2 + \frac{\alpha}{2\epsi} \zeta^2,
\end{equation}
the above inequality  rephrases as
\begin{equation}
\label{parametrized-energy-identity-epsi}
\begin{aligned}
 &  \int_{\sigma_1}^{\sigma_2} \mathcal{M}_\epsi (\tilde{t}_\epsilon'(s),\tilde{z}_\epsilon'(s),   \twodis(-\rmD_z \calI
(\tilde{t}_\epsilon(s),\tilde{z}_\epsilon(s)), \partial\calR_1(0)) )
   \,\mathrm{d}s
  + \calI(\tilde{t}_\epsilon(\sigma_2),\tilde{z}_\epsilon(\sigma_2)) \\ & \quad \leq
   \calI(\tilde{t}_\epsilon(\sigma_1),\tilde{z}_\epsilon(\sigma_1))
   +\int_{\sigma_1}^{\sigma_2} \partial_t
   \calI(\tilde{t}_\epsilon(s),\tilde{z}_\epsilon(s))\tilde{t}_\epsilon'(s)\,
   \mathrm{d}s   \quad \forall\, (\sigma_1,\sigma_2) \subset [0,S].
   \end{aligned}
\end{equation}
We will pass to the limit as $\epsi \to 0$ in \eqref{parametrized-energy-identity-epsi}.
For this, we shall rely on the
following $\Gamma$-convergence/lower semicontinuity
 result,
 \cite[Lemma~3.1]{mrs2009dcds} (cf.\ also \cite[Lemma 5.1]{krz}).%\footnote{  to shorten the paper, we might also avoid to report the full Lemma \ref{lem:g-conver}: we might instead recall the result in an "informal way"..  }
\begin{lemma}
\label{lem:g-conver} Extend the functional $\mathcal{M}_\epsi$
\eqref{def-me} to $[0,+ \infty ) \times L^2 (\Omega) \times
[0,\infty) $ via
 \[ \mathcal{M}_\epsi (0,v,\zeta):=
\begin{cases} 0&\text{for $v=0$ and $\zeta \in [0,+ \infty )$}\,,\\
                \infty&\text{for $v \in L^2(\Omega) {\setminus} \{0\}$ and  $\zeta \in [0,+ \infty )$}\,.
                \end{cases}
\]
Define $\mathcal{M}_0: [0,\infty) \times L^2(\Omega) \times
[0,\infty) \to [0,\infty]$ by
\begin{equation}
\label{e:defg0}
 \mathcal{M}_0(\alpha, v,\zeta):=\begin{cases}
\calR_1 (v) + \zeta \norm{v}_{L^2(\Omega)}  &\text{if $\alpha=0$,}
\\
\calR_1 (v) + \rmI_{{0}} (\zeta)  &\text{if $\alpha>0$,}
\end{cases}
 \end{equation}
 where $\rmI_{{0}}$ denotes the indicator function of the singleton
 $\{ 0\}$.
 Then, \\
(A) $\mathcal{M}_\eps$ $\Gamma$-converges to $\mathcal{M}_0$ on
$[0,\infty) \times L^2(\Omega) \times [0,\infty) $ w.r.\ to
the strong-weak-strong topology.
(B) If $\alpha_\eps \weakto \bar\alpha $ in $L^1(a,b)$, $v_\eps
\weakto \bar v $ in $L^1(a,b;L^2(\Omega))$, and
 $\liminf_{\eps\to0} \zeta_\eps(s)\ge
\bar\zeta(s)$  for a.a.\  $ s \in (a,b)$, then
\[
\int_{a}^{b} \mathcal{M}_0(\bar\alpha(s),\bar v(s),\bar\zeta(s)) \,\dd
s \leq \liminf_{\eps \to 0} \int_{a}^{b}
\mathcal{M}_\eps(\alpha_\eps(s),v_\eps(s),\zeta_\eps(s)) \,\dd s\,.
\]
\end{lemma}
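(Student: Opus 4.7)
\medskip

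\noindent\textbf{Proof plan for Lemma \ref{lem:g-conver}.}
The plan is to split the proof into the two assertions, handling (A) by a direct case analysis built on a single Young-type inequality, and (B) by combining the pointwise $\Gamma$-liminf of (A) with a classical Ioffe-type lower semicontinuity argument for convex integrands.

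The cornerstone of the analysis is the elementary AM--GM estimate
\[
\frac{\epsi}{2\alpha}\|v\|_{L^2(\Omega)}^2 + \frac{\alpha}{2\epsi}\zeta^2 \;\geq\; \|v\|_{L^2(\Omega)}\,\zeta \qquad \text{for all } \alpha,\epsi>0,\; v\in L^2(\Omega),\; \zeta\ge 0,
\]
with equality when $\alpha/\epsi = \|v\|_{L^2(\Omega)}/\zeta$. For the $\Gamma$-liminf in (A), I would fix a sequence $(\alpha_\eps,v_\eps,\zeta_\eps)\to(\bar\alpha,\bar v,\bar\zeta)$ in the strong-weak-strong topology on $[0,\infty)\times L^2(\Omega)\times[0,\infty)$ and distinguish two cases. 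If $\bar\alpha=0$, apply the above inequality to obtain $\mathcal{M}_\eps(\alpha_\eps,v_\eps,\zeta_\eps)\ge \calR_1(v_\eps)+\|v_\eps\|_{L^2(\Omega)}\zeta_\eps$ and pass to the liminf using the (weak) lower semicontinuity of $\calR_1$ and of $\|\cdot\|_{L^2(\Omega)}$, together with $\zeta_\eps\to\bar\zeta$; this yields $\mathcal{M}_0(0,\bar v,\bar\zeta)$. If $\bar\alpha>0$, then either $\bar\zeta>0$, in which case the term $\alpha_\eps\zeta_\eps^2/(2\epsi)$ diverges and so does $\mathcal{M}_\eps$, matching $\mathcal{M}_0=\infty$; or $\bar\zeta=0$, in which case we simply discard the two nonnegative viscous terms and use $\liminf \calR_1(v_\eps)\ge \calR_1(\bar v)$. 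For the recovery (limsup) sequences one makes explicit choices: when $\bar\alpha>0$ and $\bar\zeta=0$, take the constant sequence; when $\bar\alpha=0$ and $\bar\zeta>0$ with $\bar v\ne 0$, choose $\alpha_\eps:=\epsi\,\|\bar v\|_{L^2(\Omega)}/\bar\zeta\downarrow 0$ so that equality holds in the AM--GM bound and $\mathcal{M}_\eps(\alpha_\eps,\bar v,\bar\zeta)\equiv \calR_1(\bar v)+\|\bar v\|_{L^2(\Omega)}\bar\zeta$; the remaining degenerate cases ($\bar v=0$ or $\bar\zeta=0$) are handled by choosing $\alpha_\eps=\sqrt{\epsi}$ so that both viscous terms tend to zero.

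For assertion (B), the strategy is to reduce everything to a lower semicontinuity statement for a convex integrand with respect to weak convergence, in the spirit of Ioffe's theorem. Again by the AM--GM estimate, for $\alpha_\eps>0$ one has the pointwise lower bound
\[
\mathcal{M}_\eps(\alpha_\eps(s),v_\eps(s),\zeta_\eps(s))\;\geq\; \calR_1(v_\eps(s)) + \|v_\eps(s)\|_{L^2(\Omega)}\,\zeta_\eps(s) + \frac{\alpha_\eps(s)}{2\epsi}\,\zeta_\eps(s)^2,
\]
so that, using $\liminf \zeta_\eps\ge \bar\zeta$ together with weak lower semicontinuity of the convex integrand $(v,\zeta)\mapsto \calR_1(v)+\|v\|_{L^2(\Omega)}\zeta$ with respect to the weak convergence $v_\eps\weakto \bar v$ in $L^1(a,b;L^2(\Omega))$ and a Fatou argument on $\zeta_\eps$, we recover $\int_a^b[\calR_1(\bar v)+\|\bar v\|_{L^2(\Omega)}\bar\zeta]\,\dd s$, which is exactly $\int_a^b \mathcal{M}_0(0,\bar v,\bar\zeta)\,\dd s$ and serves as the liminf bound on the set $\{\bar\alpha=0\}$. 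On the complementary set $\{\bar\alpha>0\}$, I would argue that either the last term in the displayed bound produces an infinite contribution on a subset of positive measure where $\bar\zeta>0$ (giving the right-hand indicator $I_{\{0\}}(\bar\zeta)=\infty$), or $\bar\zeta\equiv 0$ on $\{\bar\alpha>0\}$ and only the $\calR_1$ part survives; combining the two regions yields the claimed integral inequality.

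The main technical obstacle is the third step above, namely the passage from the pointwise $\Gamma$-liminf of (A) to the integral inequality under only \emph{weak} convergence of $\alpha_\eps$ and $v_\eps$. A pointwise application of (A) is not automatically compatible with weak convergence; the key tool is therefore the convex-integrand lower semicontinuity theorem (as in Ioffe), which applies to $\calR_1(v)+\|v\|_{L^2(\Omega)}\zeta$ since it is convex and lower semicontinuous jointly in $(v,\zeta)$ and positively $1$-homogeneous in $v$. The remaining delicate point is to handle the term $\alpha_\eps\zeta_\eps^2/(2\epsi)$ on the set $\{\bar\alpha>0\}$: here one disintegrates the interval $(a,b)$ into the measurable sets $\{\bar\alpha=0\}$ and $\{\bar\alpha>0\}$, uses Egorov to reduce to a set where $\alpha_\eps\ge c>0$ uniformly, and concludes that $\liminf\int \alpha_\eps\zeta_\eps^2/(2\epsi)=+\infty$ unless $\bar\zeta=0$ a.e.\ on that set, which is exactly the information encoded by $I_{\{0\}}(\bar\zeta)$ in $\mathcal{M}_0$.
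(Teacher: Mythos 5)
You should first note that the paper does not prove Lemma \ref{lem:g-conver} at all: it imports it from \cite[Lemma~3.1]{mrs2009dcds} (see also \cite[Lemma 5.1]{krz}), so a self-contained argument like yours is a genuinely different route. Your part (A) is essentially sound: the liminf follows from the Young inequality $\tfrac{\epsi}{2\alpha}\|v\|_{L^2(\Omega)}^2+\tfrac{\alpha}{2\epsi}\zeta^2\ge \|v\|_{L^2(\Omega)}\zeta$ plus the case splitting you describe, and the recovery sequences are the right idea, except in one case: for $\bar\alpha=0$, $\bar v=0$, $\bar\zeta>0$ the choice $\alpha_\epsi=\sqrt{\epsi}$ gives $\tfrac{\alpha_\epsi}{2\epsi}\bar\zeta^2=\tfrac{\bar\zeta^2}{2\sqrt{\epsi}}\to\infty$, whereas $\mathcal{M}_0(0,0,\bar\zeta)=0$; take instead $\alpha_\epsi\equiv 0$ (admissible thanks to the extension of $\mathcal{M}_\epsi$ to $\alpha=0$, since $\bar v=0$) or any $\alpha_\epsi$ with $\alpha_\epsi/\epsi\to 0$.

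Part (B), however, contains two genuine gaps. First, the displayed pointwise bound $\mathcal{M}_\epsi\ge \calR_1(v)+\|v\|_{L^2(\Omega)}\zeta+\tfrac{\alpha}{2\epsi}\zeta^2$ is false: after cancelling $\calR_1(v)$ and $\tfrac{\alpha}{2\epsi}\zeta^2$ it would force $\tfrac{\epsi}{2\alpha}\|v\|_{L^2(\Omega)}^2\ge\|v\|_{L^2(\Omega)}\zeta$, which fails e.g.\ for $\alpha=\epsi$, $\|v\|_{L^2(\Omega)}=\zeta=1$. You must keep the two correct bounds $\mathcal{M}_\epsi\ge\calR_1(v)+\|v\|_{L^2(\Omega)}\zeta$ and $\mathcal{M}_\epsi\ge\calR_1(v)+\tfrac{\alpha}{2\epsi}\zeta^2$ separate and use each on $\{\bar\alpha=0\}$ and $\{\bar\alpha>0\}$ respectively (your splitting of $(a,b)$ already points in this direction). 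Second, and more seriously, Egorov cannot be applied to $\alpha_\epsi$: it converges only \emph{weakly} in $L^1(a,b)$, so there is in general no set of positive measure on which $\alpha_\epsi\ge c>0$ uniformly (think of oscillating characteristic functions). The blow-up on $\{\bar\alpha>0\}\cap\{\bar\zeta>0\}$ must be extracted from the weak convergence itself: choose $E$ of positive measure with $\bar\alpha\ge c_0$ and $\bar\zeta\ge c_1$ on $E$, use the monotone infima $g_m:=\inf_{k\ge m}\zeta_{\epsi_k}\uparrow\liminf_k\zeta_{\epsi_k}\ge\bar\zeta$ to find $E'\subset E$ with $|E'|>0$ and $\zeta_{\epsi_k}\ge c_1/2$ on $E'$ for all $k\ge m$, and then estimate $\int_a^b\mathcal{M}_{\epsi_k}\,\dd s\ge \tfrac{c_1^2}{8\epsi_k}\int_{E'}\alpha_{\epsi_k}\,\dd s\to\infty$, since $\int_{E'}\alpha_{\epsi_k}\,\dd s\to\int_{E'}\bar\alpha\,\dd s\ge c_0|E'|>0$ by testing the weak $L^1$ convergence against $\chi_{E'}$. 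A related repair is needed on $\{\bar\alpha=0\}$: the integrand $(v,\zeta)\mapsto\calR_1(v)+\|v\|_{L^2(\Omega)}\zeta$ is \emph{not} jointly convex, and $\zeta_\epsi$ does not converge in measure, so Ioffe's theorem does not apply verbatim; instead minorize $\zeta_{\epsi_k}$ by the truncations $\min(g_m,M)$, use convexity and strong lower semicontinuity in $v$ of $v\mapsto\int\bigl(\calR_1(v)+\|v\|_{L^2(\Omega)}\min(g_m,M)\bigr)\dd s$ to pass to the weak limit in $v$, and then send $m,M\to\infty$ by monotone convergence. With these corrections your outline does yield the lemma and provides a legitimate self-contained alternative to the citation.
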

%\footnote{\DDDS do we need to know in part (B) that $\bar\zeta(s)$ is finite? This question is also related to the question of whether or not for every $\xi \in \calZ^*$ there exists $\eta\in \partial\calR_1(0)$ such that $\xi-\eta\in L^2(\Omega)$. And this is related to the question whether the domain of
%$\calF:\calZ^*\rightarrow [0,\infty], \xi \mapsto \rmd_2(\xi,\partial\calR_1(0))$ is the whole space $\calZ^*$.  \DDDE}

We are now in the position to introduce the notion of solution which arises from passing to the limit as $\epsi \to 0$ in \eqref{parametrized-energy-identity-epsi}.
\begin{definition}[Weak parameterized solutions]
\label{def:parametrized-solution} A pair $(\tilde{t},\tilde{z}) \in
\mathrm{C}_{\mathrm{lip}}^0 ([0,S]; [0,T]\times L^2(\Omega))$ is a
\emph{weak parameterized solution} of  the rate-independent damage system \eqref{dndia}, if
it satisfies the \emph{energy inequality} for all $0 \leq \sigma_1
\leq \sigma_2 \leq S$
\begin{equation}
\label{parametrized-energy-ineq}
\begin{aligned}
 \int_{\sigma_1}^{\sigma_2} \mathcal{M}_0 (\tilde{t}'(s),\tilde{z}'(s), &  \twodis(-\rmD_z \calI
(\tilde{t}(s),\tilde{z}(s)), \partial\calR_1(0)) )
   \,\mathrm{d}s
  + \calI(\tilde{t}(\sigma_2),\tilde{z}(\sigma_2)) \\ & \leq
   \calI(\tilde{t}(\sigma_1),\tilde{z}(\sigma_1))
   +\int_{\sigma_1}^{\sigma_2} \partial_t
   \calI(\tilde{t}(s),\tilde{z}(s))\tilde{t}'(s)
   \,\dd s.
   \end{aligned}
\end{equation}
We say that a weak parameterized solution $(\tilde{t},\tilde{z}) \in
\mathrm{C}_{\mathrm{lip}}^0 ([0,S]; [0,T]\times L^2(\Omega))$ is
\emph{non-degenerate} if it fulfills
\begin{equation}
\label{e:non-deg} \tilde{t}'(s) + \norm{\tilde{z}'(s)}_{L^2(\Omega)} >0 \quad
\foraa\, s \in (0,S). \end{equation}
\end{definition}
%%%%%%

 Recall that the chain rule provided by Theorem \ref{thm:chain-rule} is a
key ingredient for  getting further insight into the notion of weak solution to the viscous system from Def.\ \ref{weak-def-sol}.
Indeed, it is by a chain rule argument that we can show that the pointwise variational inequality \eqref{weak-def-sol} is indeed
equivalent to the energy inequality \eqref{energy-inequality}.  Likewise, the
 following result, which is  the parameterized counterpart to the chain rule of Theorem \ref{thm:chain-rule}, shall enable us to obtain a differential characterization of
 the notion of weak parameterized solution in terms of the energy
 inequality \eqref{parametrized-energy-ineq}. Indeed, Prop.\
 \ref{param-counterpart} shall be
 exploited in the proof of Proposition \ref{prop:7.6}.
 %%%%%
 \begin{proposition}
\label{param-counterpart}
Under Assumptions  \ref{assumption:energy}, \ref{ass:init}, and
 (A$_\Omega$1),
let
$(\tilde{t},\tilde{z}) \in
\mathrm{C}_{\mathrm{lip}}^0 ([0,S]; [0,T]\times L^2(\Omega))$ fulfill in addition
\begin{align}
&
\label{reg-param1.1}
\tilde{z} \in L^\infty (0,S;W^{1,\il}(\Omega)),
\\
&
\label{reg-param1.2}
\int_0^S \mixed{}(s) \,\dd s <\infty \qquad \text{with }   \mixed{}(s):= \left(  \int_\Omega (1+ |\nabla \tilde z(s)|^2)^{\frac{\il-2}{ 2}}
 |\nabla \tilde{z}' (s)|^2 \,\dd x \right)^{\frac 12}.
\end{align}
%\footnote{\DDDS we should unify the notation with $\tilde m$ from here, $y$ from the proof of Lemma  \ref{l:uniform-w11} and $\norm{\nabla z'(t)}_{\nabla z(t)}$ from \eqref{def_weighted-norm}
%\DDDE}
Then, the map $s \mapsto  \calI(\tilde{t}(s),\tilde{z}(s)) $
 is absolutely continuous on $(0,S)$, and
 the following chain rule formula is valid:
\begin{equation}\label{chain-rule-param}
\begin{aligned}
\frac{\dd}{\dd s} \calI(\tilde{t}(s),\tilde{z}(s)) - \partial_s\calI(\tilde{t}(s),\tilde{z}(s))\tilde{t}'(s)
= &
\int_\Omega (1+ |\nabla \tilde{z}(s)|^2)^{\frac{q-2}{2}} \nabla \tilde{z}(s) \cdot \nabla \tilde{z}'(s) \,\dd x
\\
& + \int_{\Omega}   \rmD_z \wt{\calI}(\tilde{t}(s),\tilde{z}(s))\tilde{z}'(s) \,\dd x\,.
\end{aligned}
\end{equation}
%$(\tilde{t},\tilde{z}) \in\mathrm{C}_{\mathrm{lip}}^0 ([0,S]; [0,T]\times L^2(\Omega))$ is a
%\emph{weak parameterized solution} of \eqref{dndia} if and only if it fulfills
%\begin{equation}
%\label{variational-weak-approx-param}
%\begin{aligned}
%\calR_\epsi(w) - \calR_\epsi (\tilde{z}'(s)) \geq  &  \pairing{\calZ}{-A_{\il} \tilde{z}(s) }{w}  +  \int_\Omega (1+ |\nabla \tilde{z}(s)|^2)^{(q-2)/2} \nabla \tilde{z}(s) \cdot \nabla \tilde{z}'(s) \dd x
%\\ & \quad
%- \int_\Omega  \rmD_z \wt{\calI}(\tilde{t}(s),\tilde{z}(s)) (w-\tilde{z}'(s)) \dd x
% \quad \text{for all } w \in \calZ \qquad \foraa\, s \in (0,S)\,.
%\end{aligned}
%\end{equation}
\end{proposition}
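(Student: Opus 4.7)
The plan is to decompose $\calI(t,z) = \calI_q(z) + \wt\calI(t,z)$ (recall \eqref{der-decomp}) and adapt the proofs of Propositions \ref{l:ch-rule-tilde} and \ref{l:ch-rule-iq} to the parameterized setting. As preliminary observations, since $q>d$ one has $W^{1,q}(\Omega)\hookrightarrow L^\infty(\Omega)$, so \eqref{reg-param1.1} yields $\tilde z\in L^\infty((0,S)\times\Omega)$; the Lipschitz assumption gives $\tilde z'\in L^\infty(0,S;L^2(\Omega))$; and since $q\geq 2$ the pointwise bound $\norm{\nabla\tilde z'(s)}_{L^2(\Omega)}\leq \mixed{}(s)$ holds, so that \eqref{reg-param1.2} provides $\tilde z'\in L^1(0,S;W^{1,2}(\Omega))$.

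For the $\wt\calI$-part, absolute continuity of $s\mapsto\wt\calI(\tilde t(s),\tilde z(s))$ follows from \eqref{Lip-cont-I} combined with the embedding $W^{1,2}(\Omega)\hookrightarrow L^{2p_*/(p_*-2)}(\Omega)$ and the above bound on $\tilde z'$. To establish the chain-rule formula, I would mimic the difference-quotient argument of Proposition \ref{l:ch-rule-tilde}: at an a.e.-chosen point $s\in(0,S)$ which is a Lebesgue point both for $\tilde t'$ and for $\tilde z'$ viewed as $L^1(0,S;W^{1,2})$-valued, split the increment $h^{-1}(\wt\calI(\tilde t(s{+}h),\tilde z(s{+}h))-\wt\calI(\tilde t(s),\tilde z(s)))$ into a $\tilde t$- and a $\tilde z$-increment. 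The former converges to $\partial_t\wt\calI(\tilde t(s),\tilde z(s))\tilde t'(s)$ by \eqref{stim5}; for the latter, the integral representation along the segment joining $\tilde z(s)$ and $\tilde z(s{+}h)$, together with the continuity estimate \eqref{enhanced-stim-7} and the $W^{1,2}$-convergence $h^{-1}(\tilde z(s{+}h)-\tilde z(s))\to\tilde z'(s)$, delivers $\int_\Omega\rmD_z\wt\calI(\tilde t(s),\tilde z(s))\tilde z'(s)\,\dd x$ in the limit.

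For the $\calI_q$-part I would apply Lemma \ref{th:ch-rule-bis} to $F:=\nabla\tilde z$. All of its hypotheses follow from the preliminary observations except the mixed-estimate condition \eqref{mixed-estimate-F}, which requires $\mixed{}\in L^2(0,S)$, while \eqref{reg-param1.2} only provides $\mixed{}\in L^1(0,S)$. Inspecting the proof of Lemma \ref{th:ch-rule-bis}, however, one sees that the squared integrability is used only to guarantee integrability of the right-hand side of \eqref{ch-ruke-F}; here this is recovered via Cauchy--Schwarz and the uniform $W^{1,q}$-bound on $\tilde z$:
\[
\int_0^S\!\!\int_\Omega\!\bigl|(1+|\nabla\tilde z|^2)^{\frac{q-2}{2}}\nabla\tilde z\cdot\nabla\tilde z'\bigr|\,\dd x\,\dd s\leq C\bigl(1+\|\tilde z\|_{L^\infty(0,S;W^{1,q}(\Omega))}^{q/2}\bigr)\|\mixed{}\|_{L^1(0,S)}<\infty\,.
\]
With this remark, Claims 1--3 go through unchanged: the pointwise-in-$x$ identity of Claim 1 rests on the fixed-$x$ chain rule of Lemma \ref{ch-rule-added}, whose $W^{1,2}$-hypothesis is inessential (the argument through \cite[Thm.\,1.2.5]{AGS2008} works for merely absolutely continuous $\eta$), and $\nabla\tilde z'\in L^1((0,S)\times\Omega)$ yields by Fubini the requisite almost-everywhere absolute continuity of $\nabla\tilde z(\cdot,x)$; the absolute continuity of $s\mapsto\calI_q(\tilde z(s))$ and the final Lebesgue differentiation step in Claims 2--3 follow verbatim.

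The main obstacle is precisely this adaptation of Lemma \ref{th:ch-rule-bis} under the weaker integrability \eqref{reg-param1.2}: tracking carefully how the $L^\infty(0,S;W^{1,q})$-control of $\tilde z$ compensates, via Cauchy--Schwarz, for the loss from $L^2(0,S)$ to $L^1(0,S)$ in the time-integrability of $\mixed{}$, and verifying that the classical chain rule at fixed $x$ remains applicable to absolutely continuous, rather than $W^{1,2}$, curves.
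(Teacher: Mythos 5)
Your proposal is correct and follows essentially the same route as the paper: the paper's proof consists precisely of the H\"older/Cauchy--Schwarz estimate showing that the mixed integrand is in $L^1((0,S)\times\Omega)$ thanks to \eqref{reg-param1.1} and \eqref{reg-param1.2} (your displayed bound with the factor $1+\|\tilde z\|_{L^\infty(0,S;W^{1,q}(\Omega))}^{q/2}$ is exactly the paper's), followed by the remark that one can then argue as in the proof of Theorem \ref{thm:chain-rule}. Your additional verification that Proposition \ref{l:ch-rule-tilde} and Lemma \ref{th:ch-rule-bis} go through with the $L^1$-in-time mixed bound in place of \eqref{mixed-estimate-F} simply makes explicit what the paper leaves implicit, and it is accurate.
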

\begin{proof}
From \eqref{reg-param1.1} and \eqref{reg-param1.2} we deduce
with the H\"older inequality
 that
\[
\begin{aligned}
&
\int_0^S \int_\Omega   \left |  1+ |\nabla \tilde{z}(s)|^2)^{\frac{\il-2}{2}}
\nabla \tilde{z}(s) \cdot \nabla \tilde{z}' (s)  \right |    \,\dd x \,\dd s
\\
& = \int_0^S \int_\Omega \left |  (1+ |\nabla \tilde{z}(s)|^2)^{\frac{\il-2}{4}}  \nabla \tilde{z}' (s) \right |  \left | (1+ |\nabla \tilde{z}(s)|^2)^{\frac{\il-2}4} \nabla \tilde{z}(s)  \right |   \,\dd x \,\dd s
\\
& \leq \int_0^S  \mixed{}(s)  \|(1+ |\nabla \tilde{z}(s)|^2)^{\frac{\il-2}{4}}\nabla \tilde{z}(s) \|_{L^2(\Omega)}\,\dd s
\leq c \int_0^S  \mixed{}(s)
(1+\|\nabla
  \tilde{z}(s)\|_{L^q(\Omega)}^{\frac{q}{2}})\ds
 <\infty,
 \end{aligned}
\]
\bnnc where the last estimate relies on \eqref{reg-param1.1}. \ennc
Now we can argue as in the proof of Theorem \ref{thm:chain-rule}
to deduce that  $(\tilde{t},\tilde{z})$ fulfill the parameterized version of the chain rule
\eqref{chain-rule-param}.
%
%We use this chain rule and argue in the very same way as in the proof of Proposition~\ref{prop:equivalence} to obtain~\eqref{variational-weak-approx-param}.
\end{proof}

\subsection{ The vanishing viscosity result }
\label{ss:7.2}
\noindent
We are now in the position of stating and proving our main vanishing viscosity result.
\begin{theorem}
\label{main-thm-vanvisc}
Under Assumptions \ref{assumption:energy}, \ref{ass:init}, (A$_\Omega$1), and
(A$_\Omega$2), let
$(z_\epsilon)_\epsi$
 be a family of weak solutions (according to Definition \ref{def:wsol}),  in
 $ L^\infty (0,T; W^{1,\il} (\Omega)) \cap W^{1,2} (0,T;W^{1,2}(\Omega)) $,  to the
$\epsilon$-viscous Cauchy problem
\eqref{visc-eps-dne}--\eqref{Cauchy-condition}. Suppose that
 the  estimates \eqref{epsilon-uniform-estimates}
%for which estimates
%\eqref{epsilon-uniform-estimates-1}--\eqref{epsilon-uniform-estimates-4}
 are valid for  $(z_\epsilon)_\epsi$, and let
$(\tilde{t}_\epsi,\tilde{z_\epsi})_{\epsi>0} \subset
\mathrm{C}_{\mathrm{lip}}^0 ([0,S]; [0,T]\times L^2(\Omega))$ be defined
by \eqref{d:hat}.

Then, for every sequence $\epsi_n \searrow 0$ there exist a pair $(\tilde{t},\tilde{z}) \in
\mathrm{C}_{\mathrm{lip}}^0 ([0,S]; [0,T]\times L^2(\Omega))$,
  such that
$\tilde z$ has the regularity
\begin{equation}
\label{add-reg-reparam}
\tilde z \in  L^\il (0,S;W^{1+\beta, \il}(\Omega) \cap L^\infty (0,S;
W^{1, \il}(\Omega)) \qquad \text{for every }
 \beta \in \left[0,  \frac{1}{q}\big(1-\frac{d}q\big) \right),
\end{equation}
 and a
(not-relabeled) subsequence such that
\begin{equation}
\label{epsi-convergences}
\begin{aligned}
& (\tilde{t}_{\epsi_n},\tilde{z}_{\epsi_n}) \weaksto (\tilde{t},\tilde{z})
\text{ in $W^{1,\infty} (0,S; [0,T] \times L^2(\Omega))$,} \\ &
 \tilde{t}_{\epsi_n} \to \tilde{t} \text{ in $\mathrm{C}^0 ([0,S];
 [0,T])$,} \quad \tilde{z}_{\epsi_n}(s) \weakto \tilde{z}(s) \text{ in $L^2(\Omega)$ for
 all $s \in [0,S]$,}
 \end{aligned}
\end{equation}
and
$(\tilde{t},\tilde{z})$ is a weak parameterized solution of the rate-independent damage system \eqref{dndia},
 fulfilling
\begin{equation}
\label{almost-normalization} \hat{t}'(s) +
\norm{\hat{z}'(s)}_{L^2(\Omega)}
%\calZ}
 \leq 1 \quad \foraa\, s \in (0,S).
\end{equation}
  Furthermore, $\tilde z$ fulfills \eqref{reg-param1.2}.
\end{theorem}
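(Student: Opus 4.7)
My plan is to extract a convergent subsequence from $(\tilde t_{\epsi_n}, \tilde z_{\epsi_n})$, pass to the limit in the parameterized energy inequality \eqref{parametrized-energy-identity-epsi}, and finally derive the enhanced regularity and mixed estimate for the limit curve by lower semicontinuity arguments analogous to those of Lemma~\ref{lemma:eps-indep-est}. The normalization condition \eqref{true-normalization} immediately gives $\|\tilde t_{\epsi_n}'\|_{L^\infty(0,S)} + \|\tilde z_{\epsi_n}'\|_{L^\infty(0,S;L^2(\Omega))} \leq 1$, so by the Banach selection principle and Arzel\`a--Ascoli (using that the embedding $L^2(\Omega) \Subset W^{-1,2}(\Omega)$ is compact, hence the $\tilde z_{\epsi_n}$ are equicontinuous into a Banach space into which $L^2(\Omega)$ compactly embeds), I obtain the convergences \eqref{epsi-convergences} for a not-relabeled subsequence. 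Moreover, by the change of variables formula applied to \eqref{epsilon-uniform-estimates-2}, the sequence $(\tilde z_{\epsi_n})$ is uniformly bounded in $L^\infty(0,S;W^{1,q}(\Omega))$, and using $\tilde t_{\epsi_n}'(s) \leq 1$ together with \eqref{epsilon-uniform-estimates-2}--\eqref{epsilon-uniform-estimates-3} I  derive the $L^q(0,S; W^{1+\beta,q}(\Omega))$-bound underlying \eqref{add-reg-reparam}, by lower semicontinuity. The normalization \eqref{almost-normalization} follows by weak-$*$ lower semicontinuity of the norm applied to \eqref{true-normalization}.

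To pass to the limit in \eqref{parametrized-energy-identity-epsi}, I  first need the pointwise convergence $-\rmD_z\calI(\tilde t_{\epsi_n}(s), \tilde z_{\epsi_n}(s)) \to -\rmD_z \calI(\tilde t(s), \tilde z(s))$ in $\calZ^*$ for a.a.\ $s \in (0,S)$. Using the strong $\rmC^0$-convergence of $\tilde t_{\epsi_n}$ and refining the extraction so that $\tilde z_{\epsi_n}(s) \to \tilde z(s)$ strongly in $W^{1,q}(\Omega)$ pointwise a.e.\ (this follows via the compactness argument on $L^{2q}(0,S; W^{1+\beta,q}(\Omega)) \cap W^{1,\infty}(0,S;L^2(\Omega))$ used in Proposition~\ref{prop:compactness}), Corollary~\ref{coro-fre} yields the required strong convergence of $\rmD_z\calI$ in $\calZ^*$. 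Since $\twodis(\cdot, \partial\calR_1(0))$ is Lipschitz w.r.t.\ the $L^2$-norm and $\partial\calR_1(0) \subset L^2(\Omega)$ is closed, I can take $\liminf_{n\to\infty}\twodis(-\rmD_z\calI(\tilde t_{\epsi_n}(s),\tilde z_{\epsi_n}(s)),\partial\calR_1(0)) \geq \twodis(-\rmD_z\calI(\tilde t(s),\tilde z(s)),\partial\calR_1(0))$ for a.a.\ $s$. I can now apply part (B) of Lemma~\ref{lem:g-conver} with $\alpha_{\epsi_n} = \tilde t_{\epsi_n}'$, $v_{\epsi_n} = \tilde z_{\epsi_n}'$, and $\zeta_{\epsi_n}(s) = \twodis(-\rmD_z\calI(\tilde t_{\epsi_n}(s),\tilde z_{\epsi_n}(s)),\partial\calR_1(0))$ to  bound from below the integral term in \eqref{parametrized-energy-identity-epsi} by the corresponding integral featuring $\mathcal{M}_0$. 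The energy terms pass to the limit as in the proof of Theorem~\ref{thm:ex-viscous}: the left $\calI$-term by \eqref{weak-continuity}, the right $\calI$-term by pointwise strong convergence of $\tilde z_{\epsi_n}$ in $W^{1,q}(\Omega)$ at almost every $\sigma_1$, and the $\partial_t\calI$-term via dominated convergence combined with \eqref{weak-continuity} and \eqref{stim3}. This yields \eqref{parametrized-energy-ineq} for almost every $0 \leq \sigma_1 \leq \sigma_2 \leq S$, and then for every such pair by the absolute continuity argument used at the end of the proof of Theorem~\ref{thm:ex-viscous}, exploiting the chain rule \eqref{chain-rule-param} of Proposition~\ref{param-counterpart} — provided that the mixed estimate \eqref{reg-param1.2} has already been established.

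To establish  \eqref{reg-param1.2}, I apply the Young measure compactness tool of Theorem~\ref{thm.balder-gamma-conv} to the sequence $(\nabla \tilde z_{\epsi_n}, \nabla \tilde z_{\epsi_n}')$ in $L^2(0,S; L^2(\Omega;\R^d)\times L^2(\Omega;\R^d))$. The uniform bound comes from the reparameterized version of \eqref{epsilon-uniform-estimates-4}: by the change of variable $s = s_{\epsi_n}(t)$ and $\tilde t_{\epsi_n}'(s) \leq 1$,
\begin{equation*}
\int_0^{S_{\epsi_n}} \biggl(\int_\Omega (1+|\nabla \tilde z_{\epsi_n}(s)|^2)^{\frac{q-2}{2}}|\nabla \tilde z_{\epsi_n}'(s)|^2\,\dd x\biggr)^{1/2} \dd s = \int_0^T \biggl(\int_\Omega (1+|\nabla z_{\epsi_n}(t)|^2)^{\frac{q-2}{2}}|\nabla z_{\epsi_n}'(t)|^2\,\dd x\biggr)^{1/2} \dd t \leq C,
\end{equation*}
the equality following by direct computation since the integrand scales as $\tilde t_{\epsi_n}'(s)$ under reparameterization. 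Since $\tilde z_{\epsi_n}(s) \to \tilde z(s)$ strongly in $W^{1,q}(\Omega)$ for a.a.\ $s$, the projection of the limiting Young measure onto the first factor is the Dirac mass $\delta_{\nabla \tilde z(s)}$; by disintegration, the Young measure is $\delta_{\nabla \tilde z(s)} \otimes \tilde\nu_s$, where the barycenter of $\tilde\nu_s$ is $\nabla \tilde z'(s)$. Applying \eqref{heq:gamma-liminf} with the normal convex integrand $\mathcal H(t,(A,B)) := (\int_\Omega (1+|A|^2)^{(q-2)/2}|B|^2\,\dd x)^{1/2}$ and Jensen's inequality, exactly as in the last part of the proof of Lemma~\ref{lemma:eps-indep-est}, I conclude \eqref{reg-param1.2} and in particular that the chain rule \eqref{chain-rule-param} is available for $(\tilde t, \tilde z)$, closing the argument. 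The principal obstacle is, in my view, coordinating these pieces: namely, ensuring that the subsequence extracted for the $\rmC^0$- and $W^{1,\infty}$-convergences also yields the pointwise strong $W^{1,q}$-convergence of $\tilde z_{\epsi_n}(s)$ needed to pass to the limit both in the $\rmD_z\calI$-term appearing in $\zeta_{\epsi_n}$ and in the energy $\calI$ at the right endpoint of arbitrary subintervals $[\sigma_1,\sigma_2]$.
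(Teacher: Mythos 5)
Your proposal is correct and follows essentially the same route as the paper's proof: reparameterized a priori estimates obtained by change of variables (the paper's Lemma \ref{lem:unif-est}), Arzel\`a--Ascoli/weak-$*$ compactness for \eqref{epsi-convergences} and \eqref{almost-normalization}, pointwise strong $W^{1,q}$-convergence plus Corollary \ref{coro-fre} to handle $\rmD_z\calI$, the lower bound on the dissipation integral via Lemma \ref{lem:g-conver}(B), the Young-measure/Jensen argument of Lemma \ref{lemma:eps-indep-est} for the mixed estimate \eqref{reg-param1.2}, and the chain rule of Proposition \ref{param-counterpart} to upgrade the energy inequality from almost-every to all pairs $\sigma_1\leq\sigma_2$. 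The only loose point is your justification of $\liminf_n \twodis(-\rmD_z\calI(\tilde t_{\epsi_n}(s),\tilde z_{\epsi_n}(s)),\partial\calR_1(0))\geq \twodis(-\rmD_z\calI(\tilde t(s),\tilde z(s)),\partial\calR_1(0))$: since $\rmD_z\calI$ only converges strongly in $\calZ^*$ (not in $L^2(\Omega)$), Lipschitz continuity in the $L^2$-norm is not the right reason; one should instead argue by lower semicontinuity of the (extended-valued) $L^2$-distance from the weakly closed convex set $\partial\calR_1(0)$ under $\calZ^*$-convergence (extract weakly converging near-minimizers $\mu_n$), which is exactly the step the paper supports with the uniform bound \eqref{dorothee-added} — a minor, easily repaired imprecision rather than a gap.
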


For the proof, we will rely on the following a priori estimates for the parameterized solutions
\begin{lemma}\label{lem:unif-est}
Under Assumptions \ref{assumption:energy}, \ref{ass:init}, (A$_\Omega$1), and
(A$_\Omega$2), let
$(z_\epsilon)_\epsi$
 be a family of weak solutions (according to Definition \ref{def:wsol}),  in
 $ L^\infty (0,T; W^{1,\il} (\Omega)) \cap W^{1,2} (0,T;W^{1,2}(\Omega)) $,  to the
$\epsilon$-viscous Cauchy problem
\eqref{visc-eps-dne}--\eqref{Cauchy-condition}. Suppose that $(z_\epsilon)_\epsi$
 satisfy \eqref{epsilon-uniform-estimates}.
  Then
\begin{subequations}
\label{resc-epsilon-uniform-estimates}
\begin{align}
\label{resc-epsilon-uniform-estimates-1} &  \sup_{\epsi>0}
\| \tilde{z}_\epsi \|_{W^{1,\infty} (0,S; L^2 (\Omega))}
 \leq C,
\\
&
\label{resc-epsilon-uniform-estimates-2}
 \sup_{\epsi>0}
 \| \tilde{z}_\epsi \|_{L^\infty (0,S; W^{1, \il}(\Omega)) } \leq C,
 \\
 &
 \label{resc-epsilon-uniform-estimates-3}
 \sup_{\epsi> 0} \| \tilde{z}_\epsi \|_{L^\il (0,S;W^{1+\beta,
     \il}(\Omega)} \leq C \qquad \text{for every }
 \beta \in \left[0, \frac{1}{q}\big(1-\frac{d}q\big) \right),
 \\
 &
 \label{resc-epsilon-uniform-estimates-4}
  \sup_{\epsi>0}
  \int_0^S \left(  \int_\Omega (1+ |\nabla \tilde{z}_\epsi(s)|^2)^{\frac{\il-2}2} |\nabla \tilde{z}_\epsi' (s)|^2 \,\dd x \right)^{\frac12} \,\dd s \leq C.
\end{align}
\end{subequations}
  Moreover, there holds
\begin{equation}
\label{dorothee-added}
\sup_{\epsi>0} \int_0^S  \twodis(-\rmD_z \calI
(\tilde{t}_\epsilon(s),\tilde{z}_\epsilon(s)), \partial\calR_1(0))
  \,\mathrm{d}s \leq C.
\end{equation}

\end{lemma}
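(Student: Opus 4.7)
The idea is to systematically exploit the reparameterization identities. Since $s_\epsilon'(t) = 1 + \|z_\epsilon'(t)\|_{L^2(\Omega)}$ and $\tilde{t}_\epsilon = s_\epsilon^{-1}$, one has
\[
\tilde{t}_\epsilon'(s) = \frac{1}{1+\|z_\epsilon'(\tilde{t}_\epsilon(s))\|_{L^2(\Omega)}}, \qquad \tilde{z}_\epsilon'(s) = z_\epsilon'(\tilde{t}_\epsilon(s))\, \tilde{t}_\epsilon'(s),
\]
which, besides yielding the normalization \eqref{true-normalization}, gives in particular $\|\tilde{z}_\epsilon'(s)\|_{L^2(\Omega)} \leq 1$ for a.e.\ $s \in (0,S)$. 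Combined with the bound $\sup_t \|z_\epsilon(t)\|_{W^{1,q}(\Omega)} \leq C$ from \eqref{epsilon-uniform-estimates-2} and $\tilde{z}_\epsilon(s) = z_\epsilon(\tilde{t}_\epsilon(s))$, this immediately delivers \eqref{resc-epsilon-uniform-estimates-1} and \eqref{resc-epsilon-uniform-estimates-2}.

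For \eqref{resc-epsilon-uniform-estimates-3}--\eqref{resc-epsilon-uniform-estimates-4} and \eqref{dorothee-added}, the key tool is the change of variables $t = \tilde{t}_\epsilon(s)$, i.e.\ $ds = (1+\|z_\epsilon'(t)\|_{L^2(\Omega)})\, dt$, which for any non-negative measurable function $\Phi$ defined on $[0,T]$ gives
\[
\int_0^{S_\epsilon} \Phi(\tilde{t}_\epsilon(s))\, ds = \int_0^T \Phi(t)\bigl(1+\|z_\epsilon'(t)\|_{L^2(\Omega)}\bigr)\, dt.
\]
Applying this to $\Phi(t) = \|z_\epsilon(t)\|_{W^{1+\beta,q}(\Omega)}^q$ and splitting the right-hand side in two terms, the first is controlled by $\|z_\epsilon\|_{L^{2q}(0,T;W^{1+\beta,q}(\Omega))}^q$ (via H\"older in time and \eqref{epsilon-uniform-estimates-2}), the second by \eqref{epsilon-uniform-estimates-3}; this yields \eqref{resc-epsilon-uniform-estimates-3}. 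For \eqref{resc-epsilon-uniform-estimates-4} one uses $\nabla \tilde{z}_\epsilon'(s) = \nabla z_\epsilon'(\tilde{t}_\epsilon(s))\, \tilde{t}_\epsilon'(s)$ to compute that the parameterized mixed integrand factorizes as $\tilde{t}_\epsilon'(s)$ times the $t$-mixed integrand evaluated at $\tilde{t}_\epsilon(s)$; the factor $\tilde{t}_\epsilon'(s) = (1+\|z_\epsilon'\|_{L^2(\Omega)})^{-1}$ then exactly cancels the Jacobian $(1+\|z_\epsilon'\|_{L^2(\Omega)})$ produced by the change of variables, and the bound reduces directly to \eqref{epsilon-uniform-estimates-4}.

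The subtlest point is \eqref{dorothee-added}. Writing $\zeta_\epsilon(t) := \twodis(-\rmD_z\calI(t,z_\epsilon(t)),\partial\calR_1(0))$, the same change of variables gives
\[
\int_0^{S_\epsilon} \zeta_\epsilon(\tilde{t}_\epsilon(s))\, ds = \int_0^T \zeta_\epsilon(t)\, dt + \int_0^T \zeta_\epsilon(t)\, \|z_\epsilon'(t)\|_{L^2(\Omega)}\, dt.
\]
Here one must exploit the viscous energy inequality \eqref{energy-inequality-explicit} for $z_\epsilon$ on $[0,T]$: together with estimates \eqref{stim3} and \eqref{est_coerc1} on $\partial_t\calI$ and $\calI$, it provides the separate bounds
\[
\int_0^T \tfrac{\epsilon}{2}\|z_\epsilon'(t)\|_{L^2(\Omega)}^2\, dt \leq C, \qquad \int_0^T \tfrac{1}{2\epsilon}\zeta_\epsilon(t)^2\, dt \leq C.
\]
Cauchy--Schwarz on the first summand gives $\int_0^T \zeta_\epsilon\, dt \leq \sqrt{2\epsilon C T}$ (bounded as $\epsilon \to 0$), while on the second summand it yields $\int_0^T \zeta_\epsilon \|z_\epsilon'\|\, dt \leq \sqrt{2\epsilon C}\,\sqrt{2C/\epsilon} = 2C$, the $\epsilon$-weights canceling exactly. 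This is the only place where the viscous energy inequality is invoked directly, and it is where the interplay of the scaling $\epsilon \cdot \epsilon^{-1}$ is crucial; the other four estimates follow purely from the pointwise $a.e.$ identities for $\tilde{t}_\epsilon', \tilde{z}_\epsilon'$ together with \eqref{epsilon-uniform-estimates}.
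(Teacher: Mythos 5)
Your proof is correct. For \eqref{resc-epsilon-uniform-estimates-1}--\eqref{resc-epsilon-uniform-estimates-4} it is the same argument the paper gives (the paper merely states these follow ``trivially'' or ``via reparameterization''; you spell out the pointwise identities $\tilde t_\epsi'=(1+\|z_\epsi'\|_{L^2})^{-1}$, $\tilde z_\epsi'=z_\epsi'(\tilde t_\epsi)\tilde t_\epsi'$ and the change of variables, including the cancellation of the Jacobian against $\tilde t_\epsi'$ that makes the $L^1$-type mixed bound \eqref{epsilon-uniform-estimates-4} invariant, which is exactly what the paper means by its ``$L^1$-character'').

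For \eqref{dorothee-added} your route is genuinely different from the paper's, though both rest on the same energy inequality and the same $\epsi\cdot\epsi^{-1}$ scaling. The paper stays in the parameterized picture: from \eqref{parametrized-energy-identity-epsi} it bounds $\int_0^S\big(\tfrac{\epsi}{2\tilde t_\epsi'}\|\tilde z_\epsi'\|^2_{L^2}+\tfrac{\tilde t_\epsi'}{2\epsi}\twodis^2\big)\,\dd s$ and then splits the arclength domain into $\{\tilde t_\epsi'\le\delta\}$, where the pointwise Young inequality together with the normalization \eqref{true-normalization} ($\|\tilde z_\epsi'\|_{L^2}=1-\tilde t_\epsi'\ge 1-\delta$) gives $(1-\delta)\twodis$, and $\{\tilde t_\epsi'>\delta\}$, where the conjugate term is controlled. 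You instead work in the original time variable: you extract from \eqref{energy-inequality-explicit} the two separate uniform bounds $\int_0^T\tfrac\epsi2\|z_\epsi'\|_{L^2}^2\,\dd t\le C$ and $\int_0^T\tfrac1{2\epsi}\twodis^2\,\dd t\le C$ (legitimate, since all left-hand terms are nonnegative, $\calI$ is bounded below by \eqref{est_coerc1} and $\partial_t\calI$ by \eqref{stim3}, and weak solutions satisfy the energy inequality by Proposition \ref{prop:equivalence}), and then apply Cauchy--Schwarz to the two pieces produced by the Jacobian $1+\|z_\epsi'\|_{L^2}$, letting the $\epsi$-weights cancel. This avoids the $\delta$-splitting altogether and even shows, as a by-product, that $\int_0^T\twodis\,\dd t=O(\sqrt\epsi)$; the paper's version, on the other hand, is phrased entirely at the level of the functional $\mathcal M_\epsi$, which is the object it subsequently passes to the limit with via Lemma \ref{lem:g-conver}. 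Both arguments are sound and of comparable length.
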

\begin{proof}
Estimates \eqref{resc-epsilon-uniform-estimates-1}--\eqref{resc-epsilon-uniform-estimates-2}
are trivial consequences of  \eqref{epsilon-uniform-estimates-1} and \eqref{epsilon-uniform-estimates-2}.
 It can be easily checked that
\eqref{resc-epsilon-uniform-estimates-3} ensues from \eqref{epsilon-uniform-estimates-2} and \eqref{epsilon-uniform-estimates-3} via reparameterization. Moreover, since \eqref{epsilon-uniform-estimates-4}
essentially has a $L^1$-character (cf.\ \eqref{mixd-estim-later}),
it is preserved by the reparameterization in  \eqref{resc-epsilon-uniform-estimates-4}.

  In order to prove  \eqref{dorothee-added}, we observe that
\[
\begin{aligned}
C &  \geq
\int_0^S  \Big(
\frac{\epsi}{2\tilde{t}_\epsilon'(s)} \norm{\tilde{z}_\epsilon'(s)}_{L^2(\Omega)}^2  +
\frac{\tilde{t}_\epsilon'(s)}{2\epsilon} \twodis^2(-\rmD_z \calI
(\tilde{t}_\epsilon(s),\tilde{z}_\epsilon(s)), \partial\calR_1(0))
 \Big) \,\mathrm{d}s
 \\ &
 \geq
  \int_{\Set{s\in (0,S)}{\tilde
     t_\epsilon'(s)\leq \delta}}  \norm{\tilde{z}_\epsilon'(s)}_{L^2(\Omega)} \twodis(-\rmD_z \calI
(\tilde{t}_\epsilon(s),\tilde{z}_\epsilon(s)), \partial\calR_1(0))  \,\dd s
\\ & \quad+
 \int_{\Set{s \in (0,S)}{\tilde{t}_\epsilon'(s)>\delta}}
 \frac{\delta}{2\epsilon}  \twodis(-\rmD_z \calI
(\tilde{t}_\epsilon(s),\tilde{z}_\epsilon(s)), \partial\calR_1(0)) \ds
\end{aligned}
 \]
 for arbitrary $\delta \in (0,1)$. Since in the second term it holds $\norm{\tilde
  z'_\epsilon(s)}_{L^2(\Omega)} =1 -\wt t_\epsilon'(s)\geq 1-\delta$
  in view of the normalization condition \eqref{true-normalization},
   we
conclude.
\end{proof}

Relying on the above result, we now develop the
\begin{proof}[Proof of Theorem \ref{main-thm-vanvisc}.]
From the normalization condition \eqref{true-normalization}, we deduce that there exists
$(\tilde{t},\tilde{z}) \in \mathrm{C}_{\mathrm{lip}}^0 ([0,S];
[0,T]\times L^2(\Omega))$ such that convergences \eqref{epsi-convergences}
hold along some subsequence.
 Further, from estimates \eqref{resc-epsilon-uniform-estimates} it follows (possibly after extracting a further subsequence) that
\begin{gather}
\wt z_\epsilon \overset{*}{\rightharpoonup} \wt z \quad \text{ in } L^\infty(0,S;W^{1,q}(\Omega)),\\
\wt z_\epsilon(s) \rightarrow \wt z(s) \text{ uniformly in $\calX$ for all $W^{1,q}(\Omega)
\Subset \calX\subset L^2(\Omega)$ and all $s\in [0,S]$,  }
\\
\wt z_\epsilon \rightarrow \wt z \text{ strongly in }
 L^q(0,S;W^{1+\beta}(\Omega)) \text{ for all }\beta \in \left[0,
   \frac{1}{q}\big( 1-\frac{d}{q}\big)\right).
\end{gather}

 Arguing as in the proof of Theorem~\ref{thm:ex-viscous}
 and relying on %Lemma \ref{lem:unif-est} and
 Corollary \ref{coro-fre}, we find that
\begin{equation}
\label{epsi-conv2}
\begin{gathered}
 \lim_{n\rightarrow\infty}
\calI(\tilde{t}_{\epsi_n}(s), \tilde{z}_{\epsi_n}(s)) =
\calI(\tilde{t}(s), \tilde{z}(s)),\quad \rmD_z
\calI(\tilde{t}_{\epsi_n}(s), \tilde{z}_{\epsi_n}(s))\to %\rightharpoonup
\rmD_z \calI(\tilde{t}(s), \tilde{z}(s))
 \text{ strongly (!)
  in }\calZ^*,\\
\partial_t\calI(\tilde{t}_{\epsi_n}(s),
\tilde{z}_{\epsi_n}(s)) \rightarrow
\partial_t\calI(\tilde{t}(s), \tilde{z}(s)) \quad\text{in $L^1(0,S)$.}
% for all $1 \leq p <\infty$.}
\end{gathered}
\end{equation}
for almost all $s \in (0,S)$.
 Now, \eqref{almost-normalization} follows by taking the limit as $\epsi_n \to 0$ in
 \eqref{true-normalization}, with
a trivial lower semicontinuity argument.
   We then apply Lemma \ref{lem:g-conver}.
Estimate \eqref{dorothee-added} guarantees that for a.a.\  $s \in (0,S)$
 $\liminf_{\epsi_n \to 0}  \twodis(-\rmD_z
\calI (\tilde{t}_{\epsi_n}(s),\tilde{z}_{\epsi_n}(s)), \partial\calR_1(0))  <\infty$.
  In view of     \eqref{epsi-conv2},
 we  have that, for all $0 \leq \sigma_1\leq   \sigma_2\leq S$
 \[
 \begin{aligned}
\liminf_{\epsi_n \to 0} \int_{\sigma_1}^{\sigma_2} \mathcal{M}_{\epsi_n}
(\tilde{t}_{\epsi_n}'(s),\tilde{z}_{\epsi_n}'(s),  &  \twodis(-\rmD_z
\calI (\tilde{t}_{\epsi_n}(s),\tilde{z}_{\epsi_n}(s)),
\partial\calR_1(0)) )
   \,\mathrm{d}s \\ & \geq  \int_{\sigma_1}^{\sigma_2} \mathcal{M}_0 (\tilde{t}'(s),\tilde{z}'(s),   \twodis(-\rmD_z \calI
(\tilde{t}(s),\tilde{z}(s)), \partial\calR_1(0)) )
   \,\dd s.
   \end{aligned}
 \]
Then, combining \eqref{epsi-convergences} and \eqref{epsi-conv2},
and using that $\tilde{z}_\epsi(0)=z_\epsi(0)=z_0$ for all $\epsi>0$,
we pass to the limit in \eqref{parametrized-energy-identity-epsi}
  for all $\sigma_2 \in [0,S]$, for $\sigma_1=0$, and
 for almost all $0< \sigma_1 <\sigma_2$    such that the convergences in \eqref{epsi-conv2} are valid.
%[argument to pass to the limit in $\calI(\tilde{t}_\epsi(\sigma_1),\tilde{z}_\epsi(\sigma_1))$
%on the R.H.S. STILL MISSING].
We thus find that
the pair $(\tilde{t},\tilde{z})$ satisfies
\eqref{parametrized-energy-ineq}    for all $\sigma_2 \in [0,S]$, for $\sigma_1=0$, and
 for almost all $0 < \sigma_1 <\sigma_2$.

%  for almost all $0 \leq \sigma_1\leq   \sigma_2\leq S$.
With the same Young measure argument
as in the proof of Lemma \ref{lemma:eps-indep-est}, it follows that the limit function
$\tilde{z}$  satisfies the    \emph{mixed estimate}    \eqref{reg-param1.2}.
Applying the chain rule \eqref{param-counterpart}, we  then conclude in the same way as at the end of the proof of
Theorem \ref{thm:ex-viscous} that the energy inequality in fact holds for \emph{all} $ 0 \leq \sigma_1 \leq \sigma_2 \leq S$.
%In view of Lemma \ref{lemma:characterization}, we conclude that $(\hat{t},\hat{z})$  is a parameterized solution.
\end{proof}

%\subsection{ Differential characterization of (non-degenerate) weak parameterized solutions} \label{s:7.3}
  \paragraph{Differential characterization of (non-degenerate) weak parameterized solutions.} Following
  %\footnote{keep it as a subsection? \begin{lastnew} you're right, better have it as a paragraph.. \end{lastnew}} 
  the lines of \cite[Prop.~5.3,Cor.~5.4]{MRS10a} % cf.\  therein,
and of \cite[Prop.\ 5.1]{krz},
we now aim to provide a characterization of
weak parameterized solutions as solutions of a suitable
subdifferential inclusion. Loosely speaking, the latter should reflect  two evolutionary  regimes for the  damage system, namely
\begin{compactitem}
\item rate-independent evolution when $\tilde{t}'>0$ (and $\tilde z' \neq 0$)
\item (possibly) \emph{viscous} evolution when $\tilde{t}' =0$ (and $\tilde z' \neq 0$).
\end{compactitem}

We have to interpret Proposition \ref{prop:7.6} below in this spirit: for $\tilde{t}'>0$,  the variational inequality
\eqref{t'>0} is a weak formulation of the \emph{rate-independent}
 subdifferential inclusion $\partial \calR_1 (\tilde{z}'(s)) +  \rmD_z \calI (s,\tilde{z} (s)) \ni 0$ for a.a.\ $s \in (0,S)$.
 For $\tilde{t}'=0$, $\tilde z' \neq 0$ follows from the non-degeneracy condition. The system may be subject to \emph{viscous} dissipation. This \emph{viscous} regime is 
seen as a jump in the (slow) external time scale, encoded by the time
function $\tilde{t}$, which is frozen.
Indeed, the variational inequality \eqref{t'=0} is a (very) weak form of the
 \emph{viscous} $\partial \calR_1 (\tilde{z}'(s)) +\lambda(s) \tilde{z}'(s)  +  \rmD_z \calI (s,\tilde{z} (s)) \ni 0$ for a.a.\ $s \in (0,S)$ (with $\lambda : (0,S) \to [0,+\infty)$).

\begin{proposition}[Differential characterization]
\label{prop:7.6}
Under Assumptions \ref{assumption:energy}, \ref{ass:init}, (A$_\Omega$1), and
(A$_\Omega$2), let
$(\tilde{t},\tilde{z})\in \mathrm{C}_{\mathrm{lip}}^0 ([0,S]; [0,T]\times
L^2(\Omega))$ be a \emph{non-degenerate} parameterized weak solution of
\eqref{dndia}
with \eqref{reg-param1.2},
then
\begin{enumerate}
\item
If $\tilde{t}'(s)>0$, then for every $w\in\calZ$
\begin{equation}\label{t'>0}
\begin{aligned}
\calR_1(w) -  \calR_1(\tilde{z}'(s)) &  \geq - \int_\Omega (1+ |\nabla \tilde{z}(s)|^2)^{\frac{q-2}{2}} \nabla \tilde{z}(s) \cdot (\nabla w - \nabla \tilde{z}'(s)) \,\dd x
\\ & \quad
-
\langle \rmD_z \wt{\calI}(\tilde{t}(s),\tilde{z}(s)), w-\tilde{z}' (s)\rangle_{W^{1,2}(\Omega)}\,.
\end{aligned}
\end{equation}
\item
If $\tilde{t}'(s)=0$, then
 \begin{equation}\label{e:3}
 \begin{aligned}
&\calR_1(\tilde{z}'(s)) + \twodis(-\rmD_z \calI (\tilde{t}(s),\tilde{z}(s)), \partial\calR_1(0)) )\| \tilde{z}'(s)\|_{L^2(\Omega)}
\\
&\leq
- \int_\Omega (1+ |\nabla \tilde{z}(s)|^2)^{\frac{q-2}{2}} \nabla \tilde{z}(s) \cdot \nabla \tilde{z}'(s) \,\dd x
 - \int_{\Omega}   \rmD_z \wt{\calI}(\tilde{t}(s),\tilde{z}(s))\tilde{z}'(s) \,\dd x.
\end{aligned}
\end{equation}
As a consequence, for every $w\in\calZ$
\begin{equation}\label{t'=0}
\begin{aligned}
 & \calR_1(w) -  \calR_1(\tilde{z}'(s)) \geq \pairing{L^2(\Omega)}{\rmD_z \calI (\tilde{t}(s),\tilde{z}(s)) +\eta(s)}{w-\tilde{z}'(s)}
   \\ &
  -  \int_\Omega (1+ |\nabla \tilde{z}(s)|^2)^{\frac{q-2}{2}} \nabla \tilde{z}(s) \cdot (\nabla w - \nabla \tilde{z}'(s)) \,\dd x
 -
\langle \rmD_z \wt{\calI}(\tilde{t}(s),\tilde{z}(s)), w-\tilde{z}' (s)\rangle_{W^{1,2}(\Omega)} ,
% \int_{\Omega}   \rmD_z \wt{\calI}(\tilde{t}(s),\tilde{z}(s))(w-\tilde{z}'(s)) \dd x,
 \end{aligned}
\end{equation}
where $\eta(s)\in \partial\calR_1(0)$ is such that
\begin{equation}\label{e:eta}
\twodis(-\rmD_z \calI (\tilde{t}(s),\tilde{z}(s)), \partial\calR_1(0)) )=\|-\rmD_z \calI (\tilde{t}(s),\tilde{z}(s)) -\eta(s)\|_{L^2(\Omega)}.
\end{equation}
\end{enumerate}
% if and only if there exists a Borel
%function $\lambda: (0,S) \to [0,+\infty)$ such that
% \calR_1(w) -  \calR_1(\tilde{z}') \geq - \int_\Omega (1+ |\nabla \tilde{z}|^2)^{(q-2)/2} \nabla \tilde{z} \cdot (\nabla w - \nabla \tilde{z}') \dd x - \int_{\Omega}   \rmD_z \wt{\calI}(\tilde{t},\tilde{z})( w-\tilde{z}' ) \dd x
\end{proposition}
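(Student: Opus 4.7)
The plan is to combine the energy inequality \eqref{parametrized-energy-ineq} with the parameterized chain rule \eqref{chain-rule-param} from Proposition \ref{param-counterpart}, which is applicable under the regularity \eqref{reg-param1.2} we have assumed. Writing \eqref{parametrized-energy-ineq} between $\sigma_1$ and $\sigma_2$ and subtracting the integrated chain rule identity, one obtains
\[
\int_{\sigma_1}^{\sigma_2}\!\Big[\mathcal{M}_0(\tilde t'(s),\tilde z'(s),\zeta(s)) + \int_\Omega(1+|\nabla \tilde z(s)|^2)^{\frac{q-2}{2}}\nabla \tilde z(s){\cdot}\nabla \tilde z'(s)\dx + \int_\Omega \rmD_z\wt\calI(\tilde t(s),\tilde z(s))\,\tilde z'(s)\dx\Big]\ds \leq 0,
\]
with the shorthand $\zeta(s):=\twodis(-\rmD_z\calI(\tilde t(s),\tilde z(s)),\partial\calR_1(0))$. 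Both summands in brackets are integrable on $(0,S)$ by \eqref{reg-param1.2} and \eqref{estimate-for-DI}. Since this integral inequality holds for all $0\leq\sigma_1\leq\sigma_2\leq S$, Lebesgue differentiation gives for a.a.\ $s\in(0,S)$
\begin{equation}\label{eq:pointwise-proposal}
\mathcal{M}_0(\tilde t'(s),\tilde z'(s),\zeta(s)) + \int_\Omega(1+|\nabla \tilde z(s)|^2)^{\frac{q-2}{2}}\nabla \tilde z(s){\cdot}\nabla \tilde z'(s)\dx + \int_\Omega \rmD_z\wt\calI(\tilde t(s),\tilde z(s))\,\tilde z'(s)\dx \leq 0.
\end{equation}

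Case $\tilde t'(s)>0$: by \eqref{e:defg0}, $\mathcal{M}_0(\tilde t'(s),\tilde z'(s),\zeta(s))=\calR_1(\tilde z'(s)) + I_{\{0\}}(\zeta(s))$, which forces $\zeta(s)=0$, i.e.\ $-\rmD_z\calI(\tilde t(s),\tilde z(s))\in\partial\calR_1(0)$. By definition of $\partial\calR_1(0)$ (and $\calR_1(0)=0$), this yields, for every $w\in\calZ$,
\[
\calR_1(w) \geq \pairing{\calZ}{-\rmD_z\calI(\tilde t(s),\tilde z(s))}{w} = -\int_\Omega(1+|\nabla\tilde z(s)|^2)^{\frac{q-2}{2}}\nabla\tilde z(s){\cdot}\nabla w\dx - \langle\rmD_z\wt\calI(\tilde t(s),\tilde z(s)),w\rangle.
\]
Subtracting from this the pointwise inequality \eqref{eq:pointwise-proposal} (now an equality of the type $\calR_1(\tilde z'(s))\leq -[\text{chain-rule integrand at }\tilde z'(s)]$) gives \eqref{t'>0}.

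Case $\tilde t'(s)=0$: by non-degeneracy, $\tilde z'(s)\neq 0$, and \eqref{e:defg0} yields $\mathcal{M}_0=\calR_1(\tilde z'(s))+\zeta(s)\|\tilde z'(s)\|_{L^2(\Omega)}$. Inserting this into \eqref{eq:pointwise-proposal} gives exactly \eqref{e:3}. To derive \eqref{t'=0}, pick $\eta(s)\in\partial\calR_1(0)$ realizing the minimum in \eqref{e:eta}; by Cauchy--Schwarz,
\[
-\pairing{L^2}{-\rmD_z\calI(\tilde t(s),\tilde z(s))-\eta(s)}{\tilde z'(s)} \leq \zeta(s)\|\tilde z'(s)\|_{L^2(\Omega)},
\]
which combined with \eqref{e:3} yields $\calR_1(\tilde z'(s))\leq \langle\eta(s),\tilde z'(s)\rangle_{L^2}$. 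Since $\eta(s)\in\partial\calR_1(0)$ gives the reverse inequality by $1$-homogeneity, we conclude $\langle\eta(s),\tilde z'(s)\rangle_{L^2}=\calR_1(\tilde z'(s))$, i.e.\ $\eta(s)\in\partial\calR_1(\tilde z'(s))$. The subgradient inequality at $\tilde z'(s)$ then reads $\calR_1(w)-\calR_1(\tilde z'(s))\geq\langle\eta(s),w-\tilde z'(s)\rangle_{L^2}$ for all $w\in\calZ$, and adding and subtracting $\langle\rmD_z\calI(\tilde t(s),\tilde z(s)),w-\tilde z'(s)\rangle$ (split via $\rmD_z\calI=A_q\tilde z+\rmD_z\wt\calI$) produces exactly \eqref{t'=0}.

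The main subtlety, apart from the passage from integrated to pointwise inequality, is ensuring all duality pairings in the final inequalities are legitimate: the term $\int_\Omega(1+|\nabla\tilde z|^2)^{\frac{q-2}{2}}\nabla\tilde z{\cdot}\nabla\tilde z'\dx$ is controlled via H\"older and the mixed estimate \eqref{reg-param1.2}, and $\int_\Omega\rmD_z\wt\calI\cdot\tilde z'\dx$ via \eqref{estimate-for-DI} and the regularity $\tilde z'(s)\in W^{1,2}(\Omega)\hookrightarrow L^{2p_*/(p_*-2)}(\Omega)$, exactly as in the unparameterized chain-rule argument of Sec.~\ref{ss:3.2}.
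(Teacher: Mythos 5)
Your proposal is correct and follows essentially the same route as the paper: pass from the integrated energy inequality, via the parameterized chain rule of Proposition \ref{param-counterpart}, to the pointwise inequality \eqref{e:start}, then split according to the form \eqref{e:defg0} of $\mathcal{M}_0$, using the subgradient inequality for $-\rmD_z\calI\in\partial\calR_1(0)$ when $\tilde t'>0$ and the optimal $\eta(s)$ with Cauchy--Schwarz when $\tilde t'=0$. The only cosmetic difference is that in the second case you first record $\eta(s)\in\partial\calR_1(\tilde z'(s))$ and then invoke the subgradient inequality at $\tilde z'(s)$, whereas the paper adds $\calR_1(w)\geq\pairing{\calZ}{\eta(s)}{w}$ directly to \eqref{e:3}; the underlying computation is identical.
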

\noindent
Observe that, in view of Notation \ref{not:abuse} we could replace the duality pairings on the right-hand sides of
\eqref{t'>0} and \eqref{t'=0}
  by $\int_{\Omega}   \rmD_z \wt{\calI}(\tilde{t}(s),\tilde{z}(s))( w-\tilde{z}' (s)) \,\dd x$.
\begin{proof}
We differentiate \eqref{parametrized-energy-ineq} w.r.t.\ time and get
 $\foraa\ s\in(0,S)$
\begin{equation}
\label{e:start}
\begin{aligned}
&\mathcal{M}_0 (\tilde{t}'(s),\tilde{z}'(s),   \twodis(-\rmD_z \calI
(\tilde{t}(s),\tilde{z}(s)), \partial\calR_1(0)) )\leq
      -\frac{\dd}{\dd s}\calI(\tilde{t}(s),\tilde{z}(s)) +  \partial_t  \calI(\tilde{t}(s),\tilde{z}(s))\tilde{t}'(s)
   \\ & =  - \int_\Omega (1+ |\nabla \tilde{z}(s)|^2)^{\frac{q-2}{2}} \nabla \tilde{z}(s) \cdot \nabla \tilde{z}'(s) \,\dd x
 - \int_{\Omega}   \rmD_z \wt{\calI}(\tilde{t}(s),\tilde{z}(s))\tilde{z}'(s) \,\dd x,
   \end{aligned}
\end{equation}
where the second equality follows from the parameterized chain rule \eqref{chain-rule-param}.
Now, according to the definition  \eqref{e:defg0} of $\mathcal{M}_0$
we distinguish between two cases.

If $\tilde{t}'(s)>0$, then \eqref{e:start} yields
\begin{equation}\label{e:1}
\begin{aligned}
\calR_1(\tilde{z}'(s)) +& I_0( \twodis(-\rmD_z \calI (\tilde{t}(s),\tilde{z}(s)), \partial\calR_1(0)) )
\\&\leq
- \int_\Omega (1+ |\nabla \tilde{z}(s)|^2)^{\frac{q-2}{2}} \nabla \tilde{z}(s) \cdot \nabla \tilde{z}'(s) \,\dd x
 - \int_{\Omega}   \rmD_z \wt{\calI}(\tilde{t}(s),\tilde{z}(s))\tilde{z}'(s) \,\dd x.
 \end{aligned}
\end{equation}
Thus $\twodis(-\rmD_z \calI (\tilde{t}(s),\tilde{z}(s)), \partial\calR_1(0))=0$, so that
 $-\rmD_z \calI (\tilde{t}(s),\tilde{z}(s))\in\partial\calR_1(0)\subset\calZ^*$ which implies that
 for every $w\in\calZ$
 \begin{equation}\label{e:2}
 \calR_1(w)\geq \pairing{\calZ}{-\rmD_z \calI (\tilde{t}(s),\tilde{z}(s))}{w}.
 \end{equation}
Adding \eqref{e:1} and \eqref{e:2} we get \eqref{t'>0}.

 If $\tilde{t}'(s)=0$, then from \eqref{e:start} together with \eqref{e:defg0}
we deduce \eqref{e:3}.
Let now $\eta\in \partial\calR_1(0)$ as in \eqref{e:eta}.
Then, for every $w\in\calZ$ there holds $\calR_1(w)\geq \pairing{\calZ}{\eta}{w}$ which, together with \eqref{e:3}
(upon adding and subtracting $\pairing{\calZ}{\rmD_z \calI
  (\tilde{t}(s),\tilde{z}(s))}{w}$ on the right-hand side)
provides~\eqref{t'=0}.
\end{proof}

\subsubsection*{Acknowledgment}
We would like to thank Giuseppe Savar\'e for some useful suggestions
about the proof of Theorem \ref{thm:chain-rule}.
We are also grateful to Karoline Disser and Jens Griepentrog for
enlightening discussions on function spaces.

 D.K.\ and C.Z.\ are grateful for the kind hospitality of the Section of Mathematics of DICATAM, University of Brescia, where part of this work was performed. R.R.\ and C.Z.\
 acknowledge the kind hospitality of the Weierstrass Institute for Applied Analysis and Stochastics, where part of this
 research was carried out.

This project was partially supported by the
2012-PRIN project ``Calcolo delle Variazioni'', by GNAMPA (Indam) and
the Deutsche Forschungsgemeinschaft
   through the project C32 ``Modeling of Phase Separation and Damage
   Processes in Alloys'' of
  the Research Center MATHEON. 

%%%% THIS IS THE APPENDIX ON YOUNG MEASURES  %%%%

 \appendix
\section{Young measure tools}
\label{s:a-1}
 \noindent In this section, we provide a minimal aside on
  Young measures
with values in infinite-dimensional  spaces (see   e.g.\
\cite{Balder84, Ball89, Valadier90});  in particular, we shall
focus on Young measures with values in a reflexive Banach space
$V$. The main result here, Theorem \ref{thm.balder-gamma-conv}  below,
is extension to weak topologies
of  the so-called {\em
  Fundamental Theorem of Young measures}, see e.g.\
\cite[Thm.\,1]{Balder84},
\cite{Ball89},
\cite[Thm.\,16]{Valadier90}.

Preliminarily, we establish some basic notation and definitions:
%%%%%%%%%%%%%%%%%%%%%%%%%%%%%%%%%%%%%%%%%%%%%%%%
We  denote by $\mathscr{L}_{(0,T)}$
the $\sigma$-algebra of the Lebesgue measurable subsets of  the interval $(0,T)$ and,
given
   a reflexive Banach space $V$,
 by $\mathscr B(V)$ its Borel $\sigma$-algebra.
We  use the symbol $\otimes$ for product $\sigma$-algebrae.

We consider the space $V$ endowed   with the \emph{weak}
topology, and say that a $\mathscr{L}_{(0,T)} \otimes \mathscr
B(V)$--measurable functional $\mathcal{H}: (0,T) \times
V \to (-\infty,+\infty]$ is a \emph{weakly-normal
integrand}  if for a.a. $t \in (0,T)$ the map
\begin{equation}
\label{def:wws}
\begin{gathered}
 \text{$w \mapsto  \calH (t,w)$
is sequentially lower semicontinuous on $V$ w.r.t.\ the
weak topology.}
\end{gathered}
\end{equation}
%%%%%%%%%%%%%%%%%%%
We   denote by $\mathscr{M} (0,T; V)$ the set of all
$\mathscr{L}_{(0,T)}$-measurable functions $y: (0,T) \to
V$.
%%%%%%%%%%%%%%%%%%%%
%%%%%%%%%%%%%%%%%%%%
%%%%%%%%%%%%%%%%%%
%%
 \begin{definition}[\bf (Time-dependent) Young measures]
  \label{parametrized_measures}
  A \emph{Young measure} in the space $V $
  is a family
  $\mmu:=\{\mu_t\}_{t \in (0,T)} $ of Borel probability measures
  on $ V $
  such that the map on $(0,T)$
\begin{equation}
\label{cond:mea} t \mapsto \mu_{t}(B) \quad \mbox{is}\quad
{\mathscr{L}_{(0,T)}}\mbox{-measurable} \quad \text{for all } B \in
\mathscr{B}(V).
\end{equation}
We denote by $\mathscr{Y}(0,T; V)$ the set of all Young
measures in $V $.
\end{definition}
%%%%%%%%%%%%%
%%%%%%%%%%%

We are now  in the position of recalling the following compactness result, which was proved in
\cite[Thm.\,3.2]{RossiSavare06} (see also \cite[Thm.\,4.2]{Stef08?BEPD} and \cite[Thm.\ A.2]{mrs2013}).
\begin{theorem}
\label{thm.balder-gamma-conv}
Let $1 \leq p \leq \infty$ and let  $(w_n) \subset L^p
(0,T;V)$ be a bounded sequence.  Then,
  there exists a subsequence $(w_{n_k})$ and
  a Young  measure
  $ \mmu=\{ \mu_{t} \}_{t \in (0,T)}  \in \mathscr{Y}(0,T;V)$
  such that for a.a.\ $t\in (0,T)$
  \begin{equation}
\label{e:concentration}
\begin{gathered}
%\limsup_{k \to \infty} \left( \| v_{n_k}(t) \| + \| \zeta_{n_k}(t)
%\|_* + |p_{n_k}(t)| \right)<+\infty,
%\\
  \mbox{$ \mu_{t} $ is
      concentrated on
      the set
      $ L(t):=
      \bigcap_{l=1}^{\infty}
      \overline{\big\{w_{n_k}(t)\,: \ k\ge  l
      \big\}}^{\mathrm{weak}}$}
      \end{gathered}
  \end{equation}
of the limit points of the sequence $(w_{n_k}(t))$ with respect to
the weak topology of $V$
  and,  for every weakly-normal
integrand
  $\mathcal H
: (0,T) \times V\to (-\infty,+\infty]$ such that  the
sequence $t \mapsto \mathcal{H}^- (t,w_{n_k}(t))$
is uniformly
integrable ($\mathcal{H}^-$ denoting the negative part of $\mathcal{H}$), there holds
\begin{equation}
\label{heq:gamma-liminf} \liminf_{k \to \infty} \int_0^T
\mathcal{H}%_{n_k}
 (t,w_{n_k}(t)) \,\dd t \geq
\int_0^T\int_{V} \mathcal{H} (t,w)\,\dd \mu_t(w)  \,\dd
t\,.
\end{equation}
As a consequence,
 setting
%  \[
$\rmw(t):=\int_{V} w \,\dd \mu_t (w)$  $\foraa\, t
\in (0,T)\,$, 
 % \]
there holds
\begin{equation}
  \label{eq:35}
w_{n_k} \weakto \rmw \ \ \text{ in $L^p (0,T;V)$},
\end{equation}
with $\weakto$ replaced by $\weaksto$ if $p=\infty$.
\end{theorem}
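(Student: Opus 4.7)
The plan is to prove Theorem \ref{thm.balder-gamma-conv} as an extension of the classical Fundamental Theorem of Young measures \cite{Balder84, Ball89, Valadier90} to the weak topology on a reflexive Banach space $V$. First, I would reduce to a separable setting: without loss of generality we may replace $V$ by the closed linear span of $\bigcup_n w_n((0,T))$, so that both $V$ and (by reflexivity) $V^*$ can be assumed separable. On a fixed closed ball $B_R \subset V$, the weak topology is then metrizable and compact, so that $(B_R,\mathrm{weak})$ is a Polish space; since $V = \bigcup_{R\in\mathbb{N}} B_R$, the weak topology makes $V$ a Suslin space, which is the natural framework in which abstract Young measure theory operates.

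Next, I would construct $\mmu$ by a Prohorov-type tightness argument applied to the elementary Young measures $\nu_n := \{\delta_{w_n(t)}\}_{t \in (0,T)}$. Using the tightness function $\Phi(v):=\|v\|_V^p$ and the boundedness of $(w_n)$ in $L^p(0,T;V)$, one obtains $\sup_n \int_0^T\int_V \Phi(v)\,\dd\nu_n^t(v)\,\dd t <\infty$, which in combination with the weak-compactness of sublevels of $\Phi$ in the Suslin topology yields a $\mathscr{Y}(0,T;V)$-accumulation point $\mmu$ of $(\nu_{n_k})$. The concentration on $L(t)$ in \eqref{e:concentration} follows from a standard diagonal argument: for any weakly open $U$ disjoint from $L(t)$, only finitely many $w_{n_k}(t)$ can lie in $U$, so by the (sequential) upper semicontinuity of $\mu\mapsto \mu(U)$ for open $U$, one gets $\mu_t(U)\le \liminf_k \delta_{w_{n_k}(t)}(U)=0$.

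For the liminf inequality \eqref{heq:gamma-liminf}, the plan is to approximate the weakly-normal integrand $\mathcal{H}$ from below by a monotone sequence of weakly continuous, bounded, $\mathscr{L}_{(0,T)}\otimes\mathscr{B}(V)$-measurable integrands $\mathcal{H}_m$ (realisable since a weakly lower semicontinuous function is the pointwise supremum of its weakly continuous affine minorants; truncation and a measurable-selection argument handle the time dependence). For each fixed $m$, the weak convergence of $\nu_{n_k}$ to $\mmu$ together with the uniform integrability of $\mathcal{H}_m^-$ (which follows from boundedness) gives
\[
\liminf_k \int_0^T \mathcal{H}_m(t,w_{n_k}(t))\,\dd t \geq \int_0^T\!\!\int_V \mathcal{H}_m(t,w)\,\dd\mu_t(w)\,\dd t,
\]
and monotone convergence in $m$ yields the inequality for $\mathcal{H}$ provided the uniform integrability of $\mathcal{H}^-(\cdot,w_{n_k}(\cdot))$ (in the hypothesis) is used to justify Fatou on the sequence side. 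The identification of the barycenter $\rmw$ as the weak limit in \eqref{eq:35} is then obtained by applying the inequality to the weakly continuous integrands $\mathcal{H}_\pm(t,v):=\pm\varphi(t)\langle v^*,v\rangle$ for $\varphi\in L^{p'}(0,T)$ and $v^*\in V^*$: both choices yield equalities, from which $\int_0^T\varphi\langle v^*,w_{n_k}\rangle\,\dd t\to \int_0^T\varphi\langle v^*,\rmw\rangle\,\dd t$, giving weak (or weak-$*$, for $p=\infty$) convergence.

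The main obstacle in this programme is the non-metrizability of $(V,\mathrm{weak})$ as a whole, together with the need to manipulate measurability with respect to $\mathscr{B}(V)$ for the Borel $\sigma$-algebra of the weak topology. Restriction to bounded balls, where reflexivity plus separability of $V^*$ buys metrizability, together with the Suslin framework for the abstract Prohorov and disintegration theorems, is what makes the entire construction go through; essentially all the other steps are faithful adaptations of the strong-topology proof.
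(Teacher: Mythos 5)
Your proposal cannot be matched against an internal argument, because the paper does not prove Theorem \ref{thm.balder-gamma-conv}: it is explicitly recalled from the literature (Rossi--Savar\'e, Thm.~3.2 of [RS06]; see also [Ste08, Thm.~4.2] and [MRS13, Thm.~A.2]). Your general skeleton --- reduction to a separable reflexive $V$ with separable dual, metrizability and compactness of the weak topology on balls (so that $V$ with the weak topology is Suslin), tightness of the elementary Young measures via the coercive integrand $\|v\|_V^p$, a lower semicontinuity step for weakly-normal integrands, and identification of the barycenter by linear test integrands --- is indeed the strategy of the cited proofs.

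However, two of your steps fail as justified. (i) \emph{Concentration \eqref{e:concentration}.} You argue pointwise in $t$ that $\delta_{w_{n_k}(t)}$ converges narrowly to $\mu_t$ and that a weakly open $U$ disjoint from $L(t)$ can contain only finitely many $w_{n_k}(t)$. Neither holds: Young-measure convergence is only time-integrated, not narrow convergence of $\delta_{w_{n_k}(t)}$ for fixed $t$; and the finiteness claim is false --- take $V=\mathbb{R}$, $w_{n_k}(t)=1/k$ and $U=(0,1)$, so that $L(t)=\{0\}$ is disjoint from $U$ while every term lies in $U$ (the cluster points may sit on $\partial U$). The standard repair is to derive concentration from the liminf inequality itself (Balder's support argument): apply \eqref{heq:gamma-liminf} to the normal integrands $(t,v)\mapsto \min\{1, d_w(v,\overline{\{w_{n_k}(t):k\ge l\}}^{\mathrm{weak}})\}$, with $d_w$ a metric for the weak topology on a ball containing the relevant values, which vanish along the sequence for $k\ge l$; this gives $\mu_t\bigl(\overline{\{w_{n_k}(t):k\ge l\}}^{\mathrm{weak}}\bigr)=1$ for a.a.\ $t$, and intersecting over the countably many $l$ yields \eqref{e:concentration}. (ii) \emph{Approximation of $\mathcal{H}$.} A sequentially weakly lower semicontinuous integrand is \emph{not} the supremum of its weakly continuous \emph{affine} minorants unless $\mathcal{H}(t,\cdot)$ is convex (that supremum is the convex l.s.c.\ envelope); as stated this step only proves \eqref{heq:gamma-liminf} for convex integrands, which is insufficient (and also glosses over the gap between sequential and topological weak lower semicontinuity off bounded sets). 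The correct route again exploits metrizability of the weak topology on balls: truncate/localize to balls and use inf-convolution (Moreau--Yosida) regularizations with respect to a weak-topology metric, together with a monotone-convergence passage; this is precisely where reflexivity and separability are used in the cited proof. Finally, a smaller caveat: your derivation of \eqref{eq:35} by testing with $\pm\varphi(t)\langle v^*,v\rangle$ requires uniform integrability of $\|w_{n_k}(\cdot)\|_V\,|\varphi|$, which holds for $1<p\le\infty$ but not for $p=1$ (a bounded $L^1$-sequence need not converge weakly in $L^1$); since the theorem is applied in the paper only with $p=2$ and $p=\infty$ this is harmless, but your argument should acknowledge the restriction.
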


%\small
%\bibliographystyle{alpha}
%\renewcommand{\baselinestretch}{0.5}
%\bibliography{literature_CDR2}
%\vfill

\end{document}